\documentclass[10pt,reqno]{amsart}
\usepackage[margin=1in]{geometry}
\usepackage{bbm}
\usepackage{amsfonts}
\usepackage{latexsym, amssymb, amsmath, amscd, amsthm, amsxtra}
\usepackage{mathtools}
\usepackage{enumerate}
\usepackage[all]{xy}
\usepackage{mathrsfs}
\usepackage{fancyhdr}
\usepackage{listings}
\usepackage{hyperref}
\usepackage{cleveref}
\usepackage{soul}
\usepackage{xcolor}
\usepackage{dsfont}
\usepackage{stmaryrd}
\usepackage{bm}
\usepackage{comment}
\usepackage{diagbox}
\usepackage{verbatim}
\usepackage{stmaryrd}

\allowdisplaybreaks[4]
\setcounter{tocdepth}{1}

\hypersetup{colorlinks=true}
\pagestyle{plain}

\oddsidemargin=0in
\evensidemargin=0in
\textwidth=6.5in
\setlength{\unitlength}{1cm}
\setlength{\parindent}{0.6cm}
\usepackage{graphicx, color}
\usepackage{cite}

\flushbottom
\numberwithin{equation}{section}
\numberwithin{figure}{section}

\makeatletter
\newcommand{\rmnum}[1]{\uppercase{{\expandafter{\romannumeral #1}}}}
\makeatother

\theoremstyle{plain}
\newtheorem{theorem}{Theorem}[section]
\newtheorem*{theorem*}{Theorem}
\newtheorem{lemma}[theorem]{Lemma}
\newtheorem*{lemma*}{Lemma}

\newtheorem*{corollary*}{Corollary}

\newtheorem*{proposition*}{Proposition}
\newtheorem{definition}[theorem]{Definition}
\newtheorem*{definition*}{Definition}

\newtheorem*{conjecture*}{Conjecture}

\theoremstyle{definition}
\newtheorem{example}[theorem]{Example}
\newtheorem*{example*}{Example}

\newtheorem*{remark*}{Remark}
\newtheorem{claim}{Claim}

\newcommand{\cal}{\mathcal}
\newcommand{\scr}{\mathscr}

\renewcommand{\ul}[1]{\underline{#1} \!\,}
\newcommand{\ol}[1]{\overline{#1} \!\,}
\newcommand{\wh}{\widehat}
\newcommand{\wt}{\widetilde}
\renewcommand{\txt}[1]{\text{\rm{#1}}}

\definecolor{darkred}{rgb}{0.9,0,0.3}
\definecolor{darkblue}{rgb}{0,0.3,0.9}

\usepackage{ifthen}
\def\comment#1{\ifthenelse{\isodd{\value{page}}}{\marginpar{\raggedright\scriptsize{\textcolor{darkred}{#1}}}}{\marginpar{\raggedleft\scriptsize{\textcolor{darkred}{#1}}}}}

\renewcommand{\P}{\mathbb{P}}
\newcommand{\E}{\mathbb{E}}
\newcommand{\R}{\mathbb{R}}
\newcommand{\C}{\mathbb{C}}
\newcommand{\N}{\mathbb{N}}
\newcommand{\Z}{\mathbb{Z}}

\newcommand{\cW}{{\mathcal W}}
\newcommand{\cT}{\mathcal T}
\newcommand{\cI}{\mathcal{I}}
\newcommand{\cC}{\mathcal{C}}
\newcommand{\cR}{{\mathcal R}}

\newcommand{\cS}{{\mathcal{S}}}
\newcommand{\cB}{{\mathcal{B}}}

\newcommand{\cL}{{\mathcal{L}}}
\newcommand{\cK}{{\mathcal K}}
\newcommand{\cQ}{{\mathcal{Q}}}
\newcommand{\cJ}{{\mathcal{J}}}

\newcommand{\fc}{\mathfrak c}
\newcommand{\fd}{\mathfrak d}

\newcommand{\fn}{\mathfrak n}

\newcommand{\fS}{\mathfrak S}

\newcommand{\fB}{{\mathfrak{B}}}

\newcommand{\sE}{\mathsf E}

\newcommand{\bxi}{\boldsymbol{\xi}}

\newcommand{\ii}{\mathrm{i}}
\newcommand{\dd}{\mathrm{d}}

\renewcommand{\leq}{\leqslant}
\renewcommand{\geq}{\geqslant}
\renewcommand{\epsilon}{\varepsilon}

\newcommand{\qq}[1]{[\![{#1}]\!]}

\newcommand{\p}[1]{({#1})}

\newcommand{\pa}[1]{\left({#1}\right)}

\newcommand{\q}[1]{[{#1}]}

\newcommand{\qa}[1]{\left[{#1}\right]}

\newcommand{\h}[1]{\{{#1}\}}

\newcommand{\ha}[1]{\left\{{#1}\right\}}

\newcommand{\abs}[1]{\lvert #1 \rvert}

\newcommand{\absa}[1]{\left\lvert #1 \right\rvert}

\newcommand{\norm}[1]{\lVert #1 \rVert}

\newcommand{\norma}[1]{\left\lVert #1 \right\rVert}

\newcommand{\avg}[1]{\langle #1 \rangle}

\newcommand{\avga}[1]{\left\langle #1 \right\rangle}

\DeclareMathOperator{\tr}{Tr}

\DeclareMathOperator{\re}{Re}
\DeclareMathOperator{\im}{Im}

\newcommand{\ba}{{\bf{a}}}
\newcommand{\bx}{{\bf{x}}}

\newcommand{\bu}{{\bf{u}}}
\newcommand{\bv}{{\bf{v}}}

\newcommand{\be}{\begin{equation}}
\newcommand{\ee}{\end{equation}}

\newcommand{\e}{{\varepsilon}}

\newcommand{\rd}{\mathrm{d}}

\newcommand{\oo}{\mathrm{o}}
\newcommand{\OO}{\mathrm{O}}

\newcommand{\bsigma}{\bm{\sigma}}

\newcommand{\cut}{\mathrm{Cut}}
\newcommand{\cutL}{(\mathrm{Cut}_L)}
\newcommand{\cutR}{(\mathrm{Cut}_R)}
\newcommand{\Zn}{\widetilde{\mathbb Z}_n^d}
\newcommand{\ZL}{{\mathbb Z}_L^d}
\newcommand{\Gc}{{\mathring G}}
\newcommand{\bsig}{\bsigma}
\newcommand{\Kgen}{\wh{\cK}}
\newcommand{\OK}{\mathcal O_{\mathcal K}}
\newcommand{\lenk}{l_{\mathcal K}}

\newcommand{\SE}{{S_{\txt{B}}}}
\newcommand{\SRBM}{{S^{\txt{RBM}}}}

\newcommand{\ppp}[1]{{\llparenthesis #1\rrparenthesis}}
\newcommand{\ti}{t_{\txt{i}}}
\newcommand{\tf}{t_{\txt{f}}}

\renewcommand{\leq}{\le}
\renewcommand{\geq}{\ge}

\newcommand{\opr}[1]{\mathrm{O}_\prec\left({#1}\right)}
\setcounter{tocdepth}{1}
\allowdisplaybreaks

\numberwithin{equation}{section}
\usepackage{environ}

\begin{document}

\title{A Block Reduction Method for Random Band Matrices with General Variance Profiles}

\author{Jiaqi Fan$^\star$}
\thanks{$^\star$Qiuzhen College, Tsinghua University, Beijing, China, \href{mailto:fanjq24@mails.tsinghua.edu.cn}{fanjq24@mails.tsinghua.edu.cn}}

\author{Fan Yang$^\ddagger$}
\thanks{$^\ddagger$Yau Mathematical Sciences Center, Tsinghua University, and Beijing Institute of Mathematical Sciences and Applications, Beijing, China,
\href{mailto:fyangmath@mail.tsinghua.edu.cn}{fyangmath@mail.tsinghua.edu.cn}}

\author{Jun Yin$^\S$}
\thanks{$^\S$Department of Mathematics, University of California, Los Angeles, Los Angeles, CA, USA,
\href{mailto:jyin@math.ucla.edu}{jyin@math.ucla.edu}}

\begin{abstract}
We present a novel block reduction method for the study of a general class of random band matrices (RBM) defined on the $d$-dimensional lattice $\ZL:=\{1,2,\ldots,L\}^{d}$ for $d\in \{1,2\}$,  with band width $W$ and almost arbitrary variance profiles subject to a core condition. We prove the delocalization of bulk eigenvectors for such RBMs under the assumptions $W\geq L^{1/2+\varepsilon}$ in one dimension and $W\geq L^{\varepsilon}$ in two dimensions, where $\varepsilon>0$ is an arbitrarily small constant. This result extends the findings of \cite{Band1D,Band2D} on block RBMs to models with general variance profiles.
Furthermore, we generalize our results to Wegner orbital models with small interaction strength $\lambda\ll 1$. Under the sharp condition $\lambda\gg W^{-d/2}$, we establish lower bounds for the localization lengths of bulk eigenvectors, thereby extending the results of \cite{truong2025localizationlengthfinitevolumerandom} to settings with nearly arbitrary potential and hopping terms.
Our block reduction method provides a powerful and flexible framework that reduces both the dynamical analysis of the loop hierarchy and the derivation of deterministic estimates for general RBMs to the corresponding analysis of block RBMs, as developed in \cite{Band1D,Band2D,truong2025localizationlengthfinitevolumerandom}.

\end{abstract}

\maketitle

\section{Introduction}
Since the seminal work by Anderson \cite{PhysRev.109.1492}, the study of the Anderson localization-delocalization transition has become a central topic in probability theory and mathematical physics \cite{Lagendijk09Fifty,Abrahams201050years,Sheng2006Intro,lee1985disordered,thouless1974electrons,Anderson1978Local}. Mathematically, Anderson \cite{PhysRev.109.1492} introduced a class of random Schr\"{o}dinger operators---now known as \emph{Anderson's tight-binding model}—on discrete lattices to investigate the transition between localized and delocalized electron wave functions.
The conjecture for the one-dimensional (1D) Anderson model, which posits that localization occurs across the entire spectrum for any non-zero disorder, was rigorously confirmed decades ago \cite{Carmona1982Exp,David2002Local,JIMSPL1977FAIA,cmp/1103908590,ishii1973localization}.
In higher dimensions, however, rigorous mathematical understanding of Anderson localization/delocalization remains limited. The two-dimensional (2D) Anderson model is expected to exhibit localization throughout the entire spectrum for any non-zero disorder \cite{PRL_Anderson}.
In dimensions three and higher, localization is believed to occur throughout the spectrum for sufficiently large disorder. Conversely, for weak disorder, localization is anticipated only near the spectral edges, with delocalization conjectured to occur within the bulk of the spectrum. Despite numerous significant results concerning Anderson localization (see e.g., \cite{FroSpen_1983,Aizenman1993,bourgain2005localization,klein2012comprehensive,carmona1987anderson,ding2020localization,li2022anderson,frohlich1985constructive,von1989new,spencer1988localization}), these conjectures remain unresolved in dimensions two and higher. Notably, rigorous delocalization results have so far been established only for the Anderson model on the Bethe lattice (see e.g., \cite{Bethe_PRL, Bethe_JEMS, aggarwal2025mobilityedgeandersonmodel}), which can be viewed as an infinite-tree-like graph representing an \(\infty\)-dimensional structure, but such results have not yet been demonstrated for any random Schr\"{o}dinger operators in finite dimensions.

Another well-known model in probability theory and mathematical physics that exhibits the Anderson localization–delocalization transition is the \emph{random band matrix} (RBM) model \cite{scalingabndCGMLIF1990PRL,ConJ-Ref2,ScalingPropertyBandMatrixFYMA1991PRL}, also referred to as the \emph{Wegner orbital model} \cite{Wegner1,Wegner2,Wegner3}.
In this paper, we study an RBM $H = (H_{xy})$ defined on a large $d$-dimensional discrete torus $\ZL := \{1, 2, \ldots, L\}^d$, consisting of $N = L^d$ lattice sites. The entries $H_{xy}$ are independent (subject to the Hermitian symmetry $H_{xy} = \overline{H}_{yx}$), centered complex Gaussian random variables with a banded variance profile $S_{xy}:=\mathbb{E}|H_{xy}|^2$. A defining characteristic of the RBM is the locality of hopping---that is, the variances $S_{xy}$ decay rapidly when the graph distance $|x-y|$ on $\ZL$ significantly exceeds the band width parameter $W\ll L$. We normalize the row sums of the variance matrix $S = (S_{xy})$ to be 1, under which the global eigenvalue distribution of $H$ converges weakly to the Wigner semicircle law, supported on the interval $[-2, 2]$ \cite{Wigner}.

The RBM can be viewed as a natural interpolation between the Anderson model and the celebrated Wigner ensemble \cite{Wigner}, with the band width $W$ serving as the interpolation parameter. Like the Anderson model, the RBM exhibits a sharp Anderson metal–insulator transition, depending on both the band width $W$ and the spatial dimension $d$.
More precisely, numerical simulations \cite{ConJ-Ref2, scalingabndCGMLIF1990PRL, ConJ-Ref4, ConJ-Ref6}, as well as non-rigorous arguments based on supersymmetric techniques \cite{ScalingPropertyBandMatrixFYMA1991PRL}, suggest that within the bulk of the spectrum $(-2,2)$, the localization length for 1D RBMs scales as $W^2$. In 2D, it is conjectured to grow exponentially as $\exp(\Omega(W^2))$.
In particular, as $W$ increases and the localization length exceeds the system size $L$, the RBM undergoes a transition from a localized to a delocalized phase.
In dimensions $d\ge 3$, the localization length of bulk eigenvectors becomes infinite once $W$ surpasses a sufficiently large constant, indicating complete delocalization in this regime.

In recent years, substantial progress has been made in understanding the above localization–delocalization conjecture for random band matrices (RBMs), driven by the development of powerful tools in random matrix theory. The localization of 1D RBMs has been rigorously established under the condition $W \ll L^{a}$ for various exponents $a < 1/2$ in \cite{Sch2009,Wegner,CS1_4,CPSS1_4}, and more recently under the sharp condition $W \ll \sqrt L$ in \cite{Localization1_2}.
On the delocalization side, delocalization, bulk universality, and quantum unique ergodicity for 1D RBMs within the bulk of the spectrum $(-2,2)$ were obtained under the condition $W\gg L^{3/4}$ in \cite{PartI,PartII,Band1D_III}. An improved result under $W\gg L^{8/11}$ was later achieved via a flow-based argument introduced in \cite{DY}. This approach, combined with a more refined analysis of the so-called loop hierarchy (see \Cref{lem:SE_basic} below), ultimately led to the resolution of the delocalization conjecture for 1D RBMs down to the critical band width $W\gg \sqrt{L}$ \cite{Band1D}.
The core of this approach is a \emph{tree approximation} to the loop hierarchy (as defined in \Cref{Def_Ktza}), also referred to as the \emph{primitive loop} in \cite{Band1D}.
This method was subsequently extended to the 2D case in \cite{Band2D}, where delocalization of bulk eigenvectors was established under the condition $W\gg L^{\varepsilon}$ for any small constant $\e>0$.
The results of \cite{Band1D,Band2D} were also generalized in \cite{truong2025localizationlengthfinitevolumerandom} to a broader class of block random Schr{\"o}dinger operators. As a special case, the models considered in \cite{truong2025localizationlengthfinitevolumerandom} include the Wegner orbital model with weak inter-block hoppings, which can be viewed as a RBM model with weak interactions.
More recently, the results of \cite{Band1D} were extended to 1D RBMs with non-Gaussian entries and general variance profiles in \cite{erdos2025zigzagstrategyrandomband}, by combining the flow-based analysis of the loop hierarchy with an improved Green's function comparison method. A brief comparison of our results with those of \cite{erdos2025zigzagstrategyrandomband} will be provided in \Cref{sec:comparison}.

With these developments, the delocalization of RBMs is now relatively well understood in one and two dimensions. However, the existing works \cite{Band1D,Band2D,truong2025localizationlengthfinitevolumerandom} focus primarily on RBMs with special block-structured variance profiles, while \cite{erdos2025zigzagstrategyrandomband} addresses only the 1D case under certain technical assumptions on the $\Theta$-propagators (defined in \Cref{def_Theta_propagators} below). These assumptions are often difficult to verify and may fail in some cases (e.g., when the interaction strength is weak, or when the random walk represented by the variance matrix degenerates).
The goal of this paper is to establish delocalization for a broad class of RBM models that includes the block-structured matrices studied in \cite{Band1D,Band2D}, the translationally invariant models in \cite{PartI,PartII,Band1D_III,yang2021delocalizationquantumdiffusionrandom,YYYTexpansion2022CMP,CFHJBulkBandAOP2024}, and the Wegner orbital models with weak inter-block hopping from \cite{truong2025localizationlengthfinitevolumerandom}. In fact, our assumptions on the variance profiles---presented in \Cref{def_considered_model}---extend significantly beyond these previous works, requiring only a certain block-translation symmetry and a core condition. These assumptions are designed specifically to ensure the desired properties of the $\Theta$-propagators, which were assumed without proof in \cite{erdos2025zigzagstrategyrandomband}.
Our approach relies on a new block reduction argument, inspired by techniques from \cite{PartI,PartII,Band1D_III}, to generalize the results of \cite{Band1D,Band2D,truong2025localizationlengthfinitevolumerandom}. We also believe that our results in the 1D setting can be extended to non-Gaussian ensembles by employing the refined Green's function comparison method developed in \cite{erdos2025zigzagstrategyrandomband}. However, extending these results to 2D appears to require new ideas beyond current methods, and we therefore do not pursue universality in the present work to simplify presentation.

The main focus of this paper is the bulk regime of RBMs in dimensions 1 and 2. For completeness, we also briefly summarize the known results near the spectral edges $\pm 2$ and for higher-dimensional RBMs.
At the spectral edges, a sharp phase transition in the edge eigenvalue statistics of 1D RBMs was rigorously established in the seminal work \cite{Sod2010}, occurring as the band width $W$ crosses the critical threshold $L^{5/6}$. This analysis was later extended to higher dimensions ($2 \le d \le 4$), revealing an analogous transition at $W = L^{1 - d/6}$ \cite{Band_Edge123}.
The corresponding delocalization of bulk eigenvectors for 1D and 2D RBMs was recently proved in \cite{Bandedge}, building on the techniques developed in \cite{Band1D,Band2D}. In \cite{Bandedge}, lower bounds on the localization lengths of all eigenvectors were established under the condition $W \gg L^{\varepsilon}$, covering eigenvectors associated with energies across the entire spectrum $[-2,2]$, including those near the spectral edges.
The delocalization of bulk eigenvectors for RBMs in dimensions $d\ge 7$ was proved under the condition $W\gg L^{\varepsilon}$, using a technically intricate diagrammatic expansion method \cite{yang2021delocalizationquantumdiffusionrandom,YYYTexpansion2022CMP,CFHJBulkBandAOP2024}.
By combining this diagrammatic expansion method with the tree approximation of the loop hierarchy, delocalization results were further extended to all dimensions $d \ge 3$ under the condition $W \gg L^{\varepsilon}$ \cite{DYYY25_d3}.

\subsection{The model}

We consider a general class of random band matrices with band width $W$ in dimensions $1$ and $2$. More precisely, let $H=(H_{xy})$ be a random band matrix whose indices $x,y$ belong to the $d$-dimensional lattice \(\Z_L^d=\qq{-\pa{L-1}/{2},\pa{L-1}/2}^d\).
For definiteness, we assume throughout the paper that $L=nW$ for some $n,W\in 2\N+1$; the case of even $L$ can be treated analogously by shifting the origin of the lattice. We also view $\Z_L^d$ as the set of representatives of elements in the discrete torus $\Z^d/L\Z^d$, and denote by $\p{x}_L$ the representative of $x$. For notational convenience, we impose the following block structure on $\Z_L^d$. We emphasize that this block structure is purely spatial; the variance profile of the model remains completely general (see \Cref{def_considered_model} below).

\begin{definition}[Block structure] \label{def: BM2}
The lattice $\mathbb Z_L^d$ is divided into $n^d$ blocks of linear size $W$, such that the central one is $\qq{ -(W-1)/2, (W-1)/2}^d$. Given any $x\in \Z_L^d$, denote the block containing $x$ by $[x]$. Denote the lattice of blocks $[x]$ by \smash{$\wt\Z_n^d$}. We will also view \smash{$\wt\Z_n^d$} as a torus and denote by $\pa{[x]}_n$ the representative of $[x]$ in \smash{$\wt\Z_n^d$}. For convenience, we will regard $[x]$ both as a vertex of the lattice \smash{$\wt\Z_n^d$} and a subset of vertices on the lattice $\Z_L^d$. Denote by $\{x\}$ the representative of $x$ in the block $[0]$ containing the origin, i.e.,
$$\{x\}:=(x+W\Z^d)\cap [0] = x - W[x].$$
Any $x\in \Z_L^d$ can be labeled as $([x],\{x\})$. For definiteness, we use the $L^1$-norm in this paper, i.e., $\|x-y\|_L:=\|\p{x-y}_L\|_1$, which is the (periodic) graph distance on $\Z_L^d$.
Similarly, we also define the periodic $L^1$-distance $\|\cdot\|_n$ on \smash{$\wt \Z_n^d$}. For simplicity of notations, throughout this paper, we will abbreviate
\begin{align}\label{Japanesebracket} |x-y|\equiv \|x-y\|_L,\quad &\langle x-y \rangle \equiv \|x-y\|_L + W,\quad \text{for} \ \ x,y \in \Z_L^d, \\
\label{Japanesebracket2} |[x]-[y]|\equiv \|[x]-[y]\|_n,\quad &\langle [x]-[y] \rangle \equiv \|[x]-[y]\|_n + 1, \quad \text{for} \ \  x,y \in \wt \Z_n^d.
\end{align}
\end{definition}
Using the block structure in \Cref{def: BM2}, for any $\fn$-tensor $\cal A$ with indices $\bx\equiv\pa{{x_1},\ldots,{x_\fn}}\in\pa{\Z_L^d}^\fn$, we can define the ``projected tensor'' with indices in \smash{$\ba\equiv\pa{\qa{a_1},\ldots,\qa{a_\fn}}\in\p{\wt \Z_n^d}^\fn$} as
\begin{equation}\label{def_average_tensor}
        \qq{\cal A}_{\ba}:=W^{-\fn d}\sum_{\bx\in\qa{a_1}\times\cdots\times\qa{a_\fn}}\cal{A}_{\bx}.
\end{equation}
Denoting $N:=L^d$ as the volume of the lattice, we define the model studied in this paper as follows.
For definiteness, we focus on the complex Hermitian model in this paper, while the real symmetric case can be treated in a similar manner, as illustrated in \cite{erdos2025zigzagstrategyrandomband}.

\begin{definition}[Random band matrix]\label{def_considered_model}
Fix $d\in\{1,2\}$. Assume that $H$ is an $N\times N$ complex Hermitian random matrix with independent Gaussian entries up to the Hermitian symmetry $H_{xy}=\overline H_{yx}$. Specifically, the diagonal entries of $H$ are real Gaussian random variables, and the off-diagonal entries are complex Gaussian random variables, distributed as follows:
    \begin{equation}\label{eq:distr_H}
        \begin{aligned}
            H_{xy}\sim \begin{cases}\mathcal{N}_{\R}(0, S_{xy}), \ & \text{if}\ {x=y} \\ \mathcal{N}_{\C}(0, S_{xy}) , \ & \text{if}\  {x\ne y}.\end{cases}
        \end{aligned}
    \end{equation}
Here, the variance matrix $S$ is a \emph{symmetric doubly stochastic} matrix satisfying the following conditions for some constants $\varepsilon_S,C_S>0$:
    \begin{enumerate}
        \item (Block translation invariance) For any $\qa{a},\qa{b},\qa{x}\in \Zn$, we have        \begin{equation}\label{def_block_translation_invarinace}
            \begin{aligned}                S|_{\qa{a}+\qa{x},\qa{b}+\qa{x}}=S|_{\qa{a}\qa{b}},
            \end{aligned}
        \end{equation}
where $S|_{[a][b]}$ denotes the $([a],[b])$-th block of $S$, which is a $W^d\times W^d$ matrix.

        \item (Local $\varepsilon_S$-fullness/core condition) For any $\qa{a}\in\wt\Z_n^d$ and $x,y\in\qa{a}$, we have        \begin{equation}\label{def_epsilon_full}
            \begin{aligned}                \pa{S|_{\qa{a}\qa{a}}}_{xy}\geq  \varepsilon_S W^{-d}.
            \end{aligned}
        \end{equation}
        \item ($C_S$-flat condition) For any $x,y\in\Z_L^d$, we have
        \begin{equation}\label{def_C_flat}
            \begin{aligned}
                S_{xy}\leq C_S W^{-d}\mathbf{1}_{\absa{x-y}\leq C_SW}.
            \end{aligned}
        \end{equation}
    \end{enumerate}
    Additionally, we assume the following parity symmetry: for any $\qa{a},\qa{b}\in \Zn$ and $x,y\in\qa{0}$,     \begin{equation}\label{parity_symmetry_block_2D}
        \begin{aligned}      \pa{S|_{\qa{a}\qa{b}}}_{x,y}=\pa{S|_{\qa{a}\qa{b}}}_{-y,-x}.
        \end{aligned}
    \end{equation}
    We define the ``interaction strength" as the average of the elements in the off-diagonal blocks of $S$:
    \begin{equation}\label{def_lambda}
        \begin{aligned}
            \lambda^2\equiv \lambda^2\pa{S}:=\frac{1}{L^d}\sum_{[a]}\sum_{[b]:[b]\ne[a]}\sum_{x\in\qa{a},y\in\qa{b}}S_{xy}=\frac{1}{W   ^d}\sum_{[b]:[b]\ne[a]}\sum_{x\in\qa{a},y\in\qa{b}}S_{xy},\quad \forall [a]\in \Zn.
        \end{aligned}
    \end{equation}
    Here, the equality follows from the block translation invariance \eqref{def_block_translation_invarinace}.
    Note that $\lambda^2\pa{S}$ corresponds (up to a constant factor) to the variance of a single step of the random walk on \smash{$\Zn$} whose transition probability matrix is given by $W^d\qq{S}$ (recall \eqref{def_average_tensor}). We now assume the following three conditions on this random walk:
    \begin{enumerate}
        \item[(i)] {\bf Interaction strength}: There exists a small constant $\varepsilon_{\txt{inter}}>0$ such that
    \begin{equation}\label{S_interaction_strength}
        \begin{aligned}
            \lambda^2\pa{S}\geq W^{-d+\varepsilon_{\txt{inter}}}.
        \end{aligned}
    \end{equation}
        \item[(ii)] {\bf Irreducibility}: There exists a constant \(c_{\mathrm{irre}} > 0\) such that, uniformly for all \(\mathbf{p} \in [-\pi, \pi]^d\),          \begin{equation}\label{S_irreducibility}
                1-\varphi\pa{\mathbf{p}}=W^d\sum_{\qa{x}\in \Zn\setminus\{0\}}\qq{S}_{\qa{0}\qa{x}}\pa{1-\cos\pa{\mathbf{p}\cdot\qa{x}}}\geq c_{\txt{irre}}\lambda^2\pa{S}|\mathbf{p}|^2,
        \end{equation}
        where \(\varphi(\mathbf{p}) := \E(e^{i\mathbf{p} \cdot X_1})\) is the characteristic function of the one-step displacement \(X_1\) of the random walk.
This irreducibility condition implies isotropy of the random walk by taking the Hessian of both sides of \eqref{S_irreducibility} and using that $X_1$ is symmetrically distributed.

        \item[(ii)'] {\bf Isotropy}: There exists a small constant $c_{\txt{iso}}>0$ such that
        \begin{equation}\label{S_isotropy}
            \begin{aligned}
            c_{\txt{iso}}\lambda^2(S)I_d\leq \Sigma\leq c_{\txt{iso}}^{-1}\lambda^2(S) I_d,
            \end{aligned}
        \end{equation}
        where $\Sigma:=\E\pa{X_1X_1^{\top}}$ is the covariance matrix of a single step \(X_1 \in \Zn\) of the random walk.

    \end{enumerate}
\end{definition}

The main assumptions for our model are given in conditions \eqref{def_block_translation_invarinace}--\eqref{def_C_flat}. We now briefly comment on the additional assumptions \eqref{parity_symmetry_block_2D}, \eqref{S_interaction_strength}, and \eqref{S_irreducibility}.
The assumption \eqref{S_interaction_strength} is in fact a sharp condition for delocalization, as demonstrated for Wegner orbital models in \cite{truong2025localizationlengthfinitevolumerandom,Wegner}. Specifically, when $\lambda^2(S)\ll W^{-d}$, bulk eigenvectors of $H$ are localized with localization length of order one, as shown in \cite{Wegner} for a special class of Wegner orbital models that fall within our framework.
The irreducibility assumption \eqref{S_irreducibility} imposes a qualitative requirement on the irreducibility of the associated random walk. It is designed to prevent the model from degenerating into a lower-dimensional system—in which case the wave function would be confined to a lower-dimensional sublattice and the localization–delocalization transition would no longer be properly captured. For example, if the isotropy condition \eqref{S_isotropy} fails, a 2D random band matrix may effectively reduce to a 1D model, whose behavior differs fundamentally from the 2D case.
From a technical standpoint, the assumption in \eqref{S_irreducibility} is only required to establish the properties of the $\Theta$-propagators in \Cref{lemma_properties_of_propagators}. Once these properties are assumed---as is done, for instance, in Definition 2.4 of \cite{erdos2025zigzagstrategyrandomband}---the irreducibility assumption is no longer needed.

Finally, the symmetry assumption \eqref{parity_symmetry_block_2D} is purely technical and should be regarded as a candidate for removal in future work. It requires each block to be symmetric across its anti-diagonal. This condition is automatically satisfied when the variance matrix exhibits translation symmetry---a property we do not assume in our model definition in \Cref{def_considered_model}, where only block-level translation symmetry is imposed.
We emphasize that, from a technical perspective, this assumption is only used in the proofs of the upper bound for $\cK$-loops in \Cref{bounds_on_primitive_loops} and the upper bound for the evolution kernel in \Cref{lem:sum_decay}. In the latter case, some form of symmetry is required for the 2D setting, as also observed in \cite{Band2D,truong2025localizationlengthfinitevolumerandom}.

We now present several examples of random band matrix models covered by \Cref{def_considered_model}, which also serve as the main motivating examples for this work.

\begin{example}[Translationally invariant random band matrices]\label{example_model}
The models described in \Cref{def_considered_model} include the most standard random band matrix models with a translationally invariant variance profile $S$. We can easily check that $S$ satisfies the assumptions in \Cref{def_considered_model} as long as the following conditions hold for some large constant $C>0$:
    \begin{equation*}
        \begin{aligned}
            C^{-1}\frac{\mathbf{1}_{|x-y|\leq W}}{W^d}\leq S_{xy}\leq C\frac{\mathbf{1}_{|x-y|\leq CW}}{W^d}, \quad \forall x,y\in\ZL.
        \end{aligned}
    \end{equation*}
    This class of models was studied in \cite{PartI,PartII,Band1D_III}.
\end{example}

\begin{example}[Wegner orbital model with general block variance profiles]\label{example_WO}
Our framework also covers a general class of Wegner orbital models. Specifically, let the potential block $V\in\R^{\qa{0}\times \qa{0}}$ be a symmetric matrix, and denote the interaction blocks by \smash{$\h{A^{\qa{x}}:\qa{x}\in\Zn}\subseteq \R^{\qa{0}\times \qa{0}}$}. Assume that all these matrices have nonnegative entries. We define the variance profile matrix $S$ by
    \begin{equation*}            S_{W\qa{a}+\ha{a},W\qa{b}+\ha{b}}:=\mathbf{1}_{\qa{a}=\qa{b}}V_{\ha{a}\ha{b}}+\mathbf{1}_{\qa{a}\neq\qa{b}}A^{\qa{b}-\qa{a}}_{\ha{a}\ha{b}}.
    \end{equation*}
In accordance with the assumptions in \Cref{def_considered_model}, we impose the following three conditions on the blocks $V$ and \smash{$\h{A^{\qa{x}}:\qa{x}\in\Zn\setminus\ha{0}}$}.
    \begin{enumerate}
        \item For any block $X\in\ha{V}\cup\h{A^{\qa{x}}:\qa{x}\in\Zn\setminus\ha{0}}$,
        \begin{equation*}
            \begin{aligned}
                X_{xy}=X_{-y,-x}\quad\forall x,y\in\qa{0}.
            \end{aligned}
        \end{equation*}
        Moreover, there exists a constant $\varepsilon_0>0$ such that \smash{$\max_{x,y\in\qa{0}}X_{xy}\le \varepsilon_0^{-1}W^{-d}$}, and, in addition, $V_{xy}\geq \varepsilon_0W^{-d}$ for all $x,y\in\qa{0}$.

        \item For any $\qa{x}\in\Zn\setminus\ha{0}$, we have $(A^{\qa{x}})^{\top}=A^{-\qa{x}}$. Moreover, there exists a constant $C>0$ such that
        \begin{equation*}
            \begin{aligned}
                A^{\qa{x}}=0,\quad \forall |\q{x}|> C.
            \end{aligned}
        \end{equation*}

        \item There exists a small constant $c>0$ such that the following set generates, in the sense of group operations, a $d$-dimensional lattice:
        \begin{equation}\label{eq:irred_A}
                \scr{C}\equiv\scr{C}\pa{c}:=\bigg\{\qa{x}\in\Zn\setminus\ha{[0]}: \sum_{a,b\in\qa{0}}A^{\qa{x}}_{ab}\geq c\sum_{\qa{y}\ne [0]}\sum_{a,b\in\qa{0}}A^{\qa{y}}_{ab}\bigg\}.
        \end{equation}
        Additionally, there exists a constant $\varepsilon_{\txt{inter}}>0$ such that
        \begin{equation*}
            \begin{aligned}
\sum_{\qa{y}\ne[0]}\sum_{a,b\in\qa{0}}A^{\qa{y}}_{ab}\geq W^{\varepsilon_{\txt{inter}}}.
            \end{aligned}
        \end{equation*}
    \end{enumerate}
    Under these assumptions, all conditions in \Cref{def_considered_model}---except for \eqref{S_irreducibility}---can be verified directly. The irreducibility condition \eqref{S_irreducibility} follows from the definition
    \begin{equation*}
        \begin{aligned}
            \lambda^2=W^{-d}\sum_{\qa{y}\ne [0]}\sum_{a,b\in\qa{0}}A^{\qa{y}}_{ab},
        \end{aligned}
    \end{equation*}
    along with \eqref{eq:irred_A} and the existence of a constant $c_0>0$ such that
    \begin{equation}
            \sum_{\qa{x}\in \scr{C}}\q{1-\cos\pa{\mathbf{p}\cdot\qa{x}}}\geq c_0|\mathbf{p}|^2,\quad \forall \mathbf{p}\in\qa{-\pi,\pi}^d.
    \end{equation}
    The Wegner orbital model considered in Definition 1.2 of \cite{truong2025localizationlengthfinitevolumerandom} is a special case of this framework.  Furthermore, the current setting allows an interaction block $A^{\qa{x}}$ to be concentrated within a small subset of the index set $[0]\times [0]$, involving only $W^{d+\txt{o}(1)}$ nonzero entries out of the total $W^{2d}$.
\end{example}

\subsection{Main results}\label{sec:comparison}

In this paper, we extend the results for 1D and 2D random band matrices (or Wegner orbital models) from \cite{Band1D,Band2D,truong2025localizationlengthfinitevolumerandom} to a broader class of random band matrices defined in \Cref{def_considered_model}. Specifically, our results cover the models in Examples \ref{example_model} and \ref{example_WO}.
For the matrix $H$ defined in \Cref{def_considered_model}, we denote its eigenvalues by \smash{$\ha{\lambda_k:k\in\ZL}$} and the corresponding eigenvectors by \smash{$\ha{\bu_{k}:k\in\ZL}$}. Our first result establishes the delocalization of bulk eigenvectors for $H$.

\begin{theorem}[Delocalization]\label{thm:supu}
    Consider the matrix $H$ defined in \Cref{def_considered_model} with $d\in\{1,2\}$. Let $\kappa,\delta>0$ be arbitrary small constants. If $W\geq L^{\delta}$, then for any constants $\tau,D>0$, the following estimate holds for sufficiently large $L$:
    \begin{equation}\label{eq:delocalmax}
        \begin{aligned}
            \P\bigg(\sup_{k: |\lambda_k | \leq 2 - \kappa} \|\bu_k\|_\infty^2 \leq W^\tau \eta_* \bigg) &\ge 1- L^{-D} ,
        \end{aligned}
    \end{equation}
    where $\eta_*$ is defined as
    \be\label{def_eta_star}
\eta_*:=\frac{1}{(W\lambda)^2}\mathbf 1_{d=1}+\frac{1}{N}.
\ee
\end{theorem}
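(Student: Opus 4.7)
The plan is to derive \Cref{thm:supu} from an optimal entrywise local law for the resolvent $G(z) = (H - z)^{-1}$ down to the critical scale $\eta = \eta_*$. Granting such a local law, the passage to delocalization is standard: the spectral decomposition
\[
\mathrm{Im}\, G_{xx}(E + i\eta) \;=\; \sum_k \frac{\eta\, |\bu_k(x)|^2}{(\lambda_k - E)^2 + \eta^2}
\]
yields $|\bu_k(x)|^2 \leq \eta\, \mathrm{Im}\, G_{xx}(\lambda_k + i\eta)$ for any $\eta > 0$. Setting $\eta = \eta_*$ and taking a union bound over $x \in \ZL$ and over a polynomially-fine net of bulk energies $E \in [-2+\kappa, 2-\kappa]$, \eqref{eq:delocalmax} reduces to showing that with probability at least $1 - L^{-D}$,
\[
\sup_{x \in \ZL}\ \sup_{E \in [-2+\kappa, 2-\kappa]}\ \mathrm{Im}\, G_{xx}(E + i\eta_*) \,\leq\, W^{\tau/2}.
\]

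To establish this entrywise local law, I would deploy the \emph{block reduction method} advertised as the main technical contribution of the paper. The strategy is to pass from $H$ to its block-projected counterpart via the projected tensors in \eqref{def_average_tensor}: at the coarse mesoscopic scale of $\Zn$, the projected variance profile $[S]$ inherits from $S$ the structure of a block RBM on a lattice of linear size $n = L/W$, to which the loop-hierarchy analysis of \cite{Band1D,Band2D,truong2025localizationlengthfinitevolumerandom} can be transplanted. Concretely, I would first verify that the block-projected $\Theta$-propagators satisfy the sharp decay and smoothing estimates asserted in \Cref{lemma_properties_of_propagators}; here the block-translation invariance \eqref{def_block_translation_invarinace}, flatness \eqref{def_C_flat}, and isotropy \eqref{S_isotropy} guarantee that the heat kernel of the random walk $R_n$ is genuinely diffusive at the coarse scale, while the local $\varepsilon_S$-fullness condition \eqref{def_epsilon_full} forces the intra-block structure of $S|_{[a][a]}$ to look Wigner-like on scale $W^{-d}$, so that intra-block fluctuations are absorbed by standard Wigner-type inputs.

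Once the loop hierarchy of \Cref{lem:SE_basic} has been verified at the mesoscopic block level, the dynamical bootstrap scheme of \cite{Band1D,Band2D,truong2025localizationlengthfinitevolumerandom} propagates the local law from a trivial initial scale $\eta \sim 1$ down to $\eta_*$ by a continuity argument in $\eta$. The essential technical ingredients—the upper bounds on primitive $\cK$-loops of \Cref{bounds_on_primitive_loops} and the evolution-kernel decay of \Cref{lem:sum_decay}—should transplant to the general model with only cosmetic modifications, with the anti-diagonal parity symmetry \eqref{parity_symmetry_block_2D} supplying precisely the symmetrization needed to close the resolvent expansions on the block level, as is already the case in the block RBM setting.

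The main obstacle I anticipate is that the reduction is not literally to a block RBM: the microscopic variance structure inside each block is essentially arbitrary (subject only to \eqref{def_epsilon_full} and \eqref{def_C_flat}), so one must quantitatively track the error terms generated by replacing $S$ with its projection $[S]$ and show that they do not swamp the block-level gains at the sharp scale $\eta_*$. In the 1D regime this is especially delicate because the margin is only polynomial in $W$ rather than in $N$, so even $W^\tau$-type losses must be bookkept carefully. Controlling these errors all the way to the sharp interaction threshold \eqref{S_interaction_strength} is where the irreducibility/isotropy assumptions and the parity symmetry really earn their keep, and where the bulk of the technical work in the block reduction argument will reside.
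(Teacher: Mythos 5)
Your reduction of delocalization to the entrywise local law via the spectral decomposition $|\bu_k(x)|^2 \leq \eta\, \mathrm{Im}\, G_{xx}(\lambda_k + i\eta)$ at $\eta$ slightly above $\eta_*$, followed by a union bound over $x\in\ZL$ and a net of bulk energies, is exactly the route the paper takes: it cites the corresponding proofs in prior works and omits the details, and you are right that all the substance of \Cref{thm:supu} lives in the local law \Cref{thm_locallaw}.

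Your account of the block reduction method that is supposed to supply that local law is, however, not the mechanism the paper uses and would not close as described. You propose to ``pass from $H$ to its block-projected counterpart,'' view the projected $[S]$ as a block RBM on $\Zn$, and absorb the intra-block fluctuations by ``standard Wigner-type inputs.'' The paper never replaces $H$ or $S$ by a coarse-grained model on $\Zn$; the full general variance profile $S$ on $\ZL$ persists throughout. The actual device is to modify the \emph{stochastic flow}: instead of $\rd H_t = S\odot\rd B_t$, the paper runs $\rd H_t = \SE\odot\rd B_t$ for $t\geq \ti$, where $\SE$ is the block-constant profile and the initial datum $H_{\ti}$ is a nontrivial random band matrix with variance $S_{\ti}$. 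Because the diffusion coefficient of the flow is block-constant, the loop hierarchy \eqref{eq:mainStoflow} becomes self-consistent against the primitive-loop evolution \eqref{pro_dyncalK} even though $S_t$ inside each block remains arbitrary---this is precisely what the core condition \eqref{def_epsilon_full} buys, since it lets one write $\SRBM = \varepsilon_0\SE + (1-\varepsilon_0)S_{\mathrm{o}}$. The price, which your sketch does not anticipate, is that the flow no longer starts from $H_0 = 0$, so one must separately establish global laws for the $\cL$-loops at time $\ti$ (Section 4); moreover, since $S_t$ is no longer a scalar multiple of $S$ along the flow, the rescaling argument behind the continuity estimate must be replaced by the auxiliary flow of variance profiles in \Cref{lemma_variance_flow}. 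The projected quantities on $\Zn$ enter only as observables---the $\cK$-loops, $\Theta$-propagators, and evolution kernel---not as a replacement model.
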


The above delocalization of bulk eigenvectors is an immediate consequence of the following optimal local law for the Green's function (or resolvent) of $H$, defined as\[G\pa{z}:=\pa{H-z}^{-1},\quad z\in \C\setminus\R.\]

\begin{theorem}[Local law]\label{thm_locallaw}
In the setting of \Cref{thm:supu}. For any constants $\fc, \tau,D>0$, the following local law estimates hold for $z=E+\ii \eta$ and large enough $L$ with probability $\geq 1-L^{-D}$:
\begin{align}\label{locallaw}
&\bigcap_{|E|\le 2- \kappa}\bigcap_{W^{\fc}\eta_*\le \eta\le \fc^{-1}}\ha{ \|G(z) - m(z) \|_{\max}^2  \le \frac{W^\tau}{W^d\ell(\eta)^d \eta }} ,
\\
\label{locallaw_aver}
&\bigcap_{|E|\le 2- \kappa}\bigcap_{W^{\fc}\eta_*\le \eta\le \fc^{-1}}\ha{\max_{[a]\in\Zn} \Big|W^{-d}\sum_{x\in [a]}G_{xx}(z) - m(z) \Big| \le \frac{W^\tau}{W^d\ell(\eta)^d \eta }}.
\end{align}
Here, $m\p{z}$ is the Stieltjes transformation of the Wigner semicircle law:
\begin{equation}\label{def_m}
    \begin{aligned}
        m\p{z}:=\frac{1}{2\pi}\int_{\R}\frac{\sqrt{4-x^2}}{x-z}\rd x
        = \frac{-z+\sqrt{z^2-4}}{2},
    \end{aligned}
\end{equation}
and $\ell(\eta)$ is defined by
\be\label{eq:elleta}
\ell(\eta):= \min\left(\lambda\eta^{-1/2}+1, n\right).
\ee
\end{theorem}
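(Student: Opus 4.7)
The plan is to follow the dynamical loop-hierarchy framework developed in \cite{Band1D,Band2D,truong2025localizationlengthfinitevolumerandom} and combine it with a new block reduction argument that transfers estimates from block-structured RBMs to the general variance profile of \Cref{def_considered_model}. Since the entrywise bound \eqref{locallaw} follows from the averaged bound \eqref{locallaw_aver} via a Ward identity together with high-moment control of resolvent entries, I would focus on establishing the averaged law first; this is also the natural scale at which the block reduction operates, since block averaging as in \eqref{def_average_tensor} is the key projection used throughout.

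The proof has three main ingredients. \emph{First}, I would establish deterministic estimates on the $\Theta$-propagators: using the block-translation invariance \eqref{def_block_translation_invarinace}, one can perform Fourier analysis on the block torus $\wt\Z_n^d$, and combining this with the isotropy \eqref{S_isotropy} and parity \eqref{parity_symmetry_block_2D} conditions yields diffusive decay of the block-projected propagators at the characteristic scale $\ell(\eta)=\min(\lambda\eta^{-1/2}+1,n)$, matching the propagator estimates invoked via \Cref{lemma_properties_of_propagators}. \emph{Second}, I would use the core condition \eqref{def_epsilon_full} to decompose $S=S_{\mathrm{core}}+S_{\mathrm{rest}}$, where $S_{\mathrm{core}}$ has the block-constant profile $\varepsilon_s W^{-d}$ on each diagonal block and mimics the block RBMs already analyzed, while the nonnegative remainder $S_{\mathrm{rest}}$ is treated as a perturbation. \emph{Third}, I would set up the loop hierarchy of \Cref{lem:SE_basic} for high moments of $G-m$, project every tensor to the block level via \eqref{def_average_tensor}, and verify that the projected dynamics is governed by the same estimates that control the block RBM hierarchy in \cite{Band1D,Band2D,truong2025localizationlengthfinitevolumerandom}. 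A continuity bootstrap in $\eta$ then propagates the local law: starting from $\eta\sim \fc^{-1}$, where a standard local law holds, one decreases $\eta$ by a factor $1-W^{-\delta'}$ at each step and uses the loop hierarchy together with the propagator estimates to recover the target bound $W^{\tau}/(W^d\ell(\eta)^d\eta)$ on each new scale, iterating until $\eta=W^{\fc}\eta_*$.

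The main obstacle is the block reduction step itself. In the block RBM setting of \cite{Band1D,Band2D,truong2025localizationlengthfinitevolumerandom} each block has a constant variance profile, so the self-consistent equation and the loop hierarchy decouple along blocks after Fourier analysis on $\wt\Z_n^d$. For general profiles the intra-block structure does not decouple: the $\cK$-loops and the evolution kernels acquire corrections from the non-constant part of $S$ within each block, and these must be controlled before they can be absorbed into the bootstrap error. The local fullness condition \eqref{def_epsilon_full} is what ensures such corrections are of lower order, but showing this requires carefully tracking block projections at every step of the hierarchy. The parity assumption \eqref{parity_symmetry_block_2D} enters crucially here, as the paper already flags: it forces certain off-diagonal corrections in the evolution kernel to vanish by symmetry, exactly as is needed in \Cref{bounds_on_primitive_loops} and \Cref{lem:sum_decay}. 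The 2D case is particularly delicate, since $\ell(\eta)$ can grow up to $n$ and the sharper multi-scale structure of \cite{Band2D} must be lifted to the block-projected hierarchy while preserving the core-perturbation decomposition.
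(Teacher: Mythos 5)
The high-level skeleton of your proposal---propagator estimates, block projection, loop hierarchy, continuity bootstrap in $\eta$---matches the paper, but two ingredients central to the argument are missing or mis-framed, and without them the plan does not close.

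\textbf{The mechanism that closes the loop hierarchy is not identified.} You propose to write $S=S_{\mathrm{core}}+S_{\mathrm{rest}}$ with $S_{\mathrm{core}}=\varepsilon_s \SE$ and ``treat the nonnegative remainder $S_{\mathrm{rest}}$ as a perturbation.'' But $S_{\mathrm{rest}}$ is $\mathrm O(1)$ relative to $S_{\mathrm{core}}$, so this is not a small perturbation; if you run the flow $\dd H_t = S\odot \dd B_t$ and try to project the loop hierarchy to block averages, the equation does not become autonomous---the cut operators involve the full operator $\cS_S$, which mixes intra-block entries, and the block-averaged loop hierarchy would be coupled to non-block-averaged quantities with coefficients of leading order. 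The paper's key device is different: it replaces the flow direction itself, running $\dd H_t = \SE\odot\dd B_t$ (see \eqref{matrix_flow_introduction} and \Cref{lem:SE_basic}). Because the stochastic increment has the flat block-diagonal profile, the loop hierarchy for the projected $G$-loops closes \emph{exactly} against the primitive-loop equation \eqref{pro_dyncalK}, reproducing the block-RBM structure of \cite{Band1D,Band2D,truong2025localizationlengthfinitevolumerandom} with no perturbative remainder at all. This choice of flow direction---not a perturbative absorption of $S_{\mathrm{rest}}$---is the substance of the ``block reduction'' and is what makes the hierarchy analysis transferable.

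\textbf{The nontrivial initial condition requires global laws, which are not in your plan.} The price for flowing only in the $\SE$ direction is that the flow cannot start from $H_0=0$: to arrive at $H_{\tf}\eqdist\sqrt{\tf}H$ with variance $\tf\SRBM$, it must start at time $\ti$ from a matrix with a nontrivial variance profile in the family $\ti\fS_{\ti}$. Your bootstrap ``starts from $\eta\sim\fc^{-1}$, where a standard local law holds,'' but what is needed is not a standard large-$\eta$ law for a single profile---it is a full set of global $G$-loop laws, entrywise and averaged, uniform over the whole variance-profile family $\fS_t$, with fast spatial decay and improved expectation bounds (this is \Cref{lemma_global_law} and \Cref{improved_global_laws}, all of \Cref{sec_global_law}). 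These serve as the initial data for the flow and are a substantial new component, whereas in \cite{Band1D,Band2D} the flow starts from zero and the initial estimate is free. Relatedly, the continuity estimate \Cref{lem_ConArg} in those works relied on scaling $H_t\mapsto\sqrt{t_2/t_1}H_t$ along the flow, which is no longer consistent with a variance profile that is not proportional to $S$; the paper repairs this by introducing the nested family $\fS_t$ in \Cref{lemma_variance_flow}. Your proposal would need a substitute for this as well.

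A minor point: you propose to derive the entrywise bound \eqref{locallaw} from the averaged bound \eqref{locallaw_aver} via a Ward identity, but in the paper both are obtained simultaneously from the $2$-$G$ loop estimate via \Cref{from_2_loop_to_1_chain} (the $2$-loop bound controls $T$-variables, which give entrywise and averaged resolvent estimates in one shot). The averaged-to-entrywise direction is a valid heuristic in some settings, but here it is not the route taken, and it would in any case require the $2$-loop input you would be constructing anyway.
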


In the delocalized regime, where $\eta_*$ is of order $N^{-1}$, we can further establish a stronger \emph{quantum unique ergodicity} (QUE) estimate for the bulk eigenvectors, albeit at the cost of a slightly weaker probability bound.

\begin{theorem}[Quantum unique ergodicity]\label{thm:QUE}
In the setting of \Cref{thm:supu}, assume in addition that $W\ge W^\fc N^{1/2}$ for a constant $\fc>0$ in dimension $d=1$. Given a constant $\e\in (0, \fc)$, define the subset $${\cal I}_E\equiv {\cal I}_E(\e):=\left\{x: |x-E|\le W^{-\e}\eta_{0}\right\},\quad\text{with}\quad \eta_{0}:=\begin{cases}
  W\lambda/N^{3/2}, & \text{if} \ d=1\\
{W\lambda}/{N}, & \text{if} \ d=2\\
\end{cases}.$$
Then, for each $d\in\{1,2\}$, there exists a small constant $c>0$ depending on $\e$, $\fc$, and the constant $\varepsilon_{\txt{inter}}$ in \eqref{S_interaction_strength}, such that the following estimate holds for large enough $L$:
\begin{equation}\label{Meq:QUE}
\sup_{E: |E|\le 2-\kappa}
\max_{ [a]\in \Zn} \P\bigg(\max_{i,j:\lambda_i, \lambda_j \in {\cal I}_E}
 \bigg| \sum_{x\in[a]}\overline \bu_i(x)\bu_j(x)-\frac{W^{d}}{N}\delta_{ij}  \bigg|^2 \ge \frac{W^{d-c}}{N} \bigg) \le  W^{-c},
\end{equation}
More generally, for any subset $A\subset \Zn$, we have
\begin{equation}\label{Meq:QUE2}
\sup_{E: |E|\le 2-\kappa}
\mathbb{P}\bigg(\max_{k: \lambda_k\in {\cal I}_E}
\bigg|\sum_{[a]\in A}\sum_{x\in [a]}\left|\bu_k(x)\right |^2 -\frac{W^d}{N}|A|\bigg| \ge  \frac{W^{d-c}|A|}{N}  \bigg) \le W^{-c}.
\end{equation}
\end{theorem}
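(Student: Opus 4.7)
The plan is to reduce the QUE estimate to a sharp bound on a centered block 2-loop of the resolvent, which is then controlled by the local law \Cref{thm_locallaw} and the loop hierarchy developed earlier in the paper. Fix $E$ with $|E|\le 2-\kappa$ and $[a]\in\Zn$; with $\eta_0$ as in the theorem, choose $\eta := W^{-\e'}\eta_0$ for a small $\e'\in (0,\e)$ and set $z:=E+\ii\eta$. Introduce the Hermitian, traceless observable $\Pi := P_{[a]}-(W^d/N)I$, where $P_{[a]}$ is the orthogonal projection onto the block $[a]$; then the inner product appearing in \eqref{Meq:QUE} is exactly $\langle \bu_i, \Pi \bu_j\rangle$. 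The spectral identity
\[
\tr\pB{\Im G(z)\,\Pi\,\Im G(z)\,\Pi} = \sum_{k,l}\frac{\eta^2\,|\langle \bu_k, \Pi \bu_l\rangle|^2}{|\lambda_k-z|^2\,|\lambda_l-z|^2},
\]
combined with the bound $|\lambda_k-z|^2 \le 2\eta^2$ for $\lambda_k\in\cal I_E$ (valid since $\e'<\e$), gives
\[
\max_{i,j:\, \lambda_i,\lambda_j\in\cal I_E}|\langle \bu_i, \Pi \bu_j\rangle|^2 \,\le\, \sum_{i,j:\,\lambda_i,\lambda_j\in\cal I_E}|\langle \bu_i, \Pi \bu_j\rangle|^2 \,\le\, 4\eta^2\,\tr\pB{\Im G(z)\,\Pi\,\Im G(z)\,\Pi}.
\]
It therefore suffices to prove $\tr(\Im G\,\Pi\,\Im G\,\Pi)\le W^{d-c}/(4\eta^2 N)$ with probability at least $1-W^{-c}$.

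Writing $\Im G = (G(z)-G(\bar z))/(2\ii)$ and expanding $\Pi = P_{[a]}-(W^d/N)I$, the trace above decomposes into a finite number of ``2-loops'' of the form $\tr(G(z_1) P_{[a]} G(z_2) P_{[a]})$, $(W^d/N)\tr(G(z_1) P_{[a]} G(z_2))$, and $(W^d/N)^2\tr(G(z_1)G(z_2))$ with $z_1,z_2\in\{z,\bar z\}$. By the Ward identity $\sum_{y\in\ZL} G_{xy}(z)G_{yx}(\bar z) = \eta^{-1}\Im G_{xx}(z)$ combined with the averaged local law \eqref{locallaw_aver}, each term of the latter two types is well approximated by a deterministic quantity of the form $W^{2d}\Im m/(N\eta)$ (up to signs and lower-order corrections). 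The first type, the genuine block 2-loop, has leading deterministic part $W^{2d}\Theta_{[a][a]}(z_1,z_2)$, where $\Theta$ is the propagator studied in \Cref{lemma_properties_of_propagators}. Block-translation invariance \eqref{def_block_translation_invarinace} makes $\Theta_{[a][b]}(z,\bar z)$ a function of $[a]-[b]$ on $\Zn$, and its zero-Fourier-mode matches exactly the Ward-induced constant $\Im m/(N\eta)$ produced by the second and third types; this is precisely the mode cancelled by the $-(W^d/N)I$ subtraction in $\Pi$. Hence the centered trace $\tr(\Im G\,\Pi\,\Im G\,\Pi)$ detects only the non-zero Fourier modes of $\Theta$, whose size is controlled---via the irreducibility/isotropy \eqref{S_irreducibility}--\eqref{S_isotropy} of the random walk associated with $S$---by the target quantity $W^{d-c}/(\eta^2 N)$.

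The main technical obstacle is the quantitative sharpness of this centered 2-loop bound at the critical scale $\eta_0$: the leading deterministic part is handled by the $\Theta$-analysis of \Cref{lemma_properties_of_propagators}, while controlling the fluctuations around it requires the loop hierarchy \Cref{lem:SE_basic}, the primitive-loop estimates \Cref{bounds_on_primitive_loops}, and the block reduction framework that is the main technical novelty of this paper. Because these fluctuation bounds are only obtained with polynomial-in-$W$ accuracy, the final QUE probability is $W^{-c}$ rather than the $L^{-D}$ accuracy of \Cref{thm_locallaw}. Finally, the more general statement \eqref{Meq:QUE2} follows by the same argument with $P_{[a]}$ replaced by $P_A := \sum_{[a]\in A} P_{[a]}$ and $(W^d/N)I$ replaced by $(W^d|A|/N)I$; restricting to the diagonal $k=l$ contributions of the spectral identity above then yields the required control on $\bigl|\sum_{[a]\in A}\sum_{x\in[a]}|\bu_k(x)|^2 - W^d|A|/N\bigr|$.
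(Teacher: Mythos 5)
Your proposal captures the standard QUE reduction that the paper refers to (it omits the details, citing Theorems 2.1 and 2.2 of \cite{truong2025localizationlengthfinitevolumerandom}): pass from eigenvector overlaps to the resolvent trace $\tr\pB{\Im G\,\Pi\,\Im G\,\Pi}$ via the spectral identity, take $\Pi=P_{[a]}-(W^d/N)I$ so that the zero Fourier mode of the deterministic block kernel cancels, and control the remainder with \Cref{thm_locallaw} and the quantum-diffusion estimates of \Cref{thm_diffu}. This is essentially the same route the paper has in mind, and your initial reductions (the spectral identity, the bound $|\lambda_i-z|^2\le 2\eta^2$ for $\lambda_i\in\cal I_E$, and the Ward identity) are correctly stated.

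A few places in your sketch should be tightened. First, the passage ``controlling the fluctuations requires the loop hierarchy, the primitive-loop estimates and the block reduction framework'' is circular as phrased: the correct logic is to use the already-established quantum-diffusion \Cref{thm_diffu} as a black box---specifically \eqref{eq:diffu1}--\eqref{eq:diffu2} for the high-probability fluctuation bounds of the centered block $2$-loop, and the expectation bounds \eqref{eq:diffuExp1}--\eqref{eq:diffuExp2} together with a Markov inequality to trade a stronger estimate for the weaker $W^{-c}$ probability in \eqref{Meq:QUE}; the loss to $W^{-c}$ comes from this Markov step, not merely from ``polynomial-in-$W$ accuracy of the fluctuation bounds.'' Second, the extra hypothesis $W\ge W^{\fc}N^{1/2}$ in $d=1$ is there precisely to guarantee that $\eta=W^{-\e'}\eta_0$ lies above the local-law threshold and that $\ell(\eta)\sim n$, so that \Cref{thm_locallaw} and \Cref{thm_diffu} are available at this spectral scale and the diffusion has filled the torus; your sketch does not invoke this condition anywhere. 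Third, for \eqref{Meq:QUE2} the remark that it ``follows by the same argument with $P_{[a]}$ replaced by $P_A$'' hides an important point: the target is $|\langle\bu_k,\Pi_A\bu_k\rangle|^2\le W^{2(d-c)}|A|^2/N^2$, so one needs the $|A|^2$ scaling in the non-zero Fourier mode contribution, which comes from the pointwise bound $|\wh{\mathbf 1}_A(\mathbf p)|\le|A|$ on the discrete Fourier transform of $\mathbf 1_A$; the Parseval identity alone only gives a linear-in-$|A|$ bound. With these repairs the argument is sound and matches the standard proof that the paper cites.
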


The above QUE estimates can be derived from the following \emph{quantum diffusion conjecture} for random band matrices.

\begin{theorem}[Quantum diffusion]\label{thm_diffu}
In the setting of \Cref{thm:supu}, for any constants $\tau, D>0$, the following events hold for large enough $L$ with probability $\geq 1-L^{-D}$:
\begin{align}
&\bigcap_{|E|\le 2- \kappa}\bigcap_{W^{\fc}\eta_*\le \eta\le \fc^{-1}}\ha{\max_{[a],[b]} \bigg|\frac{1}{W^{2d}}\sum_{x\in[a],y\in[b]}\left(|G_{xy}|^2 -\wh \cK_{\pa{+,-},\pa{x,y}}\right)\bigg| \le\frac{W^\tau}{(W^d\ell(\eta)^d\eta)^2}},\label{eq:diffu1}\\
&\bigcap_{|E|\le 2- \kappa}\bigcap_{W^{\fc}\eta_*\le \eta\le \fc^{-1}}\ha{\max_{[a],[b]} \bigg|\frac{1}{W^{2d}}\sum_{x\in[a],y\in[b]}\left(G_{xy}G_{yx} -\wh \cK_{\pa{+,+},\pa{x,y}}\right)\bigg| \le\frac{W^\tau}{(W^d\ell(\eta)^d\eta)^2}},\label{eq:diffu2}
\end{align}
where $\wh \cK_{\pa{+,-},\pa{x,y}}$ and $\wh \cK_{\pa{+,-},\pa{x,y}}$ are defined as
\begin{equation}\nonumber
    \begin{aligned}
        \wh \cK_{\pa{+,-},\pa{x,y}}=\absa{m\p{z}}^2\p{1-\absa{m\p{z}}^2S}^{-1}_{xy},\quad \wh \cK_{\pa{+,+},\pa{x,y}}=m^2\p{z}\pa{1-m^2\p{z}S}^{-1}_{xy}.
    \end{aligned}
\end{equation}
Moreover, stronger bounds hold in the sense of expectation for each $z$ with $|E|\le 2- \kappa$ and $W^{\e}\eta_*\le \eta\le \fc^{-1}$:
\begin{align}
\max_{[a],[b]}\bigg|\frac{1}{W^{2d}}\sum_{x\in[a],y\in[b]}\E\left(|G_{xy}|^2 -\wh \cK_{\pa{+,-},\pa{x,y}}\right) \bigg| &\le\frac{W^\tau}{(W^d\ell(\eta)^d\eta)^3},\label{eq:diffuExp1}\\
\max_{[a],[b]}\bigg|\frac{1}{W^{2d}}\sum_{x\in[a],y\in[b]}\E\left(G_{xy}G_{yx} -\wh \cK_{\pa{+,-},\pa{x,y}}\right)\bigg|&\le\frac{W^\tau}{(W^d\ell(\eta)^d\eta)^3}.\label{eq:diffuExp2}
\end{align}
\end{theorem}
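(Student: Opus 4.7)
My plan is to follow the block-reduction philosophy emphasized throughout the introduction: recast the quantum-diffusion problem for the general variance profile $S$ as a problem about block-projected $G$-loops on $\wt\Z_n^d$, and then import the dynamical loop-hierarchy machinery already developed for block RBMs in \cite{Band1D,Band2D,truong2025localizationlengthfinitevolumerandom}. Concretely, I would introduce the primary $2$-loops
\[
\cL^{(+,-)}_{xy} := |G_{xy}(z)|^2,\qquad \cL^{(+,+)}_{xy} := G_{xy}(z)G_{yx}(z),
\]
and their block averages $[\cL^{(\sigma_1,\sigma_2)}]_{[a][b]}$ defined via \eqref{def_average_tensor}. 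The estimates \eqref{eq:diffu1}--\eqref{eq:diffuExp2} are precisely a comparison of these averaged loops with $\wh\cK_{(\sigma_1,\sigma_2),(x,y)}$, so the whole problem reduces to proving sharp high-probability and expectation bounds on the deviation of projected $2$-loops from $\wh\cK$.

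Next, using Gaussianity of $H$ (via Stein's identity, equivalently a Schur/resolvent expansion) together with the entrywise local law \eqref{locallaw}, I would derive a self-consistent equation for the projected $2$-loops of the schematic form
\[
(1-|m|^2 S)\,\cL^{(+,-)} \;=\; |m|^2\,\mathbf{1} \;+\; \text{(higher-order }\cL\text{-loops)} \;+\; \text{(martingale/fluctuation error)},
\]
and analogously for $\cL^{(+,+)}$, exactly matching the hierarchy recorded in \Cref{lem:SE_basic}. Inverting the left-hand operator produces $\wh\cK$ plus a convolution of the $\Theta$-propagator against higher $\cL$-loops. The crucial deterministic input is \Cref{lemma_properties_of_propagators}: for the general $S$, the $\Theta$-propagators still enjoy the Gaussian-type decay and $L^p$ bounds required for the hierarchy to close, and this is precisely what \eqref{S_irreducibility}--\eqref{S_isotropy} were designed to guarantee. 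Combined with the block-translation invariance \eqref{def_block_translation_invarinace}, which makes the projected hierarchy on $\wt\Z_n^d$ have essentially constant coefficients, this matches the structure of the block hierarchy controlled in \cite{Band1D,Band2D,truong2025localizationlengthfinitevolumerandom}.

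I would then run the characteristic-flow / $\eta$-continuity argument of \cite{Band1D,Band2D,DY}. Starting at a reference scale $\eta_0=\fc^{-1}$, where the target bounds follow from the standard global law, I would decrease $\eta$ down to $W^{\fc}\eta_*$ while propagating size bounds on all $n$-loops (for bounded $n$) via a Gronwall-type inequality for the hierarchy evolution. The block reduction guarantees that at each step the higher $n$-loop bounds proved for block RBMs in the cited works translate, after projection, into the analogous bounds for our setting without losing any power of $W$. The expectation estimates \eqref{eq:diffuExp1}--\eqref{eq:diffuExp2} with the extra factor $(W^d\ell(\eta)^d\eta)^{-1}$ arise naturally: the deterministic higher-loop contribution carries only one Ward-identity gain, while the fluctuating part gains an additional $(W^d\ell(\eta)^d\eta)^{-1/2}$ upon squaring. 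The high-probability statements \eqref{eq:diffu1}--\eqref{eq:diffu2} then follow from the expectation bounds combined with a $4$-loop variance estimate via Markov and a standard grid argument in $z$.

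The main obstacle, in my view, is setting up the loop hierarchy in the general-$S$ setting so that the block-averaging reduction preserves sharpness. Two deterministic inputs in particular demand care: the primitive $\cK$-loop bounds of \Cref{bounds_on_primitive_loops}, which form the backbone of the higher-loop estimates, and the evolution-kernel upper bound of \Cref{lem:sum_decay}, which controls the flow step. Both rely crucially on the parity symmetry \eqref{parity_symmetry_block_2D}, the core/$\varepsilon_S$-fullness condition \eqref{def_epsilon_full}, and the isotropy bound \eqref{S_isotropy} in order to recover the block-RBM structure after averaging. Once these deterministic estimates are established, the dynamical part of the proof should be a fairly direct transcription of the block-RBM analysis in the cited works.
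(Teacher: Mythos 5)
Your proposal correctly identifies the high-level skeleton---reduce to block-averaged $2$-loops, compare with $\wh\cK$ via the loop hierarchy and $\Theta$-propagator estimates, propagate along a characteristic flow from $\eta\sim 1$ down to $W^{\fc}\eta_*$, and use a net argument for uniformity. This matches the paper's route, where \eqref{eq:diffu1}--\eqref{eq:diffuExp2} follow almost immediately (after the distributional identity $G(z)\eqdist\sqrt{\tf}\,G_{\tf}$) from the flow-based $G$-loop estimates in \Cref{ML:GLoop} and \Cref{ML:GLoop_expec}. However, your plan omits the one idea that the entire paper hinges on and that the abstract and ``Key ideas'' section single out: the matrix flow is not $\dd H_t = S\odot\dd B_t$ as in \cite{Band1D,Band2D}, but the modified flow $\dd H_t = \SE\odot\dd B_t$, where $\SE$ has a flat profile within each block. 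With a general $S$, the quadratic-variation operator $\cS_S$ does not act nicely on block-averaged tensors, so block-projecting the usual hierarchy does not close; your claim that ``the block reduction guarantees that at each step the higher $n$-loop bounds... translate, after projection'' would fail precisely at this point. Replacing $\cS$ by $\cS_\txt{E}$ is what restores self-consistency after projection (see \eqref{pro_dyncalK}).

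This substitution carries two further consequences that your plan also does not account for. First, the modified flow cannot be started from $H_0=0$, because then $S_t=t\SE$ would never equal the target profile; one must start from a nontrivial $H_{\ti}$ whose law reproduces $S_{\tf}=\tf\SRBM$ at the final time. This is why the paper has to prove global laws (\Cref{lemma_global_law}, \Cref{improved_global_laws}) for general $S$ at $\eta\sim 1$ rather than citing a ``standard global law.'' Second, the continuity estimate \Cref{lem_ConArg}---which in \cite{Band1D} uses the rescaling $H_{t_2}\eqdist\sqrt{t_2/t_1}\,H_{t_1}$ valid only when $S_t\propto t$---no longer applies directly; the paper introduces the variance-profile flow $\fS_t$ of \Cref{lemma_variance_flow} to recover it. You also propose deriving a static self-consistent equation $\pa{1-|m|^2 S}\cL^{(+,-)}= |m|^2\mathbf 1+\cdots$ via Stein's identity, which is a different (and not obviously closeable) route from the dynamical Duhamel representation \eqref{int_K-LcalE} the paper actually uses. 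Without the $\SE$-flow, the nontrivial initial data and attendant global laws, and the variance-profile flow, the Gronwall/continuity step you describe has no valid starting point and no closed hierarchy to run on.
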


The derivations of \Cref{thm:supu} and \Cref{thm:QUE} from \Cref{thm_locallaw} and \Cref{thm_diffu} are standard and follow exactly the same arguments as those in previous works \cite{Band1D,Band2D,truong2025localizationlengthfinitevolumerandom} (see, for example, the proofs of Theorems 2.1 and 2.2 in \cite{truong2025localizationlengthfinitevolumerandom}). Hence, we omit their detailed proofs. The proofs of the local laws in \Cref{thm_locallaw} and the quantum diffusion estimates in \Cref{thm_diffu} extend the flow argument developed in \cite{Band1D,Band2D}, using a block reduction method that we will discuss below, and are presented in \Cref{Sec:Stoflo}.

\subsection{Key ideas}

The proofs of our main theorems are based on an extension of the methods developed in \cite{Band1D,Band2D,truong2025localizationlengthfinitevolumerandom}, which rely on a detailed analysis of the so-called loop hierarchy (defined in \Cref{lem:SE_basic} below) along a carefully constructed characteristic flow. The models considered in these previous works share a common block structure in the variance matrix $S$, where each $W^d\times W^d$ block has a completely flat variance profile. This flatness ensures that the loop hierarchy remains self-consistent under the matrix flow
\begin{equation}\label{matrix_flow_block}
    \begin{aligned}
        \rd H_t=S\odot\rd B_t,\quad \txt{for}\quad t\geq 0,\txt{ and } H_0=0,
    \end{aligned}
\end{equation}
where $\odot$ denotes the Hadamard product, and $B_t$ is a matrix Brownian motion. However, when the variance profile within each block is no longer flat, this self-consistency is lost, making the analysis of the loop hierarchy much more challenging.

A key observation in our proof is that the convenient structure of the loop hierarchy, as established in \cite{Band1D,Band2D,truong2025localizationlengthfinitevolumerandom}, can be recovered even in our more general setting---provided the core condition \eqref{def_epsilon_full} holds---through the use of a technique we call the \emph{block reduction method}.
Specifically, we define the block diagonal matrix \smash{$\pa{\SE}_{xy}=\sum_{\qa{a}}\mathbf{1}_{x,y\in\qa{a}}W^{-d},$}
and replace the matrix flow \eqref{matrix_flow_block} with the modified flow
\begin{equation}\label{matrix_flow_introduction}
    \begin{aligned}
        \rd H_t=\SE\odot\rd B_t,\quad t\geq \ti.
    \end{aligned}
\end{equation}
By carefully choosing the initial time $\ti$ and the initial condition $H_{\ti}$, we are able to recover the self-consistent form of the loop hierarchy (see \eqref{pro_dyncalK} below). Consequently, the loop hierarchy analysis developed in \cite{Band1D,Band2D,truong2025localizationlengthfinitevolumerandom} can be applied to our setting with minimal modification (see \Cref{Sec:Stoflo}).
However, this convenience comes at a cost: unlike the flow in \eqref{matrix_flow_block}, which starts from zero and thus provides free initial estimates, the modified flow \eqref{matrix_flow_introduction} begins from a nontrivial initial condition. As a result, we must perform a systematic global analysis to establish global laws for the $G$-loops, demonstrating that they are well approximated by their deterministic limits—namely, the primitive loops defined in \Cref{Def_Ktza}.
Furthermore, the continuity estimates for $G$-loops (see \Cref{lem_ConArg}) in prior works \cite{Band1D,Band2D,truong2025localizationlengthfinitevolumerandom} rely on a rescaling of the variance profile matrix along the flow. In our setting, however, the variance profile matrix is no longer proportional to $S$, so this rescaling argument is no longer applicable.
To overcome this difficulty, we introduce a family of random band matrix models with distinct variance profile matrices, and define a decreasing flow on this family. By the end of the flow, the family contains at least one variance profile that is suitable for establishing the desired continuity estimates.

Another technical challenge in our proof lies in establishing the deterministic properties of the $\Theta$-propagators (see \Cref{def_Theta_propagators} and \Cref{lemma_properties_of_propagators}) and the $\cK$-loops (see \Cref{Def_Ktza} and \Cref{bounds_on_primitive_loops}), which form the foundation for the subsequent stochastic analysis. In previous works \cite{Band1D,Band2D,truong2025localizationlengthfinitevolumerandom}, the properties of $\Theta$-propagators were derived using Fourier series and heat kernel estimates, while the analysis of $\cK$-loops was carried out via a tree representation formula. However, in our setting, the absence of a flat variance profile within each block renders these arguments inapplicable. To recover the deterministic estimates for the $\Theta$-propagators under block-level translation invariance only, we develop a new random walk representation tailored to our setting (see \Cref{lemma_random_walk_representation}). This representation can again be analyzed using Fourier techniques and heat kernel estimates (see the proof of \Cref{lemma_properties_of_propagators} for details).
With the deterministic properties of the $\Theta$-propagators established, we employ a new dynamical method in which the evolution kernel is expressed in terms of the $\Theta$-propagators, and exploit cancellations along the flow. These cancellations yield sharp bounds in the regime of strong interaction (i.e., large $\lambda$). However, in the weak interaction regime ($\lambda\ll 1$), additional difficulties arise due to singularities in the bounds \eqref{prop:BD1} and \eqref{prop:BD2}. To address this, we make a key observation: the $\cK$-loop bound can be improved by a small factor of $\lambda^2$, which precisely cancels the singularities appearing in \eqref{prop:BD1} and \eqref{prop:BD2}, and thereby restores control in the weak interaction case.

Before concluding this section, we compare our work with the recent study \cite{erdos2025zigzagstrategyrandomband}, which extends the results of \cite{Band1D} to 1D random band matrices with non-Gaussian entries and general variance profiles under the condition $W\gg \sqrt{L}$.  Moreover,
\cite{erdos2025zigzagstrategyrandomband} establishes a stronger form of delocalization and QUE, namely isotropic delocalization and QUE for general observables.
While there is partial overlap with our results in 1D, their arguments rely on the so-called \emph{zigzag strategy}, first introduced in \cite{cipolloni2024mesoscopic} for the study of non-Hermitian random matrices (see also the introduction of \cite{erdos2025zigzagstrategyrandomband} for a detailed overview).
This strategy consists of two main components: a flow from the global scale to the local scale, with the introduction of a Gaussian component---referred to as the ``zig step"---and a Green's function comparison argument---known as the ``zag step"---that removes this Gaussian component at each stage of induction. Consequently, their conclusions hold universally, without depending on the specific distribution of the matrix entries.
In contrast, the methods employed in this paper apply only to Gaussian divisible models, i.e., random matrices with a small Gaussian component, due to the absence of the Green's function comparison argument.
We note, however, that the Green's function comparison argument (the “zag step”) does not appear to extend effectively to 2D, nor even to 1D when \smash{$W\ll \sqrt{L}$}. Extending the present approach beyond the Gaussian divisible setting would thus require new ideas. For this reason, and to keep the exposition streamlined, we state our main theorems for Gaussian random band matrices only.\footnote{To extend our results to Gaussian divisible matrices, the only modification in the proof is the need to establish the global laws in Lemmas \ref{lemma_global_law} and \ref{improved_global_laws} for random band matrices with entries from a general distribution, along with a minor adjustment to the variance-profile flow. These global laws can be proved using a similar argument based on cumulant expansions, with additional work to control higher-order cumulant terms, which do not affect the validity of our arguments. Since this argument is standard and our focus is on the new block reduction method, we omit the details for brevity.}

On the other hand, in this work, we prove delocalization for a class of RBMs with general variance profiles in both one and two dimensions under the condition $W\ge L^\e$ for an arbitrarily small constant $\e>0$. The block reduction method presented here is an extension of the approach developed in the series of works \cite{Band1D,Band2D,truong2025localizationlengthfinitevolumerandom}, and is also inspired by the mean-field reduction idea from \cite{PartI,PartII}. Compared to the methods in \cite{erdos2025zigzagstrategyrandomband}, our block reduction method simplifies the flow argument (i.e., the ``zig step") to some extent.
Additionally, the proof in \cite{erdos2025zigzagstrategyrandomband} relies on the delicate assumption of an \emph{admissible variance profile}, as defined in Definition 2.5 there, which is somewhat stronger than our \Cref{lemma_properties_of_propagators} and incorporates the properties of the $\Theta$-propagators as assumptions. As a result, \cite{erdos2025zigzagstrategyrandomband} can cover random band matrices with variance profiles (and their first and second derivatives) decaying faster than certain power laws. If we also assume the results in \Cref{lemma_properties_of_propagators}, all of our arguments still hold, and the proof can be simplified further.
However, the assumption of admissible variance profiles can be difficult to verify, especially without assumptions on the full translation invariance of the variance profile or flat variance profiles within blocks. Specifically, under our \Cref{def_considered_model}, the translation invariance requirement applies only at the block level, and the variance profile within each block could be nearly arbitrary. Consequently, the pointwise estimates for the $\Theta$-propagators may not hold. In our block reduction method, we only require ``blockwise estimates" for the $\Theta$-propagators. One key contribution of this work is to provide a systematic approach to establish such estimates by assuming only translation invariance at the block level and the irreducibility condition \eqref{S_irreducibility} (the symmetry assumption \eqref{parity_symmetry_block_2D} and the core condition \eqref{def_epsilon_full} are not necessary for this proof).

\subsection*{Organization of the remaining text}

In \Cref{sec_preliminaries}, we introduce several tools necessary for proving the main theorems, including the flow framework, $G$-loops, and their deterministic limits---referred to as primitive loops or $\cK$-loops. This section concludes with a discussion of the properties of $\Theta$-propagators and the evolution kernel for the loop hierarchy, with proofs provided in \Cref{sec_analysis_of_primitive_loops}.
Due to the general profile setting employed (notably, the absence of block structure), the deterministic analyses in \cite{Band1D,Band2D,truong2025localizationlengthfinitevolumerandom} do not directly apply to our framework. In \Cref{sec_analysis_of_primitive_loops}, we use a dynamical method to recover these deterministic results and establish upper bounds for the $\cK$-loops.
Since, in our proof, the flow does not start at $t=0$ as in previous works \cite{Band1D,Band2D,truong2025localizationlengthfinitevolumerandom}, we must derive global laws for the $G$-loops, which are presented in \Cref{sec_global_law}. Using these global laws as initial estimates, we proceed with an analysis of the flow of the loop hierarchy in \Cref{Sec:Stoflo}. Additional technical lemmas are gathered and proved in \Cref{additional_proofs}.

\subsection*{Notations} To facilitate the presentation, we introduce some necessary notations that will be used throughout this paper. We will use the set of natural numbers $\N=\{1,2,3,\ldots\}$ and the upper half complex plane $\C_+:=\{z\in \C:\im z>0\}$.
In this paper, we are interested in the asymptotic regime with $N\to \infty$. When we refer to a constant, it will not depend on $N$ or $W$. Unless otherwise noted, we will use $C$, $D$ etc.~to denote large positive constants, whose values may change from line to line. Similarly, we will use $\e$, $\delta$, $\tau$, $c$, $\fc$, $\fd$ etc.~to denote small positive constants.
For any two (possibly complex) sequences $a_N$ and $b_N$ depending on $N$, $a_N = \OO(b_N)$, $b_N=\Omega(a_N)$, or $a_N \lesssim b_N$ means that $|a_N| \le C|b_N|$ for some constant $C>0$, whereas $a_N=\oo(b_N)$ or $|a_N|\ll |b_N|$ means that $|a_N| /|b_N| \to 0$ as $N\to \infty$.
We say that $a_N \sim b_N$ if $a_N = \OO(b_N)$ and $b_N = \OO(a_N)$. For any $a,b\in\R$, we denote $\llbracket a, b\rrbracket: = [a,b]\cap \Z$, $\qq{a}:=\qq{1,a}$, $a\vee b:=\max\{a, b\}$, and $a\wedge b:=\min\{a, b\}$. For an event $\Xi$, we let $\mathbf 1_\Xi$ or $\mathbf 1(\Xi)$ denote its indicator function.
Given a vector $\mathbf v$, $|\mathbf v|\equiv \|\mathbf v\|_2$ denotes the Euclidean norm and $\|\mathbf v\|_p$ denotes the $L^p$-norm.
Given a matrix $\cal A = (\cal A_{ij})$, $\|\cal A\|$, $\|\cal A\|_{p\to p}$, and $\|\cal A\|_{\infty}\equiv \|\cal A\|_{\max}:=\max_{i,j}|\cal A_{ij}|$ denote the operator (i.e., $L^2\to L^2$) norm,  $L^p\to L^p$ norm (where we allow $p=\infty$), and maximum (i.e., $L^\infty$) norm, respectively. We will use $\cal A_{ij}$ and $ \cal A(i,j)$ interchangeably in this paper. We also introduce the following simplified notation for trace: $ \left\langle \cal A\right\rangle=\tr (\cal A) .$

For clarity of presentation, we will adopt the following notion of stochastic domination introduced in \cite{EKY_Average}.

\begin{definition}[Stochastic domination and high probability event]\label{stoch_domination}
	{\rm{(i)}} Let
	\[\xi=\left(\xi^{(N)}(u):N\in\mathbb N, u\in U^{(N)}\right),\hskip 10pt \zeta=\left(\zeta^{(N)}(u):N\in\mathbb N, u\in U^{(N)}\right),\]
	be two families of non-negative random variables, where $U^{(N)}$ is a possibly $N$-dependent parameter set. We say $\xi$ is stochastically dominated by $\zeta$, uniformly in $u$, if for any fixed (small) $\tau>0$ and (large) $D>0$,
	\[\mathbb P\bigg[\bigcup_{u\in U^{(N)}}\left\{\xi^{(N)}(u)>N^\tau\zeta^{(N)}(u)\right\}\bigg]\le N^{-D}\]
	for large enough $N\ge N_0(\tau, D)$, and we will use the notation $\xi\prec\zeta$.
	If for some complex family $\xi$ we have $|\xi|\prec\zeta$, then we will also write $\xi \prec \zeta$ or $\xi=\OO_\prec(\zeta)$.

	\vspace{5pt}
	\noindent {\rm{(ii)}} As a convention, for two deterministic non-negative quantities $\xi$ and $\zeta$, we will write $\xi\prec\zeta$ if and only if $\xi\le N^\tau \zeta$ for any constant $\tau>0$.

	\vspace{5pt}
	\noindent {\rm{(iii)}} We say that an event $\Xi$ holds with high probability (w.h.p.) if for any constant $D>0$, $\mathbb P(\Xi)\ge 1- N^{-D}$ for large enough $N$. More generally, we say that an event $\Omega$ holds $w.h.p.$ in $\Xi$ if for any constant $D>0$,
	$\P( \Xi\setminus \Omega)\le N^{-D}$ for large enough $N$.
\end{definition}

\subsection*{Acknowledgement}
Fan Yang is supported in part by the National Key R\&D Program of China (No.~2023YFA1010400) and NSFC (No.~12526201).
We would like to thank Chenlin Gu and Guangyi Zou for valuable discussions.

\section{Preliminaries}\label{sec_preliminaries}

In this section, we introduce the fundamental tools and outline the main strategy for proving \Cref{thm_locallaw,thm_diffu}. The stochastic and deterministic flows employed in the proofs are defined in \Cref{subsec:flow}, together with an auxiliary flow of variance profiles that enables the continuity estimate (see \Cref{lem_ConArg}). In \Cref{subsec:loops}, we define the loop hierarchy of $G$-loops and their deterministic limits—the primitive loops. Finally, in \Cref{subsec:propagators}, we introduce the $\Theta$-propagators and the evolution kernels, which are key tools for analyzing the loop hierarchy and the $\cK$-loops.

We consider the random band matrix $H$ defined in \Cref{def_considered_model}, whose variance profile $S^{\txt{RBM}}$ satisfies the conditions \eqref{def_epsilon_full} and \eqref{def_C_flat} with constants $\varepsilon_S = 2\varepsilon_0$ and $C_S = C_0$, respectively, where $\varepsilon_0\in(0,1/100)$ and $C_0>1$ are fixed constants. We also fix small constants $\kappa, \mathfrak{c} > 0$ and a target spectral parameter $z = E + \ii \eta$ with $\abs{E} \le 2 - \kappa$ and $\eta \in [W^{\mathfrak{c}} \eta_*, \mathfrak{c}^{-1}]$ (recall \eqref{def_eta_star} for the definition of $\eta_*$).
Since the variance profile $S^{\txt{RBM}}$ is locally $(2\varepsilon_0)$-full and $C_0$-flat, we have the following decomposition of $\SRBM$:
\begin{equation}\label{eq:SRBM}
        \SRBM=\varepsilon_0 \SE+\pa{1-\varepsilon_0}S_{\txt{o}},
\end{equation}
where the matrix $\SE$ is define by
\begin{equation}\label{eq:SE}
        \pa{\SE}_{xy}=\sum_{\qa{a}\in \wt \Z_n^d} W^{-d} \mathbf{1}\pa{x,y\in\qa{a}} , \quad \forall \ x,y\in \Z_L^d.
\end{equation}
Note that the matrix $S_{\txt{o}}$ is locally $\varepsilon_0$-full and $(2C_0)$-flat.

\subsection{Flow}\label{subsec:flow}

Throughout the proof, we fix an initial time $\ti\in \R$, which denotes the time at which the flow begins.
We will use $t_0\in \R$ to represent a potentially different starting time, which may change during the proof. In the analysis of the loop hierarchy below, we may use multiple stochastic flows with different starting times $t_0$. For further details, see \Cref{sec_analysis_of_primitive_loops}.

Given $t_0$, suppose the initial matrix $H_{t_0}$ is a random Hermitian matrix with independent Gaussian entries (up to the Hermitian symmetry) and distributed as in \eqref{eq:distr_H}, with variance matrix $S=S_{t_0}$. We now consider the following matrix evolution starting at time $t_0$ with initial value $H_{t_0}$:
\begin{equation}\label{def_stochastic_flow}
    \begin{aligned}
        \rd H_t=\SE\odot \rd B_t,\quad t\geq t_0.
    \end{aligned}
\end{equation}
Here, $\odot$ denotes the Hadamard product, and $B_t$ is an $N\times N$ matrix Brownian motion that is independent of $H_{t_0}$. Specifically, $\pa{B_t}_{xy}$ are independent complex Brownian motions up to the Hermitian symmetry, with zero mean and variance $t$.
Along this evolution, the variance profile of $H_t$ is given by:
\begin{equation}\label{def_S_t}
    \begin{aligned}
        S_t\equiv S_t\pa{t_0,S_{t_0}}:=S_{t_0}+\pa{t-t_0}\SE.
    \end{aligned}
\end{equation}
Now, given a target parameter $\mathsf{E}\in \R$, we define the deterministic flow associated with $H_t$ as follows:
\begin{equation}\label{z_t_flow}
    \begin{aligned}
        z_t\equiv z_t\p{\sE}:=\sE+\p{1-t}m\p{\sE}, \quad \forall t\in \qa{t_0,1},
    \end{aligned}
\end{equation}
where $m\p{\sE}\equiv m\p{\sE+\ii 0_+}$. We will denote $z_t=E_t+\ii \eta_t$ for $t\in\qa{t_0,1}$, with
\begin{equation}\label{E_t_eta_t_flow}
    \begin{aligned}
        E_t:=\sE+\pa{1-t}\txt{Re}\,m\p{\sE},\quad \eta_t:=\pa{1-t}\txt{Im}\, m\p{\sE}.
    \end{aligned}
\end{equation}
For $z\in \C_+$, let $m_t(z)$ be the unique solution of the equation $1+zm_t+tm_t^2=0$ with $\im m_t(z)>0$. Then, it is easy to check that
\begin{equation}\label{m_invariant}
    \begin{aligned}
        m_t\p{z_t}=m\p{\sE},\quad \forall t\in\qa{t_0,1}.
    \end{aligned}
\end{equation}
For any target spectral parameter $z$, we are interested in the original resolvent $G(z)=(H-z)^{-1}$. In our model, the study of $G\pa{z}$ can be achieved through the stochastic flow by carefully selecting the parameter $\sE$, the initial time $\ti$, and the final time $\tf$, as discussed in the following lemma.

\begin{lemma}
\label{lem:paraselect}
Fix a target spectral parameter $z=E+\ii\eta\in \C_+$ with $\im z\in (0,1]$ and $|\re z|\le 2-\kappa$ for some small constant $\kappa>0$. We choose the parameters as follows:
    \begin{equation}\label{pick_parameter_1}
        \begin{aligned}
            \tf:=\frac{\im m\p{z}}{\im m\p{z}+\eta},\quad \sE:=\sqrt{\tf}E-\frac{1-\tf}{\sqrt{\tf}}\re m\p{z},\quad t_{\txt{i}}:=\pa{1-\varepsilon_0}\tf.
        \end{aligned}
    \end{equation}
Setting $t_0=\ti$ and $z_{\ti}=\sE+[1-(1+\e_0)\tf]m(\sE)$, the flow \eqref{z_t_flow} yields
    \begin{equation}\label{eq:ztf}
        \begin{aligned}
        z_{\tf}=\sqrt{\tf}z,\quad \text{and}\quad     \sqrt{\tf}m\p{\sE}=m\p{z} .
        \end{aligned}
    \end{equation}
In particular, if $H_{\tf}\overset{\txt{d}}{=} \sqrt{\tf} H$, then
    \begin{equation}\label{equal_in_distribution_t_0}
        \begin{aligned}
            G\pa{z}\overset{\txt{d}}{=}\sqrt{\tf}G_{\tf},
        \end{aligned}
    \end{equation}
    where $G\p{z}:=\p{H-z}^{-1}$ and $G_{\tf}:=\p{H_{\tf}-z_{\tf}}^{-1}$.
    Moreover, by \eqref{E_t_eta_t_flow} and \eqref{eq:ztf}, for all $t\in [\ti,1]$,
    \begin{equation}\label{eta_t_sim_1-t}
        \begin{aligned}
            E_t=\frac{1+t}{1+\tf}\sqrt{\tf}E,\quad \eta_t=\frac{1-t}{\sqrt{\tf}}\im m\p{z},
        \end{aligned}
    \end{equation}
    which immediately implies that
    $\eta_t\sim 1-t$ for any $t\in \qa{\ti,1}$. In particular, we have $\eta_{\ti}\sim 1$ and $\eta_{\tf}\sim \im z$.
\end{lemma}
\begin{proof}
The proof follows the same argument as in \cite[Lemma 3.3]{truong2025localizationlengthfinitevolumerandom}; hence we omit the details.
\end{proof}

In the proof and discussions below, we will always take the parameters as in \eqref{pick_parameter_1}. For any given starting time $t_0<\tf$ and variance profile $S_{t_0}$, the resolvent flow induced by evolution \eqref{def_stochastic_flow} is then defined as
\begin{equation}
    \begin{aligned}
        G_t\equiv G_{t,\sE}:=\pa{H_t-z_t(\sE)}^{-1}.
    \end{aligned}
\end{equation}
By Ito's formula and \eqref{m_invariant}, we have that
\begin{equation}\label{eq:SDE_Gt}
    \begin{aligned}
        \dd G_{t}=-G_{t}(\dd H_{t})G_{t}+G_{t}\{\mathcal{S}_{\txt{B}}[G_{t}]-m\p{\sE}\}G_{t},
    \end{aligned}
\end{equation}
where $\cS_\txt{B}:\C^{N\times N}\to\C^{N\times N}$ is a linear operator associated with $\SE$ in \eqref{eq:SE}, defined by
\begin{equation*}
        \cS_{\txt{B}}\qa{X}_{ij}:=\delta_{ij}\sum_{k=1}^N \pa{\SE}_{ik}X_{kk}.
\end{equation*}
In the discussion below, we will also define the linear operator
$\cS_S:\C^{N\times N}\to\C^{N\times N}$ associated with other variance profile matrices $S$, i.e.,
\begin{equation}\label{def_cS}
        \cS_S\qa{X}_{ij}:=\delta_{ij}\sum_{k}S_{ik}X_{kk}\quad \txt{for any }X\in\C^{N\times N}.
\end{equation}

Next, we define the flow of variance profiles in the following lemma, which will enable us to rescale the matrix in the proof of \Cref{lem_ConArg} below.
\begin{lemma}[Flow of variance profiles]\label{lemma_variance_flow}
    Define the decreasing family of variance-profile matrices by
    \begin{equation}\label{def_variance_flow}
        \begin{aligned}
            \fS_t:=\ha{\frac{\tf}{s}\SRBM+\pa{1-\frac{\tf}{s}}\SE:s\in[t,\tf]},
        \end{aligned}
    \end{equation}
    where the initial time $\ti$ and the final time $\tf$ are given in \eqref{pick_parameter_1} above. (The local fullness condition ensures that every matrix in $\fS_t$ has non-negative entries.) In particular, at $t=\tf$, we have $\fS_{\tf}=\{\SRBM\}$. Now, fix any $\ti\leq t_1< t_2\leq \tf$ and $S\in t_2\fS_{t_2}$. There exists another variance profile \smash{$\wt S\in t_1\fS_{t_1}$} such that $$S_{t_2}\p{t_1,\wt S}=S,$$
    where $S_{t_2}\p{t_1,\wt S}$ is defined as in \eqref{def_S_t}. Moreover, for any $t\in\qa{t_1,t_2}$, we have
        \begin{equation}
            \begin{aligned}
                S_{t}\p{t_1,\wt S}=\wt S+\pa{t-t_1}\SE\in t\fS_{t}.
            \end{aligned}
        \end{equation}
    This shows that if we take $t_0=t_1$ and $S_{t_0}=\wt S$ in the flow \eqref{def_stochastic_flow}, then the matrix $H_{t_2}$ has variance profile $S$.
\end{lemma}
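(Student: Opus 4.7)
The statement is essentially an algebraic consistency check: the family $\{t\fS_t\}_{t\in[\ti,\tf]}$ must be invariant (in the right sense) under the shift $S\mapsto S+(t-t_1)\SE$ that defines the variance profile along the stochastic flow \eqref{def_S_t}. So my plan is to parametrize and explicitly construct $\wt S$.

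First, I would unfold the definitions. Any element of $t_2\fS_{t_2}$ can be written as
\begin{equation*}
S \;=\; \frac{t_2\tf}{s_2}\SRBM + t_2\Bigl(1-\frac{\tf}{s_2}\Bigr)\SE, \qquad s_2\in[t_2,\tf],
\end{equation*}
and I want $\wt S = S - (t_2-t_1)\SE$ to lie in $t_1\fS_{t_1}$. Comparing the $\SRBM$ coefficients forces the natural choice $s_1 := t_1 s_2/t_2$. A one-line check shows $s_1\in[t_1,\tf]$ (using $s_2\in[t_2,\tf]$ and $t_1\le t_2$), and an equally short algebraic identity confirms that the $\SE$-coefficient of $S-(t_2-t_1)\SE$ equals $t_1(1-\tf/s_1)$, so $\wt S\in t_1\fS_{t_1}$ with parameter $s_1$.

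Second, I would verify the intermediate claim $S_t(t_1,\wt S) = \wt S+(t-t_1)\SE \in t\fS_t$ for $t\in[t_1,t_2]$. By the same computation, this element has $\SRBM$-coefficient $t_1\tf/s_1 = t_2\tf/s_2$, so the natural candidate parameter is $s := ts_2/t_2 = ts_1/t_1$. Verifying $s\in[t,\tf]$ reduces to the two inequalities $s_2\ge t_2$ (giving $s\ge t$) and $s_2\le \tf$ combined with $t\le t_2$ (giving $s\le \tf$). Matching the $\SE$-coefficient is then automatic since both profiles have equal trace (they differ only in weights of a two-term convex-type combination summing to $t$). Finally, evaluating at $t=t_2$ recovers $s=s_2$ and thus $S_{t_2}(t_1,\wt S)=S$.

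The last sentence of the lemma, which reinterprets this in terms of the stochastic flow \eqref{def_stochastic_flow}, follows immediately: starting \eqref{def_stochastic_flow} from $t_0=t_1$ with initial variance profile $\wt S$, the definition \eqref{def_S_t} gives $H_{t_2}$ variance profile $\wt S+(t_2-t_1)\SE = S$.

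I do not anticipate any real obstacle here: the argument is a two-parameter linear algebra verification with no analytic content, and the only thing to be careful about is that the interval $[s,\tf]$ parametrizing $t\fS_t$ shrinks as $t$ increases to $\tf$, which is exactly why the endpoint choices $\ti=(1-\varepsilon_0)\tf$ and the constraint $s\in[t,\tf]$ stay compatible throughout.
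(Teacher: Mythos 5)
Your proof is correct and follows essentially the same route as the paper: both take $\wt S = S - (t_2-t_1)\SE$, parametrize $S$ by some $s_2\in[t_2,\tf]$, and verify membership of $S_t(t_1,\wt S)$ in $t\fS_t$ via the reparametrization $s = t s_2/t_2$. Your extra observation that matching the $\SE$-coefficient is automatic (both sides have row sums $t$, so the $\SE$-weight is forced once the $\SRBM$-weight matches) is a slight streamlining of the paper's direct algebraic check, but the argument is the same.
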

\begin{proof}
    Taking \smash{\(\wt S=S+\pa{t_1-t_2}\SE\)},
    we have \smash{$S_{t_2}\p{t_1,\wt S}=S$}. Moreover,
    by the definition of $\fS_{t_2}$ in \eqref{def_variance_flow}, we can express $S$ as
    \begin{equation}
        \begin{aligned}
            S=t_2\pa{\frac{\tf}{s}\SRBM+\pa{1-\frac{\tf}{s}}\SE}
        \end{aligned}
    \end{equation}
    for some $s\in\qa{t_2,\tf}$. Then, for any $t\in\qa{t_1,t_2}$, we have
    \begin{equation}
        \begin{aligned}
            &\frac{1}{t}S_{t}\p{t_1,\wt S}=\frac{1}{t}\big[S+\pa{t_1-t_2}\SE+\pa{t-t_1}\SE\big]\\
            =&\frac{1}{t}\qa{t_2\pa{\frac{\tf}{s}\SRBM+\pa{1-\frac{\tf}{s}}\SE}+\pa{t-t_2}\SE}=\frac{\tf}{\wt s}\SRBM+\pa{1-\frac{\tf}{\wt s}}\SE\in\fS_t,
        \end{aligned}
    \end{equation}
    where we denote $\wt s:=st/t_2\in\qa{t,\tf}$. This concludes the proof.
\end{proof}

\subsection{Loop hierarchy and primitive loops}\label{subsec:loops}

Our proofs of the main results rely on analyzing the behavior of $G$-loops and primitive loops along the stochastic and deterministic flows. We work within a time interval $\qa{t_0,t_1}\subseteq\qa{\ti,\tf}$. More precisely, for any variance profile $S_{t_1}\in t_1\fS_{t_1}$, \Cref{lemma_variance_flow} guarantees the existence of an $S_{t_0}\in t_0\fS_{t_0}$ such that, along the flow defined in \eqref{def_stochastic_flow}, the matrix $H_{t_1}$ has variance profile $S_{t_1}$.
Moreover, we consider the deterministic flow defined in \eqref{z_t_flow} with parameters chosen as in \eqref{pick_parameter_1}. In the proofs, our focus will be on the dynamics of $G_t$ for $t\in\qa{t_0,t_1}$ and the corresponding $G$-loops, defined as follows.

\begin{definition}[$G$-Loop]\label{Def:G_loop}
Along the matrix flow $H_t$, $t\in\qa{t_0,t_1}$, with variance profile $S_t\in t\fS_t$, we denote for charge $\sigma\in \{+,-\}$ that
 \begin{equation}\label{eq:Gtsig}
  G_{t}(\sigma):=\begin{cases}
       (H_t-z_t)^{-1}, \ \ \text{if} \ \  \sigma=+,\\
        (H_t-\bar z_t)^{-1}, \ \ \text{if} \ \ \sigma=-.
   \end{cases}
 \end{equation}
In other words, we denote $G_{t}(+)\equiv G_{t}$ and $G_{t}(-)\equiv G_{t}^*$. Define for $[a]\in \Zn$ the block identity matrix $I_{\qa{a}}$ and rescaled block identity matrix $E_{\qa{a}}$ as
\be\label{def:Ia} (I_{[a]})_{ij}= \delta_{ij}\cdot \mathbf 1_{i\in [a]} , \quad E_{[a]}=W^{-d}I_{[a]}.\ee
For any $\fn\in \N$, $\bsigma=(\sigma_1, \cdots \sigma_\fn)\in \{+,-\}^\fn$, and $\ba=([a_1], \ldots, [a_\fn])\in (\Zn)^\fn$, we define the $\fn$-$G$-loops by
\begin{equation}\label{Eq:defGLoop}
    {\cal L}^{(\fn)}_{t, \boldsymbol{\sigma}, \ba}:=\left \langle \prod_{i=1}^\fn \left(G_{t}(\sigma_i) E_{[a_i]}\right)\right\rangle .
\end{equation}
Furthermore, we denote
\begin{equation}\label{def_mtzk}
m (\sigma ):= \begin{cases}
    m(\sE) \equiv m_t(z_t(E)),  &\text{if} \ \ \sigma  =+ \\
    \bar m(\sE) \equiv m_t(\bar z_t(E)),  &\text{if} \ \ \sigma = -
\end{cases},
\end{equation}
and introduce the \emph{centered resolvent} as
\begin{equation}\label{Eq:defwtG}
 \Gc_t(\sigma) := G_t(\sigma) -m (\sigma).
\end{equation}
\end{definition}

To define the loop hierarchy for the $G$-loops defined above, we introduce the following loop operations from \cite[Definition 2.10]{Band1D}.

\begin{definition}[Loop operations]\label{Def:oper_loop}
We define the following operations on the $G$-loops in \eqref{Eq:defGLoop}.

 \medskip

\noindent \emph{(1)}
 For $k \in \qq{\fn}$ and $[a]\in \Zn$, we define the first type of cut-and-glue operator ${\cut}^{[a]}_{k}$ as follows:
\be\label{eq:cut1}
    {\cut}^{[a]}_{k} \circ {\cal L}^{(\fn)}_{t, \boldsymbol{\sigma}, \ba}:= \left \langle \prod_{i<k}  \left(G_{t}(\sigma_i) E_{[a_i]}\right)\left( G_{t}(\sigma_k) E_{[a]} G_{t}(\sigma_k)E_{[a_k]}\right)\prod_{i>k}  \left(G_{t}(\sigma_i) E_{[a_i]}\right)\right\rangle.
\ee
In other words, it is the $(\fn+1)$-$G$ loop obtained by replacing $G_t(\sigma_k)$ as $G_{t}(\sigma_k) E_a G_{t}(\sigma_k)$. Graphically, the operator \smash{${\cut}^{[a]}_{k}$} cuts the $k$-th $G$ edge $G_t(\sigma_k)$ and glues the two new ends with $E_{[a]}$.
This operator can also be considered as an operator on the indices $(\boldsymbol{\sigma},\ba)$:
 $${\cut}^{[a]}_{k} (\boldsymbol{\sigma}, \ba) =\big( (\sigma_1,\ldots, \sigma_{k-1}, \sigma_k,\sigma_k ,\sigma_{k+1},\ldots, \sigma_\fn ),([a_1],\ldots, [a_{k-1}], [a],[a_k],[a_{k+1}],\ldots, [a_\fn] )\big).$$
Hence, we will also express \eqref{eq:cut1} as
    $${\cut}^{[a]}_{k} \circ {\cal L}^{(\fn)}_{t, \boldsymbol{\sigma}, \ba}\equiv
     {\cal L}^{(\fn+1)}_{t, \;  {\cut}^{[a]}_{k} (\boldsymbol{\sigma}, \ba)}.
    $$

\medskip

 \noindent
 \emph{(2)} For $k < l \in \qq{\fn}$, we define the second type of cut-and-glue operator ${\cutL}^{[a]}_{k,l}$ from the left (``L") of $k$ as:
 \begin{align}\label{eq:cutL}
 {\cutL}^{[a]}_{k,l} \circ {\cal L}^{(\fn)}_{t, \boldsymbol{\sigma}, \ba}:= \left \langle \prod_{i<k}  \left[G_{t}(\sigma_i) E_{[a_i]}\right]\left( G_{t}(\sigma_k) E_{[a]} G_{t}(\sigma_l)E_{[a_l]}\right)\prod_{i>l}  \left[G_{t}(\sigma_i) E_{[a_i]}\right]\right\rangle,
 \end{align}
and define the third type of cut-and-glue operator ${\cutR}^{[a]}_{k,l}$ from the right (``R") of $k$ as:
 \begin{align}\label{eq:cutR}
 {\cutL}^{[a]}_{k,l} \circ {\cal L}^{(\fn)}_{t, \boldsymbol{\sigma}, \ba}:= \left \langle \prod_{k\le i <l}  \left[G_{t}(\sigma_i) E_{[a_i]}\right]\cdot \left( G_{t}(\sigma_l) E_{[a]}\right)\right\rangle.
 \end{align}
In other words, the second type operator cuts the $k$-th and $l$-th $G$ edges $G_t(\sigma_k)$ and $G_t(\sigma_l)$ and creates two chains: the left chain to the vertex $[a_k]$ is of length $(\fn+k-l+1)$ and contains the vertex $[a_\fn]$, while the right chain to the vertex $[a_k]$ is of length $(l-k+1)$ and does not contain the vertex $[a_\fn]$.
Then, \smash{${\cutL}^{[a]}_{k,l}\circ {\cal L}^{(\fn)}_{t, \boldsymbol{\sigma}, \ba}$} (resp.~\smash{${\cutR}^{[a]}_{k,l}\circ {\cal L}^{(\fn)}_{t, \boldsymbol{\sigma}, \ba}$}) gives a $(\fn+k-l+1)$-loop (resp.~$(l-k+1)$-loop) obtained by gluing the left chain (resp.~right chain) at the new vertex $[a]$.
Again, we can also consider the two operators to be defined on the indices $(\boldsymbol{\sigma},\ba)$:
\begin{align*}
    &{\cutL}^{[a]}_{k,l} (\boldsymbol{\sigma}, \ba) = \big((\sigma_1,\ldots, \sigma_k,\sigma_l ,\ldots, \sigma_\fn ),([a_1],\ldots, [a_{k-1}], [a],[a_l],\ldots, [a_\fn] )\big),\\
    &{\cutR}^{[a]}_{k,l} (\boldsymbol{\sigma}, \ba) = \big((\sigma_k,\ldots, \sigma_l),([a_k],\ldots, [a_{l-1}], [a])\big).
\end{align*}
Hence, we will also express \eqref{eq:cutL} and \eqref{eq:cutR} as
    $${\cutL}^{[a]}_{k,l} \circ {\cal L}^{(\fn)}_{t, \boldsymbol{\sigma}, \ba}\equiv
     {\cal L}^{(\fn+k-l+1)}_{t, \;  {\cutL}^{[a]}_{k,l} (\boldsymbol{\sigma}, \ba)},\quad {\cutR}^{[a]}_{k} \circ {\cal L}^{(\fn)}_{t, \boldsymbol{\sigma}, \ba}\equiv
     {\cal L}^{(l-k+1)}_{t, \;  {\cutR}^{[a]}_{k,l} (\boldsymbol{\sigma}, \ba)}.
    $$

\end{definition}

For $(x,y)\in \pa{\Z_L^d}^2$, we abbreviate $\partial_{xy}:=\partial_{(H_t)_{xy}}$.
Then, by It\^o's formula and equation \eqref{eq:SDE_Gt}, it is easy to derive the following SDE satisfied by the $G$-loops.

\begin{lemma}[Loop hierarchy] \label{lem:SE_basic}
Suppose that $H_t$ evolves along the flow defined in \eqref{def_stochastic_flow}. Then, the $\fn$-$G$ loop satisfies the following SDE, called the \emph{loop hierarchy}:
\begin{align}\label{eq:mainStoflow}
    \dd \mathcal{L}^{(\fn)}_{t, \boldsymbol{\sigma}, \ba} =\dd
    \mathcal{B}^{(\fn)}_{t, \boldsymbol{\sigma}, \ba}
    +\mathcal{W}^{(\fn)}_{t, \boldsymbol{\sigma}, \ba} \dd t
    +  W^d\sum_{1 \le k < l \le \fn} \sum_{[a]}  \pa{\cutL^{[a]}_{k, l} \circ \mathcal{L}^{(\fn)}_{t, \boldsymbol{\sigma}, \ba}  }\cdot \pa{ \cutR^{[a]}_{k, l} \circ \mathcal{L}^{(\fn)}_{t, \boldsymbol{\sigma}, \ba} } \dd t,
\end{align}
 where the martingale term $\mathcal{B}^{(\fn)}_{t, \boldsymbol{\sigma}, \ba}$ and the term $\mathcal{W}^{(\fn)}_{t, \boldsymbol{\sigma}, \ba}$ are defined by
 \begin{align} \label{def_Edif}
\dd\mathcal{B}^{(\fn)}_{t, \boldsymbol{\sigma}, \ba} :  = & \sum_{x,y\in \ZL}
  \left( \partial_{xy}  {\cal L}^{(\fn)}_{t, \boldsymbol{\sigma}, \ba}  \right)
 \cdot \sqrt{\pa{\SE} _{xy}}
  \left(\dd B_t\right)_{xy}, \\\label{def_EwtG}
\mathcal{W}^{(\fn)}_{t, \boldsymbol{\sigma}, \ba}: = &  {W}^d \sum_{k=1}^\fn \sum_{[a]\in \Zn} \;
 \left \langle \Gc_t(\sigma_k) E_{[a]} \right\rangle
   \cdot
  \left( {\cut}^{[a]}_{k} \circ {\cal L}^{(\fn)}_{t, \boldsymbol{\sigma}, \ba} \right) .
\end{align}
\end{lemma}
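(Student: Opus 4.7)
The plan is to apply Itô's formula to the multilinear functional $\mathcal{L}^{(\fn)}_{t,\bsigma,\ba}=\langle \prod_{i=1}^{\fn}G_t(\sigma_i)E_{[a_i]}\rangle$, viewed as a polynomial in the entries of $G_t(\sigma_1),\ldots,G_t(\sigma_\fn)$. The first ingredient is the SDE for each resolvent, which follows from \eqref{eq:SDE_Gt} (and its complex conjugate for $\sigma=-$):
\begin{equation*}
dG_t(\sigma)=-G_t(\sigma)(dH_t)G_t(\sigma)+G_t(\sigma)\{\cS_{\txt{E}}[G_t(\sigma)]-m(\sigma)\}G_t(\sigma)\,dt,
\end{equation*}
where the $m(\sigma)$ term accounts for the deterministic flow $z_t(\sE)$ in \eqref{z_t_flow}, which by \eqref{m_invariant} keeps $m_t(z_t(\sigma))\equiv m(\sigma)$ stationary in $t$. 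Expanding $d\mathcal{L}^{(\fn)}_{t,\bsigma,\ba}$ via Itô then produces three groups of contributions: a pure martingale, a single-$G$ drift at each slot, and a cross-variation drift between pairs of distinct $G$-factors. My task is to identify these three groups respectively with $d\mathcal{B}^{(\fn)}$, $\mathcal{W}^{(\fn)}\,dt$, and the $\cutL\cdot\cutR$ term of \eqref{eq:mainStoflow}.

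The martingale contribution is immediate: substituting $(dH_t)_{ij}=\sqrt{(\SE)_{ij}}(dB_t)_{ij}$ into the martingale part $-G_t(\sigma_k)(dH_t)G_t(\sigma_k)$ of each $dG_t(\sigma_k)$ and using the chain rule recovers $d\mathcal{B}^{(\fn)}_{t,\bsigma,\ba}$ exactly as in \eqref{def_Edif}. For the single-$G$ drift at position $k$, I use that $\SE$ is doubly stochastic and hence $\cS_{\txt{E}}[G_t(\sigma_k)]-m(\sigma_k)I=\cS_{\txt{E}}[\Gc_t(\sigma_k)]$, together with the fact that $\cS_{\txt{E}}[X]$ is diagonal with $(\cS_{\txt{E}}[X])_{ii}=\langle X E_{[i]}\rangle$, block-constant in $i$. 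Combined with $I_{[a]}=W^d E_{[a]}$ this yields the key identity
\begin{equation*}
\cS_{\txt{E}}[\Gc_t(\sigma_k)]=W^d\sum_{[a]\in\Zn}\langle\Gc_t(\sigma_k)E_{[a]}\rangle\,E_{[a]}.
\end{equation*}
Inserting this into the $k$-th slot replaces $G_t(\sigma_k)$ by the weighted sum $W^d\sum_{[a]}\langle\Gc_t(\sigma_k)E_{[a]}\rangle\,G_t(\sigma_k)E_{[a]}G_t(\sigma_k)$, and comparing with \eqref{eq:cut1} this equals $W^d\sum_{[a]}\langle\Gc_t(\sigma_k)E_{[a]}\rangle\cdot\cut^{[a]}_k\circ\mathcal{L}^{(\fn)}_{t,\bsigma,\ba}$. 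Summing over $k$ reproduces $\mathcal{W}^{(\fn)}_{t,\bsigma,\ba}$ of \eqref{def_EwtG}.

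For each pair $k<l$, the Itô cross-variation between entries of $dG_t(\sigma_k)$ and $dG_t(\sigma_l)$ is produced by a single Wick contraction among the entries of $dH_t$. Combining the Hermitian complex Brownian covariance with $(\SE)_{ij}=W^{-d}\sum_{[a]}\mathbf 1_{i,j\in[a]}$ and $\sum_{i\in[a]}G_t(\sigma_k)_{\cdot\,i}G_t(\sigma_l)_{i\,\cdot}=W^d[G_t(\sigma_k)E_{[a]}G_t(\sigma_l)]_{\cdot\,\cdot}$, a direct calculation yields
\begin{equation*}
\langle dG_t(\sigma_k)_{ab},\,dG_t(\sigma_l)_{cd}\rangle=W^d\sum_{[a]\in\Zn}[G_t(\sigma_k)E_{[a]}G_t(\sigma_l)]_{ad}\,[G_t(\sigma_l)E_{[a]}G_t(\sigma_k)]_{cb}\,dt.
\end{equation*}
When this is inserted into the loop expansion with $(a,b)=(x_{k-1},x_k)$ and $(c,d)=(x_{l-1},x_l)$, the cyclic trace splits at the new block $[a]$ into two smaller traces: an \emph{outer} one running through positions $k,l,l+1,\ldots,\fn,1,\ldots,k-1$ with the bridging edge $G_t(\sigma_k)E_{[a]}G_t(\sigma_l)$, and an \emph{inner} one through $k,k+1,\ldots,l-1,l$ with the bridging edge $G_t(\sigma_l)E_{[a]}G_t(\sigma_k)$. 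Comparing with \eqref{eq:cutL}--\eqref{eq:cutR}, these are exactly $\cutL^{[a]}_{k,l}\circ\mathcal{L}^{(\fn)}_{t,\bsigma,\ba}$ and $\cutR^{[a]}_{k,l}\circ\mathcal{L}^{(\fn)}_{t,\bsigma,\ba}$.

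The only genuinely non-routine step is the bookkeeping of the $W^d$ prefactor in the cross-variation term: the Itô covariance already carries a factor $W^d$; the outer and inner factorizations produce additional powers $W^{(\fn-l+k)d}$ and $W^{(l-k)d}$ when the block indicators $\sum_{x_i\in[a_i]}$ are converted into $E_{[a_i]}$-factors of the two new loops; and these combine with the global prefactor $W^{-\fn d}$ already present in $\mathcal{L}^{(\fn)}_{t,\bsigma,\ba}$ to leave exactly $W^{d}$ in front of $\cutL^{[a]}_{k,l}\circ\mathcal{L}\cdot\cutR^{[a]}_{k,l}\circ\mathcal{L}$, matching the coefficient in \eqref{eq:mainStoflow}. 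Once this $W$-counting is verified, the three groups sum to the advertised identity \eqref{eq:mainStoflow} and the lemma is proved.
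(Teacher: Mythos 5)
Your proposal is correct and takes precisely the approach the paper intends: the paper gives no detailed argument, stating only that \eqref{eq:mainStoflow} follows ``by It\^o's formula and equation \eqref{eq:SDE_Gt}," and your calculation fleshes out exactly that computation. In particular, your identification of the single-$G$ drift with $\mathcal{W}^{(\fn)}$ via $\cS_{\txt E}[\Gc_t(\sigma_k)]=W^d\sum_{[a]}\langle\Gc_t(\sigma_k)E_{[a]}\rangle E_{[a]}$ (which uses that $\SE$ is doubly stochastic to absorb the $m(\sigma_k)$ subtraction), your cross-variation formula, and the $W^d$ bookkeeping---$W^{-\fn d}$ from the $\fn$ factors $E_{[a_i]}$ in $\mathcal{L}^{(\fn)}$ times $W^d$ from $(\SE)_{ij}=W^{-d}\sum_{[a]}\mathbf 1_{i,j\in[a]}$ with $I_{[a]}=W^dE_{[a]}$, times $W^{(\fn+k-l)d}$ and $W^{(l-k)d}$ absorbed into the two new loops---all check out and yield the coefficient $W^d$ in \eqref{eq:mainStoflow}.
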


This loop hierarchy is well approximated by the primitive loops defined in \Cref{Def_Ktza} below. Our definition is given in a recursive form, whereas in \cite{Band1D, truong2025localizationlengthfinitevolumerandom} the primitive loops are formulated in a purely dynamical manner. This recursive construction is essentially the same as those in equations (11.1) and (11.2) of \cite{erdos2025zigzagstrategyrandomband}. Moreover, as we will verify in \eqref{pro_dyncalK}, this recursive scheme coincides with the formulations in \cite[Definition 2.12]{Band1D} and \cite[Definition 3.7]{truong2025localizationlengthfinitevolumerandom}.
For generality, in \Cref{Def_Ktza} we allow an arbitrary spectral parameter $z$ and any variance profile matrix $S$ with nonnegative entries, provided that the matrix $1-m\p{\sigma}m\p{\sigma'}S$ is invertible for all $\sigma,\sigma'\in\ha{+,-}$, where $m\p{+}\equiv m\p{z}$ and $m\p{-}\equiv \ol{m}\p{z}$.

\begin{definition}[Primitive loops]\label{Def_Ktza}
    We define the \emph{entrywise primitive loop} of length $1$ by
    \be\label{eq:loop1}{\wh\cK}^{(1)}_{\sigma,x}=m(\sigma),\quad \forall \sigma\in \{+,-\},\ x\in\ZL.\ee
    For $\fn\ge 1$, we define ${\wh \cK}^{(\fn+1)}_{ \boldsymbol{\sigma}, \bx}$ recursively for $\bsigma\in \{+,-\}^{\fn+1}$ and $\bx\in (\ZL)^{\fn+1}$ by
    \begin{align}\label{wh_cK_recursive_relation}
            \wh \cK_{\bsigma,\bx}^{\pa{\fn+1}}=&~m\p{\sigma_1}\wh \cK_{\pa{\sigma_2,\ldots,\sigma_{\fn+1}},\pa{x_2,\ldots,x_\fn,x_1}}^{\pa{\fn}}\pa{1-m\p{\sigma_1}m\p{\sigma_{\fn+1}}S}^{-1}_{x_1x_{\fn+1}}\\
            +&~m\p{\sigma_1}\sum_{k=2}^{\fn}\sum_{x,y}\wh \cK_{\pa{\sigma_1,\ldots,\sigma_k},\pa{x_1,\ldots,x_{k-1},y}}^{\pa{k}}S_{xy}\cdot\wh \cK_{\pa{\sigma_k,\ldots,\sigma_{\fn+1}},\pa{x_k,\ldots,x_\fn,x}}^{\pa{\fn-k+2}}\pa{1-m\p{\sigma_1}m\p{\sigma_{\fn+1}}S}^{-1}_{xx_{\fn+1}}. \nonumber
        \end{align}
    If the spectral parameter is set to \(z=\sE\) and the variance profile is given by \(S_t\) from \eqref{def_S_t}, then \( m(+)=m(\sE)\) and \(m(-)=\overline{m}(\sE),\) and we write the corresponding primitive loop as \smash{$\wh \cK_{t,\bsigma,\bx}^{\pa{\fn}}$}. Additionally, \smash{$\wh \cK_{t,\bsigma,\bx}^{\pa{\fn}}$} satisfies the following system of differential equations (see \Cref{lemma_evolution_of_primitive_loops}):
     \begin{align}\label{entrywise_pro_dyncalK}
       \frac{\dd}{\dd t}\,{\wh\cK}^{(\fn)}_{t, \boldsymbol{\sigma}, \bx}
       =
        \sum_{1\le k < l \le \fn} \sum_{a, b\in \ZL}  \pa{\cutL^{\pa{a}}_{k, l} \circ \wh{\mathcal{K}}^{(\fn)}_{t, \boldsymbol{\sigma}, \bx} } \cdot \pa{\SE}_{ab}\cdot  \pa{\cutR^{\pa{b}}_{k, l} \circ \wh{\mathcal{K}}^{(\fn)}_{t, \boldsymbol{\sigma}, \bx} } ,
    \end{align}
    where the entrywise operators $\cutL$ and $\cutR$ (analogous to the blockwise operators in \Cref{Def:oper_loop}) act on \smash{${\wh\cK}^{(\fn)}_{t, \boldsymbol{\sigma}, \bx}$} through the actions on indices:
  \be\label{entrywise_calGonIND}
     {\cutL}^{(a)}_{k,l}  \circ {\wh\cK}^{(\fn)}_{t, \boldsymbol{\sigma}, \bx} := {\wh\cK}^{(\fn+k-l+1)}_{t,  {\cutL}^{\pa{a}}_{k,l}  (\boldsymbol{\sigma}, \bx)} , \quad
     \quad  {\cutR}^{(b)}_{k,l}  \circ {\wh\cK}^{(\fn)}_{t, \boldsymbol{\sigma}, \bx} := {\wh\cK}^{(l-k+1)}_{t,  {\cutR}^{\pa{b}}_{k,l}  (\boldsymbol{\sigma}, \bx)}.
       \ee

    As the deterministic limit of the $\cL$-loops, we define the (blockwise) {\bf primitive loops} for $\fn\in \N$, $\bsigma\in\ha{+,-}^{\fn}$, and \smash{$\ba\in\p{\Zn}^{\fn}$} as the averaged tensor of the entrywise primitive loop (recall \eqref{def_average_tensor}):
    \begin{equation}\label{def_cK_block}
            \cK^{(\fn)}_{t,\bsigma,\ba}:=\qq{\wh \cK^{(\fn)}_{t,\bsigma,\cdot}}_{\ba}=W^{-\fn d}\sum_{\bx\in\qa{a_1}\times\cdots\times \qa{a_{\fn}}}\wh \cK^{(\fn)}_{t,\bsigma,\bx}.
    \end{equation}
    Equation \eqref{entrywise_pro_dyncalK} then yields
    \begin{align}\label{pro_dyncalK}
       \frac{\dd}{\dd t}\,{\cK}^{(\fn)}_{t, \boldsymbol{\sigma}, \ba}
       =
        W^d\sum_{1\le k < l \le \fn} \sum_{\qa{a}\in \Zn}  \pa{\cutL^{\qa{a}}_{k, l} \circ {\mathcal{K}}^{(\fn)}_{t, \boldsymbol{\sigma}, \ba}}  \cdot \pa{ \cutR^{\qa{a}}_{k, l} \circ {\mathcal{K}}^{(\fn)}_{t, \boldsymbol{\sigma}, \ba} } ,
    \end{align}
   where the operators $\cutL$ and $\cutR$ again act on \smash{${\cal K}^{(\fn)}_{t, \boldsymbol{\sigma}, \ba}$} through the actions on indices:
  \be\label{calGonIND}
     {\cutL}^{[a]}_{k,l}  \circ {\cK}^{(\fn)}_{t, \boldsymbol{\sigma}, \ba} := {\cK}^{(\fn+k-l+1)}_{t,  {\cutL}^{\qa{a}}_{k,l}  (\boldsymbol{\sigma}, \ba)} , \quad
     \quad  {\cutR}^{[a]}_{k,l}  \circ {\cK}^{(\fn)}_{t, \boldsymbol{\sigma}, \ba} := {\cK}^{(l-k+1)}_{t,  {\cutR}^{\qa{a}}_{k,l}  (\boldsymbol{\sigma}, \ba)}.
       \ee
 Equation \eqref{pro_dyncalK} is precisely the defining equation for primitive loops in \cite{Band1D, truong2025localizationlengthfinitevolumerandom}. Hence, our definition coincides with theirs.
\end{definition}

For clarity of presentation, we will also call $G$-loops and primitive loops as $\cL$-loops and $\cK$-loops, respectively. Moreover, we will call \smash{$(\cL-{\cal K})^{(\fn)}_{t, \boldsymbol{\sigma}, \ba}\equiv \cL^{(\fn)}_{t, \boldsymbol{\sigma}, \ba}-\cK^{(\fn)}_{t, \boldsymbol{\sigma}, \ba}$} an $(\cL-\cK)$-loop.

\subsection{Propagators and evolution kernels}\label{subsec:propagators}
Similar to previous works \cite{Band1D,Band2D,truong2025localizationlengthfinitevolumerandom,Bandedge}, the $\Theta$-propagators defined in \Cref{def_Theta_propagators} below served as a basic tool for the analysis of the loop hierarchy and primitive loops. In particular, they are the basic building blocks for the tree representation formulas of $\cK$-loops therein.

\begin{definition}[$\Theta$-propagators]\label{def_Theta_propagators}
Given any pair of charges $\sigma,\sigma'\in\{+,-\}$, we define the entrywise propagator at time $t$ as the $\ZL\times \ZL$ matrix:
    \begin{equation}\label{eq:defwhTheta}
            \wh\Theta_t^{\p{\sigma,\sigma'}}:=\pa{1-m\p{\sigma}m\p{\sigma'}S_t}^{-1},
    \end{equation}
    where $S_t$ is defined in \eqref{def_S_t}. The corresponding (blockwise) propagator is the $\Zn\times \Zn$ matrix obtained by projecting the entrywise propagator:
    \begin{equation}\label{def_theta}
        \begin{aligned}
            \Theta_t^{\p{\sigma,\sigma'}}:=\mathscr{P}\p{\wh \Theta_t^{\p{\sigma,\sigma'}}},
        \end{aligned}
    \end{equation}
    where the projection operator $\scr{P}$ acts on any matrix $A\in\C^{\ZL\times \ZL}$ by
    \begin{equation}\label{def_projection_operator}
            \scr{P}\pa{A}_{(\qa{a},\qa{b})}:=\frac{1}{W^d}\sum_{x\in\qa{a},y\in\qa{b}}A_{xy},\quad \forall \ \qa{a},\qa{b}\in\Zn.
    \end{equation}
    Moreover, by the definition of the $\cK$-loops, we immediately obtain
    \begin{equation}\label{cK_and_Theta}
        \begin{aligned}
            \cK_{t,\pa{\sigma,\sigma'},\p{\qa{a},\qa{b}}}^{\pa{2}}=W^{-d}m\p{\sigma}m\p{\sigma'}\Theta_t^{\p{\sigma,\sigma'}}\pa{\qa{a},\qa{b}},\quad \forall \ \qa{a},\qa{b}\in\Zn.
        \end{aligned}
    \end{equation}
\end{definition}

We now list several basic properties of $\Theta$-propagators, whose proofs will be presented in \Cref{sec_analysis_of_primitive_loops}.
\begin{lemma}[Properties of $\Theta$-propagators]\label{lemma_properties_of_propagators}
    Fix a time parameter $t\in \qa{\ti,\tf}$ and a variance profile $S_t\in t\fS_t$. We define
    \begin{equation}\label{eq:ellt}
        \begin{aligned}
            \ell_t:=\min\pa{\lambda\pa{1-t}^{-1/2}+1,n}.
        \end{aligned}
    \end{equation}
    Note that $\ell_t\sim \ell\pa{\eta_t}$, where $\ell(\eta_t)$ is defined in \eqref{eq:elleta} and $\eta_t$ is given in \eqref{E_t_eta_t_flow}. The following properties hold for the propagators introduced in \Cref{def_Theta_propagators}.

    \begin{enumerate}
    \item {\bf Transposition}: We have $(\Theta_{t}^{(\sigma_1,\sigma_2)})^\top =\Theta_{t}^{(\sigma_2,\sigma_1)}.$

\item {\bf Symmetry}: For any $[x],[y],[a]\in \Zn$, we have
\be\label{symmetry}
    \Theta^{(\sigma_1,\sigma_2)}_{t}([x]+[a],[y]+[a])= \Theta^{(\sigma_1,\sigma_2)}_{t}([x],[y]).
    \ee
In addition, we have the parity symmetry: $\Theta^{(\sigma_1,\sigma_2)}_{t}(0,[x])= \Theta^{(\sigma_1,\sigma_2)}_{t}(0,-[x]).$

\item {\bf Exponential decay on length scale $\ell_t$}: There exists a constant $c>0$ such that, for any large constant $D>0$, the following estimate holds when $\sigma_1\ne \sigma_2$:
\begin{equation}\label{prop:ThfadC}
\qquad\quad \Theta^{(\sigma_1,\sigma_2)}_{t}(0,[x])\prec \frac{e^{-c |[x]|/ {\ell}_t}}{|1-t|  {\ell}_t^d}+ W^{-D}.
\end{equation}
When $\sigma_1=\sigma_2$, there exists a (possibly different) constant $c>0$ such that
\begin{equation}\label{prop:ThfadC_short}     \Theta^{(\sigma_1,\sigma_2)}_{t}(0,[x])\le c^{-1}e^{-c\absa{\qa{x}}}.
\end{equation}
Moreover, for $\qa{x}\neq \qa{0}$, the bounds \eqref{prop:ThfadC} and \eqref{prop:ThfadC_short} can be improved as follows: for $\sigma_1\neq \sigma_2$,
\begin{equation}\label{prop:ThfadC_improved}
    \begin{aligned}
         \qquad\quad \Theta^{(\sigma_1,\sigma_2)}_{t}(0,[x])\prec \frac{\lambda^2}{\lambda^2+(1-t)}\frac{e^{-c |[x]|/ {\ell}_t}}{|1-t|  {\ell}_t^d}+ W^{-D},
    \end{aligned}
\end{equation}
and for $\sigma_1=\sigma_2$,
\begin{equation}\label{prop:ThfadC_short_improved}
    \begin{aligned}
         \qquad\quad \Theta^{(\sigma_1,\sigma_2)}_{t}(0,[x])\prec \lambda^2 e^{-c|\qa{x}|}.
    \end{aligned}
\end{equation}

\item {\bf Entrywise bounds}: There exists a constant $c>0$ such that the following estimates hold for the entrywise propagator:
\begin{align}\label{prop:ThfadC_pointwise}
\wh \Theta_t^{\pa{\sigma_1,\sigma_2}}\pa{x,y}&\prec \delta_{xy}+ \frac{e^{-c |x-y|/(W {\ell}_t)}}{W^d  {\ell}_t^d|1-t|}+ W^{-D},\quad \txt{ for }\ \ \sigma_1\neq\sigma_2,\\
\label{prop:ThfadC_short_pointwise}
\wh \Theta_t^{\pa{\sigma_1,\sigma_2}}\pa{x,y}&\prec \delta_{xy}+ \frac{e^{-c |x-y|/W}}{W^d}+ W^{-D},\quad  \txt{ for }\ \  \sigma_1=\sigma_2.
\end{align}

\item {\bf First-order finite difference}: The following estimate holds for all $[x],[y]\in \Zn$ and $\sigma_1\ne \sigma_2$: \begin{equation}\label{prop:BD1}
    \left| \Theta^{(\sigma_1,\sigma_2)}_{t}(0, [x])-\Theta^{(\sigma_1,\sigma_2)}_{t}(0, [y])\right|\prec \frac{1}{ \lambda^{2}+(1-t)}\frac{|[x]-[y]|}{\langle [x]\rangle^{d-1}+\avga{[y]}^{d-1}}.
     \end{equation}

\item {\bf Second-order finite difference}: The following estimate holds for all $[x],[y]\in \Zn$ and $\sigma_1\ne \sigma_2$:
\begin{equation}\label{prop:BD2}
 \Theta^{(\sigma_1,\sigma_2)}_{t} (0,[x]+[y]) + \Theta^{(\sigma_1,\sigma_2)}_{t} (0,[x]-[y])-  2\Theta^{(\sigma_1,\sigma_2)}_{t} (0,[x])
\prec \frac{1}{\lambda^{2}+(1-t)}\frac{|[y]|^2}{\langle [x]\rangle^{d}} .
 \end{equation}
\end{enumerate}
\end{lemma}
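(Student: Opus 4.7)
The starting point is a random walk representation for $\wh\Theta_t^{(\sigma_1,\sigma_2)}$. Write the Neumann series
\be
\wh\Theta_t^{(\sigma_1,\sigma_2)} = \sum_{k\geq 0} \pa{m(\sigma_1)m(\sigma_2) S_t}^k,
\ee
which converges because $\absa{m(\sigma_1)m(\sigma_2)}\leq 1-c(1-t)$ for $\absa{E}\leq 2-\kappa$ when $\sigma_1\neq\sigma_2$, and $\absa{m(\sigma_1)m(\sigma_2)}\leq 1-c$ when $\sigma_1=\sigma_2$. Since $S_t\in t\fS_t$ inherits block-translation invariance from $S$ and $\SE$, the projection $\mathscr{P}$ intertwines $S_t$ with a translation-invariant matrix $\sS$ on the block lattice $\Zn$; more precisely, the $k$-th power $S_t^k$ projected at block separation $[x]-[y]$ equals the $k$-step kernel of a random walk whose one-step kernel is essentially $W^d\mathscr{P}(S_t)$. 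The local core condition \eqref{def_epsilon_full} ensures that, within each block, the entries of $\wh\Theta_t^{(\sigma_1,\sigma_2)}$ are comparable to their block averages, which is exactly what is needed to promote the blockwise bounds to the entrywise bounds \eqref{prop:ThfadC_pointwise} and \eqref{prop:ThfadC_short_pointwise}. This comparison is done via a single-step resolvent identity isolating the intra-block Gaussian contribution. The transposition property in (i) is immediate from $S_t^\top=S_t$, and the symmetries in (ii) pass through $\mathscr{P}$ from \eqref{def_block_translation_invarinace} and \eqref{parity_symmetry_block_2D}.

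The core analytic input comes from Fourier analysis on $\Zn$. Let $\varphi$ be the characteristic function from \eqref{S_irreducibility}; then
\be
\Theta_t^{(\sigma_1,\sigma_2)}(0,[x]) \;=\; \frac{1}{n^d}\sum_{\mathbf p\in(2\pi/n)\Zn}\frac{e^{\ii\mathbf p\cdot[x]}}{1-m(\sigma_1)m(\sigma_2)\,\varphi(\mathbf p)}\,\bigl(1+\mathrm{O}_\prec(W^{-D})\bigr),
\ee
where the error accounts for the difference between $\mathscr{P}(S_t^k)$ and the pure random walk kernel. For $\sigma_1\neq\sigma_2$, one uses $1-\absa{m}^2\asymp 1-t\asymp\eta_t$ together with the irreducibility bound $1-\varphi(\mathbf p)\geq c\lambda^2\absa{\mathbf p}^2$ to get
\be
1-m(\sigma_1)m(\sigma_2)\varphi(\mathbf p)\;\asymp\; (1-t)+\lambda^2\absa{\mathbf p}^2,
\ee
whose inverse has Fourier transform decaying like $\ell_t^{-d}(1-t)^{-1}\exp(-c\absa{[x]}/\ell_t)$ with $\ell_t=\lambda(1-t)^{-1/2}\wedge n$; this proves \eqref{prop:ThfadC}. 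For $\sigma_1=\sigma_2$, the denominator is bounded below by a constant, giving the immediate exponential-in-$\absa{[x]}$ bound \eqref{prop:ThfadC_short} via contour/stationary-phase estimates together with the $C_S$-flatness of $S_t$.

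For the finite-difference bounds (v) and (vi), the Fourier representation yields
\be
\Theta_t^{(\sigma_1,\sigma_2)}(0,[x])-\Theta_t^{(\sigma_1,\sigma_2)}(0,[y])=\frac{1}{n^d}\sum_{\mathbf p}\frac{e^{\ii\mathbf p\cdot[x]}-e^{\ii\mathbf p\cdot[y]}}{1-m(\sigma_1)m(\sigma_2)\varphi(\mathbf p)},
\ee
and similarly for the symmetric second difference the numerator becomes $\mathrm{O}(\absa{\mathbf p}^2\absa{[y]}^2\wedge 1)$. Bounding the numerators by $\min(\absa{\mathbf p}\absa{[x]-[y]},1)$ and $\min(\absa{\mathbf p}^2\absa{[y]}^2,1)$, splitting the sum at the scale $\absa{\mathbf p}\sim\avga{[x]}^{-1}$ (or $\avga{[y]}^{-1}$), and using the denominator lower bound $(1-t)+\lambda^2\absa{\mathbf p}^2\geq (\lambda^2+1-t)\absa{\mathbf p}^2/(|\mathbf p|^2+1)$, one obtains the prefactor $(\lambda^2+1-t)^{-1}$ together with the claimed geometric factors. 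The improved bound (iv) at $[x]\neq 0$ is obtained by subtracting the zero-mode contribution: the constant term in the Fourier sum equals $(1-\absa{m}^2)^{-1}n^{-d}$ and all remaining modes enjoy the additional factor $\lambda^2/(\lambda^2+1-t)$ coming from $1-\varphi(\mathbf p)\gtrsim\lambda^2/(\lambda^2+1-t)\cdot(1-\absa{m}^2\varphi(\mathbf p))/\absa{m}^2$.

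The main obstacle is item (iv) for $\sigma_1=\sigma_2$, where the denominator has no small parameter and hence one cannot extract decay from the Neumann series alone; the $\lambda^2$ gain must come from cancellation between the $k=0$ term and the random walk expectation of $\mathbf 1([X_k]\neq [0])$ at each step, which requires the isotropy bound \eqref{S_isotropy} plus a careful extraction of the contribution from the interaction-free block-diagonal part of $S_t$. A second technical point is the passage from blockwise to entrywise estimates in \eqref{prop:ThfadC_pointwise} and \eqref{prop:ThfadC_short_pointwise}: one must use a resolvent identity to separate $S_t=\varepsilon_S\SE+\widetilde S$ and show that convolution by $(1-m(\sigma_1)m(\sigma_2)\varepsilon_S\SE)^{-1}$ (which is block-diagonal and exactly computable) produces the Kronecker $\delta_{xy}$ term and replaces the remaining kernel by its block average up to negligible errors, using the core condition \eqref{def_epsilon_full} essentially.
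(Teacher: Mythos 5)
The central analytic claim in your proposal is incorrect: the projection $\mathscr{P}$ does \emph{not} intertwine matrix powers with powers of the block-averaged kernel. Concretely, for $k=2$,
\begin{equation*}
\mathscr{P}(S_t^2)_{[a][b]} = \frac{1}{W^d}\sum_{[c]}\sum_{x\in[a],\,y\in[b],\,z\in[c]}(S_t)_{xz}(S_t)_{zy},
\end{equation*}
whereas the $2$-step kernel of the random walk with transition matrix $W^d\mathscr{P}(S_t)$ replaces the single sum over $z\in[c]$ by a \emph{decoupled} double sum $\frac{1}{W^{d}}\sum_{z_1,z_2\in[c]}(S_t)_{xz_1}(S_t)_{z_2y}$. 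These agree only when $(S_t)_{xz}$ is constant in $z$ inside each block, which is precisely the flat block structure your lemma is meant to dispense with. Thus your Fourier representation of $\Theta_t^{(\sigma_1,\sigma_2)}$ in terms of the characteristic function $\varphi$ of $W^d\qq{S}$, with a multiplicative $1+\mathrm{O}_\prec(W^{-D})$ error, is not justified: the discrepancy between $\mathscr{P}(S_t^k)$ and $(W^d\mathscr{P}(S_t))^k$ is order one, not $W^{-D}$. Because your proof of items (iii), (v), (vi) (and the improved bound in (iv)) rests entirely on this representation, those items are not established by your argument.

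The paper resolves exactly this non-commutativity by decomposing $S_t = S_{\mathrm{ker}} + c_{\mathrm{ker}}\SE$ with $\SE$ the flat block-averaging matrix and $c_{\mathrm{ker}}\sim 1$ (using the core condition to guarantee $S_{\mathrm{ker}}\geq 0$). Since $\SE$ has constant blocks, inserting it between copies of $(1-S_{\mathrm{ker}})^{-1}$ makes the projection $\mathscr{P}$ factor; one obtains the representation $\Theta_t = c_{\mathrm{ker}}^{-1}\,\wh t K (1-\wh t K)^{-1}$ for a genuine, translation-invariant transition kernel $K=(1-t+c_{\mathrm{ker}})\mathscr{P}[(1-S_{\mathrm{ker}})^{-1}]$ on $\Zn$, and it is \emph{this} kernel (not $W^d\mathscr{P}(S_t)$) whose characteristic function should appear in your Fourier computation. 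The paper's Lemma A.4 (\Cref{lemma_random_walk_representation}) then establishes that $K$ inherits exponential decay, total variance $\asymp\lambda^2$, and isotropy from the assumptions on $S$, after which the heat-kernel / Fourier estimates you invoke go through essentially as you sketch. So your Fourier-analytic intuition is sound, but the missing idea is the $\SE$-resummation that produces the correct underlying random walk; without it, the argument breaks down at the first step.

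A secondary remark: the parts of your argument concerning items (i), (ii), the derivation of pointwise bounds \eqref{prop:ThfadC_pointwise} and \eqref{prop:ThfadC_short_pointwise} from their blockwise counterparts via a resolvent identity, and the mechanism by which the improved bounds in (iv) gain a factor $\lambda^2/(\lambda^2+1-t)$ by isolating the block-diagonal contribution, are all in the right spirit and broadly consistent with the paper's route; the paper uses the identity $(1-S_t)^{-1}=1+S_t+S_t(1-S_t)^{-1}S_t$ for the first promotion and a comparison to the block-diagonal $S_t^{\mathrm{d}}$ for the improved bound.
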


In the main proof, the $\Theta$-propagators naturally arise in the evolution equations. We now introduce the associated evolution operators.

\begin{definition}[Evolution kernel]\label{def_evolution_kernel}
Let $S_t$ evolve according to \eqref{def_S_t} for $t\in [t_0,t_1]$, and let $\bsigma=(\sigma_1,\ldots,\sigma_\fn)\in \{+,-\}^\fn$. For any $\fn$-dimensional tensor ${\cal A}: (\Zn)^{\fn}\to \mathbb C$, we define the linear operator \smash{${\vartheta}^{(\fn)}_{t, \boldsymbol{\sigma}}$} by
\begin{align}\label{def:op_thn}
    \left({{\vartheta}}^{(\fn)}_{t, \boldsymbol{\sigma}} \circ{ \mathcal{A}}\right)_{\ba} := W^d \sum_{i=1}^\fn \sum_{[b_i]\in\Zn} \cK_{t,\pa{\sigma_i,\sigma_{i+1}},\pa{[a_i], [b_i]}}^{\pa{2}}  \mathcal{A}_{\ba^{(i)}([b_i])},\quad \forall \ba=([a_1],\ldots, [a_\fn])\in (\Zn)^\fn,
\end{align}
where we adopt the convention $\sigma_{\fn+1}=\sigma_1$, and $\ba^{(i)}$ is defined as
\begin{align}
    \ba^{(i)}([b_i]):= ([a_1], \ldots, [a_{i-1}], [b_i], [a_{i+1}], \ldots, [a_\fn]).
\end{align}
The evolution kernel on an interval $\qa{s,t}\subseteq\qa{t_0,t_1}$ corresponding to ${{\vartheta}}^{(\fn)}_{t, \boldsymbol{\sigma}}$  is defined by
    \begin{align}\label{def_Ustz}
        \left(\mathcal{U}_{s, t, \boldsymbol{\sigma}}^{(\fn)}\circ{ \mathcal{A}}\right)_{\ba}:=& \sum_{\mathbf b\in (\Zn)^\fn} \prod_{i=1}^\fn \scr{P}\left[\p{1-m\p{\sigma_i}m\p{\sigma_{i+1}}S_t}^{-1}\p{1-m\p{\sigma_i}m\p{\sigma_{i+1}}S_s}\right]_{[a_i] [b_i]} \cdot \mathcal{A}_{\mathbf{b}}\\
        =& \sum_{\mathbf b\in (\Zn)^\fn} \prod_{i=1}^\fn \left[I+\pa{t-s}m\p{\sigma_i}m\p{\sigma_{i+1}}\Theta_t^{\p{\sigma_i,\sigma_{i+1}}}\right]_{[a_i] [b_i]} \cdot \mathcal{A}_{\mathbf{b}}, \quad \text{with} \ \ \mathbf{b} = ([b_1], \ldots, [b_{\fn}]),\nonumber
    \end{align}
where $\scr{P}$ is defined in \eqref{def_projection_operator}. Using the identity $\partial_t\Theta_t^{\p{\sigma_i,\sigma_{i+1}}}=W^d\Theta_t^{\p{\sigma_i,\sigma_{i+1}}}\cK_{t,\p{\sigma_i,\sigma_{i+1}}}^{\pa{2}}$, we obtain
    \begin{equation}
        \begin{aligned}
            \frac{\rd}{\rd t}\mathcal{U}_{s, t, \boldsymbol{\sigma}}^{(\fn)}\circ{\cal A}={\vartheta^{(\fn)}_{t,\bsigma}}\circ\pa{\mathcal{U}_{s, t, \boldsymbol{\sigma}}^{(\fn)}\circ{\cal A}}, \quad \txt{for any }\fn\txt{-dimensional tensor } \cal A.
        \end{aligned}
    \end{equation}
\end{definition}

With the propagator estimates established in \Cref{lemma_properties_of_propagators}, one can derive the following bounds on the evolution kernel in \Cref{lem:sum_Ndecay,TailtoTail,lem:sum_decay}. The proofs follow the same strategy as in \cite{Band1D,Band2D,truong2025localizationlengthfinitevolumerandom}; see, for example, the proofs of Lemmas 7.1--7.3 in \cite{Band1D} and Lemma 7.7 in \cite{truong2025localizationlengthfinitevolumerandom}. For this reason, we omit the details.

\begin{lemma}
\label{lem:sum_Ndecay}
Let \smash{${\cal A}: (\Zn)^{\fn}\to \mathbb C$} be an $\fn$-dimensional tensor for a fixed $\fn\in \N$ with $\fn\ge 2$. Then, for each $t_0\le s \le t \le t_1$, we have that
\begin{align}\label{sum_res_Ndecay}
   \| {\cal U}_{s,t,\boldsymbol{\sigma}}\;\circ {\cal A}\|_{\infty} \prec \left(  \eta_s/ \eta_t\right)^{\fn }\cdot \|{\cal A}\|_{\infty} ,
\end{align}
where the $L^\infty$-norm of ${\cal A}$ is defined as $\|{\cal A}\|_{\infty}=\max_{\ba\in (\Zn)^\fn}|\cal A_{\ba}|$.
\end{lemma}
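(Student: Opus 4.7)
The plan is to exploit the tensor product structure of $\mathcal U^{(\fn)}_{s,t,\boldsymbol{\sigma}}$. From the explicit formula in \eqref{def_Ustz}, we may write
\begin{equation*}
\left(\mathcal U^{(\fn)}_{s,t,\boldsymbol{\sigma}}\circ\mathcal A\right)_{\ba} \;=\; \sum_{\mathbf b\in (\Zn)^\fn}\,\prod_{i=1}^{\fn}\bigl(M_{s,t,i}\bigr)_{[a_i][b_i]}\,\mathcal A_{\mathbf b},\qquad M_{s,t,i}:=I+(t-s)\,m(\sigma_i)m(\sigma_{i+1})\,\Theta^{(\sigma_i,\sigma_{i+1})}_t,
\end{equation*}
so that $\mathcal U^{(\fn)}_{s,t,\boldsymbol{\sigma}}=M_{s,t,1}\otimes\cdots\otimes M_{s,t,\fn}$ acts independently on each of the $\fn$ indices $[a_i]$. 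Taking the $L^\infty$ norm and applying the crude tensor bound, it suffices to prove the \emph{one-index} estimate
\begin{equation*}
\|M_{s,t,i}\|_{\infty\to\infty}\;\le\;C_\kappa\,\frac{1-s}{1-t}\;\sim\;\frac{\eta_s}{\eta_t},\qquad i=1,\ldots,\fn,
\end{equation*}
with a constant depending only on $\kappa$; multiplying $\fn$ such factors then gives the desired bound, the constant $C_\kappa^\fn$ being absorbed by $\prec$ since $\fn$ is fixed.

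To prove the one-index bound I split into two cases. \textbf{Case 1} ($\sigma_i\neq\sigma_{i+1}$): in the bulk one has $m(\sigma_i)m(\sigma_{i+1})=|m(\mathsf E)|^2=1$, and the entries of $\wh\Theta^{(+,-)}_t=(1-S_t)^{-1}=\sum_{k\ge 0}S_t^k$ are non-negative. The key algebraic input is that $S_t=S_{t_0}+(t-t_0)\SE$ has row sums equal to $t$ (because $\SE$ is doubly stochastic and $S_{t_0}\in t_0\fS_{t_0}$ has row sums $t_0$); summing the geometric series yields $\sum_y \wh\Theta^{(+,-)}_t(x,y)=1/(1-t)$. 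Averaging over $x\in[a]$ under $\mathscr P$ gives $\sum_{[b]}\Theta^{(+,-)}_t([a],[b])=1/(1-t)$, and hence
\begin{equation*}
\|M_{s,t,i}\|_{\infty\to\infty}\;\le\;1+\frac{t-s}{1-t}\;=\;\frac{1-s}{1-t}.
\end{equation*}
\textbf{Case 2} ($\sigma_i=\sigma_{i+1}$): here $\Theta^{(\sigma,\sigma)}_t$ is complex-valued, so I cannot use positivity. Instead I invoke the pointwise decay estimate \eqref{prop:ThfadC_short_pointwise} from \Cref{lemma_properties_of_propagators}, which gives $\sum_{y}|\wh\Theta^{(\sigma,\sigma)}_t(x,y)|\prec 1$ and therefore $\sum_{[b]}|\Theta^{(\sigma,\sigma)}_t([a],[b])|\prec 1$. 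Combined with $|m(\sigma)^2|=1$, this yields $\|M_{s,t,i}\|_{\infty\to\infty}\le 1+C(t-s)=O(1)\le C\,(1-s)/(1-t)$ since $(1-s)/(1-t)\ge 1$.

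The main obstacle is the sharp identity in Case 1: one must use both the doubly-stochastic structure of $\SE$ (to get row sums exactly equal to $t$) and the bulk identity $|m(\mathsf E)|^2=1$ in order to get the factor $1/(1-t)$ with no loss, which is what makes the inductive bound telescope into $((1-s)/(1-t))^\fn\sim(\eta_s/\eta_t)^\fn$. Case 2 is handled by absolute-value estimates and is not sharp. The overall argument follows the same template as Lemmas~7.6, 7.7 and A.2 of \cite{truong2025localizationlengthfinitevolumerandom}, with the tensor-product factorization above replacing any computation that used the flat block structure of $\SE$ in that reference.
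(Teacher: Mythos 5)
Your proposal is correct, and the mechanism is the standard one: factor the evolution kernel as a tensor product of $\fn$ one-index matrices $M_{s,t,i}=I+(t-s)\,m(\sigma_i)m(\sigma_{i+1})\Theta_t^{(\sigma_i,\sigma_{i+1})}$, and reduce the estimate to the $\ell^\infty\to\ell^\infty$ bound for each factor. Your Case 1 identity is clean and worth emphasizing: since every $S_u\in u\fS_u$ is a convex combination of the doubly stochastic $\SRBM$ and $\SE$ scaled by $u$, the row sums of $S_t$ are exactly $t$, hence $\sum_y\wh\Theta_t^{(+,-)}(x,y)=1/(1-t)$ exactly, and $\eta_s/\eta_t=(1-s)/(1-t)$ exactly by \eqref{eta_t_sim_1-t}, so the one-index row sum is $(1-s)/(1-t)=\eta_s/\eta_t$ with no loss whatsoever. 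Your Case 2 treatment via \eqref{prop:ThfadC_short_pointwise} (or directly \eqref{prop:ThfadC_short}) and the trivial bound $1\le\eta_s/\eta_t$ is also fine. This is essentially the same argument the paper delegates to Lemmas 7.6, 7.7, A.2 of \cite{truong2025localizationlengthfinitevolumerandom}; the minor advantage of your write-up is that the Case 1 row-sum computation is purely algebraic (using only the doubly stochastic structure of $S_t$ and $|m(\mathsf E)|^2=1$ in the bulk) and so transfers verbatim from the flat-block setting of that reference to the general variance profiles of this paper without invoking Fourier or heat-kernel estimates.
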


Given $t_0\leq s<t\leq t_1$, if a two-dimensional tensor $\cal A$ decays exponentially on the scale $\ell_s$, we can show that ${\cal U}^{(2)}_{s,t,\bsigma}\circ \cal A$ decays on the scale $\ell_t$ for $\bsigma=(+,-)$ or $(-,+)$.

\begin{lemma}
\label{TailtoTail}
For any $t\in[\ti,\tf]$ and $\ell\ge 0$, introduce the notation
 $${\cal T}_{t}(\ell) := (W^d\ell_t^d\eta_t)^{-2} \exp \big(- \left( \ell /\ell_t\right)^{1/2} \big).
$$
For $\bsigma\in\{(+,-),(-,+)\}$ and $s\in [t_0,t_1]$, suppose ${\cal A}_{\ba}$ satisfies that
$$
|{\cal A}_{\ba}|\le {\cal T}_{s}(|[a_1]-[a_2]|)+W^{-D},\quad \forall \ba=([a_1],[a_2]),
$$
for some constant $D>0$.
Then, for any $t\in[s,t_1]$, we have that
\begin{align}
\label{neiwuj}
\left({\cal U}^{(2)}_{s,t,\boldsymbol{\sigma}} \circ
{\cal A}\right)_{\ba} & \prec
 {\cal T}_{t}( |[a_1]-[a_2]|)+W^{-D} \cdot(\eta_s/\eta_t)^2, \quad {\rm if}\quad  |[a_1]-[a_2]|\ge (\log W)^{3/2}\ell_t .
\end{align}
\end{lemma}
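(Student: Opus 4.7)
The plan is to exploit the explicit product structure of the evolution kernel in \Cref{def_evolution_kernel} and reduce the estimate to separate bounds on each factor, supplied by the propagator estimates of \Cref{lemma_properties_of_propagators}. Write
\[
(\mathcal{U}^{(2)}_{s,t,\bsigma}\circ\cal A)_{\ba}=\sum_{[b_1],[b_2]\in\Zn}K^{(1)}([a_1],[b_1])\,K^{(2)}([a_2],[b_2])\,\cal A_{[b_1],[b_2]},
\]
where for $i=1,2$,
\[
K^{(i)}([a],[b]):= \bigl[I+(t-s)\,m(\sigma_i)m(\sigma_{i+1})\Theta_t^{(\sigma_i,\sigma_{i+1})}\bigr]_{[a][b]}.
\]
Since $\bsigma\in\{(+,-),(-,+)\}$, both factors involve the slow propagator $\Theta_t^{(+,-)}$ (one being the transpose of the other), so by property (iii) of \Cref{lemma_properties_of_propagators}, for any large $D_0$,
\[
|K^{(i)}([a],[b])|\prec \delta_{[a][b]}+\frac{t-s}{|1-t|\,\ell_t^d}\,\r e^{-c|[a]-[b]|/\ell_t}+W^{-D_0},
\]
while the row sums satisfy $\sum_{[b]}|K^{(i)}([a],[b])|\prec 1+(t-s)/|1-t|\lesssim \eta_s/\eta_t$.

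The main argument is a three-region decomposition. Assume $|[a_1]-[a_2]|\geq(\log W)^{3/2}\ell_t$. By the triangle inequality, for any $([b_1],[b_2])$ at least one of the three quantities $|[a_1]-[b_1]|$, $|[a_2]-[b_2]|$, $|[b_1]-[b_2]|$ is at least $|[a_1]-[a_2]|/3$. Split the sum over $([b_1],[b_2])$ into three pieces according to which one is the largest. On the first piece, extract the decay factor $\r e^{-c|[a_1]-[b_1]|/\ell_t}$ from $K^{(1)}$, peel off a stretched-exponential factor $\r e^{-c'(|[a_1]-[a_2]|/\ell_t)^{1/2}}$ using $|[a_1]-[b_1]|\ge |[a_1]-[a_2]|/3$, and close the remaining two sums using the row-sum bound on $K^{(2)}$ together with an $L^\infty$ bound $\|\cal A\|_\infty\prec (W^d\ell_s^d\eta_s)^{-2}$. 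The second piece is treated symmetrically using $K^{(2)}$. On the third piece, invoke the hypothesis $|\cal A_{\mathbf b}|\le \cal T_s(|[b_1]-[b_2]|)+W^{-D}$ together with the monotonicity $\ell_s\le \ell_t$, which gives $\exp(-(|[b_1]-[b_2]|/\ell_s)^{1/2})\le\exp(-(|[b_1]-[b_2]|/\ell_t)^{1/2})$, and then sum freely against the two kernels using their row-sum bounds.

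Collecting the three contributions produces the desired bound of the form $(W^d\ell_t^d\eta_t)^{-2}\r e^{-c''(|[a_1]-[a_2]|/\ell_t)^{1/2}}$ up to the additive $W^{-D_0}$ tails; the latter contribute at most $W^{-D_0}\cdot(\eta_s/\eta_t)^2$ after two applications of the row-sum bound, so choosing $D_0$ much larger than $D$ absorbs them into the claimed $W^{-D}\cdot(\eta_s/\eta_t)^2$ error. The stretched-exponential form is preserved by the standard ``semi-group'' property: sharp exponential decay on scale $\ell_t$ convolved against a stretched exponential on the same scale again yields a stretched exponential.

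The main obstacle is the careful tracking of the normalization prefactors: the input carries $(W^d\ell_s^d\eta_s)^{-2}$ and must be converted into $(W^d\ell_t^d\eta_t)^{-2}$ through the factor of order $W^d\ell_t^d$ gained from each propagator summation, treating the regimes $\ell_t<n$ and $\ell_t=n$ separately in view of \eqref{eq:ellt}. The threshold $(\log W)^{3/2}\ell_t$ is chosen precisely so that the stretched exponential $\exp(-c(|[a_1]-[a_2]|/\ell_t)^{1/2})$ can absorb the polynomial factors $W^{\OO(1)}$ arising from enumerating over $([b_1],[b_2])$ and from the implicit $W^{\tau}$ in the $\prec$-bounds. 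Once these prefactor identities are in place, the argument is exactly the one carried out for Lemma~A.2 in \cite{truong2025localizationlengthfinitevolumerandom} and we simply transfer it to the present setting.
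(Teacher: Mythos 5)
Your sketch correctly identifies the ingredients the paper relies on (it only refers to the proofs of Lemmas~7.6, 7.7, A.2 of \cite{truong2025localizationlengthfinitevolumerandom} and carries out no argument): the product form of $\mathcal{U}^{(2)}_{s,t,\bsigma}$, the exponential decay of $\Theta_t^{(+,-)}$ on scale $\ell_t$ from \eqref{prop:ThfadC}, the triangle inequality, and the role of the $(\log W)^{3/2}\ell_t$ cutoff. However, the Region~3 step as worded does not close. Write $R:=|[a_1]-[a_2]|$. Peeling off the factor $e^{-(R/3\ell_s)^{1/2}}\le e^{-\frac{1}{\sqrt 3}(R/\ell_t)^{1/2}}$ and then ``summing freely against the two kernels using their row-sum bounds'' produces a bound of size $(W^d\ell_s^d\eta_s)^{-2}(\eta_s/\eta_t)^2\,e^{-\frac{1}{\sqrt 3}(R/\ell_t)^{1/2}}=(W^d\ell_s^d\eta_t)^{-2}e^{-\frac{1}{\sqrt 3}(R/\ell_t)^{1/2}}$. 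Compared with the target $\mathcal{T}_t(R)=(W^d\ell_t^d\eta_t)^{-2}e^{-(R/\ell_t)^{1/2}}$ this is off by $(\ell_t/\ell_s)^{2d}e^{(1-\frac{1}{\sqrt 3})(R/\ell_t)^{1/2}}$, a factor $\ge 1$ that grows without bound in $R$, so it cannot be absorbed by $W^\tau$ from the $\prec$ notation. Relatedly, there is no ``factor of order $W^d\ell_t^d$ gained from each propagator summation'': the row sum of the $\Theta$-part of each kernel factor yields only $\OO((t-s)/(1-t))\lesssim\eta_s/\eta_t$, which is exactly what costs you $(\ell_t/\ell_s)^{2d}$ in the prefactor.

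The mechanism that actually closes the estimate is the one you allude to in your last paragraph but do not implement: a term-by-term convolution estimate, combining the \emph{pointwise} exponential decay of the propagator with the stretched-exponential decay of $\mathcal{A}$. Writing $K^{(i)}=\delta+(t-s)m m\,\Theta_t$ and splitting into the four cases $\delta\delta$, $\delta\Theta$, $\Theta\delta$, $\Theta\Theta$: in the $\delta\delta$ case the conclusion follows from $\ell_s\le\ell_t$ and $\ell_s^d\eta_s\ge\ell_t^d\eta_t$; in the mixed cases the convolution $\Theta_t\ast\mathcal{T}_s$ localizes the free sum to scale $\ell_s$ rather than $\ell_t$, so the prefactor works out to $\frac{\ell_t^d\eta_t}{\ell_s^d\eta_s}(W^d\ell_t^d\eta_t)^{-2}\le(W^d\ell_t^d\eta_t)^{-2}$ times an honest exponential $e^{-\frac{c}{2}R/\ell_t}\le e^{-(R/\ell_t)^{1/2}}$; and in the $\Theta\Theta$ case the double convolution leaves a residual of the form $(\ell_t/\ell_s)^{d}(W^d\ell_t^d\eta_t)^{-2}\bigl[e^{-\frac{c}{2}R/\ell_t}+e^{-(R/\ell_s)^{1/2}}\bigr]$, and the threshold $R\ge(\log W)^{3/2}\ell_t$ is used precisely here: $e^{\frac{c}{2}R/\ell_t-(R/\ell_t)^{1/2}}\gtrsim W^{c'(\log W)^{1/2}}\gg(\ell_t/\ell_s)^{\OO(1)}$, while $(R/\ell_s)^{1/2}-(R/\ell_t)^{1/2}=\bigl((\ell_t/\ell_s)^{1/2}-1\bigr)(R/\ell_t)^{1/2}$ dominates $\log(\ell_t/\ell_s)^{2d}$ for large $W$. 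This produces the stretched exponential with constant exactly $1$ and the correct prefactor, matching the cited reference; replacing the ``largest-of-three plus row-sum'' bullet with this four-case convolution estimate repairs the sketch.
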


If the tensor $\cal A$ exhibits faster-than-polynomial decay at scales larger than $\ell_s$, then we obtain a stronger bound than the $({\infty\to \infty})$-norm bound given by \Cref{lem:sum_Ndecay}. This bound can be further improved if $\cal A$ satisfies certain sum-zero property or symmetry.

\begin{lemma}\label{lem:sum_decay}
Let ${\cal A}: (\Zn)^{\fn}\to \mathbb C$ be an $\fn$-dimensional tensor for a fixed $\fn\in \N$ with $\fn\ge 2$. Suppose it satisfies the following property for some small constant $\e\in(0,1)$ and large constant $D>1$,
\begin{equation}\label{deccA0}
\max_{i,j\in \Zn}|[a_i]-[a_j]|\ge W^{\e}\ell_s \ \ \text{for} \ \ \ba=([a_1],\ldots, [a_\fn])\in (\Zn)^{\fn}  \implies  |\cal A_{\ba}|\le W^{-D} .
\end{equation}
Fix any $t_0\le s \le t \le t_1$ such that $(1-t)/(1-s)\ge W^{-d}$.
There exists a constant $C_\fn>0$ that does not depend on  $\e$ or $D$ such that the following bound holds (note  $\ell_t^d\eta_t\lesssim\ell_s^d\eta_s$ for $d\in\{1,2\}$ by the definition \eqref{eq:ellt}):
\begin{align}\label{sum_res_1}
    \left\|{\cal U}^{(\fn)}_{s,t,\boldsymbol{\sigma}} \circ {\cal A}\right\|_\infty \le W^{C_\fn\e}\frac{\ell_t^d }{\ell_s^d }\left(\frac{\ell_s^d \eta_s}{\ell_t^d \eta_t}\right)^{\fn} \|{\cal A}\|_\infty
    +W^{-D+C_\fn}.
\end{align}
In addition, stronger bounds hold in the following cases:
\begin{itemize}
\item[(I)] If we have $\sigma_1=\sigma_2$ for $\boldsymbol{\sigma}=(\sigma_1,\cdots, \sigma_\fn),$ then
\begin{align}\label{sum_res_2_NAL}
    \left\|{\cal U}^{(\fn)}_{s,t,\boldsymbol{\sigma}} \circ {\cal A}\right\|_\infty \le W^{C_\fn\e}  \left(\frac{\ell_s^d\eta_s}{\ell_t^d\eta_t}\right)^{\fn-1}  \|{\cal A}\|_{\infty}
   +W^{-D+C_\fn}
\end{align}
for a constant $C_\fn>0$ that does not depend on $\e$ or $D$.

\item[(II)] Suppose ${\cal A}$ satisfies the following sum-zero property:
\begin{align}\label{sumAzero}
 \sum_{[a_2],\ldots,[a_\fn]\in \Zn}{\cal A}_{\ba}=0 \quad \forall [a_1]\in \Zn.
\end{align}
Then, the following bound holds for a constant $C_\fn>0$ that does not depend on $\e$ or $D$:
\begin{align}\label{sum_res_2}
    \left\|{\cal U}^{(\fn)}_{s,t,\boldsymbol{\sigma}} \circ {\cal A}\right\|_\infty \le W^{C_\fn\e} \frac{\ell_t^{d-1}}{\ell_s^{d-1}} \left(\frac{\ell_s^d\eta_s}{\ell_t^d\eta_t}\right)^{\fn}  \|{\cal A}\|_{\infty}
   +W^{-D+C_\fn}.
\end{align}
If ${\cal A}$ further satisfies the following symmetry:
\be\label{eq:A_zero_sym}
\cal A_{([a],[a]+[b_2],\ldots, [a]+[b_\fn])} = \cal A_{([a],[a]-[b_2],\ldots, [a]-[b_\fn])},\quad \forall [a],[b_2],\ldots, [b_\fn]\in \Zn,
\ee
then the bound \eqref{sum_res_2} can be improved as follows:\footnote{
In previous works \cite{Band1D,Band2D,truong2025localizationlengthfinitevolumerandom}, the improved evolution bound was established only with the factor $\pa{\ell_s^d\eta_s/\ell_t^d\eta_t}^{\fn}$. However, a careful inspection of the arguments therein—see, for example, the proof of Lemma 7.7 in \cite{truong2025localizationlengthfinitevolumerandom}—reveals that the exponent can in fact be reduced to $\fn-1$.}
\begin{align}\label{sum_res_2_sym}
    \left\|{\cal U}^{(\fn)}_{s,t,\boldsymbol{\sigma}} \circ {\cal A}\right\|_\infty \le W^{C_\fn\e} \left(\frac{\ell_s^d\eta_s}{\ell_t^d\eta_t}\right)^{\fn-1}  \|{\cal A}\|_{\infty}
   +W^{-D+C_\fn}.
\end{align}
\end{itemize}

\end{lemma}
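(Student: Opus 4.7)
The plan is to attack all four claims uniformly by expanding ${\cal U}_{s,t,\bsigma}^{(\fn)}$ via the second representation in \eqref{def_Ustz}, namely as a product $\prod_i[I+(t-s)m(\sigma_i)m(\sigma_{i+1})\Theta_t^{(\sigma_i,\sigma_{i+1})}]_{[a_i][b_i]}$ of $\fn$ factors acting on the $i$-th tensor index, and to control each factor using the $\Theta$-propagator bounds collected in \Cref{lemma_properties_of_propagators}. The support hypothesis \eqref{deccA0} confines the effective $\mathbf b$-summation to a neighborhood of radius $\lesssim W^\e\ell_s$ around the diagonal $[a_1]=\cdots=[a_\fn]$; the off-support contributions are absorbed into the $W^{-D+C_\fn}$ error term via the uniform exponential decay of $\Theta_t^{(\sigma_1,\sigma_2)}$ on scale $\ell_t$.

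For the baseline estimate \eqref{sum_res_1}, I would use the pointwise decay \eqref{prop:ThfadC} together with the standard identity $(t-s)\sim\eta_s-\eta_t$ (coming from \eqref{eta_t_sim_1-t}) to see that summing a single factor $(t-s)\Theta_t^{(\sigma,\sigma')}([a],[b])$ against an $\ell^\infty$-bounded function over a ball of radius $W^\e\ell_s$ contributes at most $W^{C\e}\eta_s\ell_s^d/(\eta_t\ell_t^d)$; iterating this $\fn$ times and accounting for the volume constraint yields the factor $\ell_t^d/\ell_s^d\cdot(\ell_s^d\eta_s/\ell_t^d\eta_t)^\fn$. For case (I), the factor corresponding to $\sigma_1=\sigma_2$ is governed by the much stronger short-range bound \eqref{prop:ThfadC_short}, under which the $[b_1]$-sum contributes only $O(1)$ rather than $O(\eta_s\ell_s^d/\eta_t\ell_t^d)$, saving exactly the factor claimed in \eqref{sum_res_2_NAL}.

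For the sum-zero case \eqref{sum_res_2}, I would exploit \eqref{sumAzero} through a single cancellation step. Since $\sum_{[b_2],\ldots,[b_\fn]}\cal A_{\mathbf b}=0$ for each fixed $[b_1]$, one can subtract from $\prod_{i\ge 2}[\cdots]_{[a_i][b_i]}$ any reference value that is independent of $[b_2],\ldots,[b_\fn]$ without changing the sum. Choosing the reference so as to replace one of the entries $\Theta_t^{(\sigma_j,\sigma_{j+1})}([a_j],[b_j])$ by a finite difference, and applying \eqref{prop:BD1}, turns that factor into $|[b_j]-\text{ref}|/\langle\cdot\rangle^{d-1}\cdot(\lambda^2+1-t)^{-1}$; since $|[b_j]-\text{ref}|\lesssim W^\e\ell_s$ on the support of $\cal A$, this costs one power of $\ell_t$ from the dropped polynomial decay but gains one power of $\ell_s$ from the difference, producing the $\ell_t^{d-1}/\ell_s^{d-1}$ improvement. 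Under the additional symmetry \eqref{eq:A_zero_sym}, I would symmetrize the reference under $[b]\mapsto-[b]$ around a center, which kills the first-order term and exposes the second-order finite difference \eqref{prop:BD2}, yielding another factor $\ell_s/\ell_t$ and hence \eqref{sum_res_2_sym}.

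The hard part will be the bookkeeping in the sum-zero case: the cancellation is a single scalar identity, so one must extract the full $\ell_t^{d-1}/\ell_s^{d-1}$ improvement from one finite-difference substitution while bounding the other $\fn-1$ factors at their pointwise sizes, all in a form that survives the $\ell^\infty$-norm estimate. The critical regime is $\eta_t\ll\eta_s$, where the individual $\eta_s/\eta_t$ losses accumulate across the $\fn$ factors; it is essential that the prefactor $(\lambda^2+1-t)^{-1}$ appearing in \eqref{prop:BD1} and \eqref{prop:BD2} supplies the correct $\ell_t$-scale renormalization rather than an $\ell_s$-scale one, and that the tail $\{\max_{ij}|[a_i]-[a_j]|\ge W^\e\ell_s\}$ can be absorbed into $W^{-D+C_\fn}$ with a combinatorial constant $C_\fn$ that is independent of $\e$ and $D$.
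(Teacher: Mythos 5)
The paper does not prove this lemma itself; it delegates the argument to Lemmas~7.6, 7.7, and~A.2 of \cite{truong2025localizationlengthfinitevolumerandom}. Your outline nevertheless matches the expected technique: expand ${\cal U}_{s,t,\bsigma}^{(\fn)}$ as a tensor product of $\fn$ factors $[I+(t-s)m(\sigma_i)m(\sigma_{i+1})\Theta_t^{(\sigma_i,\sigma_{i+1})}]$, use the support hypothesis \eqref{deccA0} to confine all but one $\mathbf b$-index to a ball of radius $W^\e\ell_s$, apply the exponential decay \eqref{prop:ThfadC} for the unconstrained index (giving a Ward-identity-type sum $\sim(t-s)/\eta_t\sim\eta_s/\eta_t$), the pointwise cap $((1-t)\ell_t^d)^{-1}$ for the $\fn-1$ constrained indices, the short-range bound \eqref{prop:ThfadC_short} when $\sigma_1=\sigma_2$, and the first- and second-order finite differences \eqref{prop:BD1}--\eqref{prop:BD2} after telescoping once the sum-zero (and symmetry) conditions are available — exactly the combination the paper also deploys around \eqref{Theta_expansion} in the proof of \Cref{bounds_on_primitive_loops}.

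Two small inaccuracies in your accounting are worth flagging, although they do not change the final answer. First, in your description of \eqref{sum_res_1} you attribute a contribution of $W^{C\e}\eta_s\ell_s^d/(\eta_t\ell_t^d)$ to each of the $\fn$ index-sums, including the unconstrained $[b_1]$-sum, and then recover the extra $\ell_t^d/\ell_s^d$ by ``accounting for the volume constraint.'' The cleaner bookkeeping is that the $[b_1]$-sum is genuinely unconstrained and contributes $\eta_s/\eta_t$, while only the remaining $\fn-1$ constrained sums each contribute $W^{C\e}\eta_s\ell_s^d/(\eta_t\ell_t^d)$; multiplying these directly gives the prefactor $\frac{\ell_t^d}{\ell_s^d}(\ell_s^d\eta_s/\ell_t^d\eta_t)^\fn$ in \eqref{sum_res_1}, and correspondingly the gain in case~(I) is exactly that this unconstrained $\eta_s/\eta_t$ becomes $O(1)$ under \eqref{prop:ThfadC_short}. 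Second, in the sum-zero case the finite difference \eqref{prop:BD1} does \emph{not} improve on the pointwise bound uniformly — when $[a_j]$ sits inside the ball of radius $W^\e\ell_s$ around $[b_1]$ (and $d=2$), the factor $|[b_j]-[b_1]|/\langle\cdot\rangle^{d-1}$ is $O(1)$ and the finite-difference bound $\asymp(\lambda^2+1-t)^{-1}$ is of the same size as $((1-t)\ell_t^d)^{-1}$. The actual gain comes from splitting the $[b_1]$-sum into a near region (where the pointwise bound suffices because the $\Theta_t([a_1],[b_1])$-sum over $|[b_1]|\lesssim W^\e\ell_s$ is already a factor $\ell_s^d/\ell_t^d$ small) and a far region $|[b_1]|\gtrsim W^\e\ell_s$ (where the $\langle\cdot\rangle^{d-1}$ denominator supplies the improvement when integrated against the $\Theta_t$ decay). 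Your outline does not make this near/far decomposition explicit, but it is precisely the mechanism by which the prefactor $(\lambda^2+1-t)^{-1}$ in \eqref{prop:BD1}--\eqref{prop:BD2} produces the $\ell_t$-scale renormalization you correctly identify as essential.
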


\section{Deterministic analysis}
\label{sec_analysis_of_primitive_loops}

Throughout this section, we fix a target spectral parameter $z=E+\ii \eta$ with $|E|\leq 2-\kappa$ and $\eta\in\qa{W^{\fc}\eta_*,\fc^{-1}}$ for some small constants $\kappa,\fc>0$, as specified in \Cref{sec_preliminaries}. We then select the deterministic flow according to \Cref{lem:paraselect}.
Our aim is to state and prove several deterministic estimates for the $\cK$-loops and $\Theta$-propagators that will be used in the subsequent analysis. These types of estimates have previously been established in \cite{Band1D, Band2D, truong2025localizationlengthfinitevolumerandom} for random band matrices with block variance profiles. The derivations in those works rely on the random walk representation of the $\Theta$-propagators and the tree representation formula for $\cK$-loops.
However, for the more general, non-translation-invariant variance profiles considered in \Cref{def_considered_model}, the conventional random walk representation of the $\Theta$-propagators is no longer applicable. Furthermore, in our setting, the tree representation formula can only be formulated for entrywise primitive loops, which are composed of entrywise $\Theta$-propagators. Since we only have access to the pointwise upper bounds \eqref{prop:ThfadC_pointwise} and \eqref{prop:ThfadC_short_pointwise}, and lack sufficient control over their first- and second-order finite differences, we are unable to derive satisfactory estimates for the entrywise primitive loops directly.

To establish the bounds \eqref{prop:ThfadC}–\eqref{prop:BD2} for both the (entrywise and blockwise) $\Theta$-propagators, we utilize the variance-profile flow \eqref{def_S_t} and develop a dynamical approach that yields a new random–walk representation of \smash{$\Theta_t^{\pa{+,-}}$}. The resulting transition matrix enjoys properties analogous to those of $\scr{P}\pa{S}=W^d\qq{S}$ (see \eqref{def_average_tensor} and \eqref{def_projection_operator}). This allows us to apply the random–walk and Fourier analytic techniques used in the proofs of \cite[Lemma 3.10]{truong2025localizationlengthfinitevolumerandom} and \cite[Lemma 2.14]{Band2D}. Further details are provided in the proof of \Cref{lemma_properties_of_propagators}.

To estimate the $\cK$-loops, we again rely on a dynamical analysis via their evolution equation (see \eqref{pro_dyncalK}). The general strategy is to first derive bounds in the global regime and then propagate these bounds into the local regime using the evolution kernel defined in \Cref{def_evolution_kernel}. However, the standard estimates on the evolution kernel from \Cref{lem:sum_Ndecay,lem:sum_decay} are insufficient for obtaining the optimal bounds. To overcome this, we make further use of the structural features of the evolution equation \eqref{pro_dyncalK} and identify a crucial cancellation, which ultimately yields the optimal estimates on $\cK$-loops.
We also note that a related dynamical analysis of $\cK$-loops was carried out independently in \cite[Section 11]{erdos2025zigzagstrategyrandomband}. Our approach, however, requires additional work: unlike \cite{erdos2025zigzagstrategyrandomband}, which assumes sufficiently strong bounds on the entrywise $\Theta$-propagators, we must exploit further block-reduction structures arising from the variance-profile flow \eqref{def_S_t} and from the definition of the $\cK$-loops themselves.

\subsection{Properties of primitive loops}

As shown in \cite[Section 3]{Band1D} and \cite[Section 4]{truong2025localizationlengthfinitevolumerandom}, the $\cK$-loops defined in \Cref{Def_Ktza} admit a tree–representation formula, from which many fundamental properties follow directly, using essentially the same arguments as in \cite{Band1D,truong2025localizationlengthfinitevolumerandom}. For completeness, we collect these properties in \Cref{lem:baiscK}. We remark that all of them can, in principle, also be derived from the recursive relation \eqref{wh_cK_recursive_relation} together with the evolution equation \eqref{entrywise_pro_dyncalK}, but we do not pursue this approach here in order to keep the presentation streamlined.

\begin{lemma}\label{lem:baiscK}
Let $S$ be any symmetric doubly stochastic matrix defined on $\ZL$, and let $t \in (0,1]$. With a slight abuse of notation, define the entrywise primitive loop \smash{$\wh\cK^{(\fn)}_{t,\bsigma,\bx}$} for any $\fn\in\N$, $\bsigma\in\{+,-\}^{\fn}$, and $\bx\in(\ZL)^{\fn}$ by replacing $S$ with $tS$ in \eqref{wh_cK_recursive_relation} and fixing the spectral parameter at $\sE$, i.e., $m(+)=m(\sE)$ and $m(-)=\overline{m}(\sE)$ (where $\sE$ is selected as in \eqref{pick_parameter_1}). The corresponding blockwise primitive loops are then defined via \eqref{def_cK_block} by
\[\cK_{t,\bsigma,\ba}^{\pa{n}}=\qq{\wh \cK_{t,\bsigma,\cdot}^{\pa{n}}}_{\ba},\quad \forall\ \ba\in\p{\Zn}^{\fn}.
\]
Then, the following statements hold.
    \begin{enumerate}
        \item {\bf Ward's identity}: For any integer $\fn\ge 2$ and $\bsigma\in\ha{+,-}^{\fn}$ with $\sigma_1=-\sigma_{\fn}$, the primitive loops satisfy the following \emph{Ward identity} at vertex $[a_\fn]$:
\begin{align}\label{WI_calK}
\sum_{a_\fn}{\cal K}^{(\fn)}_{t, \boldsymbol{\sigma}, \ba}=
\frac{1}{2\ii W^d\eta_t}\left( {\cal K}^{(\fn-1)}_{t,  \wh\bsig^{(+,\fn)}, \wh\ba^{(\fn)}}- {\cal K}^{(\fn-1)}_{t,  \wh\bsig^{(-,\fn)}, \wh\ba^{(\fn)}}\right) ,
\end{align}
where $\wh\bsig^{(\pm,\fn)}$ is defined by removing $\sigma_\fn$ from $\boldsymbol{\sigma}$ and replacing $\sigma_1$ with $\pm$, i.e., $\wh\bsig^{(\pm,\fn)}=(\pm, \sigma_2, \cdots \sigma_{\fn-1})$, and  $\wh\ba^{(\fn)}$ is obtained by removing $a_\fn$ from $\ba$, i.e.,
$\wh\ba^{(\fn)}=(a_1, a_2,\cdots, a_{\fn-1}).$

        \item {\bf Shift invariance}: For any cyclic shift $\tau_k$ acting on $\pa{s_1,\dots,s_\fn}$ by $\tau_{k}\pa{s_1\ldots,s_\fn}=\pa{s_{k+1},\ldots,s_{k+\fn}}$,with the convention $s_i \equiv s_j$ whenever $i=j \mod \fn$, we have for any $k\in{1,\dots,\fn}$:
        \begin{equation}\label{shift_invariance}
                \wh \cK_{t,\bsigma,\bx}^{\pa{\fn}}=\wh \cK_{t,\tau_k{\bsigma},\tau_k\bx}^{\pa{\fn}}.
        \end{equation}
        \item {\bf Translation invariance}: If $S$ satisfies the block translation invariance  \eqref{def_block_translation_invarinace}, then the tensor \smash{$\cal{A}_{\ba}:=\cK_{t,\bsigma,\ba}^{\pa{\fn}}$} is translationally invariant in the sense
        \begin{equation}\label{eq:traninv}
                 \cal A_{([a]+\qa{b_1},[a]+[b_2],\ldots, [a]+[b_\fn])} =\cal A_{(\qa{b_1},[b_2],\ldots, [b_\fn])},\quad \forall [a],\qa{b_1},[b_2],\ldots, [b_\fn]\in \Zn.
        \end{equation}
        \item {\bf Parity symmetry}: If $S$ satisfies both block translation invariance \eqref{def_block_translation_invarinace} and parity symmetry in \eqref{parity_symmetry_block_2D}, then the tensor \smash{$\cal{A}_{\ba}:=\cK_{t,\bsigma,\ba}^{\pa{\fn}}$} satisfies
        \begin{equation}\label{partity_symmetry_cK}
                \cal A_{([a],[a]+[b_2],\ldots, [a]+[b_\fn])} = \cal A_{([a],[a]-[b_2],\ldots, [a]-[b_\fn])},\quad \forall [a],[b_2],\ldots, [b_\fn]\in \Zn.
        \end{equation}
    \end{enumerate}
\end{lemma}
\begin{proof}
Using the recursive equation \eqref{wh_cK_recursive_relation}, and applying an argument similar to the proof of \Cref{lemma_evolution_of_primitive_loops} below, one obtains the following evolution equation:
    \begin{equation}\label{dtKgen}
    \partial_t \Kgen_{t,\bm{\sigma},\bx}^{(\fn)}
      = \sum_{1 \leq k < \ell \leq \fn} \sum_{c, d\in \ZL}
        \pa{(\mathcal{G}_L)^{\p{c}}_{k\ell} \circ \Kgen^{(\fn)}_{t,\bm{\sigma},\bx}}
        S_{cd}
        \pa{(\mathcal{G}_R)^{\p{d}}_{k\ell} \circ \Kgen^{(\fn)}_{t,\bm{\sigma},\bx}}.
  \end{equation}
  At the initial time $t=0$, the recursive relation \eqref{wh_cK_recursive_relation} reduces to
  \begin{equation}\label{dtKgen_initial}
      \begin{aligned}
          \wh \cK_{0,\bsigma,\bx}^{\pa{\fn+1}}=m\p{\sigma_1}\wh \cK_{0,\pa{\sigma_2,\ldots,\sigma_{\fn+1}},\pa{x_2,\ldots,x_\fn,x_1}}^{\pa{\fn}}\delta_{x_1x_{\fn+1}},
      \end{aligned}
  \end{equation}
  which, together with \eqref{eq:loop1}, yields that $\wh \cK_{0,\bsigma,\bx}^{\pa{\fn}}=\prod_{i=1}^{\fn}\q{m\pa{\sigma_i}\delta_{x_ix_{i+1}}}$. The system of ODEs \eqref{dtKgen} with the initial condition \eqref{dtKgen_initial} coincides with the defining evolution of the \smash{$\Kgen$}-loops in \cite[Definition 4.15]{truong2025localizationlengthfinitevolumerandom}. Consequently, Ward’s identity \eqref{WI_calK} follows directly from \cite[Lemma 4.20]{truong2025localizationlengthfinitevolumerandom}. Properties (ii)–(iii) then follow from the tree–representation formula for \smash{$\Kgen$}-loops, given in \cite[Lemma 4.16]{truong2025localizationlengthfinitevolumerandom}. We omit the details.
\end{proof}
We now establish several upper bounds for the primitive loops. For clarity of presentation, we work under the setting introduced at the beginning of \Cref{sec_preliminaries} and use the notations developed therein.
\begin{lemma}[Bounds on primitive loops]\label{bounds_on_primitive_loops}
    Consider any $t\in \qa{\ti,\tf}$ and let $S_t\in t\fS_t$ (recall \eqref{pick_parameter_1} and \eqref{def_variance_flow}). For any integer $\fn\geq 2$, $\bsigma\in\ha{+,-}^{\fn}$, and $\ba\in\p{\Zn}^\fn$, denote the $\cK$-loop defined in \Cref{Def_Ktza} as \smash{$\cK_{t,\bsigma,\ba}^{\pa{\fn}}$}.
    \begin{enumerate}
        \item[(1)] {\bf Fast decay}: For any constant $\varepsilon>0$ and $D>0$, we have
        \begin{equation}\label{fast_decay_K}
                \max_{i,j\in \qq{\fn}}|[a_i]-[a_j]|\ge W^{\e} \ell_t \ \implies \ \cK_{t,\bsigma,\ba}^{\pa{\fn}}=\OO(W^{-D}) \quad \text{for}\quad \ba=([a_1],[a_2],\ldots, [a_\fn]) .
        \end{equation}
        \item[(2)] {\bf Upper bound}: For any $\bsigma\in\ha{+,-}^{\fn}$ and $\ba\in\p{\Zn}^\fn$, we have
        \begin{equation}\label{eq:bcal_k}
            {\cal K}^{(\fn)}_{t, \boldsymbol{\sigma}, \ba}  \prec \left(W^d \ell_t^d (1-t) \right)^{-\fn+1}\sim \left(W^d \ell_t^d \eta_t \right)^{-\fn+1}.
        \end{equation}
        Moreover, if $\qa{a_i}\neq \qa{a_j}$ for some $i,j\in \qq{\fn}$, the upper bound can be improved as
        \begin{equation}\label{eq:bcal_k_improved}
            \begin{aligned}{\cal K}^{(\fn)}_{t, \boldsymbol{\sigma}, \ba}  \prec \frac{\lambda^2}{\lambda^2+\eta_t} \left(W^d \ell_t^d \eta_t \right)^{-\fn+1}.
            \end{aligned}
        \end{equation}
    \end{enumerate}
\end{lemma}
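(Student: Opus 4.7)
The plan is to prove both parts by induction on the loop length $\fn$, using the evolution equation \eqref{pro_dyncalK} in Duhamel form. The base case $\fn=2$ is immediate from the identification \eqref{cK_and_Theta}, which reduces everything to the $\Theta$-propagator bounds in \Cref{lemma_properties_of_propagators}: the fast decay \eqref{fast_decay_K} follows from \eqref{prop:ThfadC}--\eqref{prop:ThfadC_short}, the upper bound \eqref{eq:bcal_k} follows from $|1-t|\ell_t^d \asymp \eta_t\ell_t^d$ applied to the pointwise bounds, and the improved bound \eqref{eq:bcal_k_improved} for $[a]\neq[b]$ follows from \eqref{prop:ThfadC_improved}--\eqref{prop:ThfadC_short_improved}.

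For the induction step I would first isolate the part of \eqref{pro_dyncalK} that is linear in $\cK^{(\fn)}$. The summands indexed by $(k,l)$ with $l=k+1$ for $k=1,\ldots,\fn-1$, together with $(k,l)=(1,\fn)$, each produce a length-$2$ loop paired with a length-$\fn$ loop; using \eqref{cK_and_Theta} and the transposition symmetry of $\Theta$ one checks that these $\fn$ boundary terms assemble exactly into $\vartheta^{(\fn)}_{t,\bsigma}\circ \cK^{(\fn)}_t$ from \eqref{def:op_thn}. Writing the remainder as
\begin{equation*}
R^{(\fn)}_{t,\bsigma,\ba} := W^d\sum_{\substack{1\le k<l\le \fn\\ 2\le l-k\le \fn-2}}\sum_{[a]\in\Zn} \cutL^{[a]}_{k,l}\circ \cK^{(\fn)}_t \cdot \cutR^{[a]}_{k,l}\circ \cK^{(\fn)}_t,
\end{equation*}
which is quadratic in $\cK$-loops of length between $3$ and $\fn-1$ and thus accessible to the inductive hypothesis, the identity $\partial_t \cK^{(\fn)} = \vartheta^{(\fn)}_{t,\bsigma}\circ\cK^{(\fn)} + R^{(\fn)}_t$ yields the Duhamel representation
\begin{equation*}
\cK^{(\fn)}_t = \mathcal{U}^{(\fn)}_{t_0,t,\bsigma}\circ \cK^{(\fn)}_{t_0} + \int_{t_0}^t \mathcal{U}^{(\fn)}_{s,t,\bsigma}\circ R^{(\fn)}_s\,\dd s.
\end{equation*}
I would take $t_0$ slightly above $\ti$, where $\ell_{t_0}\sim 1$ and $\eta_{t_0}\sim 1$, so that the required estimates on $\cK^{(\fn)}_{t_0}$ follow by direct inspection of the recursion \eqref{wh_cK_recursive_relation} from the base-case $\Theta$-bounds at scale one.

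For the fast-decay statement \eqref{fast_decay_K}, the inductive hypothesis gives super-polynomial smallness of every factor in $R^{(\fn)}_s$ as soon as the block-indices are separated by more than $W^{\e}\ell_s$, and the kernel $\mathcal{U}^{(\fn)}_{s,t,\bsigma}$ transports this decay from scale $\ell_s$ to scale $\ell_t$ via an iterated application of \Cref{TailtoTail} to the tensor-product form in \eqref{def_Ustz}. For the upper bound \eqref{eq:bcal_k}, combining the inductive hypothesis with the default kernel bound \eqref{sum_res_1} in \Cref{lem:sum_decay} would leave a spurious factor of $\ell_t^d/\ell_s^d$ in the integrand, yielding the wrong exponent of $\ell_t$ after integration in $s$. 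This is the point where the ``crucial cancellation'' becomes essential: applying Ward's identity \eqref{WI_calK} to the $\cutL$-factor and exploiting the translation and parity symmetries \eqref{shift_invariance}, \eqref{partity_symmetry_cK} (preserved under induction thanks to \eqref{def_block_translation_invarinace}, \eqref{parity_symmetry_block_2D}), one shows that $R^{(\fn)}_s$ either satisfies the sum-zero property \eqref{sumAzero} or the antipodal parity \eqref{eq:A_zero_sym}, up to errors already of the target size. This unlocks the sharper estimates \eqref{sum_res_2} or \eqref{sum_res_2_sym} in \Cref{lem:sum_decay}, saves precisely the missing factor, and closes the induction.

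Finally, the improved bound \eqref{eq:bcal_k_improved} is obtained by tracking an additional weight $\lambda^2/(\lambda^2+\eta_t)$ through the induction. In the base case it appears directly from \eqref{prop:ThfadC_improved}--\eqref{prop:ThfadC_short_improved}. In the inductive step, if the target loop has $[a_i]\neq[a_j]$ for some $i,j$, then in the Duhamel kernel $\mathcal{U}^{(\fn)}_{s,t,\bsigma}$, expanded as a tensor product of factors $I+(t-s)m(\sigma_i)m(\sigma_{i+1})\Theta^{(\sigma_i,\sigma_{i+1})}_t$, at least one factor must be an off-diagonal $\Theta$-propagator in order to propagate the index distinction from $R^{(\fn)}_s$ to $\cK^{(\fn)}_t$; the improved pointwise bounds \eqref{prop:ThfadC_improved}--\eqref{prop:ThfadC_short_improved} then supply the desired $\lambda^2/(\lambda^2+\eta_t)$ factor, which survives Duhamel because $\eta_s \ge \eta_t$ implies $\lambda^2/(\lambda^2+\eta_s)\le \lambda^2/(\lambda^2+\eta_t)$. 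The main obstacle in the plan is the cancellation step: extracting the sum-zero or parity structure from $R^{(\fn)}_s$ after the cut-and-glue and projection operations is delicate, since the boundary sum $\sum_{[a]}$ and the projection $\scr{P}$ can easily break these symmetries unless one carefully bookkeeps the invariance of the $\cK$-tensors (propagated inductively via the translation and parity of $S_t$) and applies Ward's identity to exactly the correct block-index. This is precisely where the block-translation invariance \eqref{def_block_translation_invarinace} and parity \eqref{parity_symmetry_block_2D} of the variance profile play an indispensable role.
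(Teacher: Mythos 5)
Your skeleton (induction on $\fn$, Duhamel along the evolution equation \eqref{pro_dyncalK}, base case $\fn=2$ from the $\Theta$-propagator estimates) matches the paper, and you correctly identify that the main obstruction for \eqref{eq:bcal_k} is a missing cancellation. However, the mechanism you propose for that cancellation is not the one the paper uses, and I do not believe it closes the argument.

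The first problem is the initial-value term $\scr U_1(t)=\mathcal{U}^{(\fn)}_{\ti,t,\boldsymbol\sigma}\circ\cK^{(\fn)}_{\ti}$, which you do not address: it is this term, not the nonlinear $R^{(\fn)}_s$-integral, that is the dominant source of loss. Applying \eqref{sum_res_2_sym} (or any of the kernel bounds in \Cref{lem:sum_decay}) directly to $\cK^{(\fn)}_{\ti}$ with $\ell_{\ti}\eta_{\ti}\sim 1$ and $\|\cK^{(\fn)}_{\ti}\|_\infty\prec W^{-d(\fn-1)}$ gives $\prec W^{C\e}(\ell_t^d\eta_t)^{-\fn}W^{-d(\fn-1)}$, which is larger than the target $(W^d\ell_t^d\eta_t)^{-\fn+1}$ by a factor $(\ell_t^d\eta_t)^{-1}$. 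No symmetry of $\cK^{(\fn)}_{\ti}$ alone recovers this. The second problem is that the sum-zero property \eqref{sumAzero} does not hold for $R^{(\fn)}_s$: that property is associated with martingale fluctuations, not with products of primitive loops, so only the parity route \eqref{eq:A_zero_sym} is available to you, and it gives precisely the inadequate bound just described.

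What the paper actually does is qualitatively different. It first expands each $I+(s-u)m^2\Theta_s$ factor in the kernel, discards the identity parts (they are already of the target size by the fast-decay property), then expands each remaining $\Theta_s([a_i],[b_i])$ around a single pivot $[b_1]$ using the zeroth/first/second-order finite-difference bounds \eqref{prop:BD1}--\eqref{prop:BD2} (this is where translation and parity of the $\cK$-loops and the bound \eqref{eq:bcal_k_improved} at lower order are used, to absorb the $(\lambda^2+1-t)^{-1}$ singularities). This reduces both $\scr U_1(t)$ and each $\scr U_2^{(k,l)}(t)$ to the form $\sum_{[a]}\prod_i\Theta_t([a_i],[a])\cdot\cR(t)$ plus acceptable error, with a \emph{scalar} coefficient $\cR(t)$. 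Only then does the paper invoke the cancellation: integrating the evolution identity $\partial_u\sum_{\mathbf b}\cK_{u,\bsigma,\mathbf b}=W^d\sum_{k<l}\sum_{[a]}\cdots$ against $(t-u)^\fn$ and integrating by parts in $u$ shows that
\begin{equation*}
\cR_1(t)+W^d\!\!\sum_{2\le l-k\le\fn-2}\!\!\cR_2^{(k,l)}(t)=\frac{\fn}{n^d}\int_{\ti}^t\frac{1-t}{1-u}\,(t-u)^{\fn-1}\sum_{\mathbf b}\cK^{(\fn)}_{u,\bsigma,\mathbf b}\,\dd u,
\end{equation*}
where the extra factor $(1-t)/(1-u)\le 1$ is exactly the gain that repairs the $1/\eta_t$ overflow. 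The remaining integral is then controlled by a separate ``pure loop'' estimate and a Ward's-identity bound $\sum_{[a_2],\ldots,[a_\fn]}\cK^{(\fn)}_{u,\bsigma,\ba}\prec(W^d\eta_u)^{-\fn+1}$ (the paper's Step (ii), which you also omit, and which is not the way you propose to use Ward's identity). So the cancellation is a telescoping identity in the Duhamel formula, not a symmetry of $R^{(\fn)}_s$ feeding \Cref{lem:sum_decay}; without it the initial-value term alone already breaks your estimate.
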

\begin{proof}
These properties for the 2-$\cK$-loop in \eqref{cK_and_Theta} have already been established in \Cref{lemma_properties_of_propagators}. Therefore, in the remainder of the proof we assume $n\geq 3$.
By \Cref{lemma_variance_flow}, there exists some $S_{\ti}\in \ti\fS_{\ti}$ such that $S_t\pa{\ti,S_{\ti}}=S_t$ and $S_s\pa{\ti,S_{\ti}}\in s\fS_s$ for all $s\in\qa{\ti,t}$, where the flow $S_s\pa{\ti,S_{\ti}}$ is defined by \eqref{def_S_t}. Applying the evolution equation \eqref{pro_dyncalK}, we obtain
    \begin{equation}
        \begin{aligned}
            &\frac{\dd}{\dd s}\,{\cK}^{(\fn)}_{s, \boldsymbol{\sigma}, \ba}
       =
        W^d\sum_{1\le k < l \le \fn} \sum_{\qa{a}\in \Zn}  \pa{\cutL^{\qa{a}}_{k, l} \circ {\mathcal{K}}^{(\fn)}_{s, \boldsymbol{\sigma}, \ba}}  \cdot \pa{\cutR^{\qa{a}}_{k, l} \circ {\mathcal{K}}^{(\fn)}_{s, \boldsymbol{\sigma}, \ba}}\\
        =&\left({{\vartheta}}^{(\fn)}_{s, \boldsymbol{\sigma}} \circ{ \mathcal{K}_{s,\bsigma,\cdot}^{\pa{\fn}}}\right)_{\ba}+W^d\sum_{1\le k < l \le \fn:2\leq l-k\leq n-2} \sum_{\qa{a}\in \wt\Z_n^d}  \pa{\cutL^{\qa{a}}_{k, l} \circ {\mathcal{K}}^{(\fn)}_{s, \boldsymbol{\sigma}, \ba} } \cdot  \pa{\cutR^{\qa{a}}_{k, l} \circ {\mathcal{K}}^{(\fn)}_{s, \boldsymbol{\sigma}, \ba}},
        \end{aligned}
    \end{equation}
    where recall that the operator $\vartheta_{s,\bsigma}^{\pa{\fn}}$ is defined in \eqref{def:op_thn}. Applying Duhamel's principle, we get
    \begin{equation}\label{cK_Duhamel}
            {\cK}^{(\fn)}_{s, \boldsymbol{\sigma}, \ba}=\scr{U}_1\pa{s}+W^{d}\sum_{1\leq k<l\leq \fn: 2\leq l-k\leq \fn-2}\scr{U}_2^{\pa{k,l}}\pa{s},
    \end{equation}
    where we denote
    \begin{align}\label{scrU_1}
            \scr{U}_1\pa{s}&:=\left(\mathcal{U}^{(\fn)}_{\ti, s, \boldsymbol{\sigma}} \circ  \mathcal{K}^{(\fn)}_{\ti, \boldsymbol{\sigma},\cdot}\right)_{\ba}=\sum_{\mathbf b\in (\wt\Z_n^d)^\fn} \prod_{i=1}^\fn \left[I+\pa{s-\ti}m\p{\sigma_i}m\p{\sigma_{i+1}}\Theta^{\p{\sigma_{i},\sigma_{i+1}}}_s\right]_{[a_i] [b_i]} \cdot \cK_{\ti,\bsigma,\mathbf{b}}^{\pa{\fn}},\\
            \scr{U}_2^{\pa{k,l}}\pa{s}&:=\int_{\ti}^s\rd u\sum_{\mathbf b\in (\wt\Z_n^d)^\fn} \prod_{i=1}^\fn \left[I+\pa{s-u}m\p{\sigma_i}m\p{\sigma_{i+1}}\Theta^{\p{\sigma_{i},\sigma_{i+1}}}_s\right]_{[a_i] [b_i]} \nonumber\\
            &\qquad\qquad\times\sum_{\qa{a}\in \wt\Z_n^d}  \pa{\cutL^{\qa{a}}_{k, l} \circ {\mathcal{K}}^{(\fn)}_{u, \boldsymbol{\sigma}, \mathbf{b}} }  \cdot \pa{ \cutR^{\qa{a}}_{k, l} \circ {\mathcal{K}}^{(\fn)}_{u, \boldsymbol{\sigma}, \mathbf{b}}}.\label{scrU_2}
    \end{align}
    The remainder of the proof is organized into the following three steps.
    \begin{enumerate}
        \item {\bf Global bound}:
        We first show that the estimates \eqref{fast_decay_K}, \eqref{eq:bcal_k}, and \eqref{eq:bcal_k_improved} hold $t$ replaced by $\ti$.
        \item {\bf Pure loop bound}: If $\bsigma=\pa{\sigma_1,\sigma_2,\ldots,\sigma_{\fn}}$ is a pure loop, i.e., $\sigma_1=\sigma_2=\cdots=\sigma_{\fn}$, then
        \begin{equation}\label{pure_loop_bound}           \sup_{s\in\qa{\ti,t}}\max_{\ba}\abs{\cal{K}^{(\fn)}_{s,\bm{\sigma},\ba} }= \OO_{\prec}\p{W^{-d(\fn-1)}}.
        \end{equation}
        Moreover, for any general $\bsigma\in\ha{+,-}^{\fn}$, the pure loop bound implies that uniformly for $s\in\qa{\ti,t}$,
        \begin{equation}\label{sum_zero_cK}
                \sum_{\qa{a_2},\ldots,\qa{a_\fn}} \mathcal{K}^{(\fn)}_{s,\bm{\sigma},\ba}= \OO_{\prec}\p{\p{W^d\eta_s}^{-\fn+1}}.
        \end{equation}
        \item {\bf General bound}: Finally, we show that the bounds \eqref{fast_decay_K}, \eqref{eq:bcal_k}, and \eqref{eq:bcal_k_improved} hold for all $\bsig\in \{+,-\}^\fn$.
    \end{enumerate}

    \medskip
    \noindent{\bf Step }\txt{(i)}. In the global regime (with $t=\ti$), we have $\eta_{\ti}\sim 1-\ti\sim 1$ by \eqref{pick_parameter_1} and \eqref{eta_t_sim_1-t}. To establish the fast decay property, we consider the entrywise primitive loops \smash{$\wh \cK_{\bsigma,\bx}^{\pa{\fn}}\equiv\wh \cK_{\ti,\bsigma,\bx}^{\pa{\fn}}$} for $\bsigma\in\ha{+,-}^{\fn}$ and $\bx=\pa{x_1,x_2,\ldots,x_\fn}$, and prove the following statement by induction on $\fn$: for any constants \(\varepsilon,D>0\),
    \begin{equation}\label{eq;decayglobal}
          \max_{i,j\in\qa{\fn}}\absa{x_i-x_j}\geq W^{1+\epsilon}\ \implies \ \wh \cK_{\bsigma,\bx}^{\pa{\fn}}=\OO(W^{-D}), \quad \text{for}\quad \bx=\pa{x_1,x_2,\ldots,x_\fn} .
    \end{equation}
    This immediately yields the desired decay estimate for the blockwise loops, since $\cK_{\ti,\bsigma,\cdot}^{\p{\fn}}=\qq{\wh \cK_{\ti,\bsigma,\cdot}^{\p{\fn}}}$.

    Using a geometric-series expansion and the fact that $\|S_{\ti}\|_{\infty\to\infty}\le t_i<1-\e_0$ (since matrices in $\fS_{\ti}$ are doubly stochastic), we obtain the following for some constant $c>0$:
    \begin{equation}\label{global_exp_decay}
        \begin{aligned}
            \pa{1-m\p{\sigma}m\p{\sigma'}S_{\ti}}^{-1}_{xy}= \delta_{xy}+\OO\p{W^{-d} e^{-c|x-y|/W}}, \quad \forall \sigma,\sigma'\in\{+,-\}.
        \end{aligned}
    \end{equation}
    This proves \eqref{fast_decay_K} for the case $\fn=2$. Now, assume that \eqref{fast_decay_K} holds for all loop lengths $2,\ldots,\fn$.
    We now prove it for $(\fn+1)$-$\cK$-loops. By shift invariance \eqref{shift_invariance}, we may assume without loss of generality that $|x_{\fn+1}-x_{1}|/W\geq W^{\varepsilon}$.
    Using the recursive relation \eqref{wh_cK_recursive_relation} together with \eqref{global_exp_decay}, we deduce the desired decay for the $(\fn+1)$-$\cK$-loops. This completes the induction.

    Next, we prove \eqref{eq:bcal_k} by induction on $\fn$, using a slightly stronger formulation, stated as in the following claim. For simplicity, we introduce the following notation for an arbitrary diagonal matrix $\fB$ (recall \eqref{def:Ia}):
        \begin{equation}\label{eq:Baa}
                \fB_{\qa{a}}:=I_{\qa{a}}\fB I_{\qa{a}} \in \C^{N\times N}.
        \end{equation}
    \begin{claim}\label{claim_global_l_1_l_infty_norm}
    Let $\ha{\fB^{\pa{i}}}_{i=1}^{\fn}$ be arbitrary diagonal matrices. For any $\bsigma\in\ha{+,-}^{\fn}$, define the generalized $\cK$-loop
        \begin{equation}\label{def_generalized_cK}                \cK_{\bsigma}^{\pa{\fn}}\pa{\fB^{\pa{1}},\ldots,\fB^{\pa{\fn}}}:=\sum_{\bx\in\p{\ZL}^{\fn}}\wh \cK_{\ti,\bsigma,\bx}^{\pa{\fn}}\prod_{i=1}^{\fn}\fB_{x_ix_i}^{\pa{i}}.
        \end{equation}
       Then, for all $\bsigma\in\ha{+,-}^{\fn}$ and $\ba\in\p{\Zn}^\fn$, we have
        \begin{equation}\label{bound_cK_global_l_1_l_infty_norm}
            \begin{aligned}
                \absa{\cK_{\bsigma}^{\pa{\fn}}\pa{\fB^{\pa{1}}_{\qa{a_1}},\ldots,\fB^{\pa{\fn}}_{\qa{a_\fn}}}}\prec \min_{i=1}^{\fn} \norm{\fB^{\pa{i}}_{\qa{a_i}}}_{1} \prod_{j\neq i}\norm{\fB^{\pa{j}}_{\qa{a_j}}},
            \end{aligned}
        \end{equation}
        where $\norm{\cdot}$ denotes the operator norm, and $\norm{A}_{1}=\sum_{i,j}|A_{ij}|$ denotes the matrix $L^1$-norm for any matrix $A$.
    \end{claim}
    \begin{proof}
        Given any $\bsigma=\pa{\sigma_1,\sigma_2}\in\{+,-\}^2$, $\ba=\pa{\qa{a_1},\qa{a_2}}\in (\Zn)^2$, and matrices $(\fB^{\pa{1}},\fB^{\pa{2}})$, we apply the bound \eqref{global_exp_decay}. This yields a constant $c>0$ such that
            \begin{align}
                &~\cK_{\bsigma}^{\pa{2}}\pa{\fB^{\pa{1}}_{\qa{a_1}},\fB^{\pa{2}}_{\qa{a_2}}}=\sum_{x_1\in\qa{a_1}}\sum_{x_2\in\qa{a_2}}m\p{\sigma_1}m\p{\sigma_2}\pa{1-m\p{\sigma_1}m\p{\sigma_2}S_{\ti}}^{-1}_{x_1x_2}\fB^{\pa{1}}_{x_1x_1}\fB^{\pa{2}}_{x_2x_2} \label{eq:2Kloop}\\
                \prec&~ \sum_{x_1\in\qa{a_1}}\sum_{x_2\in\qa{a_2}}\pa{\delta_{x_1x_2}+W^{-d} }|\fB^{\pa{1}}_{x_1x_1}||\fB^{\pa{2}}_{x_2x_2}|\lesssim \pa{\norm{\fB^{\pa{1}}_{\qa{a_1}}}_{1}\norm{\fB^{\pa{2}}_{\qa{a_2}}}}\wedge \pa{\norm{\fB^{\pa{1}}_{\qa{a_1}}}\norm{\fB^{\pa{2}}_{\qa{a_2}}}_{1}},\nonumber
            \end{align}
        which establishes \eqref{bound_cK_global_l_1_l_infty_norm} for the case $\fn=2$

        Next, suppose that \eqref{bound_cK_global_l_1_l_infty_norm} holds for all loop lengths $2,\ldots,\fn$. We prove \eqref{bound_cK_global_l_1_l_infty_norm} for generalized $\cK$-loops of length $\fn+1$. Consider any sequence of diagonal matrices \smash{$\ha{\fB^{\pa{i}}}_{i=1}^{\fn+1}$}, any \smash{$\bsigma\in\ha{+,-}^{\fn+1}$}, and any \smash{$\ba\in\p{\Zn}^{\fn+1}$}. Without loss of generality, assume that the minimum of the right-hand side (RHS) of \eqref{bound_cK_global_l_1_l_infty_norm} is attained at $i=\fn+1$.
        Then, taking the weighted summation
    \begin{equation}
        \begin{aligned}            \sum_{\bx\in\p{\Z_L^d}^{\fn+1}}\prod_{i=1}^{\fn+1}\p{\mathfrak{B}_{\qa{a_i}}^{\pa{i}}}_{x_ix_i}
        \end{aligned}
    \end{equation}
    on both sides of the recursive relation \eqref{wh_cK_recursive_relation}, we get that
    \begin{equation}\label{general_cK_recursion}
        \begin{aligned}            &\cK_{\bsigma}^{\pa{\fn+1}}\pa{\mathfrak{B}_{\qa{a_1}}^{\pa{1}},\ldots,\mathfrak{B}_{\qa{a_{\fn+1}}}^{\pa{\fn+1}}}=m\p{\sigma_1}\cK^{(\fn)}_{\pa{\sigma_2,\ldots,\sigma_{\fn+1}}}\pa{\mathfrak{B}_{\qa{a_2}}^{\pa{2}},\ldots,\mathfrak{B}_{\qa{a_{\fn}}}^{\pa{\fn}},\wt \fB_{\fn+1}{\mathfrak{B}_{\qa{a_1}}^{\pa{1}}}}\\      &+m\p{\sigma_1}\sum_{k=2}^{\fn}\sum_{x}\cK_{\pa{\sigma_1,\ldots,\sigma_k}}^{\pa{k}}\pa{\mathfrak{B}_{\qa{a_1}}^{\pa{1}},\ldots,\mathfrak{B}_{\qa{a_{k-1}}}^{\pa{k-1}},S^{\pa{x}}_{\ti}}\cdot\cK_{\pa{\sigma_k,\ldots,\sigma_{\fn+1}}}^{\pa{\fn-k+2}}\pa{\mathfrak{B}_{\qa{a_k}}^{\pa{k}},\ldots,\mathfrak{B}_{\qa{a_\fn}}^{\pa{\fn}},\wt \fB_{\fn+1}{F_x}},
        \end{aligned}
    \end{equation}
    where the matrix $\wt \fB_{\fn+1}$ is defined as (recall \eqref{def_cS} for the definition of the operator $\cS_{\ti}\equiv \cS_{S_{\ti}}$)
    \begin{align}\label{eq:defmathB}
     \wt \fB_{\fn+1}=\pa{1-m\p{\sigma_1}m\p{\sigma_{\fn+1}}\cS_{\ti}\qa{\cdot}}^{-1}\fB^{\pa{\fn+1}}_{\qa{a_{\fn+1}}} ,
    \end{align}
    $S^{\pa{x}}$ is the diagonal matrix defined as \be\label{eq:Sxij}S^{\pa{x}}_{ij}:=\delta_{ij}S_{jx},\ee
    and $F_x$ is the diagonal matrix defined as $\pa{F_x}_{ij}:=\delta_{ij}\delta_{jx}$. Note the operator inverse in \eqref{eq:defmathB} is defined as follows for any variance profile $S$ and diagonal matrix $A$:
    \begin{equation}\label{operator_to_matrix}
        \begin{aligned}
            \pa{\pa{1-m\p{\sigma}m\p{\sigma'}\cS_S\qa{\cdot}}^{-1}A}_{xy}=\delta_{xy}\sum_{w}\pa{1-m\p{\sigma}m\p{\sigma'}S}^{-1}_{yw}A_{ww}.
        \end{aligned}
    \end{equation}
    Applying the induction hypothesis to the first term on the RHS of \eqref{general_cK_recursion} yields
    \begin{equation}\label{eq:firstinduction}
        \begin{aligned}
            \absa{\cK^{(\fn)}_{\pa{\sigma_2,\ldots,\sigma_{\fn+1}}}\pa{\mathfrak{B}_{\qa{a_2}}^{\pa{2}},\ldots,\mathfrak{B}_{\qa{a_{\fn}}}^{\pa{\fn}},\wt \fB_{\fn+1}{\mathfrak{B}_{\qa{a_1}}^{\pa{1}}}}}\prec \norma{\wt \fB_{\fn+1}{\fB^{\pa{1}}_{\qa{a_1}}}}_1\prod_{i=2}^{\fn+1}\norma{\fB^{\pa{i}}_{\qa{a_i}}}.
        \end{aligned}
    \end{equation}
    Using the definition \eqref{operator_to_matrix} and the bound \eqref{global_exp_decay}, we obtain
    \begin{equation}\label{eq:Bfx}
        \begin{aligned}
            \norma{\wt \fB_{\fn+1}{\fB^{\pa{1}}_{\qa{a_1}}}}_1 &\lesssim \sum_{x\in\qa{a_1}}\sum_{y\in\qa{a_{\fn+1}}}\absa{\pa{1-m\p{\sigma_1}m\p{\sigma_2}S_{\ti}}^{-1}_{xy}}|\fB^{\pa{1}}_{xx}||\fB^{\pa{\fn+1}}_{yy}|\\
            &\prec \sum_{x\in\qa{a_1}}\sum_{y\in\qa{a_{\fn+1}}}\pa{\delta_{xy}+W^{-d}}|\fB^{\pa{1}}_{xx}||\fB^{\pa{\fn+1}}_{yy}|\lesssim \norm{\fB^{\pa{\fn+1}}_{\qa{a_{\fn+1}}}}_{1}\norm{\fB^{\pa{1}}_{\qa{a_{1}}}}.
        \end{aligned}
    \end{equation}
    Substituting it into \eqref{eq:firstinduction} gives exactly the desired contribution on the RHS of \eqref{bound_cK_global_l_1_l_infty_norm}.
    The second term in \eqref{general_cK_recursion} can be bounded similarly. More precisely, we denote
    \begin{equation}\label{def_mathscrAx}
        \begin{aligned}            \mathfrak{A}_k:=\sum_{x}\cK_{\pa{\sigma_1,\ldots,\sigma_k}}^{\pa{k}}\pa{\mathfrak{B}_{\qa{a_1}}^{\pa{1}},\ldots,\mathfrak{B}_{\qa{a_{k-1}}}^{\pa{k-1}},S^{\pa{x}}}\cdot\wt \fB_{\fn+1}{F_x}.
        \end{aligned}
    \end{equation}
    Applying the induction hypothesis to
 \smash{$\cK_{\pa{\sigma_k,\ldots,\sigma_{\fn+1}}}^{\pa{\fn-k+2}}$}
    gives
    \begin{equation}\label{first_bound_cK_2}
        \begin{aligned}
            \cK_{\pa{\sigma_k,\ldots,\sigma_{\fn+1}}}^{\pa{\fn-k+2}}\pa{\mathfrak{B}_{\qa{a_k}}^{\pa{k}},\ldots,\mathfrak{B}_{\qa{a_\fn}}^{\pa{\fn}},\mathfrak{A}_k}\prec {\norm{\mathfrak{A}_{k}}_1\prod_{i=k}^{\fn}\norm{\fB^{\pa{i}}_{\qa{a_i}}}}.
        \end{aligned}
    \end{equation}
   Next, applying the induction hypothesis to  \smash{$\cK_{\pa{\sigma_1,\ldots,\sigma_k}}^{\pa{k}}
   $} inside $\mathfrak{A}_k$, and again using \eqref{global_exp_decay}, we get
    \begin{equation}\label{bound_mathscrAk}
        \begin{aligned}
            \norm{\mathfrak{A}_{k}}_1 &\prec \sum_{x}\absa{\cK_{\pa{\sigma_1,\ldots,\sigma_k}}^{\pa{k}}\pa{\mathfrak{B}_{\qa{a_1}}^{\pa{1}},\ldots,\mathfrak{B}_{\qa{a_{k-1}}}^{\pa{k-1}},S^{\pa{x}}}}\cdot\abs{\p{\wt \fB_{\fn+1}}_{xx}}\\
            &\prec \sum_{x}\sum_{y\in\qa{a_{\fn+1}}}\prod_{i=1}^{k-1}\norm{\fB^{\pa{i}}_{\qa{a_i}}} \cdot \absa{\pa{1-m\p{\sigma_1}m\p{\sigma_{\fn+1}}S_{\ti}}^{-1}_{xy}}\abs{\fB^{\pa{\fn+1}}_{yy}}\lesssim \norm{\fB^{\pa{\fn+1}}_{\qa{a_{\fn+1}}}}_1\prod_{i=1}^{k-1}\norm{\fB^{\pa{i}}_{\qa{a_i}}}.
        \end{aligned}
    \end{equation}
    Plugging this bound into \eqref{first_bound_cK_2} yields exactly the desired contribution on the RHS of \eqref{bound_cK_global_l_1_l_infty_norm}. This completes the induction step for loop length $\fn+1$ and thereby completes the proof of \Cref{claim_global_l_1_l_infty_norm}.
    \end{proof}

    Clearly, \Cref{claim_global_l_1_l_infty_norm} implies \eqref{eq:bcal_k} as a special case in the global regime. It remains to establish the refined estimate \eqref{eq:bcal_k_improved} at time~$\ti$. To this end, we introduce additional notations.
    Let $S_{\ti}^{\txt{d}}$ denote the block-diagonal part of $S_{\ti}$; that is, \smash{\(\p{S_{\ti}^{\txt{d}}}_{ij}:=\sum_{\qa{a}}\mathbf{1}_{i,j\in\qa{a}}\pa{S_{\ti}}_{ij}\)}. We then decompose $S_{\ti}$ as
    \begin{equation}\label{decomposition_S_d_od}
        \begin{aligned}
            S_{\ti}=:S_{\ti}^{\txt{d}}+S_{\ti}^{\txt{od}}.
        \end{aligned}
    \end{equation}
    Next, we define the diagonal part of the generalized $\cK$-loops by
    \begin{equation}\label{def_fd_cK}
        \begin{aligned}
            \fd\pa{\cK}_{\bsigma}^{\pa{\fn}}\pa{\fB^{\pa{1}}_{\qa{a_1}},\ldots,\fB^{\pa{\fn}}_{\qa{a_\fn}}},
        \end{aligned}
    \end{equation}
    constructed as follows. We first define the $\fd\pa{\cK}$-loops by replacing the variance profile $S$ and spectral parameter $z$ in \eqref{wh_cK_recursive_relation} with $S_{\ti}^{\txt{d}}$ and $\sE$. The generalized $\fd\pa{\cK}$-loops in \eqref{def_fd_cK} are then defined analogously to \eqref{def_generalized_cK}. From \eqref{wh_cK_recursive_relation}, it is immediate that \smash{$\fd\pa{\cK}_{\bsigma}^{\pa{\fn}}\p{\fB^{\pa{1}}_{\qa{a_1}},\ldots,\fB^{\pa{\fn}}_{\qa{a_\fn}}}\neq 0$} only if $\qa{a_1}=\cdots=\qa{a_{\fn}}$. Hence, \eqref{eq:bcal_k_improved} follows directly from the next claim. This completes Step (i) of the proof of \Cref{bounds_on_primitive_loops}.

    \begin{claim}\label{claim_difference_cK_cKd}
    Let $\ha{\fB^{\pa{i}}}_{i=1}^{\fn}$ be arbitrary diagonal matrices. For any $\bsigma\in\ha{+,-}^{\fn}$ and $\ba\in\p{\Zn}^\fn$, we have
        \begin{equation}\label{bound_difference_cK_cKd}
            \begin{aligned}
                \cK_{\bsigma}^{\pa{\fn}}\pa{\fB^{\pa{1}}_{\qa{a_1}},\ldots,\fB^{\pa{\fn}}_{\qa{a_\fn}}}-\fd\pa{\cK}_{\bsigma}^{\pa{\fn}}\pa{\fB^{\pa{1}}_{\qa{a_1}},\ldots,\fB^{\pa{\fn}}_{\qa{a_\fn}}} \prec {\lambda^2W^d\prod_{i=1}^{\fn}\norm{\fB^{\pa{i}}_{\qa{a_i}}}}.
            \end{aligned}
        \end{equation}
    \end{claim}
    \begin{proof}
 We again employ an induction argument on $\fn$. For $\fn=1$, by \eqref{eq:loop1}, we have that
        \begin{equation}
            \begin{aligned}
\cK_{\sigma}^{\pa{1}}\pa{\fB^{\pa{1}}_{\q{a_1}}}=\fd\pa{\cK}_{\sigma}^{\pa{1}}\pa{\fB^{\pa{1}}_{\q{a_1}}}=m\p{\sigma}\avga{\fB^{\pa{1}}_{\q{a_1}}}.
            \end{aligned}
        \end{equation}
Now assume that \eqref{bound_difference_cK_cKd} holds for all loop lengths $1,\ldots,\fn$. For the case $\fn+1$, we obtain from \eqref{general_cK_recursion} the recursive relation:
\begin{align}            &~\fd\p{\cK}_{\bsigma}^{\pa{\fn+1}}\pa{\mathfrak{B}_{\qa{a_1}}^{\pa{1}},\ldots,\mathfrak{B}_{\qa{a_{\fn+1}}}^{\pa{\fn+1}}}=m\p{\sigma_1}\fd\p{\cK}^{(\fn)}_{\pa{\sigma_2,\ldots,\sigma_{\fn+1}}}\pa{\mathfrak{B}_{\qa{a_2}}^{\pa{2}},\ldots,\mathfrak{B}_{\qa{a_{\fn}}}^{\pa{\fn}},\fd\p{\wt \fB_{\fn+1}}{\mathfrak{B}_{\qa{a_1}}^{\pa{1}}}} \label{comparison_cK_cKd_cKd}\\
+&~m\p{\sigma_1}\sum_{k=2}^{\fn}\sum_{x}\fd\p{\cK}_{\pa{\sigma_1,\ldots,\sigma_k}}^{\pa{k}}\pa{\mathfrak{B}_{\qa{a_1}}^{\pa{1}},\ldots,\mathfrak{B}_{\qa{a_{k-1}}}^{\pa{k-1}},\p{S_{\ti}^{\txt{d}}}^{\pa{x}}}\cdot\fd\p{\cK}_{\pa{\sigma_k,\ldots,\sigma_{\fn+1}}}^{\pa{\fn-k+2}}\pa{\mathfrak{B}_{\qa{a_k}}^{\pa{k}},\ldots,\mathfrak{B}_{\qa{a_\fn}}^{\pa{\fn}},\fd\p{\wt \fB_{\fn+1}}{F_x}}.\nonumber
 \end{align}
Here, the matrix $\fd\p{\wt \fB_{\fn+1}}$ is defined by
\[\fd\p{\wt \fB_{\fn+1}}:=\pa{1-m\p{\sigma_1}m\p{\sigma_{\fn+1}}\cS_{\ti}^{\txt{d}}[\cdot]}^{-1}\fB^{\pa{\fn+1}}_{\qa{a_{\fn+1}}},\]
where $\cS_{\ti}^{\txt{d}}$ denotes the operator in \eqref{def_cS} with the variance profile $S$ replaced by $S_{\ti}^{\txt{d}}$. Using the geometric-series representation and the bound \smash{$\|S_{\ti}^{\txt{d}}\|_{\infty\to\infty}\le \|S_{\ti}\|_{\infty\to\infty}\le t_i<1-\e_0$}, we obtain exactly the same exponential-decay estimate as in \eqref{global_exp_decay}:
    \begin{equation}\label{global_exp_decay2}
        \begin{aligned}
            \pa{1-m\p{\sigma}m\p{\sigma'}S^{\txt{d}}_{\ti}}^{-1}_{xy}= \delta_{xy}+\OO\p{W^{-d} e^{-c|x-y|/W}}, \quad \forall \sigma,\sigma'\in\{+,-\}.
        \end{aligned}
    \end{equation}

Subtracting the first terms on the RHS of \eqref{general_cK_recursion} and \eqref{comparison_cK_cKd_cKd}, and applying the induction hypothesis, we obtain that
        \begin{equation*}
            \begin{aligned}
                &~\cK^{(\fn)}_{\pa{\sigma_2,\ldots,\sigma_{\fn+1}}}\pa{\mathfrak{B}_{\qa{a_2}}^{\pa{2}},\ldots,\mathfrak{B}_{\qa{a_{\fn}}}^{\pa{\fn}},\wt \fB_{\fn+1}{\mathfrak{B}_{\qa{a_1}}^{\pa{1}}}}-\fd\pa{\cK}^{(\fn)}_{\pa{\sigma_2,\ldots,\sigma_{\fn+1}}}\pa{\mathfrak{B}_{\qa{a_2}}^{\pa{2}},\ldots,\mathfrak{B}_{\qa{a_{\fn}}}^{\pa{\fn}},\fd\p{\wt \fB_{\fn+1}}{\mathfrak{B}_{\qa{a_1}}^{\pa{1}}}}\\
                =&~\cK_{\pa{\sigma_2,\ldots,\sigma_{\fn+1}}}\pa{\mathfrak{B}_{\qa{a_2}}^{\pa{2}},\ldots,\mathfrak{B}_{\qa{a_{\fn}}}^{\pa{\fn}},\pa{\wt \fB_{\fn+1}-\fd\p{\wt \fB_{\fn+1}}}{\mathfrak{B}_{\qa{a_1}}^{\pa{1}}}}+\opr{\lambda^2W^d\norma{\fd\p{\wt \fB_{\fn+1}}{\mathfrak{B}_{\qa{a_1}}^{\pa{1}}}}\prod_{i=2}^{\fn}\norm{\mathfrak{B}_{\qa{a_{i}}}^{\pa{i}}}}\\
                \prec &~\norma{\pa{\wt \fB_{\fn+1}-\fd\p{\wt \fB_{\fn+1}}} {\fB^{\pa{1}}_{\qa{a_{1}}}}}_1\prod_{i=2}^{\fn}\norma{\mathfrak{B}_{\qa{a_{i}}}^{\pa{i}}}+\lambda^2W^d\prod_{i=1}^{\fn+1}\norma{\mathfrak{B}_{\qa{a_{i}}}^{\pa{i}}}
                \prec \lambda^2W^d\prod_{i=1}^{\fn+1}\norma{\mathfrak{B}_{\qa{a_{i}}}^{\pa{i}}},
            \end{aligned}
        \end{equation*}
which yields the desired contribution on the RHS of \eqref{bound_difference_cK_cKd}.
Here, in the second step, we use that
\[\norma{\fd\p{\wt \fB_{\fn+1}} {\mathfrak{B}_{\qa{a_1}}^{\pa{1}}}}  \lesssim \norma{\fB^{\pa{1}}_{\qa{a_{1}}}}\norma{\fB^{\pa{\fn+1}}_{\qa{a_{\fn+1}}}},\]
which follows directly from the definition \eqref{operator_to_matrix} together with the bound $\|\pa{1-m\p{\sigma_1}m\p{\sigma_{\fn+1}}S_{\ti}^{\rd}}^{-1}\|_{\infty\to\infty}\lesssim 1$, a consequence of \eqref{global_exp_decay2}.
In the third step, we apply \eqref{bound_cK_global_l_1_l_infty_norm} in combination with the estimate:
\begin{align}
        &~\norma{\p{\wt \fB_{\fn+1}-\fd\p{\wt \fB_{\fn+1}}}{\fB^{\pa{1}}_{\qa{a_{1}}}}}_1\nonumber\\
                \leq&~ \sum_{x\in\qa{a_1}}\absa{\sum_{y\in\qa{a_{\fn+1}}}\qa{\pa{1-m\p{\sigma_1}m\p{\sigma_{\fn+1}}S_{\ti}}^{-1}-\pa{1-m\p{\sigma_1}m\p{\sigma_{\fn+1}}S_{\ti}^{\txt{d}}}^{-1}}_{xy}\fB^{\pa{\fn+1}}_{yy}\fB^{\pa{1}}_{xx}}\nonumber\\
                \lesssim&~ \sum_{x\in\qa{a_1}}\sum_{y\in\qa{a_{\fn+1}}}\absa{\qa{\pa{1-m\p{\sigma_1}m\p{\sigma_{\fn+1}}S_{\ti}}^{-1}S_{\ti}^{\txt{od}}\pa{1-m\p{\sigma_1}m\p{\sigma_{\fn+1}}S_{\ti}^{\txt{d}}}^{-1}}_{xy}}\norma{\fB^{\pa{1}}_{\qa{a_{1}}}}\norma{\fB^{\pa{\fn+1}}_{\qa{a_{\fn+1}}}}\nonumber\\
                \lesssim&~ \norma{\fB^{\pa{1}}_{\qa{a_{1}}}} \norma{\fB^{\pa{\fn+1}}_{\qa{a_{\fn+1}}}} \sum_{x\in\qa{a_1}}\sum_{y\in\qa{a_{\fn+1}}}\sum_{w_1}\sum_{w_2\in\qa{a_{\fn+1}}}\pa{\delta_{xw_1}+\frac{1}{W^d} e^{-c|x-w_1|/W}}\p{S_{\ti}^{\txt{od}}}_{w_1w_2}\pa{\delta_{w_2y}+\frac{1}{W^d}e^{-c|y-w_2|/W}}\nonumber\\
                \lesssim&~ \lambda^2W^d \norm{\fB^{\pa{1}}_{\qa{a_{1}}}} \norm{\fB^{\pa{\fn+1}}_{\qa{a_{\fn+1}}}},\label{example_difference_1}
            \end{align}
where \eqref{global_exp_decay} and \eqref{global_exp_decay2} are used in the third step, and the final step relies on the fact that the block-diagonal part of $S_{\ti}^{\txt{od}}$ vanishes.

For the second terms on the RHS of \eqref{general_cK_recursion} and \eqref{comparison_cK_cKd_cKd}, we bound their difference by replacing $\cK$ and $S_{\ti}$ in \eqref{general_cK_recursion} with their block-diagonal counterparts $\fd\pa{\cK}$ and $S_{\ti}^{\rd}$ one by one. For example, one contribution to the difference can be bounded by
        \begin{equation}\label{eq:second_diff1}
            \begin{aligned}
                &~{\sum_{x}\cK_{\pa{\sigma_1,\ldots,\sigma_k}}^{\pa{k}}\pa{\mathfrak{B}_{\qa{a_1}}^{\pa{1}},\ldots,\mathfrak{B}_{\qa{a_{k-1}}}^{\pa{k-1}},\p{S_{\ti}^{\txt{od}}}^{\pa{x}}}\cdot\cK_{\pa{\sigma_k,\ldots,\sigma_{\fn+1}}}^{\pa{\fn-k+2}}\pa{\mathfrak{B}_{\qa{a_k}}^{\pa{k}},\ldots,\mathfrak{B}_{\qa{a_\fn}}^{\pa{\fn}},\wt \fB_{\fn+1} {F_x}}}\\
                =&~ \cK_{\pa{\sigma_k,\ldots,\sigma_{\fn+1}}}^{\pa{\fn-k+2}}\pa{\mathfrak{B}_{\qa{a_k}}^{\pa{k}},\ldots,\mathfrak{B}_{\qa{a_\fn}}^{\pa{\fn}},\mathfrak{A}_{k}^{\txt{od}}} \prec {\norma{\mathfrak{A}_{k}^{\txt{od}}}_1\prod_{i=k}^{\fn}\norma{\fB^{\pa{i}}_{\qa{a_i}}}}\\
                \prec&~ \prod_{i=1}^{\fn}\norma{\fB^{\pa{i}}_{\qa{a_i}}}\sum_{x}\sum_{w}\p{S_{\ti}^{\txt{od}}}_{xw} \sum_{y\in\qa{a_{\fn+1}}}\absa{\pa{1-m\p{\sigma_1}m\p{\sigma_{\fn+1}}S_{\ti}}^{-1}_{xy}}\absa{\fB^{\pa{\fn+1}}_{yy}}\\
                \prec&~ \prod_{i=1}^{\fn+1}\norma{\fB^{\pa{i}}_{\qa{a_i}}}\sum_{x}\sum_{w}\p{S_{\ti}^{\txt{od}}}_{xw}\sum_{y\in\qa{a_{\fn+1}}}\pa{\delta_{xy}+\frac{1}{W^d} e^{-c|x-y|/W}}\prec \lambda^2W^d\prod_{i=1}^{\fn+1}\norm{\fB^{\pa{i}}_{\qa{a_i}}},
            \end{aligned}
        \end{equation}
        where $\mathfrak{A}_{k}^{\txt{od}}$ is defined analogously to \eqref{def_mathscrAx} with $S$ replaced by $S_{\ti}^{\txt{od}}$.
The second and third steps follow from \eqref{bound_cK_global_l_1_l_infty_norm} together with the operator representation \eqref{operator_to_matrix}; the fourth step uses \eqref{global_exp_decay}; and the last step uses \eqref{def_lambda} to get $\sum_{x\in[a]}\sum_{w}\p{S_{\ti}^{\txt{od}}}_{xw} \le\lambda^2W^d$ for any $[a]\in \Zn$.
      With \eqref{eq:second_diff1}, we obtain
        \begin{equation*}
            \begin{aligned}
                &~m\pa{\sigma_1}\sum_{k=2}^{\fn}\sum_{x}\cK_{\pa{\sigma_1,\ldots,\sigma_k}}^{\pa{k}}\pa{\mathfrak{B}_{\qa{a_1}}^{\pa{1}},\ldots,\mathfrak{B}_{\qa{a_{k-1}}}^{\pa{k-1}},S^{\pa{x}}_{\ti}}\cdot\cK_{\pa{\sigma_k,\ldots,\sigma_{\fn+1}}}^{\pa{\fn-k+2}}\pa{\mathfrak{B}_{\qa{a_k}}^{\pa{k}},\ldots,\mathfrak{B}_{\qa{a_\fn}}^{\pa{\fn}},\wt \fB_{\fn+1}{F_x}}\\
                -&~m\pa{\sigma_1}\sum_{k=2}^{\fn}\sum_{x}{\cK}_{\pa{\sigma_1,\ldots,\sigma_k}}^{\pa{k}}\pa{\mathfrak{B}_{\qa{a_1}}^{\pa{1}},\ldots,\mathfrak{B}_{\qa{a_{k-1}}}^{\pa{k-1}},\p{S_{\ti}^{\txt{d}}}^{\pa{x}}}\cdot{\cK}_{\pa{\sigma_k,\ldots,\sigma_{\fn+1}}}^{\pa{\fn-k+2}}\pa{\mathfrak{B}_{\qa{a_k}}^{\pa{k}},\ldots,\mathfrak{B}_{\qa{a_\fn}}^{\pa{\fn}},{\wt \fB_{\fn+1}}{F_x}}\\
                \prec &~{\lambda^2W^d\prod_{i=1}^{\fn+1}\norm{\fB^{\pa{i}}_{\qa{a_i}}}}.
            \end{aligned}
        \end{equation*}
        Applying the same reasoning—relying on the induction hypothesis, the bound \eqref{bound_cK_global_l_1_l_infty_norm} (and its analogue for the $\fd(\cK)$-loops), together with \eqref{global_exp_decay} and \eqref{global_exp_decay2}—a finite number of times, we conclude that
        \begin{equation}
            \begin{aligned}
                \cK_{\bsigma}^{\pa{\fn+1}}\pa{\mathfrak{B}_{\qa{a_1}}^{\pa{1}},\ldots,\mathfrak{B}_{\qa{a_{\fn+1}}}^{\pa{\fn+1}}}-\fd\pa{\cK}_{\bsigma}^{\pa{\fn+1}}\pa{\mathfrak{B}_{\qa{a_1}}^{\pa{1}},\ldots,\mathfrak{B}_{\qa{a_{\fn+1}}}^{\pa{\fn+1}}}\prec {\lambda^2W^d\prod_{i=1}^{\fn+1}\norm{\fB^{\pa{i}}_{\qa{a_i}}}}.
            \end{aligned}
        \end{equation}
        We omit further details. This completes the induction step and thus concludes the proof of \Cref{claim_difference_cK_cKd}.
    \end{proof}

    \noindent{\bf Step }\txt{(ii)}. Using \eqref{prop:ThfadC_short_pointwise}, together with an argument analogous to the one used in proving the decay property around \eqref{eq;decayglobal}, we obtain the following fast decay estimate. For any constants $\varepsilon>0$ and $D>0$, if  $\bsigma=\pa{\sigma_1,\cdots,\sigma_\fn}$ with $\sigma_1=\sigma_2=\cdots=\sigma_\fn$, then
    \begin{equation}\label{fast_decay_pure_loop}
            \max_{i,j\in \qq{\fn}}|[a_i]-[a_j]|\ge W^{\e}\ \implies \ \sup_{s\in\qa{\ti,t}}\abs{\cK_{s,\bsigma,\ba}^{\pa{\fn}}}=\OO(W^{-D}) \quad \text{for}\quad \ba=([a_1],[a_2],\ldots, [a_\fn]) .
     \end{equation}
     Next, combining \eqref{prop:ThfadC_short_pointwise} with an argument similar to the proof of \Cref{claim_global_l_1_l_infty_norm}, we obtain the bound \eqref{pure_loop_bound} for pure loops. To prove \eqref{sum_zero_cK}, we iteratively apply Ward’s identity \eqref{WI_calK}. At each application of \eqref{WI_calK}, the loop length decreases by one and we gain an additional factor  $(W^d\eta_t)^{-1}$. Repeating this reduction until all resulting loops are pure loops, and then invoking \eqref{pure_loop_bound}, along with the fast decay property \eqref{fast_decay_pure_loop}, yields the desired estimate. This completes the proof.

    \medskip
    \noindent{\bf Step }\txt{(iii)}. We again employ an induction argument on $\fn\geq 2$. The case $\fn=2$ follows directly from the estimates \eqref{prop:ThfadC} and \eqref{prop:ThfadC_improved}. Assume now that $\fn\ge 3$ and that the statements \eqref{fast_decay_K}, \eqref{eq:bcal_k}, and \eqref{eq:bcal_k_improved} hold uniformly for all $s\in\qa{\ti,t}$ and all loop lengths $2,\ldots,\fn-1$. The fast–decay property \eqref{fast_decay_K} then follows immediately from \eqref{prop:ThfadC}, \eqref{cK_Duhamel}, and the fact that $\ell_s$ is non-decreasing in $s$. It therefore remains to establish \eqref{eq:bcal_k} and \eqref{eq:bcal_k_improved}.
    We first consider the case where $\bsig$ is non-alternating, i.e., $\sigma_i=\sigma_{i+1}$ for some $i\in\qq{\fn}$. This situation is comparatively simpler, so we provide only a brief explanation and omit further details. By \eqref{prop:ThfadC_short}, there exists a constant $c>0$ such that
    \begin{equation}\nonumber
            \absa{\left[I+\pa{s-u}\Theta^{\pa{\sigma_{i},\sigma_{i+1}}}_s\right]_{[a_i] [b_i]}}\prec \delta_{\qa{a_i}\qa{b_i}}+{\lambda^2} c^{-1} e^{-c\absa{\qa{a_i}-\qa{b_i}}},\quad \txt{uniformly for }\qa{u,s}\subseteq\qa{\ti,t}.
    \end{equation}
    Inserting this bound—together with the induction hypothesis and the estimates obtained in Step (i)—into \eqref{cK_Duhamel} with $s=t$ yields \eqref{eq:bcal_k}. The improved estimate \eqref{eq:bcal_k_improved} follows in the same manner. The only additional observation needed is that one always gains an extra factor $\lambda^2/(\lambda^2+1-t)$ unless $\qa{a_j}=\qa{b_j}$ for all $j$ and $\qa{b_1}=\cdots=\qa{b_\fn}$, which forces $\qa{a_1}=\cdots=\qa{a_\fn}$.

    For the rest of the proof, we assume that $\bsig$ is alternating, i.e., $\sigma_i\neq \sigma_{i+1}$ for all $i\in\qq{\fn}$. For clarity, we first establish \eqref{eq:bcal_k} to illustrate the method, and then explain how the argument can be extended to obtain the additional factor $\lambda^2/(\lambda^2+1-t)$ in the improved bound \eqref{eq:bcal_k_improved}. Consider first the term $\scr{U}_1(t)$ in \eqref{scrU_1} (with $s$ replaced by $t$). We expand the product
            \smash{\(\prod_{i=1}^\fn [I+\pa{t-\ti}\Theta^{\pa{\sigma_{i},\sigma_{i+1}}}_t]_{[a_i] [b_i]}\)} (where we used the fact $|m(\sE)|^2=1$)
    into a sum of $2^\fn$ terms. If a term contains at least one factor $I_{\qa{a_i}\qa{b_i}}=\delta_{\qa{a_i}\qa{b_i}}$, then by combining the fast–decay estimate \eqref{fast_decay_K}, the bound \eqref{eq:bcal_k} for the $\cK$-loops at time $\ti$ obtained in Step (i), and the estimate \eqref{prop:ThfadC} for $\Theta_t$, we obtain the bound
    \[\opr{\p{W^{d}}^{-\pa{\fn-1}}\cdot \pa{1+(\ell_t^d\eta_t)^{-1}}^{\fn-1}}=\opr{\p{W^{d}\ell_t^d\eta_t}^{-\pa{\fn-1}}}.\]
    Consequently, the term $\scr{U}_1(t)$ can be written as
    \begin{equation}\label{scrU_1_first_error}
            \scr{U}_1\p{t}=\sum_{\mathbf b\in (\wt\Z_n^d)^\fn} \prod_{i=1}^\fn \qa{\pa{t-\ti}\Theta^{\pa{\sigma_{i},\sigma_{i+1}}}_t\pa{[a_i], [b_i]}} \cdot \cK_{\ti,\bsigma,\mathbf{b}}^{\pa{\fn}}+\opr{\pa{W^d\ell_t^d\eta_t}^{-\pa{\fn-1}}}.
    \end{equation}
    A similar argument applies to the term $\scr{U}_2^{(k,l)}(t)$ in \eqref{scrU_2}. Expanding the product
    \smash{\(\prod_{i=1}^\fn [I+\pa{t-u}\Theta^{\p{\sigma_{i},\sigma_{i+1}}}_t]_{[a_i] [b_i]}\)}
   again yields $2^\fn$ terms. If a term contains $1\le k\le \fn$ factors of the form $\delta_{\qa{a_i}\qa{b_i}}$, then applying the induction hypothesis (namely \eqref{fast_decay_K}--\eqref{eq:bcal_k_improved}) to the shortened $\cK$-loops \smash{$\cutL^{\qa{a}}_{k, l} \circ {\mathcal{K}}^{(\fn)}_{u, \boldsymbol{\sigma}, \mathbf{b}} $} and \smash{$\cutR^{\qa{a}}_{k, l} \circ {\mathcal{K}}^{(\fn)}_{u, \boldsymbol{\sigma}, \mathbf{b}} $}, whose lengths lie in $\{2,\ldots,\fn-1\}$, we obtain the bound
    \[\opr{\int_{\ti}^t \pa{\frac{\eta_u}{\ell_t^d\eta_t}}^{\fn-k}\frac{(\ell_u^d)^{\fn-k}}{(W^d\ell_u^d\eta_u)^\fn}\rd u}=\opr{ \frac{1}{W^d(W^d\ell_t^d\eta_t)^{\fn-1}} }.\]
    Therefore, the term $\scr{U}_2^{\pa{k,l}}\pa{t}$ can be written as
        \begin{align}
            \scr{U}_2^{\pa{k,l}}\pa{t}=&\int_{\ti}^t\rd u\sum_{\mathbf b\in (\wt\Z_n^d)^\fn} \prod_{i=1}^\fn \qa{\pa{t-u}\Theta^{\pa{\sigma_{i},\sigma_{i+1}}}_t\pa{[a_i], [b_i]}} \sum_{\qa{a}\in \wt\Z_n^d}  \pa{\cutL^{\qa{a}}_{k, l} \circ {\mathcal{K}}^{(\fn)}_{u, \boldsymbol{\sigma}, \mathbf{b}} } \cdot \pa{ \cutR^{\qa{a}}_{k, l} \circ {\mathcal{K}}^{(\fn)}_{u, \boldsymbol{\sigma}, \mathbf{b}} } \nonumber\\
            &+\opr{W^{-d}\pa{W^d\ell_t^d\eta_t}^{-\pa{\fn-1}}}.\label{scrU_2_first_error}
        \end{align}

    To estimate \eqref{scrU_1_first_error} and \eqref{scrU_2_first_error}, we use the decomposition \begin{equation}\label{Theta_expansion}
    \Theta^{\pa{\sigma_{i},\sigma_{i+1}}}_t\pa{[a_i], [b_i]}=\sum_{\xi=0}^{2}f_\xi\pa{\qa{a_i},\qa{s_i}},
    \end{equation}
    where $\qa{s_i}:=\qa{b_i}-\qa{b_1}$ and
    \begin{equation*}
        \begin{aligned}           &f_0\pa{\qa{a_i},\qa{s_i}}:=\Theta^{\pa{\sigma_{i},\sigma_{i+1}}}_t\pa{[a_i], [b_1]},\quad f_1\pa{\qa{a_i},\qa{s_i}}:=\frac{1}{2}\pa{\Theta^{\pa{\sigma_{i},\sigma_{i+1}}}_s\pa{[a_i], [b_1]+\qa{s_i}}-\Theta^{\pa{\sigma_{i},\sigma_{i+1}}}_s\pa{[a_i], \qa{b_1}-[s_i]}},\\
        &f_0\pa{\qa{a_i},\qa{s_i}}:=\frac{1}{2}\pa{\Theta^{\pa{\sigma_{i},\sigma_{i+1}}}_s\pa{[a_i], [b_1]+\qa{s_i}}+\Theta^{\pa{\sigma_{i},\sigma_{i+1}}}_s\pa{[a_i], [b_1]-\qa{s_i}}-2\Theta^{\pa{\sigma_{i},\sigma_{i+1}}}_s\pa{[a_i], [b_1]}}.
        \end{aligned}
    \end{equation*}
    Using this decomposition, we expand
    \be \label{Theta_expansion222} \prod_{i=2}^\fn \Theta^{\pa{\sigma_{i},\sigma_{i+1}}}_{t,[a_i][b_i]}=\sum_{\bxi=(\xi_2,\ldots, \xi_n)\in\{0,1,2\}^{\fn-1}} g_{\bm{\xi}}(\mathbf b),\quad \text{where}\quad g_{\bm{\xi}}(\mathbf b):=\prod_{i=2}^n f_{\xi_i}([a_i],[s_i]).\ee
   If $\|\bxi\|_1:=\sum_{i=1}^\fn \xi_i=1$, then the corresponding contribution vanishes due to the parity symmetry \eqref{partity_symmetry_cK}. If $\|\bxi\|_1\ge 2$, then using the improved bounds \eqref{prop:BD1} and \eqref{prop:BD2} for the $f_1$- and $f_2$-terms, respectively, we can estimate their contribution to $ \scr{U}_1\pa{t}$ as
    \[\pa{t-\ti}^\fn  \sum_{\mathbf b\in (\wt\Z_n^d)^\fn} \Theta^{\pa{\sigma_{1},\sigma_{2}}}_t\pa{[a_1], [b_1]} g_{\bm{\xi}}(\mathbf b)\cdot \cK_{\ti,\bsigma,\mathbf{b}}^{\pa{\fn}}\prec {\pa{W^d\ell_t^d\eta_t}^{-\pa{\fn-1}}}. \]
   The argument follows those in \cite[Lemma 3.11]{Band1D} and \cite[Lemma 3.15]{truong2025localizationlengthfinitevolumerandom}, so we omit the details. The only difference from the proofs there is that terms with $\|\bxi\|_1\ge 1$ may introduce a singular factor $\p{\lambda^2+1-t}^{-1}$.  However, whenever $f_{\xi_i}\pa{\qa{a_i},\qa{s_i}}\neq 0$, we necessarily have $\qa{s_i}\neq 0$, and thus the additional singular factor is canceled by the extra $\lambda^2$ gained from the off-diagonal part of \smash{$\cK_{\ti,\bsigma,\mathbf{b}}^{\pa{\fn}}$} via \eqref{eq:bcal_k_improved}.

   It remains to control the leading contribution corresponding to $\bxi=\mathbf 0$:
    \begin{equation}\label{scrU_1_second_error}
        \begin{aligned}
            \scr{U}_1\pa{t}&=\pa{t-\ti}^\fn\sum_{\mathbf b\in (\wt\Z_n^d)^\fn} \prod_{i=1}^\fn \Theta^{\pa{\sigma_{i},\sigma_{i+1}}}_t\pa{[a_i], [b_1]} \cdot \cK_{\ti,\bsigma,\mathbf{b}}^{\pa{\fn}}+\opr{\pa{W^d\ell_t^d\eta_t}^{-\pa{\fn-1}}}\\
            &=\sum_{\qa{b}\in \wt\Z_n^d} \prod_{i=1}^\fn\Theta^{\pa{\sigma_{i},\sigma_{i+1}}}_t\pa{[a_i], [b]} \cdot \frac{\pa{t-\ti}^{\fn}}{n^d}\sum_{\mathbf{b}\in(\Zn)^{\fn}} \cK_{\ti,\bsigma,\mathbf{b}}^{\pa{\fn}}+\opr{\pa{W^d\ell_t^d\eta_t}^{-\pa{\fn-1}}}\\
            &=:\sum_{\qa{b}\in \wt\Z_n^d} \prod_{i=1}^\fn\Theta^{\pa{\sigma_{i},\sigma_{i+1}}}_t\pa{[a_i], [b]}\cdot \cal{R}_1\pa{t}+\opr{\pa{W^d\ell_t^d\eta_t}^{-\pa{\fn-1}}},
        \end{aligned}
    \end{equation}
    where the second step follows from the translation invariance \eqref{eq:traninv} of the $\cK$-loops.
    Similarly, applying \eqref{Theta_expansion222} to the terms \smash{$\scr{U}_2^{\pa{k,l}}\pa{t}$} and performing an analogous analysis, we obtain
        \begin{align}
            \scr{U}_2^{\pa{k,l}}\pa{t}&=\int_{\ti}^t\rd u\sum_{\mathbf b\in (\wt\Z_n^d)^\fn} \prod_{i=1}^\fn \Theta^{\pa{\sigma_{i},\sigma_{i+1}}}_t\pa{[a_i], [b_1]} \cdot \pa{t-u}^\fn \sum_{\qa{a}\in \wt\Z_n^d}  \pa{\cutL^{\qa{a}}_{k, l} \circ {\mathcal{K}}^{(\fn)}_{u, \boldsymbol{\sigma}, \mathbf{b}} } \cdot \pa{ \cutR^{\qa{a}}_{k, l} \circ {\mathcal{K}}^{(\fn)}_{u, \boldsymbol{\sigma}, \mathbf{b}} } \nonumber\\
            &+\opr{W^{-d}\pa{W^d\ell_t^d\eta_t}^{-\pa{\fn-1}}} \nonumber\\
            &=\sum_{\qa{b}\in \wt\Z_n^d} \prod_{i=1}^\fn\Theta^{\pa{\sigma_{i},\sigma_{i+1}}}_t\pa{[a_i], [b]}\cdot \frac{1}{n^{d}} \int_{\ti}^t\rd u\, \pa{t-u}^{\fn} \sum_{\qa{a}\in \wt\Z_n^d}\sum_{\mathbf b\in (\wt\Z_n^d)^\fn}  \pa{\cutL^{\qa{a}}_{k, l} \circ {\mathcal{K}}^{(\fn)}_{u, \boldsymbol{\sigma}, \mathbf{b}} } \cdot  \pa{\cutR^{\qa{a}}_{k, l} \circ {\mathcal{K}}^{(\fn)}_{u, \boldsymbol{\sigma}, \mathbf{b}}}\nonumber\\
            &+\opr{W^{-d}\pa{W^d\ell_t^d\eta_t}^{-\pa{\fn-1}}} \nonumber\\
            & =: \sum_{\qa{b}\in \wt\Z_n^d} \prod_{i=1}^\fn\Theta^{\pa{\sigma_{i},\sigma_{i+1}}}_t\pa{[a_i], [b]}\cdot \cR_{2}^{\pa{k,l}}\pa{t}+\opr{W^{-d}\pa{W^d\ell_t^d\eta_t}^{-\pa{\fn-1}}}.\label{scrU_2_second_error}
        \end{align}
    Substituting the estimates \eqref{scrU_1_second_error} and \eqref{scrU_2_second_error} into \eqref{cK_Duhamel} yields
    \begin{equation}\label{first_reduction_cK_t}
            \cK_{t,\bsigma,\ba}^{\pa{\fn}}= \sum_{\qa{b}\in \wt\Z_n^d} \prod_{i=1}^\fn\Theta^{\pa{\sigma_{i},\sigma_{i+1}}}_{t, [a_i] [b]}\bigg({\cR_1\pa{t}+W^d\sum_{\substack{1\leq k<l\leq \fn:\\  2\leq l-k\leq \fn-2}}\cR_2^{\pa{k,l}}\pa{t}}\bigg)+\opr{\pa{W^d\ell_t^d\eta_t}^{-\pa{\fn-1}}}.
    \end{equation}

We now explore further cancellations in \eqref{first_reduction_cK_t}. On the one hand, by including the terms $\cR_2^{\pa{k,l}}\p{t}$ with $l-k\in\{1,\fn-1\}$, we obtain
    \begin{equation}\label{eq:R1t1}
            W^d\sum_{1\leq k<l\leq \fn}\cR_2^{\pa{k,l}}\pa{t}=W^d\sum_{\substack{1\leq k<l\leq \fn:\\  2\leq l-k\leq \fn-2}}\cR_2^{\pa{k,l}}\pa{t}+\frac{\fn}{n^d}\int_{\ti}^t\rd u\, \frac{\pa{t-u}^{\fn}}{1-u} \sum_{\mathbf b\in (\wt\Z_n^d)^\fn}   {\mathcal{K}}^{(\fn)}_{u, \boldsymbol{\sigma}, \mathbf{b}} \, .
    \end{equation}
    Here we used that, for any $k\in\qq{\fn}$ (with the convention $\cR_2^{\pa{\fn,\fn+1}}\p{t}=\cR_2^{\pa{\fn,1}}\p{t}$),
    \begin{equation*}
        \begin{aligned}
            \cR_2^{\pa{k,k+1}}\p{t}=&\frac{1}{n^d}\int_{\ti}^t\rd u\, \pa{t-u}^{\fn} \sum_{\qa{a}\in \wt\Z_n^d}\sum_{\mathbf b\in (\wt\Z_n^d)^\fn}  \pa{\cutL^{\qa{a}}_{k, k+1} \circ {\mathcal{K}}^{(\fn)}_{u, \boldsymbol{\sigma}, \mathbf{b}} } \cdot \pa{ \cutR^{\qa{a}}_{k, k+1} \circ {\mathcal{K}}^{(\fn)}_{u, \boldsymbol{\sigma}, \mathbf{b}} }\\
            =&\frac{1}{L^d}\int_{\ti}^t\rd u\, \frac{\pa{t-u}^{\fn}}{1-u} \sum_{\mathbf b\in (\wt\Z_n^d)^\fn}   {\mathcal{K}}^{(\fn)}_{u, \boldsymbol{\sigma}, \mathbf{b}},
        \end{aligned}
    \end{equation*}
    where the second identity follows from the shift invariance \eqref{shift_invariance}, together with the following consequence of the definitions \eqref{cK_and_Theta} and \eqref{eq:defwhTheta} and the fact that $t^{-1}S_t\in \fS_t$ is doubly stochastic:
\[W^d\sum_{[a]}\cK_{u,\pa{\sigma,\sigma'},\p{\qa{a},\qa{b}}}^{\pa{2}}=\frac{1}{1-u},\quad \forall \, \qa{b}\in\Zn, \ \sigma\ne\sigma'\in\{+,-\}.\]
    On the other hand, using the evolution equation \eqref{pro_dyncalK} and integrating by parts, we obtain
        \begin{align}
        W^d\sum_{1\leq k<l\leq \fn}\cR_2^{\pa{k,l}}\pa{t}
          &  =\frac{1}{n^d}\int_{\ti}^t\rd u\, \pa{t-u}^{\fn}\sum_{\mathbf b\in (\wt\Z_n^d)^\fn}W^d\sum_{1\leq k<l\leq \fn} \sum_{\qa{a}\in \wt\Z_n^d} \pa{ \cutL^{\qa{a}}_{k, l} \circ {\mathcal{K}}^{(\fn)}_{u, \boldsymbol{\sigma}, \mathbf{b}} } \cdot \pa{ \cutR^{\qa{a}}_{k, l} \circ {\mathcal{K}}^{(\fn)}_{u, \boldsymbol{\sigma}, \mathbf{b}}} \nonumber\\
          & =\frac{1}{n^d}\int_{\ti}^t\rd u\, \pa{t-u}^{\fn}\frac{\rd}{\rd u}\sum_{\mathbf b\in (\wt\Z_n^d)^\fn}\cK_{u,\bsigma,\ba}^{\pa{\fn}} \nonumber\\
          &=\frac{\fn}{n^d}\int_{\ti}^t\rd u\, \pa{t-u}^{\fn-1}\sum_{\mathbf b\in (\wt\Z_n^d)^\fn}\cK_{u,\bsigma,\ba}^{\pa{\fn}}-\frac{1}{n^d} \pa{t-\ti}^{\fn}\sum_{\mathbf b\in (\wt\Z_n^d)^\fn}\cK_{\ti,\bsigma,\ba}^{\pa{\fn}}\nonumber\\
          &  =-\cR_1\pa{t}+\frac{\fn}{n^d}\int_{\ti}^t\rd u\, \pa{t-u}^{\fn-1}\sum_{\mathbf b\in (\wt\Z_n^d)^\fn}\cK_{u,\bsigma,\ba}^{\pa{\fn}}.\label{eq:R1t2}
        \end{align}
    Combining \eqref{eq:R1t1} and \eqref{eq:R1t2} yields that
    \begin{equation}\nonumber
            \cR_1\pa{t}+W^d\sum_{\substack{1\leq k<l\leq \fn:\\  2\leq l-k\leq \fn-2}}\cR_2^{\pa{k,l}}\pa{t}=\frac{\fn}{n^d}\int_{\ti}^t\rd u\,\frac{1-t}{1-u} \pa{t-u}^{\fn-1}\sum_{\mathbf b\in (\wt\Z_n^d)^\fn}\cK_{u,\bsigma,\ba}^{\pa{\fn}}.
    \end{equation}
    Substituting this identity into \eqref{first_reduction_cK_t}, and applying the bounds \eqref{sum_zero_cK} and \eqref{prop:ThfadC}, we obtain
    \begin{equation}\label{cK_final_error}
        \begin{aligned}
            \absa{\cK_{t,\bsigma,\ba}^{\pa{\fn}}}\prec \frac{\ell_t}{\p{\ell_t^{d}\eta_t}^{\fn}} \int_{\ti}^{t}\rd u\,\frac{1-t}{1-u}W^{-\pa{\fn-1}d}+\pa{W^d\ell_t^d\eta_t}^{-\pa{\fn-1}}\prec {\pa{W^d\ell_t^d\eta_t}^{-\pa{\fn-1}}},
        \end{aligned}
    \end{equation}
    which establishes \eqref{eq:bcal_k}.

    Finally, to complete Step (iii), it remains to establish \eqref{eq:bcal_k_improved}. For this improved bound, it suffices to show that the error terms in \eqref{scrU_1_first_error}, \eqref{scrU_2_first_error}, \eqref{first_reduction_cK_t}, and \eqref{cK_final_error} can all be replaced by \[\opr{\lambda^2/\pa{\lambda^2+1-t}\cdot\pa{W^d\ell_t^d\eta_t}^{-\pa{\fn-1}}}\]
    whenever the diagonal constraint $\qa{a_1}=\cdots=\qa{a_{\fn}}$ fails. First, the corresponding improvements for the non-diagonal parts in \eqref{scrU_1_first_error} and \eqref{scrU_2_first_error} are immediate. Indeed, the derivations of \eqref{scrU_1_first_error} and \eqref{scrU_2_first_error} always produce an additional factor $\lambda^2/\pa{\lambda^2+1-t}$ when \eqref{eq:bcal_k_improved} is applied to the $\cK$-loops at time $\ti$ or to $\cK$-loops of shorter length. The only exceptional situations are: (i)  in \eqref{scrU_1_first_error}, when $\qa{a_i}=\qa{b_i}$ for all $i\in \qq{\fn}$ and $\qa{b_1}=\cdots=\qa{b_\fn}$; (ii) in \eqref{scrU_2_first_error}, when $\qa{a_i}=\qa{b_i}$ for all $i\in \qq{\fn}$ and $\qa{b_1}=\cdots=\qa{b_\fn}=\qa{a}$. In either case, these conditions force $\qa{a_1}=\cdots=\qa{a_{\fn}}$, so no improvement is needed.
    Similarly, one checks that the argument leading to \eqref{first_reduction_cK_t} also contributes the same factor $\lambda^2/\pa{\lambda^2+1-t}$ whenever the loop is non-diagonal. Finally, for \eqref{cK_final_error}, the product \smash{$\prod_{i=1}^\fn\Theta^{\pa{\sigma_{i},\sigma_{i+1}}}_t\pa{[a_i], [a]}$} gains an extra factor $\lambda^2/\pa{\lambda^2+1-t}$ by \eqref{prop:ThfadC_improved}, unless $\qa{a_1}=\cdots=\qa{a_\fn}=[a]$. Combining all of these improvements, we obtain \eqref{eq:bcal_k_improved}. This completes the induction for Step (iii), and therefore concludes the proof of \Cref{bounds_on_primitive_loops}.
\end{proof}

\subsection{Proof of \Cref{lemma_properties_of_propagators}}

The first two properties in \Cref{lemma_properties_of_propagators} follow directly from the corresponding symmetries of $S_t$.
The estimate \eqref{prop:ThfadC_short_pointwise} was established in equation (4.21) of \cite{PartII}; taking a block average immediately yields \eqref{prop:ThfadC_short}. Moreover, the improved estimate \eqref{prop:ThfadC_short_improved} follows from \eqref{prop:ThfadC_short_pointwise} by an argument analogous to \eqref{example_difference_1}.
Next, we derive \eqref{prop:ThfadC_pointwise} and \eqref{prop:ThfadC_improved} from \eqref{prop:ThfadC}.

First, the pointwise bound \eqref{prop:ThfadC_pointwise} follows directly from \eqref{prop:ThfadC} via the identity
    \begin{equation}
          \wh \Theta_t^{(+,-)}= \pa{1-S_t}^{-1}=1+S_t+S_t\pa{1-S_t}^{-1}S_t.
    \end{equation}
By the flatness condition \eqref{def_C_flat}, the entries of the last term satisfy, for $x\in[a]$ and $y\in[b]$,
\[ [S_t\pa{1-S_t}^{-1}S_t]_{xy} \le \frac{C}{W^d}\sum_{[a'],[b']:|[a']-[a]|\le C, [b']-[b]|\le C} \Theta_{t,[a'][b']}^{(+,-)}\]
for some constant $C>0$. Given \eqref{prop:ThfadC_pointwise}, for any $\qa{x}\neq \qa{0}$ and large constant $D>0$, there exist constants $c,c',C>0$ such that
\begin{align}            &~\absa{\Theta_t^{\pa{\sigma_1,\sigma_2}}\pa{\qa{0},\qa{x}}-\fd\pa{\Theta}_t^{\pa{\sigma_1,\sigma_2}}\pa{\qa{0},\qa{x}}} \nonumber\\
\leq&~ \frac{1}{W^d} \sum_{i\in\qa{0}}\sum_{v,j\in\qa{x}}\sum_{u:|u-v|\leq CW}\pa{1-S_t}^{-1}_{iu}\p{S_t^{\txt{od}}}_{uv}\pa{1-S_t^{\txt{d}}}^{-1}_{vj} \nonumber\\
\prec&~ \frac{1}{W^d} \sum_{i\in\qa{0}}\sum_{v,j\in\qa{x}}\sum_{u:|u-v|\leq CW} \pa{\delta_{iu}+\frac{ e^{-c'|i-u|/(W\ell_t)}}{W^d\ell_t^d|1-t|}}\p{S_t^{\txt{od}}}_{uv}\pa{\delta_{vj}+\frac{1}{W^d}\frac{1}{\pa{\lambda^2+1-t}}}+W^{-D} \nonumber\\
\prec&~ \frac{\lambda^2}{\lambda^2+1-t}\frac{e^{-c|\qa{x}|/\ell_t}}{|1-t|\ell_t^d}+W^{-D},
\end{align}
where we used the decomposition $S_t=S_t^{\txt{d}}+S_t^{\txt{od}}$ (defined analogously to \eqref{decomposition_S_d_od}), the local fullness condition \eqref{def_epsilon_full} and the flatness condition \eqref{def_C_flat} for $S_t^{\txt{od}}$, the block-diagonal structure of $S_t^{\txt{d}}$, the pointwise bound in \Cref{lemma_mean_field_inverse_bound_point_wise} for $\p{1-S_t^{\txt{d}}}^{-1}$, and the definition of $\lambda$ in \eqref{def_lambda}.

It remains to prove the estimates \eqref{prop:ThfadC}, \eqref{prop:BD1}, and \eqref{prop:BD2} when $\sigma_1\neq \sigma_2$. In this case, recalling \eqref{def_projection_operator}, we may write
    \begin{equation*}            \Theta_t^{\pa{\sigma_1,\sigma_2}}=\scr{P}\p{\pa{1-S_t}^{-1}},
    \end{equation*}
    and for brevity, we will abbreviate $\Theta_t\equiv \Theta_t^{\pa{\sigma_1,\sigma_2}}$ in the following proof.
    Recall that $\SRBM$ is $(2\varepsilon_0)$-locally full. By the definition $\fS_t$, there exists a parameter $s_{\txt{flow}}\in \qa{t,\tf}$ such that
    \begin{equation}\label{s_flow}
        \begin{aligned}
            S_t=t\frac{\tf}{s_{\txt{flow}}}\SRBM+\pa{t-t\frac{\tf}{s_{\txt{flow}}}}\SE.
        \end{aligned}
    \end{equation}
    Note that
    \begin{equation*}
            2\varepsilon_0t\frac{\tf}{s_{\txt{flow}}}+\pa{t-t\frac{\tf}{s_{\txt{flow}}}}\geq 2\varepsilon_0\tf+\ti-\tf=\varepsilon_0\tf\sim 1,
    \end{equation*}
    and hence $S_t$ contains a positive locally full component. More precisely, there exists a constant $c_{\txt{ker}}>0$ and a locally $c_{\ker}$-full matrix $S_{\txt{ker}}$ with non-negative entries such that
    \begin{equation}\label{eq:decomposeSt}
            S_t=S_{\txt{ker}}+c_{\txt{ker}}\SE.
    \end{equation}
    A key observation is the following lemma, which provides a random-walk representation of $\Theta_t$.
    \begin{lemma}\label{lemma_random_walk_representation}
        The propagator $\Theta_t$ has the following random-walk representation:
        \begin{equation}\label{eq:Thetatrep}
                \Theta_t=\frac{1}{c_{\ker}}\frac{\wh t K}{1-\wh t K}=\frac{1}{c_{\ker}}\sum_{k=1}^{\infty}\wh t^{k}K^{k},
        \end{equation}
        where $\wh t>0$ and the $\Zn\times\Zn$ matrix $K$ is defined by
        \begin{equation}
            \begin{aligned}
                K:=\pa{1-t+c_{\txt{ker}}}\scr{P}\qa{\pa{1-S_{\txt{ker}}}^{-1}},\quad\wh t:=\frac{c_{\txt{ker}}}{1-t+c_\txt{ker}}.
            \end{aligned}
        \end{equation}
        Clearly, we have
        \begin{equation}\label{time_equivalence}
            \begin{aligned}
                1-\wh t\sim 1-t.
            \end{aligned}
        \end{equation}
    The matrix $K$ is doubly stochastic and translation invariant on $\Zn$, and therefore represents the transition probability matrix for a random walk \smash{$\wh R_n:=\wh X_1+\ldots+\wh X_n$} on $\Zn$. It satisfies the following properties for some constant $c>0$:
        \begin{enumerate}
            \item {\bf Exponential decay}:
            \begin{equation}\label{K_exponential_decay}
                \begin{aligned}
                    K_{\qa{0}\qa{x}}\leq c^{-1}\pa{\delta_{\qa{0}\qa{x}}+\lambda^2\pa{\SRBM} e^{-c\absa{\qa{x}}}}.
                \end{aligned}
            \end{equation}
            \item {\bf Variance}:
            \begin{equation}\label{K_total_variance}
                \begin{aligned}
                    c\lambda^2\pa{\SRBM}\leq \sum_{\qa{x}\in \Zn}\absa{\qa{x}}^2K_{\qa{0}\qa{x}}\leq c^{-1}\lambda^2\pa{\SRBM}.
                \end{aligned}
            \end{equation}
            \item {\bf Isotropy}: Let $\Sigma_K$ denote the covariance matrix of the random walk induced by $K$. Then, we have
            \begin{equation}\label{K_isotropy}
                \begin{aligned}
                    c\lambda^2\pa{\SRBM}I_d\leq \Sigma_K\leq c^{-1}\lambda^2\pa{\SRBM}I_d,
                \end{aligned}
            \end{equation}
            Here, $A\leq B$ denotes that $B-A$ is positive semi-definite.
        \end{enumerate}
    \end{lemma}

    \begin{proof}
    The identity \eqref{eq:Thetatrep} follows from a direct computation using the decomposition \eqref{eq:decomposeSt}, together with the facts that
     \((\SE)^2=\SE\) and \(\scr{P}(A)_{[a][b]}=W^d(\SE A\SE)_{xy}\) for any $\ZL\times \ZL$ matrix $A$ and any $x\in[a], y\in[b]$. The translation invariance of $K$ is inherited from the block translation invariance of $S_{\ker}$.
     To verify that $K$ is doubly stochastic, note that for every $x\in\mathbb{Z}_L$,
        \begin{equation}
            \begin{aligned}
                1-\sum_{y\in \ZL}\pa{S_{\ker}}_{xy}=1-\sum_{y\in \ZL}\pa{S_t-c_{\ker}\SE}_{xy}=1-t+c_{\ker}.
            \end{aligned}
        \end{equation}
        This also gives $\norm{S_{\ker}}_{\infty\to\infty}=t-c_{\ker}\le 1-c_{\ker}$, and therefore the exponential decay estimate \eqref{K_exponential_decay} follows readily from the geometric-series expansion
        \begin{equation}\label{K_expansion}
            \begin{aligned}
                K=\pa{1-t+c_{\ker}}\scr{P}\qa{\pa{1-S_{\ker}}^{-1}}=\pa{1-t+c_{\ker}}\sum_{k=0}^{\infty}\scr{P}\pa{S_{\ker}^k}.
            \end{aligned}
        \end{equation}
        The bound \eqref{K_exponential_decay} immediately yields the upper bound in \eqref{K_total_variance}, and the latter in turn implies the upper bound in \eqref{K_isotropy} via \smash{$\Sigma_K=\E\q{\wh X_1\wh X_1^{\top}}\leq \E\norm{\wh X_1}^2$}. Moreover, the expansion \eqref{K_expansion}, combined with the isotropy assumption \eqref{S_isotropy}, gives
        \[ \Sigma_K\ge \pa{1-t+c_{\ker}}^2\Sigma_{S_{\ker}} \gtrsim \Sigma_{\SRBM} \gtrsim \lambda^2(\SRBM)I_d, \]
        In the second step, we used that $S_{\ker}$ differs from $\p{t\cdot \tf/{s_{\txt{flow}}}}\cdot \SRBM$ only on block-diagonal entries, which do not contribute to the covariance. This yields the lower bounds in \eqref{K_total_variance} and \eqref{K_isotropy}.
\end{proof}

With the random-walk representation in \Cref{lemma_random_walk_representation}, the estimates \eqref{prop:ThfadC}, \eqref{prop:BD1}, and \eqref{prop:BD2} follow from standard arguments based on the Fourier-series expansion of $\Theta_t$ together with local CLT and large-deviation estimates for the random walk governed by $K$. Since the proofs proceed in a manner analogous to those of \cite[Lemma 3.10]{truong2025localizationlengthfinitevolumerandom}, \cite[Lemma 2.14]{Band2D}, and \cite[Lemma 2.19]{DYYY25_d3}, we omit the details here.

\section{Global laws}\label{sec_global_law}

In this section, we establish the global laws for the $G$-loops at the initial time $\ti$ of the flow. These results are stated in \Cref{lemma_global_law}, where the term ``global" refers to the regime in which the imaginary part of the spectral parameter is of order one.
For notational convenience—as well as for certain technical simplifications—\Cref{lemma_global_law} is formulated in a slightly more general framework introduced in \Cref{def:general_global}. The desired global laws for the $G$-loops defined in \Cref{Def:G_loop} at time $\ti$ then follow directly from \Cref{lemma_global_law} after an appropriate choice of the spectral parameter~$z$ and the variance–profile $S$ (up to a simple rescaling).

\begin{definition}\label{def:general_global}
Let $H$ be a Hermitian random matrix with independent (up to symmetry) Gaussian entries distributed as in \eqref{eq:distr_H}, with variance profile $S$ satisfying the $(2\varepsilon_0)$-local fullness and $C_0$-flat conditions in \eqref{def_epsilon_full} and \eqref{def_C_flat}. We also assume that $S$ is doubly stochastic.\footnote{We emphasize that we do not impose all assumptions in \Cref{def_considered_model} here.} Let \smash{$G\p{z}:=\pa{H-z}^{-1}$} denote the Green's function.
Consider a sequence of deterministic diagonal matrices \smash{$\ha{\fB^{\pa{i}}}_{i=1}^{\infty}$} satisfying
    \begin{equation}\label{eq:opBi}
            \sup_{i\geq 1}\norm{\fB^{\pa{i}}}=\OO\pa{W^{-d}},
    \end{equation}
and define their $\qa{a}$-blocks $\fB^{\pa{i}}_{[a]}$ for $\qa{a}\in \wt \Z_n^d$ as in \eqref{eq:Baa}.
With a slight abuse of notation, we write
\[G\p{+}:=G\p{z},\quad G\p{-}:=G^*\p{z},\quad m\p{+}:=m\p{z},\quad m\p{-}:=\ol{m}\p{z}.\]
For $\bsigma\in\ha{-,+}^{\fn}$, $\ba\in\p{\wt \Z_n^d}^{\fn}$, and \smash{$\fB:=(\fB^{\pa{1}},\ldots, \fB^{\pa{\fn}})$}, we define the $G$-loops and $\cK$-loops by
    \begin{equation}\label{def_general_cL_and_cK}
        \begin{aligned}
            \cL_{\fB,\bsigma,\ba}^{\pa{\fn}}&:=\avga{G\p{\sigma_1}\mathfrak{B}_{\qa{a_1}}^{\pa{1}}G\p{\sigma_2}\mathfrak{B}_{\qa{a_2}}^{\pa{2}}\cdots G\p{\sigma_\fn}\mathfrak{B}_{\qa{a_\fn}}^{\pa{\fn}}},\\
            \cK_{\mathfrak{B},\bsigma,\ba}^{\pa{\fn}}&:=\sum_{x_1\in\qa{a_1},\ldots,x_\fn\in\qa{a_\fn}}\prod_{i=1}^{\fn}\fB_{\qa{a_i}}^{\pa{i}}\pa{x_i,x_i}\wh \cK_{\bsigma,\pa{x_1,\ldots,x_\fn}}^{\pa{\fn}},
        \end{aligned}
    \end{equation}
    where the $\wh \cK$-loops are defined in \Cref{Def_Ktza}.
    Similarly, the $G$- and $\cK$-chains are defined for $x\in\Z_L^d$ as
    \begin{equation}
        \begin{aligned}
            \cC_{\fB,\bsigma,\ba}^{\pa{\fn}}\pa{x}&:=\pa{G\p{\sigma_1}\mathfrak{B}_{\qa{a_1}}^{\pa{1}}G\p{\sigma_2}\mathfrak{B}_{\qa{a_2}}^{\pa{1}}\cdots\mathfrak{B}_{\qa{a_{\fn-1}}}^{\pa{\fn-1}} G\p{\sigma_\fn}}_{xx},\\
            \cK_{\mathfrak{B},\bsigma,\ba}^{\pa{\fn}}\pa{x}&:=\sum_{x_1\in\qa{a_1},\ldots,x_{\fn-1}\in\qa{a_{\fn-1}}}\prod_{i=1}^{\fn-1}\fB_{\qa{a_i}}^{\pa{i}}\pa{x_i,x_i}\wh \cK_{\bsigma,\pa{x_1,\ldots,x_{\fn-1},x}}^{\pa{\fn}},
        \end{aligned}
    \end{equation}
    for \smash{$\fB:=(\fB^{\pa{1}},\ldots, \fB^{\pa{\fn-1}})$}.
\end{definition}

\begin{lemma}\label{lemma_global_law}
In the setting of \Cref{def:general_global}, for any small constant $\fc\in(0,1)$ and fixed $\fn\in \N$, the following global laws hold for spectral parameters $z=E+\ii \eta$.
    \begin{itemize}
        \item {\bf Averaged global law}: Uniformly in $\bsigma\in\ha{-,+}^{\fn}$ and $\ba\in\p{\wt \Z_n^d}^{\fn}$,
        \begin{equation}\label{global_law_average}
              \sup_{z:\absa{E}\leq \fc^{-1}, \eta\in\qa{\fc,\fc^{-1}}}\absa{\cL_{\fB,\bsigma,\ba}^{\pa{\fn}}-\cK_{\mathfrak{B},\bsigma,\ba}^{\pa{\fn}}}\prec {W^{-\fn d}}.
        \end{equation}
        \item {\bf Isotropic global law}: Uniformly in $\bsigma\in\ha{-,+}^{\fn}$, $\ba\in\p{\wt \Z_n^d}^{\fn-1}$, and $x\in\Z_L^d$,
        \begin{equation}\label{global_law_isotropic}
 \sup_{z:\absa{E}\leq \fc^{-1},  \eta\in\qa{\fc,\fc^{-1}}}\absa{\cC_{\fB,\bsigma,\ba}^{\pa{\fn}}\pa{x}-\cK_{\mathfrak{B},\bsigma,\ba}^{\pa{\fn}}\pa{x}}\prec W^{-\pa{\fn-1/2} d} .
        \end{equation}
    \end{itemize}
\end{lemma}

We also obtain the following improved global laws in the case $\fn=2$.

    \begin{lemma}\label{improved_global_laws}
In the setting of \Cref{def:general_global}, take any deterministic diagonal matrices $\fB^{\pa{1}},\fB^{\pa{2}}$ satisfying $\max\ha{\norm{\fB^{\pa{1}}},\norm{\fB^{\pa{2}}}}=\OO\pa{W^{-d}}$. Then, the following global laws hold for spectral parameters $z=E+\ii \eta$.
        \begin{itemize}
            \item $\txt{\textbf{Expected global law}}$:
            Uniformly in $\bsigma\in\ha{+,-}^2$ and \smash{$\ba\in\p{\wt \Z_n^d}^2$},
            \begin{equation}\label{expected_global_law}
                    \sup_{z:\absa{E}\leq \fc^{-1}, \eta\in\qa{\fc,\fc^{-1}}}\absa{\E\cL_{\fB,\bsigma,\ba}^{\pa{2}}-\cK_{\fB,\bsigma,\ba}^{\pa{2}}}\prec {W^{-3d}}.
            \end{equation}
            \item $\txt{\textbf{Global law with decay}}$: Uniformly in $\bsigma\in\ha{+,-}^2$ and \smash{$\ba\in\p{\wt \Z_n^d}^2$}, for any fixed constant $D>0$,
            \begin{equation}\label{global_law_with_decay}
                   \absa{ \cL_{\fB,\bsigma,\ba}^{\pa{2}}-\cK_{\fB,\bsigma,\ba}^{\pa{2}} }\prec {W^{-2d} e^{-\absa{\qa{a_1}-\qa{a_2}}^{1/2}}+W^{-D}}.
            \end{equation}
        \end{itemize}
    \end{lemma}

\subsection{Proof of \Cref{lemma_global_law}}
We prove \Cref{lemma_global_law} by induction on $\fn$. For simplicity, we abbreviate $G\equiv G\pa{z}$ and $m\equiv m\pa{z}$ in the following proof.
First, to establish the global law for $\fn=1$, we follow the approach of \cite{He2018} and study the matrix self-consistent equation $\Pi\p{G}=0$, where
    \begin{equation}  \label{eq:PiGdef}
    \Pi\p{G}:=I+zG+\cS\q{G}G,
    \end{equation}
    where $\cS\equiv\cS_S$ is defined in \eqref{def_cS}. Assume that for some deterministic control parameter $\phi>0$, we have
    \begin{equation}\label{eq:bddPiG}
\max_{x,y\in\Z_L^d} \abs{\Pi\p{G}_{xy}}\prec \phi.
    \end{equation}
For brevity, set $\cQ:=\Pi\p{G}=HG+\cS\q{G}G$. Then, for any fixed $p\in \N$, we have
    \begin{equation*}
        \begin{aligned}
            \E\absa{\cQ_{xy}}^{2p}&=\E\pa{HG}_{xy}\ol{\cQ}_{xy}\absa{\cQ_{xy}}^{2p-2}+\pa{\cS\q{G}G}_{xy}\ol{\cQ}_{xy}\absa{\cQ_{xy}}^{2p-2}\\
            &=\sum_{w}S_{xw}\E G_{wy}\partial_{wx}\pa{\ol{\cQ}_{xy}\absa{\cQ_{xy}}^{2p-2}},
        \end{aligned}
    \end{equation*}
   where the second step uses Gaussian integration by parts. Using the identities
    \begin{equation*}
        \begin{aligned}
            &\partial_{wx}\cQ_{xy}=-\cQ_{xw}G_{xy}+\delta_{wx}G_{xy}-G_{xy}\sum_{\alpha}S_{x\alpha}G_{\alpha w}G_{x\alpha},\\
            &\partial_{wx}\ol{\cQ}_{xy}=-\ol{\cQ}_{xx}G_{wy}+\ol{G}_{wy}-\ol{G}_{xy}\sum_{\alpha}S_{x\alpha}\ol{G}_{\alpha x}\ol{G}_{w\alpha},
        \end{aligned}
    \end{equation*}
    we derive that
        \begin{align}
            \E\absa{\cQ_{xy}}^{2p}=&~p\sum_{w}S_{xw}\E G_{wy}\pa{-\ol{\cQ}_{xx}G_{wy}+\ol{G}_{wy}-\ol{G}_{xy}\sum_{\alpha}S_{x\alpha}\ol{G}_{\alpha x}G_{w\alpha}}{\absa{\cQ_{xy}}^{2p-2}}\nonumber\\
            &~+\pa{p-1}\sum_{w}S_{xw}\E G_{wy}\pa{-\cQ_{xw}G_{xy}+\delta_{wx}G_{xy}-G_{xy}\sum_{\alpha}S_{x\alpha}G_{\alpha w}G_{x\alpha}}\ol{\cQ}_{xy}^{p}\cQ_{xy}^{p-2}\nonumber\\
            \prec &~ \pa{\phi W^{-d/2}+W^{-d}} {\E\absa{\cQ_{xy}}^{2p-2}} .\label{estimate_global_entrywise_self_consistent_equtaion}
        \end{align}
In the last step we used \eqref{eq:bddPiG}, the $C_0$-flatness condition \eqref{def_C_flat}, the Cauchy–Schwarz inequality, and the bounds
    \begin{equation}
        \begin{aligned}
        \sum_{w}\absa{G_{xw}}^2=\pa{GG^*}_{xx}\lesssim 1,\quad \sum_{w}\absa{G_{wx}}^2=\pa{G^*G}_{xx}\lesssim 1,
        \end{aligned}
    \end{equation}
 which follow from $\norm{G}\le \eta^{-1}\lesssim{1}$.
From \eqref{estimate_global_entrywise_self_consistent_equtaion}, Hölder’s and Young’s inequalities yield the self-improving estimate
    \begin{equation}\max_{x,y\in\Z_L^d} \abs{\Pi\p{G}_{xy}}\prec \phi\quad\Longrightarrow\quad \max_{x,y\in\Z_L^d} \abs{\Pi\p{G}_{xy}}\prec \phi^{1/2} W^{-d/4}+W^{-d/2}.
    \end{equation}
    Starting from $\phi=1$ and iterating this bound $\OO\pa{1}$ times, we conclude that    \begin{equation}\label{result_global_entrywise_self_consistent_equation}\max_{x,y\in\Z_L^d} \abs{\Pi\p{G}_{xy}}\prec W^{-d/2}.
    \end{equation}

To derive the global law for $\fn=1$ from the bound on $\Pi(G)$, we use the identity    \begin{equation}\label{solving_matrix_self_consistent_equation}
    G=-\pa{z+\cS\qa{G}}^{-1}+\pa{z+\cS\qa{G}}^{-1}\Pi\pa{G}.
    \end{equation}
Subtracting the diagonal part of the scalar self-consistent equation $m=-(z+m)^{-1}$ from the diagonal entries of \eqref{solving_matrix_self_consistent_equation}, we obtain
    \begin{equation}\label{eq:vx}
    v_x=\frac{m^2\sum_{y}S_{xy}v_y}{1-m\sum_{y}S_{xy}v_y}+\opr{W^{-d/2}},
    \end{equation}
    where we set $v_x:=\pa{G-m}_{xx}$ and used the bound \eqref{result_global_entrywise_self_consistent_equation} for $\Pi\p{G}_{xx}$.
    For $\eta\geq 2$, we have $\absa{m}\leq 1/2$ and $\|G\|\le \eta^{-1}\le 1/2$, which implies $\max_x \absa{v_x}\leq 1$. Thus, from \eqref{eq:vx}, we deduce that for $\eta\ge 2$,
    \begin{equation*}
            \max_{x}\absa{v_x}\leq \frac{1}{2}\max_{x}\absa{v_x}+\opr{W^{-d/2}}\ \implies \ \max_{x}\absa{\pa{G-m}_{xx}}\prec {W^{-d/2}}.
    \end{equation*}
    For $x\neq y$, combining \eqref{result_global_entrywise_self_consistent_equation} with \eqref{solving_matrix_self_consistent_equation}, we obtain
    \begin{equation*}            G_{xy}=\pa{z+\cS\qa{G}}_{xx}^{-1}\Pi\pa{G}_{xy}\prec {W^{-d/2}}.
    \end{equation*}
     Putting together the diagonal and off-diagonal estimates, we obtain the entrywise local law
    \[\norm{G-m}_{\max}\prec {W^{-d/2}}, \quad \text{for}\ \  \eta\geq 2.\]
    On the other hand, for $\eta\geq \fc$, we have $\norm{\pa{1-m^2S}^{-1}}_{\infty\to\infty}\lesssim 1$. Therefore, applying \Cref{from_2_loop_to_1_chain} below with control parameter $\Psi^2=W^{-d}$, and using a standard continuity argument (see e.g., \cite[Section 5]{Semicircle} and \cite[Section 5.1]{Band1D}), we conclude the global laws for $\fn=1$.

The remainder of the proof is by induction. Assume that the estimates \eqref{global_law_average} and \eqref{global_law_isotropic} hold for loop lengths $1,\ldots,\fn$. We will show that they also hold for loop length $\fn+1$. We only present the proof of \eqref{global_law_average}; the proof of \eqref{global_law_isotropic} is analogous. We adopt the following notation from \cite{cipolloni-erdos2021}.

\begin{definition}\label{def:underline}
Let $f$ and $g$ be matrix-valued functions. Define
\be\label{eq:gHf}
\underline{g(H)Hf(H)} := g(H)Hf(H)-\widetilde{\E}g(H)\widetilde{H}(\partial_{\widetilde{H}}f)(H) - \widetilde{\E}(\partial_{\widetilde{H}}g)(H)\widetilde{H}f(H),
\ee
where $\widetilde{H}$ is an independent copy of $H$, $\widetilde{\E}$ denotes the partial expectation with respect to $\wt H$, and $(\partial_{\widetilde{H}}f)(H)$ denotes the directional derivative of $f$ at the point $H$ in the direction \smash{$\wt H$}, i.e.,
\be\nonumber
[(\partial_{\widetilde{H}}f)(H)]_{xy} = (\widetilde{H}\cdot \nabla f(H))_{xy} := \sum_{\alpha,\beta\in \cI}\widetilde{H}_{\alpha\beta}\frac{\partial f(H)_{xy}}{\partial H_{\alpha\beta}}.
\ee
Note that the subtracted terms are exactly the second-order contributions arising from Gaussian integration by parts. In particular, one has $\E\underline{g(H)Hf(H)} = 0$.
\end{definition}

With $G(z)=(H-z)^{-1}$ and the self-consistent equation $m=-(z+m)^{-1}$, we can verify the identity
\be\label{eq:idG-m}G-m=-m\pa{H+m}G.\ee
Applying this identity to $G\p{\sigma_1}$ inside \smash{$\cL_{\fB,\bsigma,\ba}^{\pa{\fn+1}}$} yields
    \begin{equation*}
        \begin{aligned}
\avga{G\p{\sigma_1}\mathfrak{B}_{\qa{a_1}}^{\pa{1}}G\p{\sigma_2}\mathfrak{B}_{\qa{a_2}}^{\pa{2}}\cdots G\p{\sigma_{\fn+1}}\mathfrak{B}_{\qa{a_{\fn+1}}}^{\pa{\fn+1}}}&=m\p{\sigma_1} \avga{\mathfrak{B}_{\qa{a_1}}^{\pa{1}}G\p{\sigma_2}\mathfrak{B}_{\qa{a_2}}^{\pa{2}}\cdots G\p{\sigma_{\fn+1}}\mathfrak{B}_{\qa{a_{\fn+1}}}^{\pa{\fn+1}}}\\
&-m\p{\sigma_1}\avga{\pa{H+m\p{\sigma_1}}G\p{\sigma_1}\mathfrak{B}_{\qa{a_1}}^{\pa{1}}G\p{\sigma_2}\mathfrak{B}_{\qa{a_2}}^{\pa{2}}\cdots G\p{\sigma_{\fn+1}}\mathfrak{B}_{\qa{a_{\fn+1}}}^{\pa{\fn+1}}}.
        \end{aligned}
    \end{equation*}
Using the underline notation in \Cref{def:underline} and inserting an arbitrary deterministic matrix $\wt\fB_{\fn+1}$ in place of \smash{$\mathfrak{B}_{\qa{a_{\fn+1}}}^{\pa{\fn+1}}$}, we obtain the expansion
    \begin{align}
& \avga{G\p{\sigma_1}\mathfrak{B}_{\qa{a_1}}^{\pa{1}}G\p{\sigma_2}\mathfrak{B}_{\qa{a_2}}^{\pa{2}}\cdots  G\p{\sigma_{\fn+1}}\wt \fB_{\fn+1}} =m\p{\sigma_1}\avga{\mathfrak{B}_{\qa{a_1}}^{\pa{1}}G\p{\sigma_2}\mathfrak{B}_{\qa{a_2}}^{\pa{2}}\cdots G\p{\sigma_{\fn+1}}\wt \fB_{\fn+1}}\nonumber\\
&+m\p{\sigma_1}\avga{\cS\qa{G\p{\sigma_1}-m\p{\sigma_1}}{G\p{\sigma_1}\mathfrak{B}_{\qa{a_1}}^{\pa{1}}G\p{\sigma_2}\mathfrak{B}_{\qa{a_2}}^{\pa{2}}\cdots G\p{\sigma_{\fn+1}}\wt \fB_{\fn+1}}}\nonumber\\
&+m\p{\sigma_1}\sum_{k=2}^{\fn}\avga{\cS\q{G\p{\sigma_1}\mathfrak{B}_{\qa{a_1}}^{\pa{1}}\cdots G\p{\sigma_k}}{G\p{\sigma_k}\mathfrak{B}_{[a_k]}^{(k)}\cdots G\p{\sigma_{\fn+1}}\wt \fB_{\fn+1}}}\nonumber\\
&+m\p{\sigma_1}\avga{{G\p{\sigma_1}\mathfrak{B}_{\qa{a_1}}^{\pa{1}}G\p{\sigma_2}\mathfrak{B}_{\qa{a_2}}^{\pa{2}}\cdots G\p{\sigma_{\fn+1}}}\cS\q{\pa{G\p{\sigma_{\fn+1}}-m\p{\sigma_{\fn+1}}}\wt \fB_{\fn+1}}}\nonumber\\
&+m\p{\sigma_{\fn+1}}m\p{\sigma_1}\avga{G\p{\sigma_1}\mathfrak{B}_{\qa{a_1}}^{\pa{1}}G\p{\sigma_2}\mathfrak{B}_{\qa{a_2}}^{\pa{2}}\cdots G\p{\sigma_{\fn+1}}\cS\q{\wt \fB_{\fn+1}}}\nonumber\\
&-m\p{\sigma_1}\avga{\ul{HG\p{\sigma_1}\mathfrak{B}_{\qa{a_1}}^{\pa{1}}G\p{\sigma_2}\mathfrak{B}_{\qa{a_2}}^{\pa{2}}\cdots G\p{\sigma_{\fn+1}}\wt \fB_{\fn+1}}}.\label{cumulant_expansion_global_recursive}
\end{align}
Now move the fifth term on the RHS of
\eqref{cumulant_expansion_global_recursive} to the left and set $\wt \fB_{\fn+1}=\pa{I-m\p{\sigma_1}m\p{\sigma_{\fn+1}}\cS\qa{\cdot}}^{-1}\fB_{\qa{a_{\fn+1}}}^{\pa{\fn+1}}$. This yields that
\begin{align}
&\avga{G\p{\sigma_1}\mathfrak{B}_{\qa{a_1}}^{\pa{1}}G\p{\sigma_2}\mathfrak{B}_{\qa{a_2}}^{\pa{2}}\cdots G\p{\sigma_{\fn+1}}\mathfrak{B}_{\qa{a_{\fn+1}}}^{\pa{\fn+1}}} =m\p{\sigma_1}\avga{\mathfrak{B}_{\qa{a_1}}^{\pa{1}}G\p{\sigma_2}\mathfrak{B}_{\qa{a_2}}^{\pa{2}}\cdots G\p{\sigma_{\fn+1}}\wt \fB_{\fn+1}}\nonumber\\
&+m\p{\sigma_1}\avga{\cS\qa{G\p{\sigma_1}-m\p{\sigma_1}}{G\p{\sigma_1}\mathfrak{B}_{\qa{a_1}}^{\pa{1}}G\p{\sigma_2}\mathfrak{B}_{\qa{a_2}}^{\pa{2}}\cdots G\p{\sigma_{\fn+1}}\wt \fB_{\fn+1}}}\nonumber\\
&+m\p{\sigma_1}\sum_{k=2}^{\fn}\avga{\cS\q{G\p{\sigma_1}\mathfrak{B}_{\qa{a_1}}^{\pa{1}}\cdots G\p{\sigma_k}}{G\p{\sigma_k}\mathfrak{B}_{\qa{a_k}}^{\pa{k}}\cdots G\p{\sigma_{\fn+1}}\wt \fB_{\fn+1}}}\nonumber\\
&+m\p{\sigma_1}\avga{{G\p{\sigma_1}\mathfrak{B}_{\qa{a_1}}^{\pa{1}}G\p{\sigma_2}\mathfrak{B}_{\qa{a_2}}^{\pa{2}}\cdots G\p{\sigma_{\fn+1}}}\cS\q{\pa{G\p{\sigma_{\fn+1}}-m\p{\sigma_{\fn+1}}}\wt \fB_{\fn+1}}}\nonumber\\
&-m\p{\sigma_1}\avga{\ul{HG\p{\sigma_1}\mathfrak{B}_{\qa{a_1}}^{\pa{1}}G\p{\sigma_2}\mathfrak{B}_{\qa{a_2}}^{\pa{2}}\cdots G\p{\sigma_{\fn+1}}\wt \fB_{\fn+1}}}.\label{recursive_relation_global_loop}
\end{align}
We now record a convenient decay property of the matrix $\wt \fB_{\fn+1}$, which will be used repeatedly.
\begin{claim}\label{claim:decayfB}
Let $\fB$ be a deterministic diagonal matrix supported in the block $[a]$, i.e., $\fB=I_{\qa{a}}\fB I_{\qa{a}}$, with $\norm{\fB}=\OO\pa{1}$. Define \smash{$\wt \fB:=\pa{I-m\p{\sigma}m\p{\sigma'}\cS\qa{\cdot}}^{-1}\fB$} for $\sigma,\sigma'\in\ha{+,-}$. Then, there exists a constant $c >0$ such that
    \begin{equation}\label{decay_of_Theta_global}
            \norm{I_{\qa{x}}\wt \fB I_{\qa{x}}}\lesssim e^{-c\absa{\qa{x}-\qa{a}}}.
    \end{equation}
    \end{claim}
\begin{proof}
    Expand $\p{I-m\p{\sigma}m\p{\sigma'}\cS\qa{\cdot}}^{-1}$ as a geometric series. Since $1-|m|\gtrsim1$ for $ z=E+\ii\eta$ with $\eta\in\q{\fc,\fc^{-1}}$, the series converges and each power of $\cS$ spreads mass only over nearby blocks by the flatness condition. The claimed exponential blockwise decay follows.
\end{proof}

Using \eqref{decay_of_Theta_global} together with the assumption \eqref{eq:opBi} and the averaged global law \eqref{global_law_average} for $\fn=1$, we estimate the second term on the RHS of \eqref{recursive_relation_global_loop} as
    \begin{align}
    		&\avga{\cS\qa{G\p{\sigma_1}-m\pa{\sigma_1}}{G\p{\sigma_1}\mathfrak{B}_{\qa{a_1}}^{\pa{1}}G\p{\sigma_2}\mathfrak{B}_{\qa{a_2}}^{\pa{2}}\cdots G\p{\sigma_{\fn+1}}\wt \fB_{\fn+1}}}\nonumber\\
    		=&~ \sum_{\qa{a}}\sum_{x\in\qa{a}}\avga{\pa{G\p{\sigma_1}-m\p{\sigma_1}}S^{\pa{x}}}\pa{G\p{\sigma_1}\mathfrak{B}_{\qa{a_1}}^{\pa{1}}G\p{\sigma_2}\mathfrak{B}_{\qa{a_2}}^{\pa{2}}\cdots G\p{\sigma_{\fn+1}}I_{\qa{a}}\wt \fB_{\fn+1}I_{\qa{a}}}_{xx}\nonumber\\
    		\prec&~ \sum_{\qa{a}}\sum_{x\in\qa{a}} W^{-d}\cdot W^{-\pa{\fn+1}d}\cdot e^{-c\absa{\qa{a}-\qa{a_{\fn+1}}}}\lesssim W^{-\pa{\fn+1}d},\label{global_bound_error_term_1}
    \end{align}
    where recall that $S^{(x)}$ is defined in \eqref{eq:Sxij}. An identical argument bounds the fourth term on the RHS of \eqref{recursive_relation_global_loop} by $\opr{W^{-\pa{\fn+1}d}}$.
    Next, we control the underline term
    \begin{equation*}    		\cQ:=\avga{\ul{HG\p{\sigma_1}\mathfrak{B}_{\qa{a_1}}^{\pa{1}}G\p{\sigma_2}\mathfrak{B}_{\qa{a_2}}^{\pa{2}}\cdots G\p{\sigma_{\fn+1}}\wt \fB_{\fn+1}}}.
    \end{equation*}
    To bound this term, we estimate its $(2p)$-th moment for any fixed (large) $p\in \N$. Using Gaussian integration by parts, we get
    	\begin{align}
    		\E\absa{\cQ}^{2p}&=\E\avga{\ul{HG\p{\sigma_1}\mathfrak{B}_{\qa{a_1}}^{\pa{1}}G\p{\sigma_2}\mathfrak{B}_{\qa{a_2}}^{\pa{2}}\cdots G\p{\sigma_{\fn+1}}\wt \fB_{\fn+1}}}\ol{\cQ}\absa{\cQ}^{2p-2}\nonumber\\
    		&=\sum_{x,y}S_{xy}\pa{G\p{\sigma_1}\mathfrak{B}_{\qa{a_1}}^{\pa{1}}G\p{\sigma_2}\mathfrak{B}_{\qa{a_2}}^{\pa{2}}\cdots G\p{\sigma_{\fn+1}}\wt \fB_{\fn+1}}_{yx}\partial_{yx}\pa{\ol{\cQ}\absa{\cQ}^{2p-2}}\nonumber\\
    		&=p\sum_{x,y}S_{xy}\pa{G\p{\sigma_1}\mathfrak{B}_{\qa{a_1}}^{\pa{1}}G\p{\sigma_2}\mathfrak{B}_{\qa{a_2}}^{\pa{2}}\cdots G\p{\sigma_{\fn+1}}\wt \fB_{\fn+1}}_{yx}\pa{\partial_{yx}\ol{\cQ}}\cdot{\absa{\cQ}^{2p-2}}\nonumber\\
    		&+\pa{p-1}\sum_{x,y}S_{xy}\pa{G\p{\sigma_1}\mathfrak{B}_{\qa{a_1}}^{\pa{1}}G\p{\sigma_2}\mathfrak{B}_{\qa{a_2}}^{\pa{2}}\cdots G\p{\sigma_{\fn+1}}\wt \fB_{\fn+1}}_{yx}\pa{\partial_{yx}\cQ}\cdot\ol{\cQ}^{p}{\cQ^{p-2}}=: \txt{I}+\txt{II},\label{cumulant_expansion_global_loop}
    	\end{align}
    where the derivative of $\p{G\p{\sigma_1}\mathfrak{B}_{\qa{a_1}}^{\pa{1}}G\p{\sigma_2}\mathfrak{B}_{\qa{a_2}}^{\pa{2}}\cdots G\p{\sigma_{\fn+1}}\wt \fB_{\fn+1}}_{yx}$ produces exactly the cancellation encoded in the underline notation \eqref{eq:gHf}.

    For brevity, we bound only the term $\txt{II}$ in \eqref{cumulant_expansion_global_loop}, as the term $I$ can be treated by a similar argument. By definition, and using the identity $HG\pa{z}=I+zG\pa{z}$, we can write $\cQ$ as
   \begin{align}
    \cQ=& \avga{\pa{I+z^{\sigma_1}G\p{\sigma_1}}\mathfrak{B}_{\qa{a_1}}^{\pa{1}}G\p{\sigma_2}\mathfrak{B}_{\qa{a_2}}^{\pa{2}}\cdots G\p{\sigma_{\fn+1}}\wt \fB_{\fn+1}}\nonumber\\
    &-\sum_{i,j\in \ZL}S_{ij}\partial_{ji}\pa{G\p{\sigma_1}\mathfrak{B}_{\qa{a_1}}^{\pa{1}}G\p{\sigma_2}\mathfrak{B}_{\qa{a_2}}^{\pa{2}}\cdots G\p{\sigma_{\fn+1}}\wt \fB_{\fn+1}}_{ji},\label{eq:another_Qform}
    \end{align}
where $z^\sigma$ denotes $z$ when $\sigma=+$ and $\bar z$ when $\sigma=-$. Using \eqref{eq:another_Qform}, each contribution generated from $\txt{II}$ can be written in one of the following two forms:
    	\begin{align*}
            &\txt{(i):}\quad \frac{1}{W^{\pa{2\fn+1}d}}\sum_{x,y}S_{xy}\p{A\wt \fB_{\fn+1}}_{yx}A'_{xy},\\
    		&\txt{(ii):}\quad\frac{1}{W^{2\fn d}}\sum_{x,y}\sum_{i,j}S_{xy}S_{ij}\qa{\p{A\wt \fB_{\fn+1}}_{yx}\pa{A_1}_{jy}\pa{A_2}_{xj}\p{A_3\wt \fB_{\fn+1}}_{ii}+\p{A'\wt \fB_{\fn+1}}_{yx}\p{A_1'}_{jj}\p{A_2'}_{iy}\p{A_3'\wt \fB_{\fn+1}}_{xi}},
    	\end{align*}
    where $A,A'$ and $A_i,A_i'$ for $i\in\{1,2\}$ denote (random) matrices with operator norms of order $\OO\pa{1}$.
    For case (i), using \eqref{decay_of_Theta_global}, we can bound it as follows (recall \eqref{eq:Sxij}):
    \begin{align}
        \frac{1}{W^{\pa{2\fn+1}d}}\sum_{x,y}S_{xy}\p{A\wt \fB_{\fn+1}}_{yx}A'_{xy} &= \frac{1}{W^{\pa{2\fn+1}d}}\sum_{[a]}\sum_{x\in[a]} \p{A'S^{(x)}A\wt \fB_{\fn+1}I_{[a]}}_{xx} \nonumber\\
        &\lesssim \frac{1}{W^{(2\fn +1)d}} \sum_{\qa{a}}\sum_{x\in\qa{a}} W^{-2d} e^{-c\absa{\qa{a}-\qa{a_{\fn+1}}}}\lesssim W^{-\pa{2\fn+2}d}.\label{global_bound_error_term_20}
    \end{align}
    For case (ii), we rewrite the expression as
    \begin{equation*}
    	\begin{aligned}
    		&~\frac{1}{W^{2\fn d}}\sum_{x,y}\sum_{i,j}S_{xy}S_{ij}\qa{\p{A\wt \fB_{\fn+1}}_{yx}\pa{A_1}_{jy}\pa{A_2}_{xj}\p{A_3\wt \fB_{\fn+1}}_{ii}+\p{A'\wt \fB_{\fn+1}}_{yx}\p{A_1'}_{jj}\p{A_2'}_{iy}\p{A_3'\wt \fB_{\fn+1}}_{xi}}\\
    		=&~\frac{1}{W^{2\fn d}}\sum_{x}\sum_{i}\p{A_2S^{\pa{i}}A_1 S^{\pa{x}}A\wt \fB_{\fn+1}}_{xx}\p{A_3\wt \fB_{\fn+1}}_{ii}+\frac{1}{W^{2\fn d}}\sum_{x}\sum_{j}\p{A_3'\wt \fB_{\fn+1}S^{\pa{j}}A_2'S^{\pa{x}}A'\wt \fB_{\fn+1}}_{xx}\pa{A_1'}_{jj}.
    	\end{aligned}
    \end{equation*}
    Without loss of generality, we bound the second term; the first can be handled similarly but more easily. In this case, using \eqref{decay_of_Theta_global}, we obtain
         \begin{align}
            &~\frac{1}{W^{2\fn d}}\sum_{x}\sum_{j}\p{A_3'\wt \fB_{\fn+1}S^{\pa{j}}A_2'S^{\pa{x}}A'\wt \fB_{\fn+1}}_{xx}\pa{A_1'}_{jj}\nonumber\\
            =&~\frac{1}{W^{2\fn d}}\sum_{\qa{a},\qa{b}}\sum_{x\in\qa{a}}\sum_{j\in\qa{b}}\p{A_3'\wt \fB_{\fn+1}I_{\qa{b}}S^{\pa{j}}A_2'S^{\pa{x}}A'\wt \fB_{\fn+1}I_{\qa{a}}}_{xx}\pa{A_1'}_{jj}\nonumber\\
            \lesssim&~ \frac{1}{W^{2\fn d}} \sum_{\qa{a},\qa{b}}\sum_{x\in\qa{a}}\sum_{j\in\qa{b}}W^{-4d} e^{-c\absa{\qa{b}-\qa{a_{\fn+1}}}-c\absa{\qa{a}-\qa{a_{\fn+1}}}}\lesssim W^{-\pa{2\fn+2}d}.\label{global_bound_error_term_2}
        \end{align}
    Combining \eqref{global_bound_error_term_20} and \eqref{global_bound_error_term_2} yields
           \( \E\absa{\cQ}^{2p}\prec {W^{-\pa{2\fn+2}d}}\cdot {\E\absa{\cQ}^{2p-2}} .\)
    By Hölder’s inequality, it gives $\E\absa{\cQ}^{2p} \prec ({W^{-\pa{2\fn+2}d}})^p$, and hence, by Markov's inequality,
    \be\label{global_bound_error_term_cQ}
    \cQ\prec {W^{-\pa{\fn+1}d}}.\ee

Subsequently, it remains to estimate the leading terms (i.e., the first and third terms) on the RHS of \eqref{recursive_relation_global_loop}. For this purpose, we again employ the notation of generalized $\cK$-loops introduced in \eqref{def_generalized_cK}:
    \begin{equation*}            \cK_{\fB,\bsigma,\ba}^{\pa{k}}\equiv \cK_{\bsigma}^{\pa{k}}\pa{\fB_{\qa{a_1}}^{\pa{1}},\ldots,\fB_{\qa{a_k}}^{\pa{k}}},\quad \cK_{\fB,\bsigma,\ba}^{\pa{k}}\pa{x}\equiv \cK_{\bsigma}^{\pa{k}}\pa{\fB_{\qa{a_1}}^{\pa{1}},\ldots,\fB_{\qa{a_{k-1}}}^{\pa{k-1}},F_{x}},
    \end{equation*}
    where recall that $F_x$ denotes the matrix with \smash{$\pa{F_x}_{ij}=\mathbf{1}_{i=j=x}$}.
    By the induction hypothesis, the first term on the RHS of \eqref{recursive_relation_global_loop} can be written as
    \begin{align}
&~m(\sigma_1)\avga{\mathfrak{B}_{\qa{a_1}}^{\pa{1}}G\p{\sigma_2}\mathfrak{B}_{\qa{a_2}}^{\pa{2}}\cdots G\p{\sigma_{\fn+1}}\wt \fB_{\fn+1}} \nonumber\\
=&~m(\sigma_1)\cK_{\pa{\sigma_2,\ldots,\sigma_{\fn+1}}}^{\pa{\fn}}\pa{\mathfrak{B}_{\qa{a_2}}^{\pa{2}},\ldots,\mathfrak{B}_{\qa{a_{\fn}}}^{\pa{\fn}},\wt \fB_{\fn+1}\mathfrak{B}_{\qa{a_1}}^{\pa{1}}}+\opr{W^{-\pa{\fn+1}d}}. \label{eq:first_term}
    \end{align}
    For the third term on the RHS of \eqref{recursive_relation_global_loop}, we rewrite it as
   \begin{align}
    &~m\p{\sigma_1}\sum_{k=2}^{\fn}\avga{\cS\q{G\p{\sigma_1}\mathfrak{B}_{\qa{a_1}}^{\pa{1}}\cdots G\p{\sigma_k}}{G\p{\sigma_k}\mathfrak{B}_{\qa{a_k}}^{\pa{k}}\cdots G\p{\sigma_{\fn+1}}\wt \fB_{\fn+1}}}\nonumber\\
    =&~m\p{\sigma_1}\sum_{k=2}^{\fn}\sum_{x}\avga{G\p{\sigma_1}\mathfrak{B}_{\qa{a_1}}^{\pa{1}}\cdots G\p{\sigma_k}S^{\pa{x}}}\pa{{G\p{\sigma_k}\mathfrak{B}_{\qa{a_k}}^{\pa{k}}\cdots G\p{\sigma_{\fn+1}}\wt \fB_{\fn+1}}}_{xx}.\label{eq:second_term0}
    \end{align}
    Applying the induction hypothesis to the $G$-loop of length $k$, together with an argument analogous to that used in \eqref{global_bound_error_term_2}, we obtain
    \begin{align}
        \eqref{eq:second_term0}&=m\p{\sigma_1}\sum_{k=2}^{\fn}\sum_{x}\cK_{\pa{\sigma_1,\ldots,\sigma_k}}^{\pa{k}}\pa{\mathfrak{B}_{\qa{a_1}}^{\pa{1}},\ldots,\mathfrak{B}_{\qa{a_{k-1}}}^{\pa{k-1}},S^{\pa{x}}}\pa{{G\p{\sigma_k}\mathfrak{B}_{\qa{a_k}}^{\pa{k}}\cdots G\p{\sigma_{\fn+1}}\wt \fB_{\fn+1}}}_{xx}+\opr{W^{-\pa{\fn+1}d}}\nonumber\\
        &=m\p{\sigma_1}\sum_{k=2}^{\fn}\sum_{x}\cK_{\pa{\sigma_1,\ldots,\sigma_k}}^{\pa{k}}\pa{\mathfrak{B}_{\qa{a_1}}^{\pa{1}},\ldots,\mathfrak{B}_{\qa{a_{k-1}}}^{\pa{k-1}},S^{\pa{x}}}\cdot\cK_{\pa{\sigma_k,\ldots,\sigma_{\fn+1}}}^{\pa{\fn-k+2}}\pa{\mathfrak{B}_{\qa{a_k}}^{\pa{k}},\ldots,\mathfrak{B}_{\qa{a_\fn}}^{\pa{\fn}},\wt \fB_{\fn+1}F_x} \nonumber\\
        &\quad +\opr{W^{-\pa{\fn+1}d}}.\label{eq:second_term}
    \end{align}
    Here, in the second step, we view
    \begin{equation*}
        \begin{aligned}
            m\pa{\sigma_1}\sum_{k=2}^{\fn}\sum_{x}\cK_{\pa{\sigma_1,\ldots,\sigma_k}}^{\pa{k}}\pa{\mathfrak{B}_{\qa{a_1}}^{\pa{1}},\ldots,\mathfrak{B}_{\qa{a_{k-1}}}^{\pa{k-1}},S^{\pa{x}}}\pa{{G\p{\sigma_k}\mathfrak{B}_{\qa{a_k}}^{\pa{k}}\cdots G\p{\sigma_{\fn+1}}\wt \fB_{\fn+1}}}_{xx}
        \end{aligned}
    \end{equation*}
    as a $\cK$-loop rather than a $\cK$-chain, and apply the averaged global law \eqref{global_law_average}---together with the $\cK$-loop bound \eqref{bound_cK_global_l_1_l_infty_norm}\footnote{Although the setting of \Cref{claim_global_l_1_l_infty_norm} differs from ours, its proof carries over verbatim.}---from the induction hypothesis.
Combining \eqref{global_bound_error_term_1}, \eqref{global_bound_error_term_cQ}, \eqref{eq:first_term}, and \eqref{eq:second_term}, we obtain
\begin{align*}
&   \eqref{recursive_relation_global_loop}= \opr{W^{-\pa{\fn+1}d}}+ m\p{\sigma_1}\cK^{(\fn)}_{\pa{\sigma_2,\ldots,\sigma_{\fn+1}}}\pa{\mathfrak{B}_{\qa{a_2}}^{\pa{2}},\ldots,\mathfrak{B}_{\qa{a_{\fn}}}^{\pa{\fn}},\wt \fB_{\fn+1}\mathfrak{B}_{\qa{a_1}}^{\pa{1}}}\\
&+m\p{\sigma_1}\sum_{k=2}^{\fn}\sum_{x}\cK_{\pa{\sigma_1,\ldots,\sigma_k}}^{\pa{k}}\pa{\mathfrak{B}_{\qa{a_1}}^{\pa{1}},\ldots,\mathfrak{B}_{\qa{a_{k-1}}}^{\pa{k-1}},S^{\pa{x}}} \cK_{\pa{\sigma_k,\ldots,\sigma_{\fn+1}}}^{\pa{\fn-k+2}}\pa{\mathfrak{B}_{\qa{a_k}}^{\pa{k}},\ldots,\mathfrak{B}_{\qa{a_\fn}}^{\pa{\fn}},\wt \fB_{\fn+1}F_x}.
\end{align*}
Using the recursive relation \eqref{general_cK_recursion} (which clearly remains valid when $S_{\ti}$ is replaced by $S$), the last two terms on the RHS combine to give
\smash{$\cK^{\pa{\fn+1}}_{\fB,\bsigma,\ba}=\cK_{\bsigma}^{\pa{\fn+1}}\p{\mathfrak{B}_{\qa{a_1}}^{\pa{1}},\ldots,\mathfrak{B}_{\qa{a_{\fn+1}}}^{\pa{\fn+1}}}$}.
This establishes the averaged global law for loops of length $\fn+1$ and completes the induction.

\subsection{Proof of \Cref{improved_global_laws}}

For the proof of the expected global law \eqref{expected_global_law} for the 2-$G$-loops, we first establish an improved averaged global law for $\fn=1$ after taking expectation. Let $B$ be any deterministic matrix. Using the identity \eqref{eq:idG-m} and applying Gaussian integration by parts, we obtain
    \begin{align}
            \E\avga{\pa{G-m}B}&=-m\E\avga{\pa{H+m}GB}=m\E\sum_{i,j}S_{ij}\pa{G-m}_{jj}\pa{GB}_{ii}\nonumber\\
            &=m^2\E\avga{\pa{G-m}\cS\qa{B}}+m\E\sum_{i,j}S_{ij}\pa{G-m}_{jj}\qa{\pa{G-m}B}_{ii}.\label{first_expansion_of_light_weights}
        \end{align}
    Given any $\qa{x}\in\wt \Z_n^d$ and a deterministic matrix $\fB$ with $\norm{\fB}=\OO\pa{W^{-d}}$, we choose $B=\pa{I-m^2\cS\qa{\cdot}}^{-1}\fB_{\qa{x}}$ (recall \eqref{eq:Baa}) in equation \eqref{first_expansion_of_light_weights} and get
        \begin{align}
            \E\avga{\pa{G-m}\fB_{\qa{x}}}&=m\E\sum_{i,j}S_{ij}\pa{G-m}_{jj}\qa{\pa{G-m}B}_{ii} =-m^2\E\sum_{i,j}S_{ij}\qa{\pa{H+m}G}_{jj}\qa{\pa{G-m}B}_{ii}\nonumber\\
            &=m^2\E\sum_{i,j,k}S_{ij}S_{jk}\pa{G-m}_{kk}G_{jj}\qa{\pa{G-m}B}_{ii}+m^2\E\sum_{i,j,k}S_{ij}S_{jk}G_{kj}G_{ik}\pa{GB}_{ji}\nonumber\\
            &=m^2\E\sum_{j}\avg{\pa{G-m}S^{\pa{j}}}G_{jj}\avg{\pa{G-m}BS^{\pa{j}}}+m^2\E\sum_{j}\p{GBS^{\pa{j}}GS^{\pa{j}}G}_{jj},\label{second_expansion_of_light_weights}
        \end{align}
where the second step uses \eqref{eq:idG-m} again, and the third step applies Gaussian integration by parts once more.
Clearly, $B$ satisfies the same type of decay property as in \eqref{decay_of_Theta_global}, by exactly the same argument as in \Cref{claim:decayfB}. Therefore, by arguments analogous to those used in \eqref{global_bound_error_term_1} and \eqref{global_bound_error_term_2}, we deduce from \eqref{second_expansion_of_light_weights} that
    \begin{equation}\label{expect_single_resolvent_global_law}
        \begin{aligned}
            \E\avga{\pa{G-m}\fB_{\qa{x}}}\prec {W^{-2d}}.
        \end{aligned}
    \end{equation}

With \eqref{expect_single_resolvent_global_law} in hand, we now turn to the proof of \eqref{expected_global_law}. Again, using \eqref{eq:idG-m} and Gaussian integration by parts—following arguments analogous to those used in \eqref{cumulant_expansion_global_recursive} and \eqref{first_expansion_of_light_weights}—we obtain
 \begin{align} \E\avga{G\p{\sigma_1}\fB^{\pa{1}}_{\qa{a_1}}G\p{\sigma_2}\fB^{\pa{2}}_{\qa{a_2}}}&=m\p{\sigma_1}\E\avga{\fB^{\pa{1}}_{\qa{a_1}}G\p{\sigma_2}\wt \fB_2} +m\p{\sigma_1}\E\sum_{i,j}S_{ij}\pa{G\p{\sigma_1}-m\p{\sigma_1}}_{jj}\pa{G\p{\sigma_1}\fB^{\pa{1}}_{\qa{a_1}}G\p{\sigma_2}\wt \fB_2}_{ii} \nonumber\\
&+m\p{\sigma_1}\E\sum_{i,j}S_{ij}\pa{G\p{\sigma_1}\fB^{\pa{1}}_{\qa{a_1}}G\p{\sigma_2}}_{jj}\pa{\p{G\p{\sigma_2}-m\p{\sigma_2}}\wt \fB_2}_{ii},\label{first_expansion_two_resolvent_global_law}
\end{align}
where $\wt \fB_2:=\pa{I-m\p{\sigma_1}m\p{\sigma_2}\cS\qa{\cdot}}^{-1}\fB^{\pa{2}}_{\qa{a_2}}$. For the leading term, we have
\begin{align}
m\p{\sigma_1}\E\avga{\fB^{\pa{1}}_{\qa{a_1}}G\p{\sigma_2}\wt \fB_2}&=m\p{\sigma_1}m\p{\sigma_2}\avga{\fB^{\pa{1}}_{\qa{a_1}}\wt \fB_2}+\opr{W^{-3d}}\nonumber\\    &=\cK_{\bsigma}^{\pa{2}}\pa{\fB^{\pa{1}}_{\qa{a_1}},\fB^{\pa{2}}_{\qa{a_2}}}+\opr{W^{-3d}},\label{eq;expected2first}
\end{align}
where in the first step we used the decay property of $\wt \fB_2$ from \eqref{decay_of_Theta_global} (with $[a]=[a_2]$), together with the expected averaged global law \eqref{expect_single_resolvent_global_law}. The second step follows directly from the form of the 2-$\cK$-loop in \eqref{eq:2Kloop}.
It remains to bound the second and third terms in \eqref{first_expansion_two_resolvent_global_law}. For the second term on the RHS, applying \eqref{eq:idG-m} to $G(\sigma_1)-m(\sigma_1)$ and using Gaussian integration by parts yields the following expansion:
\begin{align}
&\quad\, \E\sum_{i,j}S_{ij}\pa{G\p{\sigma_1}-m\p{\sigma_1}}_{jj}\pa{G\p{\sigma_1}\fB^{\pa{1}}_{\qa{a_1}}G\p{\sigma_2}\wt \fB_2}_{ii}\nonumber\\
&=m\p{\sigma_1}\E\sum_{j}\avga{\pa{G\p{\sigma_1}-m\p{\sigma_1}}S^{\pa{j}}}G\p{\sigma_1}_{jj}\avga{G\pa{\sigma_1}\fB^{\pa{1}}_{\qa{a_1}}G\p{\sigma_2}\wt \fB_2S^{\pa{j}}}\nonumber\\
&+m\p{\sigma_1}\E\sum_{j}\pa{G\p{\sigma_1}\fB^{\pa{1}}_{\qa{a_1}}G\p{\sigma_2}\wt \fB_2S^{\pa{j}}G\p{\sigma_1}S^{\pa{j}}G\p{\sigma_1}}_{jj}\nonumber\\
&+m\p{\sigma_1}\E\sum_{j}\pa{G\p{\sigma_2}\wt \fB_2S^{\pa{j}}G\p{\sigma_1}\fB^{\pa{1}}_{\qa{a_1}}G\p{\sigma_2}S^{\pa{j}}G\p{\sigma_1}}_{jj}.\label{eq;expected2second}
\end{align}
First, we apply the decay property of $\wt \fB_2$ from \eqref{decay_of_Theta_global}, along with an argument analogous to \eqref{global_bound_error_term_2}, to bound the second and third terms on the RHS by $\opr{W^{-3d}}$. For the first term, we use the averaged global law \eqref{global_law_average} with $\fn=2$ to replace the 2-loop with its deterministic limit, obtaining
    \begin{align}
        &~\E\sum_{j}\avga{\pa{G\p{\sigma_1}-m\p{\sigma_1}}S^{\pa{j}}}G\p{\sigma_1}_{jj}\avga{G\pa{\sigma_1}\fB^{\pa{1}}_{\qa{a_1}}G\p{\sigma_2}\wt \fB_2S^{\pa{j}}}\nonumber\\
        \prec &~ m(\sigma_1) \E\sum_{[a]}\sum_{j\in[a]} \avga{\pa{G\p{\sigma_1}-m\p{\sigma_1}}S^{\pa{j}}} \cK^{(2)}_{\bsigma}\pa{\fB^{\pa{1}}_{\qa{a_1}},\wt \fB_2S^{\pa{j}}}+ \sum_{[a]}\sum_{j\in[a]} \opr{W^{-d}}\cdot W^{-3d}e^{-c|[a]-[a_2]|} \nonumber\\
        &~ +\E\sum_{[a]}\sum_{j\in[a]}\qa{G\p{\sigma_1}-m\p{\sigma_1}}_{jj}\avga{\pa{G\p{\sigma_1}-m\p{\sigma_1}}S^{\pa{j}}} \cK^{(2)}_{\bsigma}\pa{\fB^{\pa{1}}_{\qa{a_1}},\wt \fB_2S^{\pa{j}}} \nonumber\\
        =&~\E\sum_{[a]}\sum_{j\in[a]}\qa{G\p{\sigma_1}-m\p{\sigma_1}}_{jj}\avga{\pa{G\p{\sigma_1}-m\p{\sigma_1}}S^{\pa{j}}} \cK^{(2)}_{\bsigma}\pa{\fB^{\pa{1}}_{\qa{a_1}},\wt \fB_2S^{\pa{j}}} + \opr{W^{-3d}}.\label{eq:EGG-m}
    \end{align}
    In the first step above, we also used that
    \be\label{eq:B2Sj}
    \|\wt \fB_2S^{\pa{j}}\|\lesssim W^{-2d}e^{-c|[a]-[a_2]|}
    \ee
    from the decay property \eqref{decay_of_Theta_global}, together with the averaged global law \eqref{global_law_average} applied to $\avga{(G(\sigma_1)-m(\sigma_1))S^{(j)}}$.
    In the second step, we applied \eqref{expect_single_resolvent_global_law}, the $\cK$-loop bound \eqref{bound_cK_global_l_1_l_infty_norm}, and \eqref{eq:B2Sj}, which together imply
    \be\label{eq:B2-loop}
    \cK^{(2)}_{\bsigma}\pa{\fB^{\pa{1}}_{\qa{a_1}},\wt \fB_2S^{\pa{j}}} \prec W^{-2d}e^{-c|[a]-[a_2]|}.\ee
     For the first term on the RHS of \eqref{eq:EGG-m}, we again apply \eqref{eq:idG-m} to $(G(\sigma_1)-m(\sigma_1))_{jj}$ and use Gaussian integration by parts to get that
    \begin{align}
        &~\E\sum_{[a]}\sum_{j\in[a]}\qa{G\p{\sigma_1}-m\p{\sigma_1}}_{jj}\avga{\pa{G\p{\sigma_1}-m\p{\sigma_1}}S^{\pa{j}}} \cK^{(2)}_{\bsigma}\pa{\fB^{\pa{1}}_{\qa{a_1}},\wt \fB_2S^{\pa{j}}} \nonumber\\
        =&~ m(\sigma_1)\E\sum_{[a]}\sum_{j\in[a]}G(\sigma_1)_{jj} \avga{\pa{G\p{\sigma_1}-m\p{\sigma_1}}S^{\pa{j}}}^2 \cK^{(2)}_{\bsigma}\pa{\fB^{\pa{1}}_{\qa{a_1}},\wt \fB_2S^{\pa{j}}}\nonumber\\
        &~+ m(\sigma_1)\E\sum_{[a]}\sum_{j\in[a]}\pa{G(\sigma_1)S^{\pa{j}} G\p{\sigma_1} S^{\pa{j}} G(\sigma_1)}_{jj} \cK^{(2)}_{\bsigma}\pa{\fB^{\pa{1}}_{\qa{a_1}},\wt \fB_2S^{\pa{j}}}\prec W^{-3d},\label{eq:EGG-m2}
    \end{align}
    where in the second step, we used the averaged global law \eqref{global_law_average} for $\avg{\pa{G\p{\sigma_1}-m\p{\sigma_1}}S^{\pa{j}}}$, the $2$-$\cK$-loop bound \eqref{eq:B2-loop}, and the fact $\|G\|\le \eta^{-1}\lesssim 1$. Altogether, we deduce that
\begin{align*}
            &\quad\, \E\sum_{i,j}S_{ij}\pa{G\p{\sigma_1}-m\p{\sigma_1}}_{jj}\pa{G\p{\sigma_1}\fB^{\pa{1}}_{\qa{a_1}}G\p{\sigma_2}\wt \fB_2}_{ii} \prec W^{-3d}.
    \end{align*}
    The last term on the RHS of \eqref{first_expansion_two_resolvent_global_law} can be bounded in exactly the same way. Combining these bounds with \eqref{eq;expected2first} and \eqref{first_expansion_two_resolvent_global_law} completes the proof of \eqref{expected_global_law}.

    Finally, we prove the global law with decay \eqref{global_law_with_decay}. For this purpose, we introduce the following notation: for any $\ell \in \qq{0,2n}$ and a large constant $D>0$, define
    \begin{equation}
        \begin{aligned}
            \cT_{D}\pa{\ell}:=W^{-2d}\exp\pa{-\ell^{1/2}}+W^{-D}.
        \end{aligned}
    \end{equation}
    Let $\cJ_D^*\geq 1$ be a deterministic control parameter such that
    \begin{equation}\label{decay_global_law_assumed_control}
        \begin{aligned}
            \max_{\bsigma\in\ha{+,-}^2}\max_{\ba\in\p{\wt \Z_n^d}^2}\absa{\cL_{\fB,\bsigma,\ba}^{\pa{2}}-\cK_{\fB,\bsigma,\ba}^{\pa{2}}}\Big/\cT_{D}\pa{\absa{\qa{a_1}-\qa{a_2}}}\prec \cJ_D^*
        \end{aligned}
    \end{equation}
    holds uniformly for all deterministic matrices $\fB^{(1)},\fB^{(2)}$ satisfying \smash{$\max\ha{\norm{\fB^{\pa{1}}},\norm{\fB^{\pa{2}}}}\leq W^{-d}$} and belonging to a fixed set $\mathbf S$ of diagonal matrices with cardinality at most $N^C$ for some constant $C>0$.\footnote{The set $\mathbf S$ contains all deterministic diagonal matrices relevant to the argument below; we do not specify it explicitly.} Our goal is to prove that
    \begin{equation}\label{goal_global_law_with_decay}
        \begin{aligned}
            \max_{\bsigma\in\ha{+,-}^2}\max_{\ba\in\p{\wt \Z_n^d}^2}\absa{\cL_{\fB,\bsigma,\ba}^{\pa{2}}-\cK_{\fB,\bsigma,\ba}^{\pa{2}}}\Big/\cT_{D}\pa{\absa{\qa{a_1}-\qa{a_2}}}\prec 1.
        \end{aligned}
    \end{equation}
We establish \eqref{goal_global_law_with_decay} via an iteratively self-improving bound. First, consider the case \smash{$|\qa{a_1}-\qa{a_2}|\le (\log W)^{3/2}$}. By the averaged global law \eqref{global_law_average}, we already have
    \begin{equation}\label{eq:l2-k2}
            \absa{\cL_{\fB,\bsigma,\ba}^{\pa{2}}-\cK_{\fB,\bsigma,\ba}^{\pa{2}}}\prec W^{-2d}\prec \cT_D\pa{\absa{\qa{a_1}-\qa{a_2}}},
    \end{equation}
    since $\exp(C(\log W)^{3/4})\prec 1$. It remains to treat the case where $\abs{\qa{a_1}-\qa{a_2}}>(\log W)^{3/2}$.
    In this case, if \smash{$\absa{\qa{a_1}-\qa{a_2}}>\pa{\log W}^{5/4}$}, then there exists a constant $c>0$ such that for any large constant $D>0$,
    \begin{equation*}
        \begin{aligned}            \cK_{\fB,\bsigma,\ba}^{\pa{2}}&=\sum_{x\in\qa{a_1},y\in\qa{a_2}}m\p{\sigma_1}m\p{\sigma_2}\pa{1-m\p{\sigma_1}m\p{\sigma_2}S}^{-1}_{x_1x_2}\fB^{\pa{1}}_{x_1x_1}\fB^{\pa{2}}_{x_2x_2}\\
&=m\p{\sigma_1}m\p{\sigma_2}\bv\p{\fB_{\qa{a_1}}^{\pa{1}}}^{\top}\pa{1-m\p{\sigma_1}m\p{\sigma_2}S}^{-1}\bv\p{\fB_{\qa{a_2}}^{\pa{2}}}=\OO\pa{W^{-d} e^{-c\absa{\qa{a_1}-\qa{a_2}}}}=\OO\pa{W^{-D}},
        \end{aligned}
    \end{equation*}
    where the first equality follows from the representation of the $2$-$\cK$-loop in \eqref{eq:2Kloop}. In the second step, \smash{$\bv(\fB^{(i)}_{\qa{a_i}})$} ($i\in\{1,2\}$) denotes the row vector formed by the diagonal entries of \smash{$\fB^{(i)}_{\qa{a_i}}$}, and the last step follows from the decay property \eqref{decay_of_Theta_global}. Together with \eqref{decay_global_law_assumed_control}, this yields the $2$-loop bound
    \begin{equation}\label{eq;2Loopbound}
            \cL_{\fB,\bsigma,\ba}^{\pa{2}}\prec \cJ_D^*\cdot \cT_D\pa{\absa{\qa{a_1}-\qa{a_2}}},\quad \text{for}\quad  \absa{\qa{a_1}-\qa{a_2}}>\pa{\log W}^{5/4}.
    \end{equation}
    Using \eqref{entrywise_estimate_1_from_2_loop_to_1_chain} and
\eqref{entrywise_estimate_2_from_2_loop_to_1_chain} below, this further implies that for $\absa{\qa{a_1}-\qa{a_2}}\gtrsim \pa{\log W}^{3/2}$,    \begin{equation}\label{iterative_deday_single_resolvent_entrywise_global_law}
        \begin{aligned}
            \max_{i\in\qa{a_1},j\in\qa{a_2}}\abs{\pa{G-m}_{ij}}^2\prec \cJ_D^*\cdot \cT_D\pa{\absa{\qa{a_1}-\qa{a_2}}},
        \end{aligned}
    \end{equation}
    where we also used that $\cT_D\pa{\ell-C}\prec \cT_D\pa{\ell}$ for any constant $C>0$ and $\ell\geq \log W$. With an argument analogous to those in
\eqref{cumulant_expansion_global_recursive} and
\eqref{first_expansion_two_resolvent_global_law}, we obtain
    \begin{align}    \avga{G\p{\sigma_1}\fB^{\pa{1}}_{\qa{a_1}}G\p{\sigma_2}\fB^{\pa{2}}_{\qa{a_2}}}        &=m\p{\sigma_1}\avga{\fB^{\pa{1}}_{\qa{a_1}}G\p{\sigma_2}\wt \fB_{2}}+m\p{\sigma_1}\sum_{i,j}S_{ij}\pa{G\p{\sigma_1}-m\p{\sigma_1}}_{jj}\pa{G\p{\sigma_1}\fB^{\pa{1}}_{\qa{a_1}}G\p{\sigma_2}\wt \fB_2}_{ii}\nonumber\\
&+m\p{\sigma_1}\sum_{i,j}S_{ij}\pa{G\p{\sigma_1}\fB^{\pa{1}}_{\qa{a_1}}G\p{\sigma_2}}_{jj}\pa{\pa{G\p{\sigma_2}-m\p{\sigma_2}}\wt \fB_2}_{ii}\nonumber\\
&-m\p{\sigma_1}\avga{\ul{HG\p{\sigma_1}\fB^{\pa{1}}_{\qa{a_1}}G\p{\sigma_2}\wt \fB_2}},\label{expansion_for_getting_decay}
\end{align}
where $\wt \fB_2$ is defined below \eqref{first_expansion_two_resolvent_global_law} and satisfies the decay property \eqref{decay_of_Theta_global}. Using this decay property, together with the averaged global law
\eqref{global_law_average} and an argument analogous to
\eqref{eq;expected2first}, we obtain
    \begin{align}
m\p{\sigma_1}\avga{\fB^{\pa{1}}_{\qa{a_1}}G\p{\sigma_2}\wt \fB_2}-\cK_{\bsigma}^{\pa{2}}\pa{\fB^{\pa{1}}_{\qa{a_1}},\fB^{\pa{2}}_{\qa{a_2}}}&\prec \|\wt \fB_{2}\fB^{\pa{1}}_{\qa{a_1}}\|\prec  W^{-2d} e^{-c\absa{\qa{a_1}-\qa{a_2}}}.\label{expansion_for_getting_decay_leading_term}
\end{align}
It therefore remains to bound the second through fourth terms on the RHS of \eqref{expansion_for_getting_decay}.

For the second term on the RHS of \eqref{expansion_for_getting_decay}, under the condition $\abs{\qa{a_1}-\qa{a_2}}>(\log W)^{3/2}$, we bound it as follows, for any large constant $D>0$:
        \begin{align}
            &~\sum_{i,j}S_{ij}\pa{G\p{\sigma_1}-m\p{\sigma_1}}_{jj}\pa{G\p{\sigma_1}\fB^{\pa{1}}_{\qa{a_1}}G\p{\sigma_2}\wt \fB_2}_{ii} \nonumber\\
            =&~\sum_{\qa{x}:|[x]-[a_2]|\le (\log W)^{5/4}}\sum_{\qa{y}:\absa{\qa{x}-\qa{y}}\leq 2C_0}\sum_{j\in \qa{y}}\pa{G\p{\sigma_1}-m\p{\sigma_1}}_{jj}\avga{G\p{\sigma_1}\fB^{\pa{1}}_{\qa{a_1}}G\p{\sigma_2}\wt \fB_2I_{\qa{x}}S^{\pa{j}}} \nonumber\\
            \prec&~ \sum_{\qa{x}:|[x]-[a_2]|\le (\log W)^{5/4}}W^{-d/2}\cdot \qa{\cJ_D^*\cdot \cT_D\pa{\absa{\qa{a_1}-\qa{x}}}}\cdot e^{-c\absa{\qa{x}-\qa{a_2}}}+W^{-D-10d} \nonumber\\
            \prec &~ {W^{-d/2}}\cdot \qa{\cJ_D^*\cdot \cT_D\pa{\absa{\qa{a_1}-\qa{a_2}}}}+W^{-D-10d}\lesssim W^{-d/2}\cdot \qa{\cJ_D^*\cdot \cT_D\pa{\absa{\qa{a_1}-\qa{a_2}}}}.\label{cutoff_argument_example}
        \end{align}
In the first step, we use the $C_0$-flatness condition \eqref{def_C_flat}. In the second step, we apply the entrywise global law \eqref{entrywise_estimate_2_from_2_loop_to_1_chain} below to $(G(\sigma_1)-m(\sigma_1))_{jj}$, together with the decay property \eqref{decay_of_Theta_global} for \smash{$\wt\fB_2$} and the $2$-loop bound \eqref{eq;2Loopbound}. In the third step, we use the fact that \be\label{eq:slowTdecay}\cT_D\p{\ell-\pa{\log W}^{5/4}}\prec \cT_D\pa{\ell} \quad \text{for}\quad \ell\gtrsim \pa{\log W}^{3/2}.
\ee
The third term on the RHS of \eqref{expansion_for_getting_decay} can be bounded in exactly the same manner.
    Finally, it remains to control the underlined term
    \[\cQ:=\avga{\ul{HG\p{\sigma_1}\fB^{\pa{1}}_{\qa{a_1}}G\p{\sigma_2}\wt \fB_2}}.\]
    Using the identity $HG\p{z}=I+zG\p{z}$, we rewrite $\cQ$ as
    \begin{equation*}
            \cQ=\avga{\fB^{\pa{1}}_{\qa{a_1}}G\p{\sigma_2}\wt \fB_2}+z^{\sigma_1}\avga{G\p{\sigma_1}\fB^{\pa{1}}_{\qa{a_1}}G\p{\sigma_2}\wt \fB_2}-\sum_{x,y}S_{xy}\partial_{yx}\pa{G\p{\sigma_1}\fB^{\pa{1}}_{\qa{a_1}}G\p{\sigma_2}\wt \fB_2}_{yx}.
    \end{equation*}
    We now bound the high moments of $\cQ$ using Gaussian integration by parts, following an argument analogous to that below \eqref{cumulant_expansion_global_loop}. This yields that for any $p\in\N$,
    \begin{equation}     \label{eq:EQ2p}       \E\absa{\cQ}^{2p}=\E\sum_{i,j}S_{ij}\pa{G\p{\sigma_1}\fB^{\pa{1}}_{\qa{a_1}}G\p{\sigma_2}\wt \fB_2}_{ji}\pa{p\cdot \pa{\partial_{ji}\ol\cQ}\cdot\absa{\cQ}^{2p-2}+\pa{p-1}\cdot \pa{\partial_{ji}\cQ}\cdot\ol{\cQ}^{p}\cQ^{p-2}}.
    \end{equation}

    For simplicity of presentation, we bound only the most complicated contribution in \eqref{eq:EQ2p} as a representative example, namely
    \begin{equation*}
        \begin{aligned}
        \cal E:= \pa{p-1}\sum_{i,j}\sum_{x,y}S_{ij}S_{xy}\pa{G\p{\sigma_1}\fB^{\pa{1}}_{\qa{a_1}}G\p{\sigma_2}\wt \fB_2}_{ji}\partial_{ji}\partial_{yx}\pa{G\p{\sigma_1}\fB^{\pa{1}}_{\qa{a_1}}G\p{\sigma_2}\wt \fB_2}_{yx}\cdot\ol{\cQ}^{p}\cQ^{p-2}.
        \end{aligned}
    \end{equation*}
    All remaining terms in \eqref{eq:EQ2p} can be treated in an entirely analogous manner. Fix any large constant $D>0$. We estimate $\cal E$ as follows:
   \begin{align*}
    \cal E \lesssim &\sum_{\qa{a},\qa{b}}\sum_{i,j\in\ppp{a}}\sum_{x,y\in\ppp{b}}\sum_{\alpha,\beta\in\qa{a_1}}S_{ij}S_{xy}\fB^{\pa{1}}_{\alpha\alpha}\fB^{\pa{1}}_{\beta\beta}\p{\wt \fB_2}_{ii}\p{\wt \fB_2}_{xx}  G\p{\sigma_1}_{j\alpha}G\p{\sigma_2}_{\alpha i}\partial_{ji}\partial_{yx}\pa{G\p{\sigma_1}_{y\beta}G\p{\sigma_2}_{\beta x}}\cdot\ol{\cQ}^{p}\cQ^{p-2}\\
    \prec &\sum_{\qa{a},\qa{b}}^{\star}\sum_{i,j\in\ppp{a}}\sum_{x,y\in\ppp{b}}\sum_{\alpha,\beta\in\qa{a_1}} \frac{e^{-c|[a]-[a_2]|-c|[b]-[a_2]|}}{W^{6d}} G\p{\sigma_1}_{j\alpha}G\p{\sigma_2}_{\alpha i}\partial_{ji}\partial_{yx}\pa{G\p{\sigma_1}_{y\beta}G\p{\sigma_2}_{\beta x}}\cdot\ol{\cQ}^{p}\cQ^{p-2} + W^{-2pD},
    \end{align*}
    where we have used the decay property \eqref{decay_of_Theta_global} for \smash{$\wt\fB_2$}. Here we abbreviate
    \[\ppp{a}:=\bigcup_{\absa{\qa{a'}-\qa{a}}\leq 2C_0}\qa{a'}, \quad \ppp{b}:=\bigcup_{\absa{\qa{b'}-\qa{b}}\leq 2C_0}\qa{b'},\quad  \sum^\star_{[a],[b]}:=\sum_{|[a]-[a_2]|\vee|[b]-[a_2]| \le (\log W)^{5/4}}.\]
    For notational clarity, we refer to $i,j$ as the $\qa{a}$–indices, $x,y$ as the $\qa{b}$–indices, and $\alpha,\beta$ as the $\qa{a_1}$–indices. Owing to the exponential decay factor, we also regard $i,j,x,y$ as $\qa{a_2}$–indices. We call a resolvent entry $G(\sigma_*)_{\#_1\#_2}$ a decaying $G$–factor if $\#_1,\#_2$ consist of one $\qa{a_1}$–index and one $\qa{a_2}$–index, and an interacting $G$–factor if $\#_1,\#_2$ consist of one $\qa{a}$–index and one $\qa{b}$–index. With this terminology, expanding the derivatives produces a linear combination of terms, each containing four decaying $G$–factors and at least one interacting $G$–factor. The decaying $G$-factors are bounded using \eqref{iterative_deday_single_resolvent_entrywise_global_law} and \eqref{eq:slowTdecay},
    \begin{equation*}
    |G\p{\sigma_*}_{\#_1\#_2}|^2\prec \cJ_D^*\cdot \cT_D\pa{\absa{\qa{a_1}-\qa{a_2}}},
    \end{equation*}
    while the interacting $G$-factors are estimated by
    \begin{equation*}            G\p{\sigma_*}_{\#_1\#_2}=m\p{\sigma_*}\delta_{\#_1\#_2}+\OO_{\prec}\p{W^{-d/2}}
    \end{equation*}
    using \eqref{entrywise_estimate_2_from_2_loop_to_1_chain} below.
    Combining these bounds yields
    \begin{equation*}
        \begin{aligned}
            \absa{\cal E}\prec&~ \frac{\pa{\cJ_D^*}^2\qa{ \cT_D\pa{\absa{\qa{a_1}-\qa{a_2}}}}^2}{W^{6d}}\sum_{\qa{a},\qa{b}}^\star \sum_{i,j\in\ppp{a}}\sum_{x,y\in\ppp{b}}\sum_{\alpha,\beta\in\qa{a_1}}\sum_{\#_1\in\ha{i,j}}\sum_{\#_2\in\ha{x,y}}\pa{\delta_{\#_1\#_2}+W^{-d/2}} \\
            &~\times e^{-c\absa{\qa{a}-\qa{a_2}}-c\absa{\qa{b}-\qa{a_2}}}\cdot\absa{\cQ}^{2p-2}+W^{-2pD} \\
            \lesssim&~ \frac{\pa{\cJ_D^*}^2\qa{ \cT_D\pa{\absa{\qa{a_1}-\qa{a_2}}}}^2}{W^{d/2}} \sum_{\qa{a},\qa{b}}^\star e^{-c\absa{\qa{a}-\qa{a_2}}-c\absa{\qa{b}-\qa{a_2}}}\cdot\absa{\cQ}^{2p-2}\lesssim \frac{\pa{\cJ_D^*}^2}{W^{d/2}}\qa{\cT_D\pa{\absa{\qa{a_1}-\qa{a_2}}}}^2\cdot\absa{\cQ}^{2p-2}.
        \end{aligned}
    \end{equation*}
    All other terms arising from \eqref{eq:EQ2p} satisfy the same bound. Consequently, we obtain
    \begin{equation}     \label{eq:EQ2p2}       \E\absa{\cQ}^{2p}\prec W^{-d/2} \cdot \pa{\cJ_D^*}^2 \qa{\cT_D\pa{\absa{\qa{a_1}-\qa{a_2}}}}^2\cdot\E\absa{\cQ}^{2p-2}.
    \end{equation}
     Applying Hölder’s and Markov’s inequalities to \eqref{eq:EQ2p2}, we obtain
    \begin{equation}     \label{eq:EQ2p3}
    \cQ\prec {W^{-d/4}\cdot\cJ_D^*\cT_D\pa{\absa{\qa{a_1}-\qa{a_2}}}}.\end{equation}

    Combining \eqref{eq:EQ2p3} with \eqref{expansion_for_getting_decay_leading_term} and \eqref{cutoff_argument_example}, we conclude under the assumption \eqref{decay_global_law_assumed_control} that
    \begin{equation*}
        \begin{aligned}
            \absa{\cL_{\fB,\bsigma,\ba}^{\pa{2}}-\cK_{\fB,\bsigma,\ba}^{\pa{2}}}\Big/\cT_{D}\pa{\absa{\qa{a_1}-\qa{a_2}}}\prec 1+W^{-d/4}\cJ_D^* \quad \text{for}\quad |[a_1]-[a_2]|>(\log W)^{3/2}.
        \end{aligned}
    \end{equation*}
    Recall that the same estimate also holds for $|[a_1]-[a_2]|\le (\log W)^{3/2}$ by \eqref{eq:l2-k2}. Starting from the trivial bound $\cJ_D^*=W^{D}$ and iterating the above self-improving estimate $\OO(1)$ times, we arrive at
    \begin{equation}
        \begin{aligned}
            \max_{\bsigma\in\ha{+,-}^2}\max_{\ba\in\p{\wt \Z_n^d}^2}\absa{\cL_{\fB,\bsigma,\ba}^{\pa{2}}-\cK_{\fB,\bsigma,\ba}^{\pa{2}}}\Big/\cT_{D}\pa{\absa{\qa{a_1}-\qa{a_2}}}\prec 1.
        \end{aligned}
    \end{equation}
    This establishes \eqref{global_law_with_decay} and thus completes the proof of \Cref{improved_global_laws}.

\subsection{Resolvent entry estimates}

Our analysis above relies critically on the following lemma, which bounds resolvent entries via estimates on 2-$G$-loops.

\begin{lemma}\label{from_2_loop_to_1_chain}
In the setting of \Cref{def:general_global}, fix small constants $c,\kappa>0$ and a spectral parameter $z=E+\ii\eta$ with $|E|\le 2-\kappa$.
Define the event
    \begin{equation}
        \begin{aligned}
            \Omega\pa{c,z}:=\ha{\norma{G\p{z}-m\p{z}}_{\max}\leq W^{-c}},
        \end{aligned}
    \end{equation}
and abbreviate $\cL_{\bsig,\ba}^{\pa{2}}\equiv \cL_{\fB,\bsig,\ba}^{\pa{2}}$ for $\fB=(I,I)$. Suppose that the following bound on $2$-loops at $z$ holds:
    \begin{equation}
        \begin{aligned}
        \max_{\bsig\in\{(+,-),(-,+)\}}\cL_{\bsig,\pa{\qa{a},\qa{b}}}^{\pa{2}}(z)\prec\Psi^2\p{\qa{a},\qa{b}},
        \end{aligned}
    \end{equation}
    where $\Psi\p{\qa{a},\qa{b}}$ are deterministic control parameters satisfying $W^{-d}\le \Psi^2:=\max_{\qa{a},\qa{b}}\Psi^2\pa{\qa{a},\qa{b}}\leq W^{-2c}$. Then, for any \smash{$\qa{a},\qa{b}\in\wt \Z_n^d$} with $\qa{a}\neq \qa{b}$, the following entrywise resolvent estimates hold uniformly in $z$:
    \begin{align}        \mathbf{1}_{\Omega\pa{c,z}}\max_{i\in\qa{a},j\in\qa{b}}\abs{\pa{G\p{z}-m\p{z}}_{ij}}^2\prec \sum_{\absa{\qa{a'}-\qa{a}}\leq C_0} \sum_{\absa{\qa{b'}-\qa{b}}\leq C_0} \Psi^2\pa{\q{a'},\q{b'}}+\frac{1}{W^d}\mathbf{1}_{\absa{\qa{a}-\qa{b}}\leq C_0}\label{entrywise_estimate_1_from_2_loop_to_1_chain},
    \end{align}
    \begin{equation}\label{entrywise_estimate_2_from_2_loop_to_1_chain}
        \begin{aligned}
            \mathbf{1}_{\Omega\pa{c,z}}\max_{i,j\in\qa{a}}\abs{\pa{G\p{z}-m\p{z}}_{ij}}^2\prec \Psi^2.
        \end{aligned}
    \end{equation}
    Consequently, if $\norm{G\pa{z}-m\pa{z}}_{\max}\prec W^{-c}$ for some constant $c>0$, then the bounds \eqref{entrywise_estimate_1_from_2_loop_to_1_chain} and \eqref{entrywise_estimate_2_from_2_loop_to_1_chain} hold without the indicator function. Moreover, for any deterministic diagonal matrix $B$ with $\norm{B}=\OO\pa{1}$, the following averaged resolvent estimate holds:
    \begin{equation}\label{average_estimate_from_2_loop_to_1_chain}
            \frac{1}{W^d}\max_{[a]}\absa{\avga{\pa{G\p{z}-m\p{z}}I_{\qa{a}}BI_{\qa{a}}}}\prec \Psi^2.
    \end{equation}

\end{lemma}

\begin{proof}
The proofs of the estimates \eqref{entrywise_estimate_1_from_2_loop_to_1_chain} and \eqref{entrywise_estimate_2_from_2_loop_to_1_chain} are identical to those of (4.2) and (4.3) in \cite{Band1D}, and are therefore omitted.
It remains to prove the averaged estimate \eqref{average_estimate_from_2_loop_to_1_chain}, under the assumption that $\norm{G\pa{z}-m\pa{z}}_{\max}\prec W^{-c}$.
We first claim that the $T$-variables, defined by $T_{xy}=\pa{GE_{\qa{x}}G^*}_{yy}$, satisfy the bound
    \begin{equation}\label{estimate_from_2_loop_to_2_chain}
        \begin{aligned}
            \max_{x,y\in \Zn} T_{xy}\prec \Psi^2.
        \end{aligned}
    \end{equation}
This can be proved by essentially the same argument as that following equation (6.47) in \cite{truong2025localizationlengthfinitevolumerandom}, where the $T$-variables are bounded in terms of $2$-$G$-loops. We therefore omit the details.
Next, we estimate the matrix self-consistent equation in an averaged sense using \eqref{estimate_from_2_loop_to_2_chain}. Recall that $\Pi(G)$ is defined in \eqref{eq:PiGdef}. Adopting an argument similar to the proof of \cite[Proposition 3.2]{He2018}, we can show that for any deterministic diagonal matrix $B$ with $\|B\|=\OO(1)$,

\begin{equation}\label{estimate_matrix_self_consistent_equation_average}
            \frac{1}{W^d}\max_{\qa{a}\in\wt \Z_n^d}\absa{\avga{\Pi\p{G}I_{\qa{a}}BI_{\qa{a}}}}=\opr{\Psi^2}.
    \end{equation}
More precisely, in the proof of \cite[Proposition 3.2]{He2018}, Ward’s identity is used at several points. In our setting, these terms can instead be bounded using Cauchy–Schwarz together with Gaussian integration by parts and the estimate \eqref{estimate_from_2_loop_to_2_chain}. In fact, our argument is considerably simpler than that of \cite{He2018}, since we only need to control terms arising from Gaussian integration by parts, whereas \cite{He2018} also treats additional terms coming from higher-order cumulant expansions. For this reason, we omit the details.

We are now ready to complete the proof of \eqref{average_estimate_from_2_loop_to_1_chain}. Suppose that the estimate
    \begin{equation}\label{eq:aver_theta}
            \max_{x\in\Z_L^d}\absa{\avg{\pa{G-m}S^{\pa{x}}}}\prec \theta
    \end{equation}
    holds for some control parameter $\theta>0$.
    Using the identity
    \begin{equation}\label{eq:G-mPiG}
            G-m=-m\Pi\pa{G}+m\cS\qa{G-m}G,
    \end{equation}
    together with the averaged estimate \eqref{estimate_matrix_self_consistent_equation_average}, we obtain
        \begin{align}
            \avg{\pa{G-m}S^{\pa{x}}}&=m\avg{\cS\qa{G-m}GS^{\pa{x}}}+\opr{\Psi^2}=m\sum_{y}\avg{\pa{G-m}S^{\pa{y}}}\p{GS^{\pa{x}}}_{yy}+\opr{\Psi^2} \nonumber\\
            &=m^2\sum_{y}\avg{\pa{G-m}S^{\pa{y}}}S_{xy}+\opr{\theta\Psi+\Psi^2}.\label{eq:aver_theta2}
        \end{align}
    In the last step, we used the assumption \eqref{eq:aver_theta} together with the entrywise resolvent estimate \eqref{entrywise_estimate_2_from_2_loop_to_1_chain}, applied to \smash{$\p{(G-m)S^{\pa{x}}}_{yy}=S_{xy}(G_{yy}-m)$}.
    Solving the linear equation \eqref{eq:aver_theta2} by applying the inverse of the matrix $1-m^2S$, we obtain
    \begin{equation}\label{eq;selftheta}
            \max_{x\in\Z_L^d}\absa{\avg{\pa{G-m}S^{\pa{x}}}}\prec \theta\Psi+\Psi^2.
    \end{equation}
    Here, we also use the fact that $\norm{\pa{1-m^2S}^{-1}}_{\infty\to\infty}=\OO\pa{1}$ for $z=E+\ii\eta$ with $|E|\le 2-\kappa$. (This bound follows from \eqref{prop:ThfadC_short} by choosing the spectral parameters appropriately so that $z_{\tf}=z$.)
    Iterating the self-improving estimate \eqref{eq;selftheta} for $\OO(1)$ steps, starting from the initial value $\theta=\Psi$, yields
    \begin{equation}\label{eq:averPsi2}
            \max_{x\in\Z_L^d}\absa{\avg{\pa{G\pa{z}-m\pa{z}}S^{\pa{x}}}}\prec \Psi^2.
    \end{equation}
    Finally, we treat the general case $B\neq I$. Without loss of generality, we may assume that $B=I_{\qa{a}}BI_{\qa{a}}$. Using \eqref{eq:G-mPiG} once more, we obtain
    \begin{equation}
        \begin{aligned}
            \avga{\pa{G-m}B}=m\avga{\cS\qa{G-m}GB}+\opr{\Psi^2}=\opr{\Psi^2},
        \end{aligned}
    \end{equation}
    where the first step follows from the averaged estimate \eqref{estimate_matrix_self_consistent_equation_average}, and the second step uses \eqref{eq:averPsi2}. This completes the proof of \eqref{average_estimate_from_2_loop_to_1_chain}.
\end{proof}

\section{Analysis of the loop hierarchy}\label{Sec:Stoflo}

We now return to the setting described at the beginning of \Cref{sec_preliminaries}, so that all definitions and results established there apply to the arguments below. More precisely, we consider a RBM model $H$ as in \Cref{def_considered_model}, with variance profile $\SRBM$. We assume that the conditions \eqref{def_epsilon_full} and \eqref{def_C_flat} hold with constants $\varepsilon_S=2\varepsilon_0$ and $C_S=C_0$, respectively. We further fix small constants $\kappa,\mathfrak{c}>0$ and a target spectral parameter $z=E+\ii\eta$ satisfying $\abs{E}\le 2-\kappa$ and $\eta\in[W^{\mathfrak{c}}\eta_*,\mathfrak{c}^{-1}]$.
Finally, we choose the deterministic flow according to \Cref{lem:paraselect}. We now state the main results of this section and then derive Theorems \ref{thm_locallaw} and \ref{thm_diffu} from them.

\begin{theorem}[$G$-loop estimates]\label{ML:GLoop}
For any fixed integer $\fn\geq 2$, consider $t\in\qa{\ti,\tf}$, $S_t\in t\fS_t$ (recall \eqref{def_variance_flow}), and the associated $G$-loops and $\cK$-loops defined in \Cref{Def:G_loop} and \Cref{Def_Ktza}. Then, for each $S_t\in t\fS_t$, the following estimates hold uniformly for $t\in [\ti,\tf]$ (recall $\ell_t$ defined in \eqref{eq:ellt}):
\be\label{Eq:L-KGt}
 \max_{\boldsymbol{\sigma}, \ba}\left|{\cL}^{(\fn)}_{t, \boldsymbol{\sigma}, \ba}-{\cal K}^{(\fn)}_{t, \boldsymbol{\sigma}, \ba}\right|\prec (W^d\ell_t^d\eta_t)^{-\fn} .
 \ee
Together with \eqref{eq:bcal_k}, this implies
\be
\max_{\boldsymbol{\sigma}, \ba}\left|{\cal L}^{(\fn)}_{t, \boldsymbol{\sigma}, \ba} \right|\prec (W^d\ell_t^d\eta_t)^{-\fn+1}. \label{Eq:L-KGt2}
\ee
\end{theorem}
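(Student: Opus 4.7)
My strategy is to prove \eqref{Eq:L-KGt} by a stochastic flow argument along the $\SE$-Brownian evolution \eqref{def_stochastic_flow}, following the template of \cite{Band1D,Band2D,truong2025localizationlengthfinitevolumerandom} but adapted to the block-reduction setting. Subtracting the $\cK$-equation \eqref{pro_dyncalK} from the $\cL$-equation \eqref{eq:mainStoflow} and writing
\begin{equation*}
\cutL\circ\cL\cdot\cutR\circ\cL-\cutL\circ\cK\cdot\cutR\circ\cK=\cutL\circ(\cL-\cK)\cdot\cutR\circ\cL+\cutL\circ\cK\cdot\cutR\circ(\cL-\cK),
\end{equation*}
one isolates a leading linear contribution from the pairs $(k,l)=(k,k+1)$ and $(1,\fn)$, where the ``short'' factor is a $2$-loop. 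Approximating this $2$-loop by its deterministic limit $\cK^{(2)}$ and invoking \eqref{cK_and_Theta} converts this contribution into precisely $\vartheta^{(\fn)}_{t,\bsigma}\circ(\cL-\cK)^{(\fn)}$ by the definition \eqref{def:op_thn}. All other contributions are collected into a remainder $\cR^{(\fn)}_t$ that is either higher order in $(\cL-\cK)$ or involves loops of length different from $\fn$.

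Applying Duhamel's principle with the evolution kernel $\mathcal{U}^{(\fn)}_{s,t,\bsigma}$ of \Cref{def_evolution_kernel} then gives
\begin{equation*}
(\cL-\cK)^{(\fn)}_{t,\bsigma,\ba}
=\mathcal{U}^{(\fn)}_{\ti,t,\bsigma}\circ(\cL-\cK)^{(\fn)}_{\ti,\bsigma,\cdot}(\ba)
+\int_{\ti}^t\mathcal{U}^{(\fn)}_{s,t,\bsigma}\circ\pa{\cW^{(\fn)}_s+\cR^{(\fn)}_s}(\ba)\,\dd s
+\int_{\ti}^t\mathcal{U}^{(\fn)}_{s,t,\bsigma}\circ\dd\cB^{(\fn)}_s.
\end{equation*}
The initial term is controlled by \Cref{lemma_global_law} (sharpened by \Cref{improved_global_laws} where decay is required): since $\eta_{\ti}\sim 1$ and $\ell_{\ti}\sim 1$, we have $(\cL-\cK)^{(\fn)}_{\ti}\prec W^{-\fn d}\sim (W^d\ell_{\ti}^d\eta_{\ti})^{-\fn}$, and \Cref{lem:sum_Ndecay} then absorbs the kernel action at the cost of the ratio $(\eta_{\ti}/\eta_t)^{\fn}$, giving exactly the target size.

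The $\cW^{(\fn)}$-term of \eqref{def_EwtG} factors as a single-point function $\avg{\Gc_t(\sigma_k)E_{[a]}}$---bounded by the $\fn=1$ local law at scale $(W^d\ell_t^d\eta_t)^{-1}$---paired with a cut $(\fn+1)$-loop, so the sum-zero/fast-decay bounds \eqref{fast_decay_K} and \eqref{sum_res_2} allow us to close this term by an ascending induction on $\fn$ up to a chosen maximal length $\fn_0$. The martingale is controlled via Burkholder--Davis--Gundy: its bracket is a $(2\fn)$-loop dominated inductively by \eqref{Eq:L-KGt2}, while the kernel bound \eqref{sum_res_1} makes the resulting integrand square-integrable in $s$. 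The remainder $\cR^{(\fn)}$ further splits into shorter-loop pairs with $|l-k|\notin\{1,\fn-1\}$---handled by \Cref{lem:sum_decay} together with \eqref{sum_res_2_NAL} and \eqref{sum_res_2} using the fast-decay input \eqref{fast_decay_K}---and quadratic terms in $(\cL-\cK)^{(2)}$ paired with an $\fn$-loop, for which the parity symmetry \eqref{partity_symmetry_cK} inherited from \eqref{parity_symmetry_block_2D} is essential to activate the sharpened estimate \eqref{sum_res_2_sym}.

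The main obstacle is a simultaneous continuity/bootstrap argument in $t$ and $\fn$. In the flat-block models of \cite{Band1D,Band2D,truong2025localizationlengthfinitevolumerandom} one can rescale $H_{t_2}$ to $H_{t_1}$ by a scalar and obtain the required a priori continuity for free; in the general-profile setting this scaling fails. I would instead invoke the variance-profile flow of \Cref{lemma_variance_flow}: for any $S_{t_2}\in t_2\fS_{t_2}$ there exists $\wt S\in t_1\fS_{t_1}$ with $S_{t_2}(t_1,\wt S)=S_{t_2}$, which permits bootstrapping along $\fS_t$ rather than along a single trivial rescaling. Propagating the bound \eqref{Eq:L-KGt} uniformly over the whole family $\bigcup_t t\fS_t$---and in particular verifying the needed continuity estimate (the analogue of \Cref{lem_ConArg})---is the delicate point around which the entire argument pivots and constitutes the essential novelty of the block reduction method in this setting.
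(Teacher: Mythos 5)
Your overall strategy---a stochastic flow along \eqref{def_stochastic_flow}, Duhamel's principle with the evolution kernel of \Cref{def_evolution_kernel}, global laws as initial data, Burkholder--Davis--Gundy for the martingale, fast-decay/sum-zero/parity bounds for the remainder, and the variance-profile family $\fS_t$ in place of a single trivial rescaling for continuity---is indeed the same approach the paper takes. The proof in the paper is organized around the inductive \Cref{lem:main_ind}, which is iterated over subintervals $[s,t]\subset[\ti,\tf]$ satisfying $(W^d\ell_t^d\eta_t)^{-1/100}\leq (1-t)/(1-s)\leq 1/100$; within each subinterval there is a further internal bootstrap from a priori bounds (Step 1, via \Cref{lem_ConArg}) to the sharp bounds (Steps 2--5).

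There is, however, a concrete error in your closure argument. You claim that propagating the global-law error from $\ti$ by \Cref{lem:sum_Ndecay} at the cost $(\eta_{\ti}/\eta_t)^{\fn}$ ``gives exactly the target size.'' Since $\eta_{\ti}\sim 1$ and $\ell_{\ti}\sim 1$, this yields $W^{-\fn d}\eta_t^{-\fn}$, which exceeds the target $(W^d\ell_t^d\eta_t)^{-\fn}=W^{-\fn d}\ell_t^{-\fn d}\eta_t^{-\fn}$ by the factor $\ell_t^{\fn d}$. This is not a minor bookkeeping slip: for $\eta_t$ near $\eta_*$ one has $\ell_t\sim n$, so the loss is enormous. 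The sharper kernel bound \eqref{sum_res_1} from \Cref{lem:sum_decay} still costs a factor $\ell_t^d/\ell_s^d$ per step, which is why the a priori bound in Step 1 of \Cref{lem:main_ind} carries the factor $(\ell_u^d/\ell_s^d)^{(\fn-1)}$ and must be sharpened through Steps 2--5 before the target \eqref{Eq:L-KGt} is reached; this sharpening is precisely what requires the short time-steps, the sharp averaged $1$-loop local law \eqref{eq:res_ELK_n=1}, and the improved kernel bounds \eqref{sum_res_2}, \eqref{sum_res_2_NAL}, \eqref{sum_res_2_sym}. You refer to all of this only in passing as ``the main obstacle'' of a ``simultaneous continuity/bootstrap argument,'' but that bootstrap is where the theorem actually lives, and without spelling it out the proposal does not close. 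On the other hand, your identification of \Cref{lemma_variance_flow} as the replacement for scalar rescaling in the continuity estimate is exactly the paper's Lemma \ref{lem_ConArg}: one rescales within the enlarged family $\fS_t$ rather than within a one-parameter ray, so ``scaling fails'' is a slight overstatement---it survives once the family is enlarged.
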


\begin{theorem}[$2$-$G$-loop estimates]\label{ML:GLoop_expec}
In the setting of \Cref{ML:GLoop}, fix any $S_t\in t\fS_t$. The expectation of a $2$-$G$-loop satisfies the improved bound, uniformly for $t\in [\ti,\tf]$,
   \begin{equation}\label{Eq:Gtlp_exp}
 \max_{\boldsymbol{\sigma},\ba}\left|\mathbb E{\cal L}^{(2)}_{t, \boldsymbol{\sigma}, \ba}-{\cal K}^{(2)}_{t, \boldsymbol{\sigma}, \ba}\right|\prec (W^d\ell_t^d\eta_t)^{-3}
 .
\end{equation}
Moreover, for any $\boldsymbol{\sigma}=(+,-)$ and $ \ba=([a_1], [a_2])$, we have the following pointwise estimate, uniformly for $t\in [\ti,\tf]$: for any large constant $D>0$,
\begin{equation}\label{Eq:Gdecay}
 \left| {\cal L}^{(2)}_{t, \boldsymbol{\sigma}, \ba}-{\cal K}^{(2)}_{t, \boldsymbol{\sigma}, \ba}\right|\prec (W^d\ell_t^d\eta_t)^{-2}\exp \left(-\left|\frac{ [a_1]-[a_2] }{\ell_t}\right|^{1/2}\right)+W^{-D}.
\end{equation}
\end{theorem}

\begin{theorem}[Local law for $G_t$]\label{ML:GtLocal}
In the setting of \Cref{ML:GLoop}, fix any $S_t\in t\fS_t$. Then the following local laws hold uniformly for $t\in[\ti,\tf]$:
\begin{align}\label{Gt_bound}
 \|G_{t}-m\|_{\max} &\prec (W^d\ell_t^d\eta_t)^{-1/2},\\
 \label{Eq:Gt_1_lp_exp}
\max_{\qa{a}}\absa{\E\avga{\p{G_t-m}E_{\qa{a}}}}&\prec \pa{W^d\ell_t^d\eta_t}^{-2}.
\end{align}
\end{theorem}

The proofs of Theorems \ref{ML:GLoop}, \ref{ML:GLoop_expec}, and \ref{ML:GtLocal} are based on a detailed analysis of the $G$-loops along the flow. Since all of these results at the global time scale—namely when $1-t\sim 1$—have already been established in \Cref{sec_global_law} (see \Cref{lemma_global_law,improved_global_laws,from_2_loop_to_1_chain}), it suffices to prove the following theorem.

\begin{theorem}\label{lem:main_ind}

In the setting of \Cref{ML:GLoop}, suppose that the estimates \eqref{Eq:L-KGt}, \eqref{Eq:Gtlp_exp}, \eqref{Eq:Gdecay}, and \eqref{Gt_bound} hold at some fixed time $s\in [\ti,\tf]$ and for any $S_s\in s\fS_s$. More precisely, assume that the following statements hold.
\begin{itemize}
\item[(a)] {\bf $G$-loop estimate}: For each fixed integer $\fn\ge 2$,
\be\label{Eq:L-KGt+IND}
 \max_{\boldsymbol{\sigma}, \ba}\left|{\cal L}^{(\fn)}_{s, \boldsymbol{\sigma}, \ba}-{\cal K}^{(\fn)}_{s, \boldsymbol{\sigma}, \ba}\right|\prec (W^d\ell_s^d\eta_s)^{-\fn}.
\ee
\item[(b)] {\bf 2-$G$-loop estimate}:
For $\boldsymbol{\sigma}\in\{(+,-),(-,+)\}$ and $ \ba=([a_1],[a_2]),$ and for any large constant $D>0$,
\be\label{Eq:Gdecay+IND}
\left| {\cal L}^{(2)}_{s, \boldsymbol{\sigma}, \ba}-{\cal K}^{(2)}_{s, \boldsymbol{\sigma}, \ba}\right|\prec (W^d\ell_s^d\eta_s)^{-2}\exp \left(- \left|\frac{ [a_1]-[a_2] }{\ell_s}\right|^{1/2}\right)+W^{-D} .
\ee
\item[(c)] {\bf Local law}: The following local laws hold:
\begin{align} \label{Gt_bound+IND}
 \|G_{s}-m\|_{\max} &\prec (W^d\ell_s^d\eta_s)^{-1/2},\\
 \label{Eq:Gt_1_lp_exp+IND}
  \max_{\qa{a}}\absa{\E\avga{\p{G_s-m}E_{\qa{a}}}}&\prec \pa{W^d\ell_s^d\eta_s}^{-2}.
\end{align}

\item[(d)] {\bf Expected $2$-$G$-loop estimate}:
 \be \label{Eq:Gtlp_exp+IND}
 \max_{\boldsymbol{\sigma}, \ba}\left|\mathbb E{\cal L}^{(2)}_{s, \boldsymbol{\sigma}, \ba}-{\cal K}^{(2)}_{s, \boldsymbol{\sigma}, \ba}\right|\prec (W\ell_s\eta_s)^{-3}.
\ee
\end{itemize}
Then, for any $t\in [s,\tf]$ satisfying
\begin{equation}\label{con_st_ind}
(W^d\ell_t^d\eta_t)^{-\frac{1}{100}} \le  \frac{1-t}{1-s} \le \frac{1}{100},
\end{equation}
and for any $S_t\in t\fS_t$, the estimates \eqref{Eq:L-KGt}, \eqref{Eq:Gtlp_exp}, \eqref{Eq:Gdecay}, and \eqref{Gt_bound} also hold at time $t$.

\end{theorem}

By \Cref{lemma_variance_flow}, we may choose $S_{s}\in s\fS_s$ such that $S_t\pa{s,S_s}=S_t$ and $S_u\pa{s,S_s}\in u\fS_u$ for all $u\in\qa{s,t}$ along the flow defined in \eqref{def_S_t}. We then define the stochastic flow $H_u$ as in \eqref{def_stochastic_flow}, with initial time $t_0=s$ and initial variance profile $S_{s}$.
Throughout the remainder of this section, we consider the $G$-loops and $\cK$-loops associated with $H_u$ and its variance profile along this flow. With this setup, the proof of \Cref{lem:main_ind} is divided into the following six steps.

\medskip
\noindent
\textbf{Step 1} (A priori $G$-loop bound): We first show that $\fn$-$G$-loops satisfy the a priori bound
 \begin{equation}\label{lRB1}
   {\cal L}^{(\fn)}_{u,\boldsymbol{\sigma}, \ba}\prec (\ell_u^d/\ell_s^d)^{(\fn-1)}\cdot
   (W^d\ell_u^d\eta_u)^{-\fn+1},\quad  \forall s\le u\le t.
\end{equation}
Furthermore, the following weak local law holds:
\begin{equation}\label{Gtmwc}
    \|G_u-m\|_{\max}\prec  (W^d\ell_u^d\eta_u)^{-1/4},\quad \forall s\le u\le t .
\end{equation}

\medskip
\noindent
\textbf{Step 2} (Sharp local law and a priori $2$-$G$-loop estimate):
The following sharp local law holds:
\begin{equation}\label{Gt_bound_flow}
     \|G_u-M\|_{\max}\prec  (W^d \ell_u^d \eta_u)^{-1/2},\quad \forall s\le u\le t.
\end{equation}
In particular, the local law \eqref{Gt_bound} holds at time $t$.
In addition, for $\boldsymbol{\sigma}\in\{(+,-),(-,+)\}$, $ \ba=([a_1],[a_2]),$ and any large constant $D>0$, we have
\begin{equation}\label{Eq:Gdecay_w}
\left| {\cal L}^{(2)}_{u, \boldsymbol{\sigma}, \ba}-{\cal K}^{(2)}_{u, \boldsymbol{\sigma}, \ba}\right| \prec \left(\eta_s/\eta_u\right)^4\cdot (W^d\ell_u^d\eta_u)^{-2}\exp \left(- \left|\frac{ [a_1]-[a_2] }{\ell_u}\right|^{1/2}\right)+W^{-D} , \quad \forall s\le u \le t.
\end{equation}

   \medskip
 \noindent
\textbf{Step 3}  (Sharp $G$-loop bound): For each fixed $\fn\ge 2$, the following sharp bound on $\fn$-$G$-loops holds:
\begin{equation}\label{Eq:LGxb}
\max_{\boldsymbol{\sigma}, \ba}\left| {\cal L}^{(\fn)}_{u, \boldsymbol{\sigma}, \ba} \right|
\prec
  (W^d\ell_u^d\eta_u)^{-\fn+1} ,\quad \forall s\le u\le t .
\end{equation}

\medskip
 \noindent
\textbf{Step 4}  (Sharp $(\cL-\cK)$-loop limit): For each fixed $\fn\ge 2$, the following sharp estimate on $ \p{{\cal L}-{\cal K}}$-loops holds:
\begin{equation}\label{Eq:L-KGt-flow}
 \max_{\boldsymbol{\sigma}, \ba}\left|{\cal L}^{(\fn)}_{u, \boldsymbol{\sigma}, \ba}-{\cal K}^{(\fn)}_{u, \boldsymbol{\sigma}, \ba}\right|\prec (W^d\ell_u^d\eta_u)^{-\fn},\quad \forall s\le u\le t .
\end{equation}
Hence, the $G$-loop estimate \eqref{Eq:L-KGt} holds at time $t$.

\medskip
 \noindent
\textbf{Step 5}  (Sharp 2-$G$-loop estimate): For $\boldsymbol{\sigma}\in\{(+,-),(-,+)\}$ and $ \ba=([a_1],[a_2]),$ the following estimate holds for any large constant $D>0$:
\begin{equation}\label{Eq:Gdecay_flow}
\left| {\cal L}^{(2)}_{u, \boldsymbol{\sigma}, \ba}-{\cal K}^{(2)}_{u, \boldsymbol{\sigma}, \ba}\right| \prec   (W^d\ell_u^d\eta_u)^{-2}\exp \left(- \left|\frac{ [a_1]-[a_2] }{\ell_u}\right|^{1/2}\right)+W^{-D}.
\end{equation}
Consequently, the estimate \eqref{Eq:Gdecay} holds at time $t$.

\medskip
\noindent
\textbf{Step 6} (Expected 2-$G$-loop estimate):
For $\boldsymbol{\sigma}\in\{(+,-),(-,+)\}$, we have
\begin{equation}\label{Eq:Gtlp_exp_flow}
 \max_{\boldsymbol{\sigma}, \ba}\left|\mathbb E{\cal L}^{(2)}_{u, \boldsymbol{\sigma}, \ba}-{\cal K}^{(2)}_{u, \boldsymbol{\sigma}, \ba}\right|\prec (W^d\ell_u^d\eta_u)^{-3},\quad
 \forall s \le u \le t .
\end{equation}
In particular, the estimate \eqref{Eq:Gtlp_exp} holds at time $t$.

\medskip

We remark that all estimates obtained in the above steps hold uniformly in $u\in[s,t]$ (recall \Cref{stoch_domination}), as ensured by a standard $N^{-C}$-net argument. For simplicity of presentation, we will not emphasize this uniformity in the subsequent proofs.
Before proceeding to the proof of \Cref{lem:main_ind}, we first use these results to complete the proofs of \Cref{thm_locallaw} and \Cref{thm_diffu}.

\begin{proof}[Proofs of \Cref{thm_locallaw,thm_diffu}]
By the results established in \Cref{sec_global_law} (namely \Cref{lemma_global_law,improved_global_laws,from_2_loop_to_1_chain}), the estimates \eqref{Eq:L-KGt}, \eqref{Eq:Gtlp_exp}, \eqref{Eq:Gdecay}, and \eqref{Gt_bound} hold at time $\ti$. Applying \Cref{lem:main_ind}, we can then propagate these estimates along the flow up to time $\tf$.
By the definition \eqref{def_variance_flow}, we have \smash{$\SRBM\in\fS_{\tf}$}. Moreover,  by \eqref{equal_in_distribution_t_0}, we have \smash{$G\pa{z}\overset{\txt{d}}{=}\sqrt{\tf}G_{\tf}$}. Therefore, the estimates \eqref{Eq:L-KGt}, \eqref{Eq:Gtlp_exp}, and \eqref{Gt_bound} at time $\tf$ immediately yield the quantum diffusion estimates \eqref{eq:diffu1}--\eqref{eq:diffuExp2}, as well as the local laws \eqref{locallaw} and \eqref{locallaw_aver}, for any fixed $z\in \ha{z=E+\ii\eta: \, \absa{E}\leq 2-\kappa,W^{\fc}\eta_*\leq \eta\leq \fc^{-1}}$.
To extend these estimates uniformly to all such $z$, we invoke a standard $N^{-C}$-net argument, whose details we omit.
\end{proof}

The remainder of this section is devoted to the proof of \Cref{lem:main_ind}. The proof follows the strategy developed in \cite{Band1D,Band2D,truong2025localizationlengthfinitevolumerandom}. In fact, in view of the results already established, most of the arguments in these works extend to our setting without any modification. We therefore focus only on the key differences, omitting similar details.

\subsection{Step 1: A priori $G$-loop bound}

The proofs of \eqref{lRB1} and \eqref{Gtmwc} rely on \Cref{from_2_loop_to_1_chain} (after an appropriate rescaling of the random matrix) together with the following continuity estimate for $G$-loops.

\begin{lemma}[Continuity estimate for $G$-loops]\label{lem_ConArg}
Let \( \ti\le t_1 \leq t_2 \leq \tf\). Suppose that, at time \(t_1\), the following bound holds for each $S_{t_1}\in t_1\fS_{t_1}$ and fixed \(\fn \in \N\):
\begin{equation}\label{55}
    \max_{\boldsymbol{\sigma}, \ba} \absa{{\cal L}^{(\fn)}_{t_1, \boldsymbol{\sigma}, \ba}} \prec \left( W^d \ell_{t_1}^d \eta_{t_1} \right)^{-\fn+1}.
\end{equation}
Then, on the event $\Omega:= \left\{\|G_{t_2}\|_{\max} \leq C\right\}$ for some constant $C>0$, the following estimate holds at time $t_2$ for each $S_{t_2}\in t_2\fS_{t_2}$ and fixed \(\fn \in \N\):
\begin{equation}\label{res_lo_bo_eta}
    {\bf 1}_\Omega \cdot \absa{\max_{\boldsymbol{\sigma}, \ba} {\cal L}_{t_2, \boldsymbol{\sigma}, \ba}^{(\fn)}} \prec \left( W^d \ell_{t_1}^d \eta_{t_2} \right)^{-\fn+1} = \left({\ell_{t_2}^d}/{\ell_{t_1}^d}\right)^{\fn-1}\left( W^d \ell_{t_2}^d \eta_{t_2} \right)^{-\fn+1}.
\end{equation}
\end{lemma}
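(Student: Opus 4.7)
The plan is to adapt the rescaling strategy developed for the continuity estimates in prior works \cite{Band1D,Band2D,truong2025localizationlengthfinitevolumerandom} to the non-rescalable variance profile in the present setting, using the monotonicity of the family $\fS_t$ from \Cref{lemma_variance_flow}. First observe that $\fS_{t_2}\subseteq\fS_{t_1}$ for $t_1\le t_2$, since in the definition \eqref{def_variance_flow} the admissible parameter range $[t,\tf]$ shrinks as $t$ grows. Consequently, any variance profile $S_{t_2}=t_2\Sigma\in t_2\fS_{t_2}$ gives $t_1\Sigma\in t_1\fS_{t_1}$. Applied to the rescaled Gaussian matrix
\begin{equation*}
    \tilde H := \sqrt{t_1/t_2}\, H_{t_2},
\end{equation*}
which has variance profile $t_1\Sigma\in t_1\fS_{t_1}$, the hypothesis \eqref{55} then controls the $\fn$-$G$ loops of $\tilde G(\cdot) := (\tilde H-\cdot)^{-1}$ at the spectral parameter $z_{t_1}$.

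Next, I would use the elementary identity $G_{t_2}(z)=\sqrt{t_1/t_2}\,\tilde G(\sqrt{t_1/t_2}\,z)$ to rewrite every $\fn$-loop of $G_{t_2}$ at $z_{t_2}$ as $(t_1/t_2)^{\fn/2}=\Theta(1)$ times the corresponding $\fn$-loop of $\tilde G$ at the auxiliary parameter $\hat z := \sqrt{t_1/t_2}\,z_{t_2}$. Since $\im\hat z=\sqrt{t_1/t_2}\,\eta_{t_2}\asymp\eta_{t_2}<\eta_{t_1}$ and $|\hat z-z_{t_1}|=O(t_2-t_1)$, the remaining task is to transfer the hypothesis from $z_{t_1}$ to $\hat z$. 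I would carry this out via a Ward-identity monotonicity argument, exactly as in the cited references: for a $2$-loop this uses $\sum_y|\tilde G(w)_{xy}|^2=(\im w)^{-1}\im\tilde G(w)_{xx}$, and for general $\fn\ge 3$ the argument is iterated along each edge of the loop using the $L^\infty$-bound on $\tilde G(\hat z)$ supplied, after rescaling, by the event $\Omega=\{\|G_{t_2}\|_{\max}\le C\}$. This yields a degradation factor $(\eta_{t_1}/\im\hat z)^{\fn-1}\asymp(\eta_{t_1}/\eta_{t_2})^{\fn-1}$. Any remaining mismatch between the real parts of $\hat z$ and $z_{t_1}$ is absorbed using the resolvent identity $\tilde G(\hat z)-\tilde G(z_{t_1})=(\hat z-z_{t_1})\tilde G(\hat z)\tilde G(z_{t_1})$, whose error is controlled on $\Omega$. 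Combining, on $\Omega$ we obtain
\begin{equation*}
|\cL^{(\fn)}_{t_2,\bsigma,\ba}| \prec (\eta_{t_1}/\eta_{t_2})^{\fn-1}\bigl(W^d\ell_{t_1}^d\eta_{t_1}\bigr)^{-\fn+1} = \bigl(W^d\ell_{t_1}^d\eta_{t_2}\bigr)^{-\fn+1},
\end{equation*}
which is the claimed bound.

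The main obstacle will be executing the Ward-monotonicity step cleanly for $\fn\ge 3$. For a single resolvent or a $2$-loop the argument is essentially immediate, but for longer loops one must iterate Ward's identity through each of the $\fn$ edges while keeping tight control on the intermediate off-diagonal resolvent entries; this is precisely where the $L^\infty$-bound supplied by $\Omega$ plays the decisive role. The reason we cannot simply invoke an a priori local law at time $t_2$, which would make this step trivial, is that \Cref{lem_ConArg} is precisely the input to Step 1 used to establish the local law at $t_2$; hence the appearance of the qualitative boundedness event $\Omega$ in the statement. The adaptation to the family $\fS_t$, achieved above via the monotonicity $\fS_{t_2}\subseteq\fS_{t_1}$, is the only structural novelty compared to the block-flat setting of the prior works.
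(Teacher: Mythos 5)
Your proposal is correct and takes essentially the same approach as the paper: both hinge on the observation that $\fS_{t_2}\subseteq\fS_{t_1}$ (equivalently, $(t_1/t_2)S_{t_2}\in t_1\fS_{t_1}$), so that the rescaled matrix $\sqrt{t_1/t_2}\,H_{t_2}$ falls within the scope of the hypothesis at time $t_1$, after which the Ward-identity monotonicity argument from Lemma~5.1 of \cite{Band1D} applies verbatim. The paper states this in one line and defers entirely to \cite{Band1D}; your write-up expands the rescaling bookkeeping and the role of $\Omega$ but is materially identical.
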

\begin{proof}
The proof follows the argument of Lemma 5.1 in \cite{Band1D}. The only difference is that, in our setting, $S_t$ is no longer proportional to $t$, so the scaling relation \smash{$H_{t_2}\overset{\txt{d}}{=}\sqrt{t_2/t_1}\cdot H_{t_1}$} used in \cite{Band1D} does not apply directly. This issue can be resolved by observing that, for any $S_{t_2}\in t_2\fS_{t_2}$, we have $t_1/t_2\cdot S_{t_2}\in t_1\fS_{t_2}\subseteq t_1\fS_{t_1}$.
Thus, given an RBM $H_{t_2}$ with variance profile $S_{t_2}\in t_2\fS_{t_2}$, we may choose $H_{t_1}$ to be an RBM with variance profile $t_1/t_2\cdot S_{t_2}\in t_1\fS_{t_1}$. Applying the bound \eqref{55} at time $t_1$ and then repeating the argument of Lemma 5.1 in \cite{Band1D} yields the desired estimate.
\end{proof}

With \Cref{from_2_loop_to_1_chain} and \Cref{lem_ConArg} at hand, the bound \eqref{lRB1} follows directly from \eqref{Eq:L-KGt+IND}, \eqref{eq:bcal_k}, and \eqref{55}. Moreover, the weak local law \eqref{Gtmwc} is an immediate consequence of \eqref{lRB1} together with \eqref{entrywise_estimate_2_from_2_loop_to_1_chain}, by the same argument as in Section 5.1 of \cite{Band1D}. We therefore omit the details.

\subsection{Steps 2--5: Proofs of \eqref{Gt_bound_flow}--\eqref{Eq:Gdecay_flow}}

The proofs of Steps 2–5 rely on an analysis of the loop hierarchy along the flow, using approximation by the primitive loops introduced in \Cref{Def_Ktza}.
Fix any $\fn\in \N$. Combining the loop hierarchy \eqref{eq:mainStoflow} with the evolution equation \eqref{pro_dyncalK} for the primitive loops, we obtain
\begin{align}\label{eq_L-K-1}
       \dd(\mathcal{L} - \mathcal{K})^{(\fn)}_{t, \boldsymbol{\sigma}, \ba}
    =&~ W^d \sum_{1 \leq k < l \leq \fn} \sum_{[a]}
   (\mathcal{L} - \mathcal{K})^{(\fn+k-l+1)}_{t, \cutL^{[a]}_{k, l}\left(\boldsymbol{\sigma},\, \ba\right)}
   \mathcal{K}^{(l-k+1)}_{t,\cutR^{[a]}_{k,l}\left(\boldsymbol{\sigma} ,\ba\right)}\, \dd t \nonumber\\
    +&~ W^d \sum_{1 \leq k < l \leq \fn} \sum_{[a]} \mathcal{K}^{(\fn+k-l+1)}_{t, \cutL^{[a]}_{k, \,l}\left(\boldsymbol{\sigma},\ba\right)}
   \left(\cL-\cK\right)^{(l-k+1)}_{t,\cutR^{[a]}_{k, l}\left(\boldsymbol{\sigma},\ba\right)} \, \dd t \nonumber\\
   +&~ \mathcal{E}^{(\fn)}_{t, \boldsymbol{\sigma}, \ba}\dd t +
    \dd\mathcal{B}^{(\fn)}_{t, \boldsymbol{\sigma}, \ba}
    +\mathcal{W}^{(\fn)}_{t, \boldsymbol{\sigma}, \ba}
    \dd t,
\end{align}
where $\mathcal{E}^{(\fn)}_{t, \boldsymbol{\sigma}, \ba}$ is defined by
\begin{equation}\label{def_ELKLK}
\mathcal{E}^{(\fn)}_{t, \boldsymbol{\sigma}, \ba} :=
    W^d \sum_{1 \leq k < l \leq \fn} \sum_{[a]}
   (\mathcal{L} - \mathcal{K})^{(\fn+k-l+1)}_{t, \cutL^{[a]}_{k, l}\left(\boldsymbol{\sigma},\, \ba\right)}
   (\cL-\mathcal{K})^{(l-k+1)}_{t,\cutR^{[a]}_{k,l}\left(\boldsymbol{\sigma} ,\ba\right)}\,  .
\end{equation}
We rearrange the first two terms on the RHS of \eqref{eq_L-K-1} according to the length of the $\cK$-loops. This yields
$$\sum_{\lenk=2}^\fn \left[\OK^{(\lenk)} (\mathcal{L} - \mathcal{K})\right]^{(\fn)}_{t, \boldsymbol{\sigma}, \ba}\dd t,$$
where $\OK^{(\lenk)}$ is a linear operator defined as
\begin{align}\label{DefKsimLK}
\left[\OK^{(\lenk)} (\mathcal{L} - \mathcal{K})\right]_{t, \boldsymbol{\sigma}, \ba}^{(\fn)}:=  &~W^d \sum_{1\le k < l \leq \fn : l-k=\lenk-1} \sum_{[a]}
   (\mathcal{L} - \mathcal{K})^{(\fn-\lenk+2)}_{t, \cutL^{[a]}_{k, l}\left(\boldsymbol{\sigma},\, \ba\right)}
   \mathcal{K}^{(\lenk)}_{t,\cutR^{[a]}_{k,l}\left(\boldsymbol{\sigma} ,\ba\right)} \nonumber\\
    +&~  W^d \sum_{1 \leq k < l \leq \fn:l-k=\fn-\lenk+1} \sum_{[a]} \mathcal{K}^{(\lenk)}_{t, \cutL^{[a]}_{k, l}\left(\boldsymbol{\sigma},\ba\right)}
   \left(\cL-\cK\right)^{(\fn-\lenk+2)}_{t,\cutR^{[a]}_{k, l}\left(\boldsymbol{\sigma},\ba\right)}  .
\end{align}
Extracting the leading term corresponding to $\lenk =2$, and observing that the operator $\OK^{\pa{2}}$ coincides with the operator \smash{$\vartheta_{t,\bsigma}^{\pa{\fn}}$} defined in \eqref{def:op_thn}, we may rewrite \eqref{eq_L-K-1} as
\begin{align}\label{eq_L-Keee}
    \dd(\mathcal{L} - \mathcal{K})^{(\fn)}_{t, \boldsymbol{\sigma}, \ba} = &~\left[\vartheta_{t,\bsigma}^{\pa{\fn}}\circ (\mathcal{L} - \mathcal{K})\right]^{(\fn)}_{\ba} \, \dd t+\sum_{\lenk=3}^\fn \left[\OK^{(\lenk)} (\mathcal{L} - \mathcal{K})\right]^{(\fn)}_{t, \boldsymbol{\sigma}, \ba}\, \dd t \nonumber\\
    +&~ \mathcal{E}^{(\fn)}_{t, \boldsymbol{\sigma}, \ba}\dd t +
    \dd\mathcal{B}^{(\fn)}_{t, \boldsymbol{\sigma}, \ba}
    +
    \mathcal{W}^{(\fn)}_{t, \boldsymbol{\sigma}, \ba}\dd t.
\end{align}
Applying Duhamel’s principle to \eqref{eq_L-Keee}, we obtain for any $s\le t$:
\begin{align}\label{int_K-LcalE}
    (\mathcal{L} - \mathcal{K})^{(\fn)}_{t, \boldsymbol{\sigma}, \ba} & =
    \left(\mathcal{U}^{(\fn)}_{s, t, \boldsymbol{\sigma}} \circ (\mathcal{L} - \mathcal{K})^{(\fn)}_{s, \boldsymbol{\sigma}}\right)_{\ba} + \sum_{l_\mathcal{K} =3}^\fn \int_{s}^t \left(\mathcal{U}^{(\fn)}_{u, t, \boldsymbol{\sigma}} \circ \Big[\OK^{(\lenk)} (\mathcal{L} - \mathcal{K})\Big]^{(\fn)}_{u, \boldsymbol{\sigma}}\right)_{\ba} \dd u \nonumber \\
    &+ \int_{s}^t \left(\mathcal{U}^{(\fn)}_{u, t, \boldsymbol{\sigma}} \circ \mathcal{E}^{(\fn)}_{u, \boldsymbol{\sigma}}\right)_{\ba} \dd u  + \int_{s}^t \left(\mathcal{U}^{(\fn)}_{u, t, \boldsymbol{\sigma}} \circ \cW^{(\fn)}_{u, \boldsymbol{\sigma}}\right)_{\ba} \dd u + \int_{s}^t \left(\mathcal{U}^{(\fn)}_{u, t, \boldsymbol{\sigma}} \circ \dd \cB^{(\fn)}_{u, \boldsymbol{\sigma}}\right)_{\ba} ,
\end{align}
where the evolution kernel $\mathcal{U}^{(\fn)}_{s, t, \boldsymbol{\sigma}}$ is defined in \eqref{def_Ustz}.
Furthermore, let $T$ be a stopping time with respect to the matrix Brownian motion $\{H_t\}$, and set $\tau:= T\wedge t$. Then the stopped version of \eqref{int_K-LcalE} reads
\begin{align}\label{int_K-L_ST}
(\mathcal{L} - \mathcal{K})^{(\fn)}_{\tau, \boldsymbol{\sigma}, \ba} & =
    \left(\mathcal{U}^{(\fn)}_{s, \tau, \boldsymbol{\sigma}} \circ (\mathcal{L} - \mathcal{K})^{(\fn)}_{s, \boldsymbol{\sigma}}\right)_{\ba} + \sum_{l_\mathcal{K} =3}^\fn \int_{s}^\tau \left(\mathcal{U}^{(\fn)}_{u, \tau, \boldsymbol{\sigma}} \circ \Big[\OK^{(\lenk)} (\mathcal{L} - \mathcal{K})\Big]^{(\fn)}_{u, \boldsymbol{\sigma}}\right)_{\ba} \dd u \nonumber \\
    &+ \int_{s}^\tau \left(\mathcal{U}^{(\fn)}_{u, \tau, \boldsymbol{\sigma}} \circ \mathcal{E}^{(\fn)}_{u, \boldsymbol{\sigma}}\right)_{\ba} \dd u  + \int_{s}^\tau \left(\mathcal{U}^{(\fn)}_{u, \tau, \boldsymbol{\sigma}} \circ \cW^{(\fn)}_{u, \boldsymbol{\sigma}}\right)_{\ba} \dd u + \int_{s}^\tau \left(\mathcal{U}^{(\fn)}_{u, \tau, \boldsymbol{\sigma}} \circ \dd \cB^{(\fn)}_{u, \boldsymbol{\sigma}}\right)_{\ba} .
\end{align}

With the preparations above—namely, the deterministic estimates from Sections \ref{subsec:propagators} and \ref{sec_analysis_of_primitive_loops}, Lemma \ref{from_2_loop_to_1_chain}, estimates \eqref{lRB1} and \eqref{Gtmwc}, together with the integral equations \eqref{int_K-LcalE} and \eqref{int_K-L_ST}—we are now in a position to carry out the arguments for Steps 2–5 exactly as in \cite[Section 5]{Band1D} or \cite[Section 7]{truong2025localizationlengthfinitevolumerandom}.
Indeed, under the block reduction framework introduced in \eqref{def_S_t}, the setting in those references coincides verbatim with the present one. Consequently, the same arguments apply without modification and yield the estimates \eqref{Gt_bound_flow}–\eqref{Eq:Gdecay_flow}. We therefore omit the details for brevity.

\subsection{Step 6: Expected 2-$G$-loop estimate}

At this stage, we have at our disposal the initial estimate \eqref{Eq:Gtlp_exp+IND} at time $s$, the sharp local law \eqref{Gt_bound_flow}, and the sharp $G$-loop estimates \eqref{Eq:LGxb}, \eqref{Eq:L-KGt-flow}, and \eqref{Eq:Gdecay_flow}. We now apply these inputs to the expectation of \eqref{int_K-LcalE} in order to establish \eqref{Eq:Gtlp_exp_flow}.
We have already proved the sharp averaged local law
\begin{align}\label{eq:res_ELK_n=1}
  \max_{[a]}  \left|\langle (G_u-m) E_{[a]}\rangle\right|
  \prec (W^d\ell_u^d\eta_u)^{-1}  ,
  \end{align}
by applying \eqref{Eq:L-KGt-flow} with $\fn=1$. Our first goal is to improve this bound at the level of expectation:
\begin{align}\label{res_ELK_n=1}
  \max_{[a]}  \left|\mathbb E\langle (G_u-m) E_{[a]}\rangle\right|
  \prec (W^d\ell_u^d\eta_u)^{-2}  .
  \end{align}

We note that the argument based on Gaussian integration by parts used in the proof of Lemma A.4 in \cite{truong2025localizationlengthfinitevolumerandom} is not available in our setting, due to the absence of a block structure in the variance profile. We therefore employ a dynamical approach to prove \eqref{res_ELK_n=1}.
Specifically, taking expectations on both sides of \eqref{eq_L-K-1} with $\fn=1$, we obtain that for any \smash{$\qa{a}\in\Zn$} and $u\in\qa{s,t}$,
\begin{equation}\label{evolution_ELK_fn_1}
    \begin{aligned}
        \frac{\rd}{\rd u}\E\avga{\pa{G_u-m}E_{\qa{a}}}&=W^d\sum_{\qa
        x}\E\avga{\pa{G_u-m}E_{\qa{x}}}\cL_{u,\pa{+,+},\pa{\qa{x},\qa{a}}}^{\pa{2}}\\
        &=W^d\sum_{\qa
        x}\E\avga{\pa{G_u-m}E_{\qa{x}}}\cK_{u,\pa{+,+},\pa{\qa{x},\qa{a}}}^{\pa{2}}+\cR_{\qa{a}}\p{u},
    \end{aligned}
\end{equation}
where the remainder term is defined by
\begin{equation}\nonumber
        \cR_{\qa{a}}\p{u}:=W^d\sum_{\qa
        x}\E\avga{\pa{G_u-m}E_{\qa{x}}}\pa{\cL-\cK}_{u,\pa{+,+},\pa{\qa{x},\qa{a}}}^{\pa{2}}.
\end{equation}
Applying Duhamel’s principle to \eqref{evolution_ELK_fn_1}, we obtain
\begin{equation}\label{integration_ELK_fn_1}
    \begin{aligned}
        \E\avga{\pa{G_u-m}E_{\qa{a}}}=&~\sum_{\qa{x}}\pa{I+\pa{u-s}m^2\Theta_u}_{\qa{a}\qa{x}}\E\avga{\pa{G_s-m}E_{\qa{x}}}\\
        &~+\int_s^{u}\sum_{[x]}\pa{I+\pa{u-v}m^2\Theta_u}_{\qa{a}\qa{x}}\cR_{\qa{x}}\p{v}\,\rd v.
    \end{aligned}
\end{equation}
Using \eqref{Eq:L-KGt-flow} and \eqref{eq:res_ELK_n=1}, we estimate the remainder term as
\begin{equation*}
    \begin{aligned}
        \absa{\cR_{\qa{x}}\p{v}}\prec W^d \ell_v^d\cdot\frac{1}{W^d\ell_v^d\eta_v}\cdot\frac{1}{\pa{W^d\ell_v^d\eta_v}^2}\lesssim \frac{1}{\pa{W^d\ell_u^d\eta_u}^2}\cdot\frac{1}{1-v},
    \end{aligned}
\end{equation*}
where we used $\ell_v^d\eta_v\gtrsim \ell_u^d\eta_u$, and the factor $\ell_v^d$ arises from the exponential decay of the $(\cL-\cK)$-loops by \eqref{Eq:Gdecay+IND}. Substituting this bound into the second term on the RHS of \eqref{integration_ELK_fn_1} yields
\begin{equation*}
    \begin{aligned}
        \int_s^{u}\sum_{[x]}\pa{I+\pa{u-v}m^2\Theta_u}_{\qa{a}\qa{x}}\cR_{\qa{x}}\p{v}\rd v\prec \int_s^{u}\frac{1}{\pa{W^d\ell_u^d\eta_u}^2}\cdot\frac{1}{1-v}\,\rd v\prec \frac{1}{\pa{W^d\ell_u^d\eta_u}^2},
    \end{aligned}
\end{equation*}
where we also used \eqref{prop:ThfadC_short} in the first step. On the other hand, by the assumption \eqref{Eq:Gt_1_lp_exp+IND}, the first term on the RHS of \eqref{integration_ELK_fn_1} can be bounded as
\begin{equation*}
    \begin{aligned}
        \sum_{\qa{x}}\pa{I+\pa{u-s}m^2\Theta_u}_{\qa{a}\qa{x}}\E\avga{\pa{G_s-m}E_{\qa{x}}}\prec \frac{1}{\pa{W^d\ell_s^d\eta_s}^2}\lesssim \frac{1}{\pa{W^d\ell_u^d\eta_u}^2}.
    \end{aligned}
\end{equation*}
Combining these bounds proves \eqref{res_ELK_n=1}.

With \eqref{res_ELK_n=1} established, we can now complete the proof of \eqref{Eq:Gtlp_exp_flow} by exactly the same argument as in the proof of equation (2.80) in \cite{Band1D} or equation (5.18) in \cite{truong2025localizationlengthfinitevolumerandom}. This completes Step 6 in the proof of \Cref{lem:main_ind}.

\appendix

\section{Proof of some auxiliary lemmas}\label{additional_proofs}

In this appendix, we collect two auxiliary results used in the main proofs.

\begin{lemma}[Evolution of primitive loops]\label{lemma_evolution_of_primitive_loops}
For the primitive loops \smash{$\wh \cK_{t,\bsigma,\bx}^{\pa{\fn}}$} defined above \eqref{entrywise_pro_dyncalK}, the following evolution equation holds:
    \begin{align}
       \frac{\dd}{\dd t}\,{\wh\cK}^{(\fn)}_{t, \boldsymbol{\sigma}, \bx}
       =
        \sum_{1\le k < l \le \fn} \sum_{a, b\in \ZL} \pa{ \cutL^{\pa{a}}_{k, l} \circ \wh{\mathcal{K}}^{(\fn)}_{t, \boldsymbol{\sigma}, \bx}}  \cdot \pa{\SE}_{ab}\cdot  \pa{\cutR^{\pa{b}}_{k, l} \circ \wh{\mathcal{K}}^{(\fn)}_{t, \boldsymbol{\sigma}, \bx} } .
    \end{align}
\end{lemma}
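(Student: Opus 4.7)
\medskip

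\noindent\textbf{Proof proposal.} My plan is to prove this by induction on $\fn$, using the recursive definition \eqref{wh_cK_recursive_relation} as the main input. The base case $\fn=1$ is trivial: $\wh\cK^{(1)}_{\sigma,x}=m(\sigma)$ is $t$-independent, while the right-hand side is an empty sum. For $\fn=2$, the recursion yields $\wh\cK^{(2)}_{t,(\sigma_1,\sigma_2),(x_1,x_2)}=m(\sigma_1)m(\sigma_2)\,\wh\Theta^{(\sigma_1,\sigma_2)}_{t}(x_1,x_2)$ with $\wh\Theta^{(\sigma,\sigma')}_t=(1-m(\sigma)m(\sigma')S_t)^{-1}$, so that differentiating $S_t$ via $\partial_t S_t=\SE$ gives
\[
\partial_t\wh\cK^{(2)}_{t,(\sigma_1,\sigma_2),(x_1,x_2)} = m(\sigma_1)^2 m(\sigma_2)^2 \sum_{a,b}\wh\Theta^{(\sigma_1,\sigma_2)}_t(x_1,a)\,(\SE)_{ab}\,\wh\Theta^{(\sigma_1,\sigma_2)}_t(b,x_2),
\]
which matches the unique cut $(k,l)=(1,2)$ on the right-hand side after using the symmetry of $\wh\Theta$.

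\medskip

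For the induction step, I would assume the evolution equation holds for loop lengths $1,2,\ldots,\fn$ and establish it for length $\fn+1$. Differentiating \eqref{wh_cK_recursive_relation} for $\wh\cK^{(\fn+1)}_{t,\bsigma,\bx}$ via the product rule produces three kinds of terms: (i) $\partial_t$ hitting the outer propagator factor $\wh\Theta^{(\sigma_1,\sigma_{\fn+1})}_{t}(x_1,x_{\fn+1})$, which by direct computation equals $m(\sigma_1)m(\sigma_{\fn+1})\sum_{a,b}\wh\Theta_t(x_1,a)(\SE)_{ab}\wh\Theta_t(b,x_{\fn+1})$; (ii) $\partial_t$ hitting the single large $\wh\cK^{(\fn)}$ factor in the first summand $T_1$ of the recursion; and (iii) $\partial_t$ hitting one of the two smaller sub-loops $\wh\cK^{(k)}$ or $\wh\cK^{(\fn-k+2)}$ appearing in the second summand $T_2$ for each $k\in\{2,\ldots,\fn\}$. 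Terms of types (ii) and (iii) can be rewritten using the induction hypothesis as sums over cuts $(k',l')$ of the sub-loops.

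\medskip

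The heart of the argument is then a combinatorial matching between these contributions and the cuts $(k,l)$ with $1\le k<l\le \fn+1$ on the claimed right-hand side. The key bijection is: a cut $(k,l)$ with $k\ge 2$ and $l\le \fn$ sits entirely inside a sub-loop produced by the recursion and is generated by the induction hypothesis of types (ii)–(iii); a cut with $k=1$ and $l=\fn+1$ corresponds exactly to the derivative of the outer propagator $\wh\Theta^{(\sigma_1,\sigma_{\fn+1})}_t$, i.e.\ case (i); and the ``mixed" cuts with $k=1<l\le\fn$ or $2\le k<l=\fn+1$ must be generated by combining the derivative of the outer propagator with an intact sub-loop piece from $T_2$, after reassembling two half-edges sharing $x_1$ or $x_{\fn+1}$. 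To verify that these pieces assemble correctly, I would use the recursion \eqref{wh_cK_recursive_relation} applied \emph{in reverse} on one of the output cut-loops, recognizing the combination $\wh\cK^{(k)}\,S\,\wh\cK^{(\fn-k+2)}\,\wh\Theta$ as the $T_2$-type contribution to a single longer $\wh\cK$-loop indexed by the cut vertex $a$ or $b$.

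\medskip

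The main obstacle, and the only place where I expect significant bookkeeping to be required, is to verify that every cut $(k,l)$ on the right-hand side of the claimed identity is produced \emph{exactly once} by the above case analysis, with the correct relabeling of charges and positions and with the correct combinatorial coefficient. This is essentially the same type of bijection that is carried out on the graph-representation side in \cite{truong2025localizationlengthfinitevolumerandom} (where it is encoded in Lemma 4.16 there), and in fact once the identification of $\wh\cK^{(\fn)}$ with the graph-representation object is in hand, one could alternatively quote that result directly. The advantage of the induction approach outlined here is that it uses only the recursion \eqref{wh_cK_recursive_relation} and the elementary identity $\partial_t(1-mm'S_t)^{-1}=mm'\wh\Theta_t\SE\wh\Theta_t$, and so it is self-contained within the framework introduced in Section~\ref{sec_preliminaries}.
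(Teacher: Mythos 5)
Your plan is essentially the same as the paper's: induction on $\fn$ via the recursion \eqref{wh_cK_recursive_relation}, with identical base cases $\fn=1,2$. The difference is in how the inductive step is organized, and it bears directly on the obstacle you flag. You propose to differentiate \eqref{wh_cK_recursive_relation} directly by the product rule, which produces a contribution from $\partial_t$ hitting the outer factor $(1-m(\sigma_1)m(\sigma_{\fn+1})S_t)^{-1}$; you then have to reassemble the ``mixed'' cuts $k=1<l\le\fn$ and $2\le k<l=\fn+1$ by running the recursion in reverse, and you correctly identify this as the main bookkeeping burden. The paper sidesteps it: before differentiating, it multiplies both sides of \eqref{wh_cK_recursive_relation} on the right by $(1-m(\sigma_1)m(\sigma_{\fn+1})S_t)$, yielding the transformed relation \eqref{transformed_recursive_relation_wh_cK} in which the outer propagator is absent. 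Differentiating that identity is then a clean product rule over the sub-loop factors plus a single $\SE$ contribution from $\partial_t S_t$, after which one multiplies back by $(1-m(\sigma_1)m(\sigma_{\fn+1})S_t)^{-1}$; the $\wh\cK^{(2)}$ factor that reappears on the left-hand side, together with the inductive-hypothesis terms on the right, then reassemble into the claimed cut sum via a forward application of \eqref{wh_cK_recursive_relation}, with no reverse-engineering of mixed cuts required. Your direct route can be made to work, but the transformation trick is cleaner and removes precisely the case-by-case matching you were worried about.
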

\begin{proof}
 We prove the claim by induction on the loop length $\fn$. For $\fn=1$, the statement is immediate from the invariance property \eqref{m_invariant}.
 For $\fn=2$, by the recursive definition \eqref{wh_cK_recursive_relation}, we have
    \begin{equation}\label{expression_2_wh_cK_loop}
        \begin{aligned}            \wh\cK_{t,\pa{\sigma_1,\sigma_2},\pa{x_1,x_2}}^{\pa{2}}=m\p{\sigma_1}m\p{\sigma_2}\pa{1-m\p{\sigma_1}m\p{\sigma_2}S_t}^{-1}_{x_1x_2},
        \end{aligned}
    \end{equation}
which implies \eqref{entrywise_pro_dyncalK} upon differentiation with respect to $t$ and using $\partial_t S_t=\SE$.  Now assume that \eqref{entrywise_pro_dyncalK} holds for all loop lengths $1,2,\ldots,\fn$, and consider a $\cK$-loop of length $\fn+1$.
By \eqref{wh_cK_recursive_relation}, for $\bsigma\in\ha{+,-}^{\fn+1}$, \smash{$\bx\in\p{\Zn}^{\fn+1}$}, and $x\in\ZL$, we have
    \begin{equation}\label{transformed_recursive_relation_wh_cK}
        \begin{aligned}
            \sum_{x_{\fn+1}}\wh \cK_{t,\bsigma,\bx}^{\pa{\fn+1}}\pa{1-m\p{\sigma_1}m\p{\sigma_{\fn+1}}S_t}_{x_{\fn+1}x}=m\p{\sigma_1}\wh \cK_{t,\pa{\sigma_2,\ldots,\sigma_{\fn+1}},\pa{x_2,\ldots,x_\fn,x_1}}^{\pa{\fn}}\delta_{x_{1}x}\\
            +m\p{\sigma_1}\sum_{k=2}^{\fn}\sum_{y}\wh \cK_{t,\pa{\sigma_k,\ldots,\sigma_{\fn+1}},\pa{x_k,\ldots,x_\fn,x}}^{\pa{\fn-k+2}}\pa{S_t}_{xy} \wh \cK_{t,\pa{\sigma_1,\ldots,\sigma_k},\pa{x_1,\ldots,x_{k-1},y}}^{\pa{k}}.
        \end{aligned}
    \end{equation}
Differentiating both sides with respect to $t$, we obtain
   \begin{align*}
            &\sum_{x_{\fn+1}}\partial_t\wh \cK_{t,\bsigma,\bx}^{\pa{\fn+1}}\pa{1-m\p{\sigma_1}m\p{\sigma_{\fn+1}}S_t}_{x_{\fn+1}x}-m\p{\sigma_1}m\p{\sigma_{\fn+1}}\sum_{x_{\fn+1}}\wh \cK_{t,\bsigma,\bx}^{\pa{\fn+1}}\pa{\SE}_{x_{\fn+1}x}\\
            =&~m\p{\sigma_1}\partial_t\wh \cK_{t,\pa{\sigma_2,\ldots,\sigma_{\fn+1}},\pa{x_2,\ldots,x_\fn,x_1}}^{\pa{\fn}}\delta_{x_{1}x}\\
            +&~m\p{\sigma_1}\sum_{k=2}^{\fn}\sum_{y}\wh \cK_{t,\pa{\sigma_k,\ldots,\sigma_{\fn+1}},\pa{x_k,\ldots,x_\fn,x}}^{\pa{\fn-k+2}} \pa{\SE}_{xy} \wh \cK_{t,\pa{\sigma_1,\ldots,\sigma_k},\pa{x_1,\ldots,x_{k-1},y}}^{\pa{k}} \\
            +&~m\p{\sigma_1}\sum_{k=2}^{\fn}\sum_{y} \wh \cK_{t,\pa{\sigma_k,\ldots,\sigma_{\fn+1}},\pa{x_k,\ldots,x_\fn,x}}^{\pa{\fn-k+2}} \pa{S_t}_{xy} \partial_t\wh \cK_{t,\pa{\sigma_1,\ldots,\sigma_k},\pa{x_1,\ldots,x_{k-1},y}}^{\pa{k}} \\
            +&~m\p{\sigma_1}\sum_{k=2}^{\fn}\sum_{y}\partial_t\wh \cK_{t,\pa{\sigma_k,\ldots,\sigma_{\fn+1}},\pa{x_k,\ldots,x_\fn,x}}^{\pa{\fn-k+2}} \pa{S_t}_{xy} \wh \cK_{t,\pa{\sigma_1,\ldots,\sigma_k},\pa{x_1,\ldots,x_{k-1},y}}^{\pa{k}} .
        \end{align*}
    Multiplying both sides on the right by $\p{1-m\p{\sigma_1}m\p{\sigma_{\fn+1}}S_t}^{-1}$, and applying \eqref{expression_2_wh_cK_loop} together with the induction hypothesis to each occurrence of $\partial_t \wh\cK_t$ on the RHS, yields that
    \begin{align*}
        &~\partial_t\wh \cK_{t,\bsigma,\bx}^{\pa{\fn+1}}-\sum_{x,y}\wh \cK_{t,\bsigma,\pa{x_1,\ldots,x_{\fn},y}}^{\pa{\fn+1}}\pa{\SE}_{yx}\wh\cK_{t,\pa{\sigma_{\fn+1},\sigma_1},\pa{x,x_{\fn+1}}}^{\pa{2}}\\
            =&~ m\p{\sigma_1}\sum_{2 \leq r < \ell \leq \fn+1} \sum_{c, d}
          \wh \cK^{(\ell -r + 1)}_{t,\pa{\sigma_r,\ldots,\sigma_\ell},\pa{x_r,\ldots,x_{\ell-1},d}}
        \pa{\SE}_{cd} \wh\cK^{(\fn+r-\ell+1)}_{t,\pa{\sigma_2,\ldots,\sigma_r,\sigma_\ell,\ldots,\sigma_{\fn+1}},\pa{x_2,\ldots,x_{r-1},c,x_\ell,\ldots,x_{\fn},x_1}}\\
        &~\qquad \times \pa{1-m\p{\sigma_1}m\p{\sigma_{\fn+1}}S_t}^{-1}_{x_1x_{\fn+1}}\\
        +&~m\p{\sigma_1}\sum_{k=2}^{\fn}\sum_{c,d}\wh \cK_{t,\pa{\sigma_1,\ldots,\sigma_k},\pa{x_1,\ldots,x_{k-1},d}}^{\pa{k}} \pa{\SE}_{cd}\wh \cK_{t,\pa{\sigma_k,\ldots,\sigma_{\fn+1}},\pa{x_k,\ldots,x_\fn,c}}^{\pa{\fn-k+2}} \pa{1-m\p{\sigma_1}m\p{\sigma_{\fn+1}}S_t}^{-1}_{cx_{\fn+1}}\\
        +&~m\p{\sigma_1}\sum_{k=2}^{\fn}\sum_{1 \leq r < \ell \leq k} \sum_{c, d}\sum_{x,y}
          \wh\cK^{(k+r-\ell+1)}_{t,\pa{\sigma_1,\ldots,\sigma_r,\sigma_\ell,\ldots,\sigma_k},\pa{x_1,\ldots,x_{r-1},c,x_\ell,\ldots,x_{k-1},y}}
        \pa{\SE}_{cd} \wh \cK^{(\ell-r+1)}_{t,\pa{\sigma_r,\ldots,\sigma_\ell},\pa{x_r,\ldots,x_{\ell-1},d}}\\
        &~\qquad\times\pa{S_t}_{xy}\cdot\wh \cK^{(\fn-k+2)}_{t,\pa{\sigma_k,\ldots,\sigma_{\fn+1}},\pa{x_k,\ldots,x_\fn,x}} \pa{1-m\p{\sigma_1}m\p{\sigma_{\fn+1}}S_t}^{-1}_{xx_{\fn+1}}\\
        +&~ m\p{\sigma_1}\sum_{k=2}^{\fn}\sum_{k \leq r < \ell \leq \fn+1} \sum_{c, d}\sum_{x,y}\wh \cK_{t,\pa{\sigma_1,\ldots,\sigma_k},\pa{x_1,\ldots,x_{k-1},y}}^{\pa{k}}\pa{S_t}_{xy}\wh\cK^{(\fn-\ell+r-k+3)}_{t,\pa{\sigma_k,\ldots,\sigma_r,\sigma_\ell,\ldots,\sigma_{\fn+1}},\pa{x_k,\ldots,x_{r-1},c,x_\ell,\ldots,x_{\fn},x}}\\
        &~\qquad\times
        \pa{\SE}_{cd} \wh \cK^{(\ell-r+1)}_{t,\pa{\sigma_r,\ldots,\sigma_\ell},\pa{x_r,\ldots,x_{\ell-1},d}}\pa{1-m\p{\sigma_1}m\p{\sigma_{\fn+1}}S_t}^{-1}_{xx_{\fn+1}}
         .
        \end{align*}
        We now apply the recursive relation \eqref{wh_cK_recursive_relation} and obtain
        \begin{align*}
       & \partial_t\wh \cK_{t,\bsigma,\bx}^{\pa{\fn+1}}=\sum_{y,w}\wh \cK_{t,\bsigma,\pa{x_1,\ldots,x_{\fn},y}}^{\pa{\fn+1}}\pa{\SE}_{yw}\wh\cK_{t,\pa{\sigma_{\fn+1},\sigma_1},\pa{w,x_{\fn+1}}}^{\pa{2}}\\
        &+  \sum_{2 \leq r < \ell \leq \fn+1} \sum_{c, d}
          \wh\cK^{(\fn+r-\ell+2)}_{t,\pa{\sigma_1,\ldots,\sigma_r,\sigma_\ell,\ldots,\sigma_{\fn+1}},\pa{x_1,\ldots,x_{r-1},c,x_\ell,\ldots,x_{\fn+1}}}
        \pa{\SE}_{cd} \wh \cK^{(\ell -r + 1)}_{t,\pa{\sigma_r,\ldots,\sigma_\ell},\pa{x_r,\ldots,x_{\ell-1},d}} \\
        &+ \sum_{1\le r < \ell \le \fn}\sum_{c, d} \cK_{t,\pa{\sigma_1,\sigma_k,\ldots,\sigma_{\fn+1}},\pa{c,x_k,\ldots,x_\fn,x_{\fn+1}}}^{\pa{\fn-k+3}} \pa{\SE}_{cd} \wh \cK_{t,\pa{\sigma_1,\ldots,\sigma_k},\pa{x_1,\ldots,x_{k-1},d}}^{\pa{k}}  .
        \end{align*}
        Together with the definitions of the operators $\cutL$ and $\cutR$, this identity can be rewritten in the compact form
        \begin{equation*}
            \begin{aligned}
              \partial_t{\wh\cK}^{(\fn+1)}_{t, \boldsymbol{\sigma}, \bx}
       =
         \sum_{1\le k < l \le \fn+1} \sum_{a, b\in \ZL}  \pa{\cutL^{\pa{c}}_{k, l} \circ \wh{\mathcal{K}}^{(\fn)}_{t, \boldsymbol{\sigma}, \bx} } \cdot \pa{\SE}_{cd}\cdot  \pa{\cutR^{\pa{d}}_{k, l} \circ \wh{\mathcal{K}}^{(\fn)}_{t, \boldsymbol{\sigma}, \bx}}.
            \end{aligned}
        \end{equation*}
        This completes the induction step and hence concludes the proof of \eqref{entrywise_pro_dyncalK}.
\end{proof}

\begin{lemma}\label{lemma_mean_field_inverse_bound_point_wise}
Consider a symmetric $W^d\times W^d$ matrix $A$ with nonnegative entries. Suppose that for some constant $c\in\pa{0,1}$,
    \begin{equation*}
            c W^{-d}\leq A_{ij}\leq c^{-1}W^{-d},\quad  1\le i,j\le W^d.
    \end{equation*}
 For each $1\le i\le W^d$, denote $r_i:=\sum_{j}A_{ij}$, and assume that $\max_i r_i< 1$ and $a:=W^{-d}\sum_{i}r_i<1$. Then, $1-A$ is invertible, and there exists a constant $C=C(c)>0$ such that
    \begin{equation}\label{mean_field_inverse_bound_point_wise}
        \begin{aligned}
         \pa{1-A}^{-1}_{ij}\leq C\pa{\delta_{ij}+ \frac{1}{W^d}\frac{1}{1-a}},\quad \forall \  1\le i,j\le W^d.
        \end{aligned}
    \end{equation}
\end{lemma}
\begin{proof}
By the Perron–Frobenius theorem and the assumption $\max_i r_i< 1$, the spectral radius of $A$ is strictly smaller than 1. Hence, $1-A$ is invertible. To prove \eqref{mean_field_inverse_bound_point_wise}, we use the identity
    \begin{equation}\label{A_average_to_pointwise}
       \pa{1-A}^{-1}_{ij}=\qa{1+A+A\pa{1-A}^{-1}A}_{ij}\leq \delta_{ij}+c^{-1}\frac{1}{W^d}+c^{-2}\frac{1}{W^{2d}}\sum_{i,j}\pa{1-A}^{-1}_{ij}.
    \end{equation}
    Therefore, it suffices to bound $\mathbf{1}^{\top}\pa{1-A}^{-1}\mathbf{1}$, where $\mathbf{1}$ denotes the vector with all entries equal to 1. Set $\bu:=\pa{1-A}^{-1}\mathbf{1}$. Using the identity $(1-A)^{-1}=1+A(1-A)^{-1}$, we compute
    \begin{equation*}
        \begin{aligned}
            \sum_{i}u_i= W^d+\sum_{i,j}A_{ij}u_j =W^d+\sum_{j}r_ju_j.
        \end{aligned}
    \end{equation*}
    Rearranging gives
    \begin{equation}\label{bu_bound_min}
        W^d= \sum_{j}\pa{1-r_j}u_j \ge  W^d\pa{1-a}\min_ju_j.
    \end{equation}
    On the other hand, for any $1\le i\le W^d$, we have
    \begin{equation}\label{bu_min_to_average}
            \sum_j u_j\leq c^{-1}W^d\sum_{j}A_{ij} u_j = c^{-1}W^d (A\pa{1-A}^{-1}\mathbf{1})_i =c^{-1}W^{d}\pa{u_{i}-1}\leq c^{-1}W^d u_i.
    \end{equation}
    Taking the minimum over $i$ and using \eqref{bu_bound_min}, we obtain
    \begin{equation*}
        \begin{aligned}
         \sum_{j} u_j\leq c^{-1}W^{d}\min_i u_i\leq c^{-1}{W^d}/\p{1-a}.
        \end{aligned}
    \end{equation*}
     Plugging this estimate back into \eqref{A_average_to_pointwise}, we conclude that for some constant $C=C(c)>0$,
    \begin{equation*}
        \begin{aligned}
        \pa{1-A}^{-1}_{ij}\leq \delta_{ij}+c^{-1}\frac{1}{W^d}+c^{-2}\frac{1}{W^{2d}}\sum_{i}u_i \le   \delta_{ij}+\frac{1}{W^d}\frac{2c^{-3}}{1-a},
        \end{aligned}
    \end{equation*}
which proves \eqref{mean_field_inverse_bound_point_wise}.
\end{proof}

\end{document}